\newtheorem*{rep@theorem}{\rep@title}
\newcommand{\newreptheorem}[2]{
\newenvironment{rep#1}[1]{
 \def\rep@title{#2 \ref{##1}}
 \begin{rep@theorem}}
 {\end{rep@theorem}}}
\newtheorem{thm}{Theorem}[section]
\newtheorem{lemma}[thm]{Lemma}
\newtheorem{clm}{Claim}
\newtheorem{proposition}[thm]{Proposition}
\newtheorem{corollary}[thm]{Corollary}
\newtheorem{obs}[thm]{Observation}
\theoremstyle{definition}
\newtheorem*{definition*}{Definition}
\newtheorem{qn}{Question}
\newenvironment{claimproof}[1][\proofname]{\par\pushQED{\hfill$\lozenge$}\normalfont \topsep6\p@\@plus6\p@\relax \trivlist \item\relax{\itshape#1\@addpunct{.}}\hspace\labelsep\ignorespaces}{\popQED\endtrivlist\@endpefalse}
\newtheorem*{acknowledgement}{Acknowledgments}
\newcommand{\G}{\mathcal{G}}
\newcommand{\Z}{\mathbb{Z}}
\newcommand{\N}{\mathbb{N}}
\newcommand{\W}{W} 
\renewcommand{\le}{\leqslant}
\renewcommand{\leq}{\leqslant}
\renewcommand{\ge}{\geqslant}
\renewcommand{\geq}{\geqslant}
\def\F {{\mathcal F}}
\def\P {{\mathcal P}}
\def\X {{\mathcal X}}
\def\W {{\mathcal W}}
\def\L {{\mathcal L}}
\def\U {{\mathcal U}}
\def\cC {{\mathcal C}}
\def\ad {{\rm asdim}}
\title[Asymptotic Dimension of Minor-Closed Families]{Asymptotic Dimension of Minor-Closed Families and Assouad-Nagata Dimension of Surfaces}
\author[M. Bonamy]{Marthe Bonamy} \address{LaBRI, CNRS,
  Universit\'e de Bordeaux, Bordeaux, France}
\email{marthe.bonamy@u-bordeaux.fr}
\author[N. Bousquet]{Nicolas Bousquet}
\address{LIRIS, CNRS, Universit\'e Claude Bernard Lyon 1, Lyon, France.}
\email{nicolas.bousquet@univ-lyon1.fr}
\author[L. Esperet]{Louis Esperet}
\address{Laboratoire G-SCOP, CNRS, Univ. Grenoble Alpes, Grenoble, France.}
\email{louis.esperet@grenoble-inp.fr}
\author[C. Groenland]{Carla Groenland}
\address{Mathematical  Institute,  University  of  Oxford,  Oxford  OX2  6GG,  United  Kingdom.}
\email{carla.groenland@maths.ox.ac.uk}
\author[C.-H. Liu]{Chun-Hung Liu}
\address{Department of Mathematics,
  Texas A\&M University, College Station, TX 77843-3368, USA}
\email{chliu@math.tamu.edu}
\author[F.\ Pirot]{Fran\c{c}ois Pirot}
\address{Laboratoire G-SCOP, CNRS, Univ. Grenoble Alpes, Grenoble, France.}
\email{francois.pirot@grenoble-inp.fr}
\author[A. Scott]{Alex Scott}
\address{Mathematical  Institute,  University  of  Oxford,  Oxford  OX2  6GG,  United  Kingdom.}
\email{scott@maths.ox.ac.uk}
\thanks{This paper is a combination of parts of two manuscripts \cite{BBEGPS,Liu20} in the arXiv repository, where \cite{Liu20} has appeared as an extended abstract in a conference, and a number of results in the unpublished manuscript~\cite{BBEGPS} are strengthened to Assouad-Nagata dimension in this paper.
M. \ Bonamy and N.\ Bousquet are supported by ANR Projects DISTANCIA (\textsc{ANR-17-CE40-0015}) and GrR (\textsc{ANR-18-CE40-0032}). 
L.\ Esperet and F. Pirot are supported by ANR Projects GATO
(\textsc{ANR-16-CE40-0009-01}) and GrR
(\textsc{ANR-18-CE40-0032}). C.-H. Liu is partially supported by NSF under Grant No. DMS-1929851 and DMS-1954054.}
\begin{document}
\begin{abstract}
The asymptotic dimension is an invariant of metric spaces introduced by Gromov in the context of geometric group theory. 
In this paper, we study the asymptotic dimension of metric spaces generated by graphs and their shortest path metric and show their applications to some continuous spaces.
The asymptotic dimension of such graph metrics can be seen as a large scale generalisation of weak diameter network decomposition which has been extensively studied in computer science.

We prove that every proper minor-closed family of graphs has asymptotic dimension at most 2, which gives optimal answers to a question of Fujiwara and Papasoglu and (in a strong form) to a problem raised by Ostrovskii and Rosenthal on minor excluded groups. 
For some special minor-closed families, such as the class of graphs embeddable in a surface of bounded Euler genus, we prove a stronger result and apply this to show that complete Riemannian surfaces have Assouad-Nagata dimension 
at most 2.
Furthermore, our techniques allow us to determine the asymptotic dimension of graphs of bounded layered treewidth and graphs  with any fixed growth rate, which are graph classes that are defined by purely combinatorial notions and properly contain graph classes with some natural topological and geometric flavours.
\end{abstract}
\maketitle

\section{Introduction}\label{sec:intro}

\subsection{Asymptotic dimension}

Asymptotic dimension of metric spaces was introduced by Gromov~\cite{Gr93} in the context of geometric group theory.
Every metric space can be realized by a graph whose edges are weighted.
In this paper, we address connections between structural graph theory and the theory of asymptotic dimension.
In particular, we solve open problems in coarse geometry and geometric group theory via tools from structural graph theory.

Let $(X,d)$ be a pseudometric space, and let $\mathcal{U}$ be a family of subsets of
$X$. We say that $\mathcal{U}$ is \emph{$D$-bounded} if each set $U\in \mathcal{U}$ has diameter at
most $D$. We say that $\mathcal{U}$ is \emph{$r$-disjoint} if for any
$a,b$ belonging to different elements of $\mathcal{U}$ we have
$d(a,b)> r$.

We say that $D_X:\mathbb{R}^+\to
\mathbb{R}^+$ is an
\emph{$n$-dimensional control function} for $(X,d)$ if for any $r>0$, $(X,d)$ has a cover
$\mathcal{U}=\bigcup_{i=1}^{n+1}\mathcal{U}_i$, such that each
$\mathcal{U}_i$ is $r$-disjoint and each element of $\mathcal{U}$ is $D_X(r)$-bounded. 
The \emph{asymptotic dimension} of $(X,d)$, denoted by $\mathrm{asdim}\, (X,d)$, is the
least integer $n$ such that $(X,d)$ has an $n$-dimensional control function. If no such integer $n$ exists, then the asymptotic
dimension is infinite. 
 
The reader is referred to~\cite{BD08} for a
survey on asymptotic dimension and its group theoretic applications,
and to the lecture notes of Roe~\cite{Roe03} on coarse geometry for more detailed proofs of some results of~\cite{Gr93}.

As the asymptotic dimension of a bounded space is 0, we are more interested in the asymptotic dimension of infinite pseudometric spaces or (infinite) families of (infinite or finite)  pseudometric spaces.
We define the \emph{asymptotic dimension of a family} $\mathcal{X}$ of pseudometric spaces as the least $n$ for which there exists a function $D_{\mathcal{X}}:\mathbb{R}^+\to \mathbb{R}^+$ which is an $n$-dimensional control function for  all $X\in \mathcal{X}$.

\subsection{Graphs as (pseudo)metric spaces}

A \emph{weighted graph} $(G,\phi)$ consists of a graph $G$ and a function $\phi: E(G) \rightarrow {\mathbb R}^+$.
We call $\phi(e)$ the \emph{weight} of $e$ for each $e \in E(G)$. 
The \emph{length in $(G,\phi)$} of a path $P$ in $G$ is the sum of the weights of the edges of $P$.
Given two vertices $x,y \in V(G)$, we define $d_{(G,\phi)}(x,y)$ to be the infimum of the length in $(G,\phi)$ of a path between $x$ and $y$; we define $d_{(G,\phi)}(x,y)=\infty$ if there exists no path between $x$ and $y$.
Notice that $(V(G),d_{(G,\phi)})$ is a pseudometric space. 

In this paper, we do not distinguish a weighted graph $(G,\phi)$ from the pseudometric space $(V(G),d_{(G,\phi)})$.
Therefore, the \emph{asymptotic dimension} of a weighted graph is the asymptotic dimension of this pseudometric space.

Given an (unweighted) graph $G$, it can be viewed as a weighted graph in which each edge has weight 1.
In particular, the asymptotic dimension of a graph is defined.

The main goal of our present work is to determine the asymptotic dimension of various (classes of  weighted or unweighted) graphs.
As observed above, a finite graph has asymptotic dimension
0, so this 
is only interesting for infinite graphs, or for infinite classes of (finite or infinite) graphs.
On the other hand, every metric space can be realised by a weighted complete graph, so it will be more interesting to study graphs with restricted structure.

Indeed, the asymptotic dimension of (unweighted) graphs  with structure restriction has attracted wide attention.
For example, Gromov considered the asymptotic dimension of Cayley graphs of groups (see Section~\ref{sec:cayley} for more details).
In addition, Gromov~\cite{Gr93} observed 
that $d$-dimensional Euclidean spaces have asymptotic
dimension $d$, and it can easily be deduced from this that for any $d\ge 1$, the class
of $d$-dimensional grids (with or without diagonals) has asymptotic dimension $d$. 
On the other hand, it is implicit in the work of Gromov~\cite{Gr03} (see also~\cite[Proposition 11.29]{Roe03}) that  any infinite family of bounded degree
expanders (in particular cubic expanders) has unbounded asymptotic
dimension.
A different proof, using vertex expansion instead of spectral expansion, is given in~\cite{Hum17}. 
Another example of a class of graphs with bounded degree
and infinite asymptotic dimension is the class of lamplighter
graphs of binary trees~\cite{BMSZ20} (these graphs have maximum degree 4). This implies that bounding the degree is not enough to bound the asymptotic dimension.

The asymptotic dimension of graphs is studied in several research areas. 
It is a large-scale generalisation of weak diameter network decomposition which has been studied in distributed computing (see Section~\ref{sec:clustered} for more details); a more refined notion of asymptotic dimension is called Assouad-Nagata dimension and its algorithmic form is related to weak sparse partition schemes, which are studied in theoretical computer science (see Sections~\ref{subsec:AN_dim} and \ref{sec:sparsecover} for more details).

A simple compactness argument (see Theorem~\ref{compact_graphs}) shows that the asymptotic dimension of an infinite weighted graph is at most the asymptotic dimension of the class of its finite induced weighted subgraphs\footnote{For a graph $G$ and a subset $S$ of $V(G)$, the \emph{subgraph of $G$ induced by $S$} is the graph, denoted by $G[S]$, whose vertex-set is $S$ and whose edge-set consists of the edges of $G$ with both ends in $S$. For a weighted graph $(G,\phi)$ and a subset $S$ of $V(G)$, the \emph{(weighted) subgraph of $(G,\phi)$ induced by $S$} is the weighted graph $(G[S],\phi|_{E(G[S])})$.}.
Hence in this paper, we only consider the asymptotic dimension of classes of finite weighted or unweighted graphs.

From now on, graphs, and more generally weighted graphs, are finite, unless stated otherwise. Note that the pseudometric space generated by a finite weighted or unweighted graph is a metric space.

\subsection{Minor-closed classes of graphs}
A (finite) graph $H$ is a \emph{minor} of a  finite or infinite graph $G$ if it can be obtained from a subgraph of $G$ by contracting edges. 
We say that a class $\mathcal{G}$ of graphs is \emph{minor-closed} if any minor of a graph from $\mathcal{G}$ is also in $\mathcal{G}$. A minor-closed class $\mathcal{G}$ is \emph{proper} if it does not contain all graphs.
In particular, for every proper minor-closed family ${\mathcal G}$, there exists a graph $H$ such that  $H \not \in {\mathcal G}$, so ${\mathcal G}$ is a subclass of the class of $H$-minor free graphs. 

Minor-closed classes are a far-reaching generalisation of classes of graphs with some geometric or topological properties.
For example, any minor of a graph embeddable in a surface\footnote{A \emph{surface} is a non-null connected 2-dimensional manifold without boundary.} $\Sigma$ is also embeddable in $\Sigma$, and thus classes of graphs embeddable in a fixed surface form a natural example of a proper minor-closed class. 
Other well-known examples of minor-closed classes include the class of linkless embeddable graphs and the class of knotless embeddable graphs.
Moreover, a deep result in graph theory (the Graph Minor Theorem~\cite{RS04}) states that for every proper minor-closed class $\mathcal G$, there exist finitely many graphs $H_1,H_2,...,H_n$ such that $\mathcal G$ is the intersection of the classes of $H_i$-minor free graphs.
In order to prove this result, Robertson and Seymour obtained a structural description of any proper minor-closed class, which will be instrumental in the proof of Theorem~\ref{thm:minorch} below.

Ostrovskii and Rosenthal~\cite{OR15} proved that for every integer $t$, the class of $K_t$-minor free graphs has asymptotic dimension at most $4^t$.
Recently, Fujiwara and Papasoglu~\cite{FP20} proved that the class of planar graphs, which is one of the most extensively studied minor-closed classes, has asymptotic dimension at most 3 and asked whether there exists a uniform upper bound on the asymptotic dimension of proper minor-closed families, as follows. 

\begin{qn}[Question 5.2 in~\cite{FP20}]\label{qn:fp}
Is there a constant $k$ such that for any graph $H$, the class of $H$-minor free graphs has asymptotic dimension at most $k$? 
Can we take $k=2$?
\end{qn}

Question~\ref{qn:fp} is a possible common strengthening of their result and the aforementioned result in \cite{OR15}. 
One of the main results of this paper is a complete solution of Question~\ref{qn:fp}.

\begin{thm}\label{thm:minorch}
For any graph $H$, the class of $H$-minor free graphs has asymptotic dimension at most 2.
In particular, every proper minor-closed class has asymptotic dimension at most 2.
\end{thm}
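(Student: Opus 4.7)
The plan is to deduce Theorem \ref{thm:minorch} from the Robertson-Seymour Graph Minor Structure Theorem. That result says that for any graph $H$, every $H$-minor free graph admits a tree decomposition whose torsos are obtained from graphs of bounded Euler genus by adding a bounded number of apex vertices and a bounded number of vortices of bounded depth attached to its faces. Consequently, the bound $\mathrm{asdim} \le 2$ must be established for graphs of bounded Euler genus as a base case, shown to persist under apex and vortex additions, and finally propagated through tree decompositions. The upper bound $2$ is expected to be tight because the integer lattice $\mathbb{Z}^2$ is planar and has asdim exactly $2$.

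For the base case, I would first prove that graphs of bounded layered treewidth (in the sense of Dujmović-Morin-Wood, a class that includes graphs of bounded Euler genus) have asdim at most $2$. The strategy is to fix a BFS layering $L_0, L_1, L_2, \ldots$ rooted at an arbitrary vertex, and exploit a layered tree decomposition in which every bag meets each layer $L_i$ in at most $k$ vertices for some uniform constant $k$. For a given scale $r$, I would group layers into consecutive slabs of width about $r$, using three cyclically shifted offsets. This gives three partitions of $V(G)$; within a single slab, one obtains a subgraph whose effective width at scale $r$ is $O(kr)$, so it is quasi-isometric to a tree, and trees are known to have asdim $1$. Cutting each slab further according to a $1$-dimensional control function of this tree-like structure should yield the three colour classes required, each $r$-disjoint and $O(r)$-bounded.

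To propagate the bound through the structural decomposition, I would establish three reduction lemmas. First, adding a bounded number of apex vertices to a graph of asdim at most $d$ preserves the bound, since apex vertices can be absorbed into one bounded additional set without affecting $r$-disjointness asymptotically. Second, attaching bounded-depth vortices along face boundaries should preserve the bound, since a vortex already carries a linear decomposition of bounded width compatible with layered treewidth. Third, if a graph admits a tree decomposition whose torsos all have asdim at most $d$ under a uniform control function, then the whole graph has asdim at most $d$; this coarse analogue of standard treewidth behaviour should follow by combining the torso covers along the tree decomposition using a cover of the underlying tree (which has asdim $1$).

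The main obstacle will be the interaction between the tree decomposition structure across torsos and the layered/slab structure used inside each torso. In particular, a vertex of a shared separator may sit in many different BFS layers depending on the torso one looks from, so the layerings of neighbouring torsos need to be synchronised, and one has to ensure that the control functions composed along the decomposition do not blow up non-linearly. Getting a uniform $n$-dimensional control function that works at every level of the structural decomposition simultaneously is the delicate technical step, but once this is in place the final bound $2$ is forced by the base case, yielding the theorem for every proper minor-closed family.
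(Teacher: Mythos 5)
Your overall architecture mirrors the paper's (structure theorem, a base case for bounded layered treewidth, an apex lemma, and propagation through a bounded-adhesion tree-decomposition), but the two steps you treat as routine are precisely the hard parts, and as stated they do not work. First, in your base case you claim that a slab of $O(r)$ consecutive BFS layers, having treewidth $O(kr)$, is ``quasi-isometric to a tree.'' Bounded treewidth does not imply quasi-isometry to a tree with uniform constants: already the family of cycles has treewidth $2$ but fails Manning's bottleneck property at the scale of the cycle length, so no uniform quasi-tree structure exists. What you actually need is that graphs of bounded treewidth have asymptotic dimension at most $1$ (Theorem~\ref{tw_ad_intro_2} in the paper), and that is not an off-the-shelf fact: its proof occupies the technical core of the paper (the gluing machinery of Section~\ref{sec:tree}), with the base class being graphs of bounded vertex-cover rather than trees. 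Once that is available, the passage from a layering to the dimension bound $2$ is handled by the Hurewicz-type theorem of Brodskiy--Dydak--Levin--Mitra (Theorem~\ref{thm:bd}), which is the rigorous version of your slab argument.

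Second, your third reduction lemma --- that a tree-decomposition whose torsos have asymptotic dimension at most $d$ yields asymptotic dimension at most $d$ for the whole graph --- is asserted with the sketch ``combine the torso covers using a cover of the underlying tree, which has asdim $1$.'' That argument, made precise via the standard union/Hurewicz theorems applied to the tree map, only gives a bound of the form $d+1$ (or worse), which in this application would reproduce the Fujiwara--Papasoglu bound of $3$ rather than $2$. Proving that gluing along a tree-decomposition of \emph{bounded adhesion} does not increase the dimension is exactly the paper's main new technical result (Theorem~\ref{tree_extension_ad}, proved by the lengthy induction of Lemma~\ref{tree_extension} with precolouring extensions and the gadget graph $H$), and it also requires the hypothesis class to be hereditary and closed under attaching new vertices to the adhesion sets --- this is why the paper needs Lemma~\ref{small_exten_minor} to absorb the clique-sum attachments of the torsos into layered treewidth (and, via \cite{DMW17}, to absorb the vortices, so they never have to be handled separately as in your sketch). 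Without proofs of these two ingredients --- Theorem~\ref{tw_ad_intro_2} and Theorem~\ref{tree_extension_ad} --- your outline assumes the substance of the theorem rather than proving it.
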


The bound in Theorem \ref{thm:minorch} is optimal if $H$ is a non-planar graph, since the class of 2-dimensional grids has asymptotic dimension 2~\cite{Gr93} and is a subclass of planar graphs.
When $H$ is planar, we prove that the  bound for asymptotic dimension can be further reduced to 1.

\begin{thm} \label{tw_ad_intro_1}
For every planar graph $H$, the asymptotic dimension of the class of $H$-minor free graphs is at most 1.
In particular, every proper minor-closed class that does not contain all planar graphs has asymptotic dimension at most 1.
\end{thm}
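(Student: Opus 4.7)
The plan is to reduce Theorem~\ref{tw_ad_intro_1} to a bound for bounded-treewidth graphs. By the classical theorem of Robertson and Seymour (a consequence of the Grid Minor Theorem), for every planar graph $H$ there is an integer $k=k(H)$ such that every $H$-minor-free graph has treewidth at most $k$. It therefore suffices to show that the class of graphs of treewidth at most $k$ has asymptotic dimension at most $1$, with a control function depending only on $k$.

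To treat the bounded-treewidth case, I would work with a tree decomposition $(T,\{B_t\}_{t\in V(T)})$ of width $k$, root $T$ at an arbitrary node, and for each $v\in V(G)$ assign a ``level'' $\ell(v)$ derived from the bags in $T$ containing $v$. Given $r>0$ and a suitably large band width $R=R(r,k)$, I group vertices into bands $\ell^{-1}([jR,(j+1)R))$ indexed by $j\in\mathbb{Z}_{\geq 0}$, take the connected components of the induced subgraph on each band as cover elements, and colour each such component by $j\bmod 2$. A bound on the diameter of each cover element would follow by combining the bag size $k+1$ with the $R$-depth range of the subtree of $T$ supporting the component. Intuitively, a bounded-treewidth graph behaves at coarse scale like its decomposition tree, and trees have asymptotic dimension~$1$.

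The main obstacle is ensuring that same-colour cover elements are $r$-disjoint. Two components in different bands of the same parity are separated by an opposite-colour band of ``thickness'' $R$, but one must rule out short $G$-paths that bypass such a buffer by using edges between vertices whose assigned bags are far apart in $T$; this is a genuine issue, since an edge $uv\in E(G)$ only guarantees that $u$ and $v$ share some bag, not that $\ell(u)$ and $\ell(v)$ are close. Resolving this will require a careful choice of $\ell$ together with an analysis exploiting the separator property: each bag $B_t$, being of size at most $k+1$, is a separator of $G$, so a path leaving a band must traverse sufficiently many tree-edges before it can re-enter, and a short $G$-path cannot oscillate too much in $\ell$ without paying a cost. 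I expect the argument to yield a control function of order $R=O(k\,r)$, giving asymptotic dimension at most $1$ with explicit bounds depending only on $k$.
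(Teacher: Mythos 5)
Your first step is fine and is exactly the paper's reduction: by the Grid Minor Theorem, excluding a planar minor is equivalent to bounded treewidth, so the statement is equivalent to showing that for each $k$ the class of graphs of treewidth at most $k$ has asymptotic dimension at most $1$ (this is Theorem~\ref{tw_ad_intro_2}). The problem is that everything after that is a sketch whose two essential steps are not established, and one of them you explicitly leave open. For the $r$-disjointness of same-parity band components, you correctly identify the obstruction -- an edge of $G$ only forces a shared bag, and a vertex can lie in bags at wildly different depths (e.g.\ a vertex of the root bag that also appears arbitrarily deep), so no level function ``derived from the bags containing $v$'' is Lipschitz with respect to $d_G$ -- but you do not resolve it; saying that a resolution ``will require a careful choice of $\ell$'' is precisely the missing proof. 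The weak-diameter bound is also unjustified: a band component's supporting subtree having bounded depth range does not bound its weak diameter in $G$, and for the natural alternative (levels given by graph distance to the root bag, which at least is $1$-Lipschitz) there are treewidth-$2$ counterexamples -- e.g.\ a subdivided fan, with spokes of length $d$ from an apex $v_0$ to outer-path vertices $x_0,\dots,x_m$, has a single annulus component $\{x_0,\dots,x_m\}$ of weak diameter about $2d$, which is unbounded over the class for fixed band width $R$. So neither property of your cover is available for any level function you describe.

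This is not a presentational gap: the treewidth case is where all the work in the paper lies. The paper does not band a level function at all; it proves a precolouring-extension statement (Lemma~\ref{tree_extension}, yielding Theorem~\ref{tree_extension_ad}) by induction on the adhesion of a rooted tree-decomposition, using $(k,r)$-centered sets (Lemma~\ref{deleting_centered_set}) to absorb the neighbourhood of a small root bag, and augmenting the top part $G_0$ with gadget vertices $v_Y$ recording which adhesion vertices are mutually close inside the subtrees hanging below -- exactly the information that your ``short $G$-paths bypassing the buffer'' obstruction says is invisible from the top part alone. Treewidth at most $k$ is then handled by taking the bag class to be graphs with a vertex cover of size $k+1$ (asymptotic dimension $0$), giving $\mathrm{asdim}\le\max\{0,1\}=1$. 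If you want to pursue your route, you would in effect have to rebuild this machinery; layering by a genuine $1$-Lipschitz projection and invoking the Hurewicz-type theorem (as in Section~\ref{sec:control}) only gives dimension $2$, not $1$.
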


The bound in Theorem \ref{tw_ad_intro_1} is optimal.
Bell and Dranishnikov \cite{BD08} and Fujiwara and Papasoglu \cite{FP20}, respectively, showed that the class of trees and the class of cacti, respectively, have asymptotic dimension 1.
Note that trees are $K_3$-minor free and cacti are $K_{2,3}$-minor free.

By a simple compactness argument (Theorem~\ref{compact_graphs}), we obtain the following immediate corollary of Theorems~\ref{thm:minorch} and \ref{tw_ad_intro_1}.

\begin{corollary} \label{minor_infinite_intro}
Let $\F$ be a class of finite or infinite graphs. 
    \begin{enumerate}
        \item For any (finite) graph $H$, if no member of $\F$ contains $H$ as a minor, then $\ad(\F) \leq 2$.
        \item For any (finite) planar graph $H$, if no member of $\F$ contains $H$ as a minor, then $\ad(\F) \leq 1$.
        \item If $\F$ is minor-closed and does not contain all (finite) graphs, then $\ad(\F) \leq 2$.
        \item If $\F$ is minor-closed and does not contain all (finite) planar graphs, then $\ad(\F) \leq 1$.
    \end{enumerate}
\end{corollary}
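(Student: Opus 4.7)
The plan is to deduce all four claims from Theorems~\ref{thm:minorch} and \ref{tw_ad_intro_1} together with the compactness principle Theorem~\ref{compact_graphs}. Parts (3) and (4) reduce immediately to (1) and (2): if $\F$ is minor-closed and some finite graph $H$ is missing from $\F$, then by the very definition of \emph{minor-closed} no member of $\F$ can contain $H$ as a minor, and one can take $H$ planar whenever $\F$ fails to contain all planar graphs, which matches the hypothesis of (2). So the substance lies in (1) and (2), and these are proved by identical arguments with the constant $2$ replaced by $1$ and Theorem~\ref{thm:minorch} replaced by Theorem~\ref{tw_ad_intro_1}.

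To prove (1), I would start from the observation that Theorem~\ref{thm:minorch} provides a single $2$-dimensional control function $D\colon\R^+\to\R^+$ that works uniformly for every \emph{finite} $H$-minor free graph (since the conclusion concerns the asymptotic dimension of the whole class). Let $G\in\F$ be an arbitrary, possibly infinite, graph. Every finite induced subgraph of $G$ is still $H$-minor free, since taking an induced subgraph cannot create new minors, and therefore each such subgraph admits $D$ as a $2$-dimensional control function. Invoking Theorem~\ref{compact_graphs} then lifts this uniform bound from the class of finite induced subgraphs of $G$ to $G$ itself: $G$ admits the very same control function $D$. Since $D$ is independent of the particular graph $G\in\F$, it serves as a $2$-dimensional control function for the whole family, yielding $\ad(\F)\le 2$. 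The argument for (2) is word-for-word the same, using Theorem~\ref{tw_ad_intro_1} in place of Theorem~\ref{thm:minorch}.

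The only subtlety I anticipate is a careful reading of Theorem~\ref{compact_graphs}: we need not merely a pointwise bound $\ad(G)\le 2$ for each individual infinite $G\in\F$, but a single control function valid simultaneously for every element of $\F$. This is essentially automatic because the finite-graph theorems furnish a control function depending only on $H$ (not on the particular finite graph in play), and the compactness lift inherits this dependency. Once that is noted, the corollary is a routine and almost formal consequence of the finite-graph results, with no further structural graph theory required.
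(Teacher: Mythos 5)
Your argument is correct and is essentially the paper's own proof: the corollary is stated there as an immediate consequence of Theorems~\ref{thm:minorch} and \ref{tw_ad_intro_1} via the compactness statement Theorem~\ref{compact_graphs}, with (3) and (4) reducing to (1) and (2) exactly as you describe. The only nitpick is that Theorem~\ref{compact_graphs} lifts the uniform control function $D$ of the finite class to $g(x)=D(x+1)$ rather than to $D$ itself, but since $g$ still depends only on $H$ (not on the particular member of $\F$), your conclusion is unaffected.
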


As we will elaborate in Section~\ref{subsec:AN_dim}, we are able to strengthen the statement in Theorem~\ref{thm:minorch}  for some graphs $H$ so that it holds for weighted graphs and for a more refined notion of dimension which leads to results about Assouad-Nagata dimension for Riemannian surfaces.

\subsection{Treewidth and layered treewidth}

A {\it tree-decomposition} of a graph $G$ is a pair $(T,\X)$ such that $T$ is a tree and $\X$ is a collection $(X_t: t \in V(T))$ of subsets of $V(G)$, called the {\it bags}, such that
	\begin{itemize}
		\item $\bigcup_{t \in V(T)}X_t = V(G)$,
		\item for every $e \in E(G)$, there exists $t \in V(T)$ such that $X_t$ contains the ends of $e$, and
		\item for every $v \in V(G)$, the set $\{t \in V(T): v \in X_t\}$ induces a connected subgraph of $T$.
	\end{itemize}
For a tree-decomposition $(T,\X)$, the {\it adhesion} of $(T,\X)$ is $\max_{tt' \in E(T)}\lvert X_t \cap X_{t'} \rvert$, and the {\it width} of $(T,\X)$ is $\max_{t \in V(T)}\lvert X_t \rvert-1$.
The {\it treewidth} of $G$ is the minimum width of a tree-decomposition of $G$.

By the Grid Minor Theorem \cite{RS86}, excluding any planar graph as a minor is equivalent to having bounded treewidth.
Hence we have Theorem \ref{tw_ad_intro_2} which is an equivalent form of Theorem \ref{tw_ad_intro_1}. 

\begin{thm} \label{tw_ad_intro_2}
For every positive integer $w$, the asymptotic dimension of the class of graphs of treewidth at most $w$ is at most 1.
\end{thm}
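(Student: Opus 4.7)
The plan is to deduce Theorem~\ref{tw_ad_intro_2} directly from Theorem~\ref{tw_ad_intro_1}; the equivalence between the two theorems is the content of the Grid Minor Theorem alluded to in the paragraph just before the statement. Only the \emph{easy} direction of that theorem is needed here, namely that there exist planar graphs of arbitrarily large treewidth.

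First, I would record the elementary monotonicity of treewidth under minors: if $H$ is a minor of $G$, then $\mathrm{tw}(H)\le\mathrm{tw}(G)$. This is immediate from the definition, by verifying that a tree-decomposition $(T,\X)$ of $G$ can be converted into a tree-decomposition of any subgraph (by restricting every bag to the retained vertices) and of any single-edge contraction $G/e$ (by identifying the two contracted vertices inside every bag containing them), with the width not increasing in either case. In particular, contraction preserves all three tree-decomposition axioms: the union of bags, the requirement that each edge sits inside a bag, and the connectedness of the set of bags containing any fixed vertex.

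Next, for each positive integer $w$, I would fix $H_w$ to be the $(w+1)\times(w+1)$ grid. This graph is planar and has treewidth $w+1>w$. By the monotonicity observation above, no graph of treewidth at most $w$ can contain $H_w$ as a minor, so the class of graphs of treewidth at most $w$ is a subclass of the class of $H_w$-minor free graphs. Applying Theorem~\ref{tw_ad_intro_1} to the planar graph $H_w$ produces a $1$-dimensional control function for $H_w$-minor free graphs, which is then automatically a $1$-dimensional control function for the subclass of graphs of treewidth at most $w$. This yields $\mathrm{asdim}\le 1$, as required. There is no genuine obstacle in this derivation; all the mathematical content is carried by Theorem~\ref{tw_ad_intro_1}. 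I note for completeness that the \emph{converse} implication (Theorem~\ref{tw_ad_intro_2} $\Rightarrow$ Theorem~\ref{tw_ad_intro_1}) would require the harder direction of the Grid Minor Theorem~\cite{RS86}, that excluding any planar minor forces bounded treewidth, but this is not needed for the implication proved here.
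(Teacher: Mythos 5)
Your derivation is logically valid as an implication, but it is circular relative to this paper: here Theorem~\ref{tw_ad_intro_1} has no proof independent of Theorem~\ref{tw_ad_intro_2}. The paper proves Theorem~\ref{tw_ad_intro_2} directly (restated as Theorem~\ref{tw_ad} in Section~\ref{sec:tree}) by applying the gluing theorem, Theorem~\ref{tree_extension_ad}: a graph of treewidth at most $w$ has a tree-decomposition of adhesion at most $w$ in which each bag $X_t$, even after attaching new vertices whose neighbourhoods lie in the adhesion sets $X_t\cap X_{t'}$, yields a graph with a vertex cover of size at most $w+1$; the class of graphs with bounded vertex cover has asymptotic dimension $0$ (Observation~\ref{vc_ad}, which rests on Lemma~\ref{deleting_centered_set}), so Theorem~\ref{tree_extension_ad} gives asymptotic dimension at most $\max\{0,1\}=1$. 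Theorem~\ref{tw_ad_intro_1} is then \emph{deduced from} Theorem~\ref{tw_ad_intro_2} using the hard direction of the Grid Minor Theorem (excluding a planar minor forces bounded treewidth); the other results of the paper that avoid treewidth (Theorems~\ref{thm:minorch} and~\ref{thm:main}) only give the bound $2$. So when you write that ``all the mathematical content is carried by Theorem~\ref{tw_ad_intro_1},'' that content is, in this paper, precisely the proof of the theorem you were asked to prove; invoking it begs the question.

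The individual steps you do carry out --- minor-monotonicity of treewidth, planarity of the grid, and the fact that the $(w+1)\times(w+1)$ grid has treewidth $w+1>w$ --- are correct, and they establish the easy direction of the equivalence that the paper already records in the sentence preceding Theorem~\ref{tw_ad_intro_2}. But they do not remove the need for a genuine argument bounding the asymptotic dimension. To repair the proposal you would have to supply such an argument, for instance the paper's route: reformulate via weak diameter colourings (Proposition~\ref{obs:weakdiameter}) and prove the colouring statement by the tree-decomposition machinery of Theorem~\ref{tree_extension_ad} with bags of bounded vertex cover, or give some other self-contained proof for bounded treewidth that does not pass through Theorem~\ref{tw_ad_intro_1}.
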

The bound in Theorem \ref{tw_ad_intro_2} is optimal since  it is easy to see that every path has treewidth 1 and the class of paths has asymptotic dimension at least 1. 
Note that the special case of Theorem~\ref{tw_ad_intro_2} with the  additional   assumption of bounded maximum degree, which was proved in~\cite{BBEGPS}, can also be deduced from the work of Benjamini, Schramm and Tim\'ar~\cite{BST10}.

A {\it layering} of a graph $G$ is an ordered partition $(V_i)_{i\in \N}$ or $(V_i)_{i\in \Z}$  of $V(G)$ into (possibly empty) subsets $V_i$ such that for every edge $e$ of $G$, there exists $i_e$ such that $V_{i_e} \cup V_{i_e+1}$ contains both ends of $e$.
We call each set $V_i$ a {\it layer}.
The {\it layered treewidth} of a graph $G$ is the minimum $w$ such that there exist a tree-decomposition of $G$ and a layering of $G$ such that the size of the intersection of any bag and any layer is at most $w$.

Layered treewidth is a common generalisation of treewidth and Euler genus of graphs.
A number of classes of graphs with some geometric properties have bounded layered treewidth.
For example, Dujmovi\'{c}, Morin and Wood \cite{DMW17} showed that for every nonnegative integer $g$, graphs that can be embedded in a surface of Euler genus at most $g$ have layered treewidth at most $2g+3$.
Moreover, the 2-dimensional grids with all diagonals have layered treewidth two while having unbounded treewidth and unbounded Euler genus as they can contain arbitrarily large complete minors.

Combining Theorem \ref{tw_ad_intro_2} with machinery developed by Brodskiy, Dydak, Levin and Mitra~\cite{BDLM}, we obtain the following result for graphs of bounded layered treewidth.

\begin{thm} \label{layered_tw_ad_intro}
For every positive integer $w$, the asymptotic dimension of the class of graphs of layered treewidth at most $w$ is at most 2.
\end{thm}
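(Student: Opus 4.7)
The plan is to apply the Hurewicz-type theorem for asymptotic dimension of Brodskiy, Dydak, Levin and Mitra~\cite{BDLM} to the natural layer-index map. Let $G$ have layered treewidth at most $w$, witnessed by a layering $(V_i)_{i \in \Z}$ and a tree-decomposition $(T, \X)$ with $|X_t \cap V_i| \le w$ for all $t$ and $i$. The map $f \colon V(G) \to \Z$ sending $v$ to the index $i$ such that $v \in V_i$ is $1$-Lipschitz, since by the definition of a layering every edge of $G$ joins vertices in the same or consecutive layers. Because $\ad(\Z) = 1$, the Hurewicz theorem reduces the task to showing that the family of \emph{slabs}, i.e.\ preimages $S = f^{-1}(I)$ for bounded intervals $I \subseteq \Z$, has asymptotic dimension at most $1$ uniformly over all slabs of a given diameter, when each slab is equipped with the subspace metric $d_G|_S$ inherited from $G$.

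To treat a slab $S = V_j \cup V_{j+1} \cup \cdots \cup V_{j+R}$, we first note that the induced subgraph $G[S]$ inherits the tree-decomposition $(T, \{X_t \cap S\}_{t})$ of width at most $w(R+1)-1$, so Theorem~\ref{tw_ad_intro_2} provides a $1$-dimensional control function for $(V(G[S]), d_{G[S]})$ depending only on $w$ and $R$. The main obstacle is that the metric relevant for the Hurewicz theorem is not the induced-subgraph metric $d_{G[S]}$ but the subspace metric $d_G|_S$: a shortest $G$-path between two vertices of $S$ may leave $S$ through other layers and re-enter, so in general $d_G|_S \le d_{G[S]}$ with no reverse comparison (one may see this already in a caterpillar where $V_0$ is an independent set connected by a spine in $V_{-1}$).

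To bridge this gap, we use a \emph{thickening} trick. Given the target scale $r > 0$, let $S^{+} := \{v \in V(G) : d_G(v, S) \le r\}$; since $f$ is $1$-Lipschitz, $S^{+} \subseteq V_{j-r} \cup \cdots \cup V_{j+R+r}$, so the induced subgraph $G[S^{+}]$ has treewidth at most $w(R+1+2r)-1$. Theorem~\ref{tw_ad_intro_2} then yields a cover of $V(G[S^{+}])$ by two families $\V_1 \cup \V_2$ that is $r$-disjoint and $D$-bounded in $d_{G[S^{+}]}$ for some $D = D(r, w, R)$. The crucial observation is that for $u, v \in S$ with $d_G(u,v) \le r$, every vertex of a shortest $G$-path from $u$ to $v$ lies within $G$-distance $r$ of $S$ and hence inside $S^{+}$, so $d_{G[S^{+}]}(u,v) = d_G(u,v)$ for such pairs. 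Consequently, restricting $\V_1, \V_2$ to $S$ produces a cover of $S$ that is both $r$-disjoint in $d_G|_S$ (via the contrapositive of the observation) and $D$-bounded in $d_G|_S$ (since $d_G|_S \le d_{G[S^{+}]}$ on $S$). This delivers the uniform $\ad \le 1$ bound for slabs, and the Hurewicz theorem of~\cite{BDLM} concludes that $\ad(G) \le 1 + 1 = 2$.
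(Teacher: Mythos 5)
Your argument is correct and is essentially the paper's own proof: the layer map as a $1$-Lipschitz real projection, the treewidth bound for unions of consecutive layers (Lemma~\ref{layerable}) fed into Theorem~\ref{tw_ad_intro_2}, and the Hurewicz-type theorem of Brodskiy, Dydak, Levin and Mitra (Theorem~\ref{thm:mthm49}), assembled exactly as in Theorem~\ref{thm:bd}. Your thickening of a slab to $S^{+}$ to reconcile the induced-subgraph metric with the ambient metric is precisely the enlargement $X^{+}$ used in the proof of Lemma~\ref{lem:intrinsic}, so the two proofs coincide up to packaging.
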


In fact, classes of graphs of bounded layered treewidth are of interest beyond minor-closed families.
Let $g,k$ be nonnegative integers.
A graph is {\it $(g,k)$-planar} if it can be drawn in a surface of Euler genus at most $g$ with at most $k$ crossings on each edge. 
So $(g,0)$-planar graphs are exactly the graphs of Euler genus at most $g$.
It is well-known that $(0,1)$-planar graphs (also known as {\it 1-planar graphs} in the literature) can contain an arbitrary graph as a minor.
So the class of $(g,k)$-planar graphs is not a minor-closed family.
On the other hand, Dujmovi\'{c}, Eppstein and Wood \cite{DEW17} proved that $(g,k)$-planar graphs have layered treewidth at most $(4g+6)(k+1)$.

Hence the following is an immediate corollary of Theorem \ref{layered_tw_ad_intro}. 

\begin{corollary} \label{gk_planar_ad_intro}
For any nonnegative integers $g$ and $k$, the class of $(g,k)$-planar graphs has asymptotic dimension at most 2.
\end{corollary}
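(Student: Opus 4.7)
The plan is to simply combine Theorem~\ref{layered_tw_ad_intro} with the Dujmovi\'c--Eppstein--Wood bound cited immediately before the corollary; no new idea is required. Note that we cannot invoke Theorem~\ref{thm:minorch} directly, because as already observed in the text even $(0,1)$-planar graphs can contain arbitrarily large complete minors, so the class of $(g,k)$-planar graphs is not minor-closed. Layered treewidth, however, behaves well enough to serve as an intermediary.

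Concretely, I would fix nonnegative integers $g$ and $k$ and set $w := (4g+6)(k+1)$. By the result of Dujmovi\'c, Eppstein and Wood \cite{DEW17} recalled above, every $(g,k)$-planar graph has layered treewidth at most $w$. Hence the class of $(g,k)$-planar graphs is contained in the class $\mathcal{G}_w$ of graphs of layered treewidth at most $w$. By Theorem~\ref{layered_tw_ad_intro}, $\mathcal{G}_w$ admits a $2$-dimensional control function $D_{\mathcal{G}_w} : \mathbb{R}^+ \to \mathbb{R}^+$ that works uniformly over all graphs in $\mathcal{G}_w$.

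Since the definition of asymptotic dimension of a family is monotone under inclusion of classes (the same control function witnessing $n$-dimensionality for $\mathcal{G}_w$ also witnesses $n$-dimensionality for every subclass), taking $D := D_{\mathcal{G}_w}$ yields a single $2$-dimensional control function valid for every $(g,k)$-planar graph. This proves $\mathrm{asdim}$ of the class of $(g,k)$-planar graphs is at most $2$.

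There is essentially no obstacle here: the substantive work sits in Theorem~\ref{layered_tw_ad_intro} and in the layered treewidth bound \cite{DEW17}; the only thing to check is that $w$ depends only on the parameters $g,k$ of the class and not on the individual graph, which is precisely what ensures that a single control function suffices uniformly over the family.
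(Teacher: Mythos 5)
Your proof is correct and is exactly the paper's argument: the corollary is derived by combining the Dujmovi\'c--Eppstein--Wood bound on the layered treewidth of $(g,k)$-planar graphs with Theorem~\ref{layered_tw_ad_intro}, using monotonicity of asymptotic dimension under class inclusion. Nothing further is needed.
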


Recall that Corollary \ref{gk_planar_ad_intro} (and hence Theorem \ref{layered_tw_ad_intro}) is optimal since the class of 2-dimensional grids has asymptotic dimension 2.
Other extensively studied graph classes that are known to have bounded layered treewidth include map graphs \cite{CGP02,DEW17} and string graphs with bounded maximum degree \cite{DJMNW18}.
We refer readers to \cite{DEW17,DJMNW18} for discussion of those graphs.

One weakness of layered treewidth is that adding apices can increase layered treewidth a lot.
Note that for any vertex $v$ in a graph $G$ and for any layering of $G$, the neighbours of $v$ must be contained in the union of three consecutive layers.
So if a graph has bounded layered treewidth, then the subgraph induced by the neighbours of any fixed vertex must have bounded treewidth.
However, consider the graphs that can be obtained from 2-dimensional grids by adding a new vertex adjacent to all other vertices.
Since 2-dimensional grids can have arbitrarily large treewidth, such graphs cannot have bounded layered treewidth.

In contrast to the fragility of layered treewidth about adding apices, we show that adding a bounded number of apices does not increase the asymptotic dimension.
Let $\F$ be a class of graphs.
For every nonnegative integer $n$, define $\F^{+n}$ to be the class of graphs such that for every $G \in \F^{+n}$, there exists $Z \subseteq V(G)$ with $\lvert Z \rvert \leq n$ such that $G-Z \in \F$.

\begin{thm} \label{apex_extension_ad_intro}
For every class of graphs $\F$ and nonnegative integer $n$,  the asymptotic dimension of $\F^{+n}$ equals the asymptotic dimension of $\F$.
\end{thm}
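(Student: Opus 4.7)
The plan is to prove both inequalities $\ad(\F) \le \ad(\F^{+n})$ and $\ad(\F^{+n}) \le \ad(\F)$. The inequality $\ad(\F) \le \ad(\F^{+n})$ is immediate since $\F \subseteq \F^{+n}$ (take $Z = \emptyset$). For the reverse inequality, I would induct on $n$, using $\F^{+n} = (\F^{+(n-1)})^{+1}$; so it suffices to prove $\ad(\F^{+1}) \le \ad(\F)$.

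Fix $G \in \F^{+1}$ with apex $z$, let $H := G - z \in \F$, let $k = \ad(\F)$ with control function $D_\F$, and fix $r > 0$. Apply the control function of $\F$ to $H$ at parameter $r$ to obtain a cover $\U_0, \dots, \U_k$ of $V(H)$ such that each $\U_i$ is $r$-disjoint in $H$ and each element of $\U_i$ is $D$-bounded in $H$, where $D := D_\F(r)$. For every set $U$ in the cover, define $\alpha_U := \min_{v \in U} d_G(v,z)$, and split $\U_i$ into a near part $\U_i^{\mathrm{near}} := \{U \in \U_i : \alpha_U \le r\}$ and a far part $\U_i^{\mathrm{far}} := \{U \in \U_i : \alpha_U > r\}$. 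The key construction is, for each color $i$ separately, to amalgamate the near sets with the apex into a blob: let $B_i := \{z\} \cup \bigcup \U_i^{\mathrm{near}}$ and $\U_i' := \{B_i\} \cup \U_i^{\mathrm{far}}$.

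Clearly $\U_0' \cup \dots \cup \U_k'$ covers $V(G)$. Each element of $\U_i^{\mathrm{far}}$ is $D$-bounded in $G$ since $d_G \le d_H$. Each $U \in \U_i^{\mathrm{near}}$ contains a vertex within $G$-distance $r$ of $z$ and has $G$-diameter at most $D$, so $U$ lies in the $G$-ball of radius $r+D$ around $z$; hence $B_i$ has $G$-diameter at most $2(r+D)$. Therefore every set in the new cover is $2(r + D_\F(r))$-bounded in $G$.

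The delicate step, which I expect to be the main obstacle, is verifying the $r$-disjointness of each $\U_i'$ in $G$ despite shortcuts available through $z$. For two distinct sets $U_1, U_2 \in \U_i^{\mathrm{far}}$, every $u_j \in U_j$ satisfies $d_G(u_j,z) > r$, so any path from $u_1$ to $u_2$ that passes through $z$ has length strictly greater than $2r$, and any path avoiding $z$ has length $\ge d_H(u_1,u_2) > r$. For $B_i$ versus $U \in \U_i^{\mathrm{far}}$, pick $w \in B_i$ and $u \in U$: any path through $z$ has length at least $d_G(w,z) + d_G(z,u) \ge \alpha_U > r$; for a path avoiding $z$ we must have $w \ne z$, so $w$ lies in some near set $U' \in \U_i^{\mathrm{near}}$ distinct from $U$, and the length is at least $d_H(w,u) > r$ by $r$-disjointness of $\U_i$ in $H$. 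The crucial observation is that although all blobs $B_i$ share the vertex $z$, they lie in different color classes, so no $r$-disjointness between different $B_i$'s is required---this is exactly why the near/far split must be performed separately per color rather than pooling all near sets across colors (which would otherwise force an additional color). Iterating the single-step update $r \mapsto 2(r + D_\F(r))$ a total of $n$ times yields the required control function for $\F^{+n}$.
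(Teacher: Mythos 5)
Your proof is correct. The two inequalities are handled properly: $\ad(\F)\le\ad(\F^{+n})$ is trivial from $\F\subseteq\F^{+n}$, the identity $\F^{+n}=(\F^{+(n-1)})^{+1}$ justifies the reduction to a single apex, and the single-apex step is sound: far sets stay $D_\F(r)$-bounded and pairwise $r$-separated in $G$ (paths through $z$ cost more than $2r$, paths avoiding $z$ live in $H$), the blob $B_i$ is $2(r+D_\F(r))$-bounded since every near set lies in the ball of radius $r+D_\F(r)$ around $z$, and the per-colour merging correctly avoids any need for separation between the blobs of different colours. The route is genuinely different from the paper's. The paper first translates asymptotic dimension into the language of weak-diameter colourings of powers $G^\ell$ (Proposition~\ref{obs:weakdiameter}) and then proves Lemma~\ref{deleting_centered_set}: for any $(k,r)$-centered set $Z$, an \emph{arbitrary} precolouring of $Z$ together with a bounded-weak-diameter colouring of $G-Z$ yields a bounded-weak-diameter colouring of $G$, by induction on $k$ (peeling off one center and observing that every monochromatic component meeting the neighbourhood of the peeled part stays within bounded distance of that center); Theorem~\ref{apex_extension_ad_intro} is the special case $(k,r)=(n,0)$. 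That extra generality (radius $r$ and freedom in the precolouring) is not needed for the apex statement itself but is the workhorse of the tree-gluing argument (Lemma~\ref{tree_extension}), which is why the paper packages it that way. Your argument instead works directly with covers witnessing the dimension, splitting each colour class into near and far sets around the apex and absorbing the near ones into a single blob per colour; it is more elementary and self-contained, handles one apex at a time, and yields the explicit control-function update $r\mapsto 2(r+D_\F(r))$, which incidentally shows that adding a bounded number of apices also preserves asymptotic dimension of linear type and Assouad-Nagata dimension (a refinement the paper does not state here, though its bound $2^k(N+2r+2)-2r-2$ would give it too). The only cosmetic omission is the degenerate case where the deleted set is empty (i.e.\ $G\in\F$ already), which is handled by the original cover.
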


This leads to the following strengthening of Theorem \ref{layered_tw_ad_intro}.

\begin{corollary} \label{apex_layered_tw_ad_intro}
Let $k$ be a nonnegative integer.
Let $w$ be a positive integer.
Let $\F$ be a class of graphs such that for every $G \in \F$, there exists $Z \subseteq V(G)$ with $\lvert Z \rvert \leq k$ such that $G-Z$ has layered treewidth at most $w$.
Then the asymptotic dimension of $\F$ is at most 2.
\end{corollary}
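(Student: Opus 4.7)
The plan is to derive this corollary essentially as a one-line consequence of Theorems \ref{layered_tw_ad_intro} and \ref{apex_extension_ad_intro}, which I may assume. First, let $\G_w$ denote the class of graphs of layered treewidth at most $w$. By Theorem \ref{layered_tw_ad_intro}, we have $\ad(\G_w) \leq 2$.

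Next, observe that the hypothesis on $\F$ is exactly the statement that $\F \subseteq \G_w^{+k}$, using the notation $\F^{+n}$ introduced just before Theorem \ref{apex_extension_ad_intro}. Indeed, for every $G \in \F$ there is a set $Z \subseteq V(G)$ with $\lvert Z\rvert \leq k$ such that $G - Z$ has layered treewidth at most $w$, i.e.\ $G-Z \in \G_w$, which is precisely the defining property of membership in $\G_w^{+k}$.

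Now apply Theorem \ref{apex_extension_ad_intro} with the class $\G_w$ and with $n=k$: this gives $\ad(\G_w^{+k}) = \ad(\G_w) \leq 2$. Since the asymptotic dimension of a subclass is at most the asymptotic dimension of the enclosing class (any $n$-dimensional control function for $\G_w^{+k}$ restricts to one for $\F$), we conclude $\ad(\F) \leq \ad(\G_w^{+k}) \leq 2$, as required.

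There is no real obstacle here: the whole content has been packaged into Theorems \ref{layered_tw_ad_intro} and \ref{apex_extension_ad_intro}, and the corollary is a straightforward combination. The only point worth being careful about is verifying that asymptotic dimension is monotone under taking subclasses when defined via a uniform control function, but this is immediate from the definition of $\ad$ for families given in Section~\ref{sec:intro}.
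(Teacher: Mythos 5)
Your proposal is correct and follows exactly the paper's intended route: the corollary is stated as an immediate consequence of Theorems \ref{layered_tw_ad_intro} and \ref{apex_extension_ad_intro}, combined with the trivial monotonicity of asymptotic dimension under taking subclasses, which is precisely your argument. Nothing further is needed.
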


\subsection{Assouad-Nagata dimension} \label{subsec:AN_dim}

Gromov~\cite{Gr93} noticed that the notion of asymptotic dimension of a metric space can be refined by restricting the growth rate of the control function in its definition.
Although this function can be chosen to be linear in many cases, its complexity can be significantly worse in general, as there exist (Cayley) graphs of asymptotic dimension $n$ for which any $n$-dimensional control function $D(r)$ grows as fast as $\Omega(\exp \exp \cdots \exp r^k)$, for any given height of the tower of exponentials~\cite{Now07}.

A particularly interesting refinement of the asymptotic dimension is the Assouad-Nagata dimension introduced by Assouad~\cite{As82} (see~\cite{LS05} for more results on this notion).
A control function $D_X$ for a metric space $X$ is said to be
a \emph{dilation} if there is a constant $c>0$ such that $D_X(r)\le cr$, for any $r>0$.
A metric space $(X,d)$ has \emph{Assouad-Nagata dimension at
 most $n$} if $X$ has an $n$-dimensional control function which is a dilation. 
The definition extends to families of metric spaces in a natural way.
Observe that the Assouad-Nagata dimension is at least the asymptotic dimension.

We prove that when $H=K_{3,p}$ (for any fixed $p>0$), Theorem \ref{thm:minorch} can be extended to weighted graphs as well as in the setting of Assouad-Nagata dimension.

\begin{thm}\label{thm:main}
 For any integer $p>0$, the class of weighted  (finite or infinite) graphs excluding the complete bipartite graph $K_{3,p}$ as a minor has Assouad-Nagata dimension at most 2.
\end{thm}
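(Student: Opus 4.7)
The plan is to combine a structural decomposition of $K_{3,p}$-minor free graphs with a weighted/Assouad-Nagata upgrade of the planar case, and then to glue the pieces using a tree-decomposition combination lemma.

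First, I would invoke a structure theorem for $K_{3,p}$-minor free graphs (in the spirit of Oporowski, Oxley, and Thomas, or Ding): every graph excluding $K_{3,p}$ as a minor admits a tree-decomposition of adhesion at most $2$ whose torsos are either planar or have at most $f(p)$ vertices for some function $f$. This clean decomposition -- with no vortices and no bounded-genus apex parts, unlike in the general minor-free structure theorem -- is what makes the $K_{3,p}$ case amenable to Assouad-Nagata control. For infinite graphs, a compactness argument in the style of Theorem~\ref{compact_graphs} reduces us to the finite case.

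Second, I would establish that the class of weighted planar graphs has Assouad-Nagata dimension at most $2$. This is the main technical content, and is naturally a weighted, dilation-controlled upgrade of Theorem~\ref{layered_tw_ad_intro}, since planar graphs have layered treewidth bounded by an absolute constant. The strategy is to pick a root $v_0$ in each component and BFS-layer by distance, taking $V_i := \{v : d(v_0,v) \in [ir,(i+1)r)\}$. Within each annulus, the planar structure is ``radial'' around $v_0$ and admits a bounded layered-treewidth-like decomposition; one-dimensional covers can be built per annulus and combined over consecutive layers to produce a $3$-colour cover whose diameters are linear in $r$, as required for Assouad-Nagata dimension at most $2$.

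Third, I would apply a tree-decomposition combination lemma: if a weighted graph $G$ admits a tree-decomposition of bounded adhesion whose torsos (equipped with virtual edge weights matching the shortest-path distance in $G$ between adhesion vertices) admit a common linear $n$-dimensional control function, then so does $G$. This is a standard ``union over a tree of spaces'' argument in the spirit of Bell--Dranishnikov, adapted to preserve the dilation property. Applied to the decomposition of Step~1, the planar torsos contribute dimension $2$ (by Step~2), and the bounded-size torsos trivially have Assouad-Nagata dimension $0$, giving Assouad-Nagata dimension at most $2$ for $G$.

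The main obstacle is Step~2: bounding the Assouad-Nagata dimension of weighted planar graphs. Unlike in the unweighted setting, one cannot subdivide edges or use integer layerings; arbitrary real edge weights mean layer boundaries may fall awkwardly across edges, and the dilation condition demands tight linear control on diameters throughout the construction. A secondary obstacle in Step~3 is choosing torso edge weights so that $r$-disjoint covers at the torso level transport back to $\Omega(r)$-disjoint covers of $G$ without inflating diameters beyond the linear regime that Assouad-Nagata dimension permits.
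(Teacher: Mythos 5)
Your route is genuinely different from the paper's, but as written it has two fatal gaps. First, the structure theorem you invoke in Step~1 is false for general $p$. Wagner's theorem gives ``$2$-sums of planar graphs and $K_5$'' only for $K_{3,3}$; for $p\ge 5$ there are arbitrarily large $4$-connected non-planar graphs of Euler genus $1$ or $2$ (e.g.\ large $4$-connected projective-planar or toroidal quadrangulations), which exclude $K_{3,p}$ since $K_{3,p}$ has Euler genus $\lceil (p-2)/2\rceil$, yet, being $4$-connected, admit no nontrivial tree-decomposition of adhesion at most~$2$: their torsos cannot be planar or of bounded size. Any correct decomposition (in the spirit of Ding's work on excluded $K_{3,n}$) would at least have to allow bounded-genus-type pieces, so your Step~2 would have to treat weighted bounded-genus graphs anyway --- note that in the paper the bounded-genus case (Corollary~\ref{cor:genus_intro}) is \emph{deduced from} the $K_{3,p}$ result, not used to prove it.

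Second, and more importantly, Step~2 does not work as described. For \emph{weighted} planar graphs an annulus $\{u: kr\le d(v_0,u)\le kr+S\}$ carries no useful treewidth or layered-treewidth bound: rescaling all edge weights places an arbitrary weighted planar graph inside a single annulus, so there is no ``bounded layered-treewidth-like decomposition'' per annulus. Even in the unweighted setting, treewidth-type bounds cannot deliver the linear diameter control you need: the paper's own Theorem~\ref{thm:nofptw_intro} shows that layered treewidth at most $1$ does not even bound the asymptotic dimension of linear type, and the treewidth result (Theorem~\ref{tw_ad}) comes from the weak-diameter-colouring machinery, which gives no dilation control. What actually makes the annuli tractable is metric, not width-based: by Lemma~\ref{lem:layerqfat} an annulus of width $S$ in a $K_{3,p}$-minor-free graph has no $\Theta(S)$-fat $K_{2,p}$-minor, and Lemma~\ref{lem:thetaminor} converts that into a two-set cover with explicitly linear bounds, after which Theorem~\ref{thm:bd} (the Brodskiy--Dydak--Levin--Mitra Hurewicz-type theorem) and the scaling trick of Lemma~\ref{linear_to_AN} yield Assouad--Nagata dimension $2$. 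Finally, your Step~3 gluing lemma (bounded-adhesion tree-decompositions preserving a common \emph{dilation} control function, with virtual torso weights) is not standard and is not available in this paper: Theorem~\ref{tree_extension_ad} preserves only asymptotic dimension, with highly non-linear bounds, so this step would itself require a new argument.
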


When $p \geq 3$, 2-dimensional grids are $K_{3,p}$-minor free, so the bound in Theorem \ref{thm:main} is optimal.
In addition, for any fixed integer $g\ge 0$, the class of graphs embeddable in a surface of Euler genus
$g$ excludes $K_{3,2g+3}$ as a minor.
So Theorem \ref{thm:main} immediately implies the following result. 

\begin{corollary} \label{cor:genus_intro}
For any integer $g\ge 0$, the class of weighted  (finite or infinite) graphs embeddable in 
a surface of Euler genus $g$ has Assouad-Nagata dimension 2. 
\end{corollary}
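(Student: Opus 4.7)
The plan is to derive this corollary directly from Theorem~\ref{thm:main}, which is already available. The only non-trivial inputs are (i) a topological lemma showing that graphs of Euler genus at most $g$ exclude $K_{3,2g+3}$ as a minor, and (ii) a matching lower bound witnessed by planar grids. Since the problem is essentially a packaging of earlier results, I do not expect a genuine obstacle, only a short genus computation.

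\medskip

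\textbf{Upper bound.} First I would verify that the class of graphs embeddable in a fixed surface $\Sigma$ is minor-closed; this is classical (contracting an edge or deleting a vertex/edge preserves embeddability). Hence it suffices to show that $K_{3,2g+3}$ itself does not embed in any surface of Euler genus at most $g$, since then no graph embedded in such a surface can have $K_{3,2g+3}$ as a minor. For this, I would apply Euler's formula to a hypothetical cellular embedding of $K_{3,n}$ in a surface of Euler genus $\gamma$: since $K_{3,n}$ is bipartite, every face has length at least $4$, so $4F \le 2E = 6n$, giving $F \le 3n/2$. Combined with $V-E+F \ge 2-\gamma$ and $V=n+3$, $E=3n$, this yields $\gamma \ge (n-2)/2$. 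Setting $n = 2g+3$ gives $\gamma \ge g+\tfrac{1}{2}$, so $\gamma \ge g+1$, contradicting $\gamma \le g$. Therefore every graph of Euler genus at most $g$ is $K_{3,2g+3}$-minor free, and Theorem~\ref{thm:main} (applied with $p=2g+3$) bounds its Assouad-Nagata dimension by $2$.

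\medskip

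\textbf{Lower bound.} To conclude that the dimension equals $2$ (rather than merely being at most $2$), I would exhibit a family of planar (hence Euler-genus-$0$) weighted graphs of Assouad-Nagata dimension at least $2$. The natural choice is the family of $n\times n$ square grids with unit edge weights: these are planar, and Gromov's result that $\Z^2$ has asymptotic dimension $2$ (quoted in the introduction for the class of $2$-dimensional grids) transfers to this family. Since asymptotic dimension is a lower bound on Assouad-Nagata dimension, this family has Assouad-Nagata dimension at least $2$, which combined with the upper bound above yields equality.

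\medskip

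The only delicate point is the Euler-formula computation, and even there the only thing to be slightly careful about is the unified treatment of orientable and non-orientable surfaces via Euler genus; but with the standard convention $V-E+F \ge 2 - \gamma$ (with equality in the cellular case), this is a one-line check, so the proof of the corollary is essentially immediate once Theorem~\ref{thm:main} is in hand.
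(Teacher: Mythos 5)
Your proof is correct and follows essentially the same route as the paper: the paper likewise observes (citing Euler's formula via Mohar--Thomassen) that graphs of Euler genus at most $g$ are $K_{3,2g+3}$-minor free and then invokes Theorem~\ref{thm:main}, with the lower bound coming from the planar $2$-dimensional grids exactly as you argue. Your explicit Euler-formula computation and the grid lower bound match the paper's (largely implicit) justification, so there is nothing further to add.
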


Corollary \ref{cor:genus_intro} can be used for proving the following result about complete 2-dimensional connected Riemannian manifolds without boundary.

\begin{thm} \label{cor:manifold_intro}
The Assouad-Nagata dimension of any complete Riemannian surface of finite  Euler genus is at most 2.
\end{thm}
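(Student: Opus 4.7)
The plan is to approximate the Riemannian surface $M$ by weighted graphs embedded in $M$ (and hence of Euler genus at most $g$), and then to apply Corollary~\ref{cor:genus_intro} to obtain decompositions that pull back to $M$ with only a bounded multiplicative loss.

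Fix a complete Riemannian surface $M$ of Euler genus at most $g$. For each scale $r > 0$, I would construct a geodesic triangulation $T_r$ of $M$ whose faces have diameter at most $r$; the existence of arbitrarily fine geodesic triangulations of smooth $2$-manifolds is a classical fact. Let $G_r$ be the $1$-skeleton of $T_r$, viewed as a weighted graph with edge weights equal to the Riemannian lengths of the corresponding geodesic segments. Because $T_r$ embeds in $M$, the graph $G_r$ has Euler genus at most $g$. By following a Riemannian geodesic between two vertices and snapping to nearby edges of $T_r$, one can verify that for all $u,v \in V(G_r)$,
\begin{equation}
    d_M(u,v) \;\leq\; d_{G_r}(u,v) \;\leq\; d_M(u,v) + O(r),
\end{equation}
so the two metrics agree up to an additive $O(r)$ error on vertices.

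Next I would apply Corollary~\ref{cor:genus_intro} to the family $\{G_r : r > 0\}$, all of whose members have Euler genus at most $g$. This yields a dilation $D(s) \leq c\,s$ with $c = c(g)$ such that each $G_r$ admits a cover $\mathcal{V} = \mathcal{V}_1 \cup \mathcal{V}_2 \cup \mathcal{V}_3$ of $V(G_r)$ with each $\mathcal{V}_i$ being $3r$-disjoint in $d_{G_r}$ and each element of $\mathcal{V}$ being $D(3r)$-bounded. To lift this cover to $M$, for each $V \in \mathcal{V}_i$ I would set $\widetilde{V}$ to be the closed $r$-neighborhood of $V$ in $M$. Since every point of $M$ lies within $d_M$-distance at most $r$ of some vertex of $G_r$, the families $\widetilde{\mathcal{U}}_i = \{\widetilde V : V \in \mathcal{V}_i\}$ cover $M$. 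Using the metric comparison established above, each $\widetilde V$ has $d_M$-diameter at most $(3c + O(1))\,r$, and distinct members of $\widetilde{\mathcal{U}}_i$ are at $d_M$-distance greater than $r$ from one another. This provides a linear $2$-dimensional control function for $M$, proving that $M$ has Assouad-Nagata dimension at most $2$.

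The main obstacle is to justify the uniform geometric behaviour of the triangulations $T_r$: smoothness of $M$ guarantees the existence of geodesic triangulations at every scale, but to obtain a constant $C$ in the bound $d_{G_r}(u,v) \leq d_M(u,v) + Cr$ that is independent of $r$ and of the points $u,v$, one must build $T_r$ from charts of uniformly controlled geometry, which requires care when $M$ is non-compact and the curvature is unbounded. A complementary subtlety is that Corollary~\ref{cor:genus_intro} must be applied to possibly infinite weighted graphs, which is precisely why that statement is formulated in the (finite or infinite) weighted form.
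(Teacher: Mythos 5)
There is a genuine gap, and in fact two distinct problems. First, the metric comparison you claim is false as stated: for the Euclidean plane with a uniform equilateral triangulation at scale $r$, the $1$-skeleton distance between two lattice points can be as large as $\tfrac{2}{\sqrt 3}$ times their Euclidean distance (e.g.\ between $0$ and $n(e_1+e_2)$ in the triangular lattice), so no bound of the form $d_{G_r}(u,v)\le d_M(u,v)+O(r)$ can hold; at best one gets a quasi-isometric comparison $d_M\le d_{G_r}\le \lambda\, d_M+O(r)$ with some multiplicative constant $\lambda>1$. This particular defect is repairable: with a uniform $\lambda$ you can still lift covers by requesting a larger separation parameter in $G_r$ (apply the dilation at scale $\approx(3\lambda+C)r$ instead of $3r$). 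Second, and more seriously, the step you yourself call ``the main obstacle'' --- producing, for \emph{every} scale $r$ and every complete surface (possibly non-compact, with unbounded curvature), an embedded triangulation whose skeleton approximates $d_M$ with constants independent of $r$ and of the local geometry --- is exactly the crux, and it is not established in your proposal. Without it you have no uniform dilation for $M$, hence no bound on the Assouad--Nagata dimension.

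The paper avoids this multi-scale uniformity problem altogether, and this is the essential difference between your route and the one used there. Lemma~\ref{lem:rietogr} discretises the surface only at a \emph{single fixed scale}: it takes a maximal $\tfrac15$-separated net, joins nearby net points by shortest geodesics, and adds the (finitely many, locally) crossing points as vertices so that the resulting weighted graph is embedded in $S$ (hence of Euler genus at most $g$), with the explicit uniform bounds $d\le d_{(G,\phi)}\le 5d+2$ --- a quasi-isometry, not an additive approximation. Quasi-isometry invariance of asymptotic dimension together with Corollary~\ref{cor:genus2} then gives asymptotic dimension at most $2$ for the class of all complete Riemannian surfaces of genus $g$. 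The passage to all scales is done not by refining the graph but by rescaling the Riemannian metric itself: this class is scaling-closed, and Lemma~\ref{linear_to_AN} shows that a scaling-closed class of asymptotic dimension at most $n$ automatically has Assouad--Nagata dimension at most $n$. In effect, covering $(S,d)$ at scale $r$ is reduced to covering $(S,d/r)$ at scale $1$, which is again a complete surface of the same genus, so no uniformly controlled triangulations at arbitrarily fine scales are ever needed. If you want to salvage your approach, either prove the uniform (multiplicative) distortion bound for fat triangulations at all scales (which is delicate on surfaces of unbounded geometry, cf.\ the reference to Saucan in the paper), or adopt the single-scale-plus-rescaling argument above.
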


Note that Corollary \ref{cor:genus_intro} and Theorem \ref{cor:manifold_intro} extend results of J\o rgensen and Lang \cite{JL20} on the plane.

Similar to the notion of dilation, a control function $D_X$ for a metric space $X$ is said to be {\it linear} if there is a constant $c>0$ such that $D_X(r)\le cr+c$ for any $r>0$.
We say that a metric space $(X,d)$ has \emph{asymptotic dimension at most $n$ of linear type} if $X$ has a linear $n$-dimensional control function. 
The definition extends to families of metric spaces in a natural way. 
This notion is sometimes called \emph{asymptotic dimension with Higson property}~\cite{DZ04}.  
Obviously, the asymptotic dimension of linear type is between the asymptotic dimension and the Assouad-Nagata dimension.

Nowak~\cite{Now07} proved that the asymptotic dimension of linear type is not bounded by any function of the asymptotic dimension, by constructing (Cayley) graphs of asymptotic dimension 2 and infinite asymptotic dimension of linear type.
We give another such example by showing that some classes of graphs of bounded layered treewidth do not have bounded asymptotic dimension of linear type, even though Theorem \ref{layered_tw_ad_intro} shows that their asymptotic dimension is at most 2.

\begin{thm}\label{thm:nofptw_intro}
There is no integer $d$ such that the class of graphs of layered treewidth at most 1 has asymptotic dimension of linear type at most $d$.
\end{thm}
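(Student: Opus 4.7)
The plan is to exhibit an explicit family $(G_n)_{n \ge 1}$ of graphs, each of layered treewidth at most $1$, for which the asymptotic dimension of linear type is unbounded. Any such construction must use the flexibility of layered treewidth $1$ beyond trees, since trees are known to have Assouad-Nagata dimension $1$ with a universal linear constant. The class of graphs of layered treewidth $1$ strictly extends the class of trees: for instance, generalized theta graphs (two hubs joined by many internally disjoint parallel paths of the same length) have layered treewidth $1$, since placing the hubs in layers $0$ and $k$ and using one bag per parallel path (each bag containing the two hubs and one internal vertex per intermediate layer) gives a valid decomposition with at most one vertex per layer per bag.

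First, I would define $G_n$ explicitly. A natural candidate is a recursive family: $G_{n+1}$ is obtained from $G_n$ by attaching, at each designated "boundary" vertex of $G_n$, a theta graph $\theta(m_n, k_n)$ whose parameters $m_n, k_n$ grow appropriately with $n$. Since each attachment is at a single vertex, the block-tree structure of $G_{n+1}$ is a refinement of that of $G_n$, and layered treewidth $1$ follows inductively: one extends the BFS layering of $G_n$ through the new theta block, and glues the bag-tree of each new theta to the existing tree-decomposition at the shared attachment vertex, preserving the "at most one vertex per layer" property of every bag.

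The crux is to show that for every constants $c, d$, one can choose $n$ and a scale $r$ for which $G_n$ admits no cover by $d+1$ families, each $r$-disjoint and $(cr)$-bounded. My plan is to identify, at each recursion level, witness sets consisting of the midpoints of the parallel paths of the theta blocks added at that level; these form metric antichains of many vertices at mutual distance close to $r$, tuned to the scale of the level. At any single scale $r$ there is enough room to place these witnesses into a few color classes, but across the nested scales $r_1 > r_2 > \cdots > r_n$ the constraints interact: a cover valid at scale $r_i$ has color classes of diameter at most $cr_i$, and a careful computation should show that, relative to the $r_i$-disjointness condition, only one of the $m_i$ midpoints at level $i$ can lie in a single class of that scale, forcing at least one new color class per level. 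The argument proceeds by induction on the number of levels.

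The main obstacle is quantitatively tying this multi-scale separation argument to the definition of linear type, ensuring that the lower bound holds against all valid covers rather than only natural ones. This step is similar in spirit to Nowak's proof of unbounded linear-type dimension for his Cayley graphs of asymptotic dimension $2$, and I expect that a careful geometric analysis of how a single linear control function with fixed slope $c$ interacts with the varying scales $r_i$ of the recursion in $G_n$ will yield the desired contradiction once $n$ exceeds a function of $c$ and $d$.
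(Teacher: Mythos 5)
There is a decisive obstruction to your plan: every graph in your proposed family has treewidth $2$. Each block of $G_n$ is a generalized theta graph (or an edge), and theta graphs are series--parallel; gluing blocks at cut vertices preserves treewidth $2$, hence $G_n$ is $K_4$-minor free, hence $K_{3,3}$-minor free (contract two independent edges of $K_{3,3}$ to see it has a $K_4$ minor). But Theorem~\ref{thm:main} of this very paper shows that the class of (even weighted) $K_{3,3}$-minor-free graphs has Assouad--Nagata dimension at most $2$, so your family has asymptotic dimension of linear type at most $2$ uniformly, no matter how you choose $m_n,k_n$. Consequently the multi-scale lower bound you hope for cannot exist, and indeed the specific step you lean on is false: the $m_i$ midpoints of the parallel paths of a theta block $\theta(m_i,k_i)$ are pairwise at distance exactly $k_i$, so at scale $r\approx k_i$ a \emph{single} colour class of diameter $k_i\le cr$ (with $c\ge 1$) may contain all of them; they never force a new colour per level. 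Informally, your construction is tree-like with fat "bananas", and such graphs are exactly the kind handled by the annulus argument of Lemma~\ref{lem:thetaminor}, which yields covers with linearly controlled bounds.

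The missing idea is that a lower bound against \emph{linear} control functions requires genuinely high-dimensional structure present at a single scale, and layered treewidth $1$ must be achieved without destroying it. The paper does this by subdividing: start from any family of unbounded Assouad--Nagata dimension (e.g.\ grids of unbounded dimension and size), and replace each edge of $G$ by a path with $|E(G)|$ edges. Subdivision rescales the metric, and since the linear/dilation condition is scale-invariant, a dilation control function for the subdivision pulls back to one for $G$ (Lemma~\ref{obs:subdv}); on the other hand the heavily subdivided graphs are $1$-planar, hence of bounded layered treewidth, and a further quasi-isometric modification brings the layered treewidth down to $1$. This yields Lemma~\ref{01_unbounded_AN} and then Theorem~\ref{thm:nofptw_intro}. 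If you want to salvage an explicit construction, the witnesses must be (coarse) $d$-dimensional grids for unbounded $d$, not parallel paths between two hubs; anything of bounded treewidth, or more generally excluding some $K_{3,p}$ minor, is ruled out by Theorem~\ref{thm:main}.
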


So Theorem~\ref{thm:nofptw_intro} states that Theorem~\ref{layered_tw_ad_intro} cannot be extended to asymptotic dimension of linear type or Assouad-Nagata dimension, even if we replace 2 by an arbitrary constant. This is in contrast with the case of $K_{3,p}$-minor free graphs and graphs with bounded Euler genus, which have bounded layered treewidth \cite{DMW17}, yet   have Assouad-Nagata dimension at most 2 by Theorem~\ref{thm:main} and Corollary~\ref{cor:genus_intro}.

\subsection{Growth rate}

For any function $f$, a graph $G$ has \emph{growth} at most $f$ if for any integer $r$, any vertex $v\in V(G)$ has at most $f(r)$ vertices at distance at most $r$.  
Similarly, we say that a class of graphs has growth at most $f$ if all graphs in this class have growth at most $f$.
It is known that vertex-transitive graphs of polynomial growth have bounded asymptotic dimension, while some classes of graphs of exponential growth have unbounded asymptotic dimension~\cite{Hum17}. 

We prove the following result that not only removes the vertex-transitive requirement but also shows that polynomials are the fastest-growing growth rate that ensures finite asymptotic dimension.

\begin{thm}\label{thm:polygrowth_intro}
The following holds.
    \begin{enumerate}
        \item For any polynomial $f$, there exists $d \in {\mathbb N}$, such that the class of graphs of growth at most $f$ has bounded asymptotic dimension at most $d$. 
        \item For any superpolynomial function\footnote{We say that a function $f$ is \emph{superpolynomial} if it can be written as $f(r)=r^{g(r)}$ with $g(r)\to \infty$ when $r\to \infty$.} $f$ with $f(r) \geq 3r+1$ for every $r \in \mathbb{N}$, the class of graphs of growth at most $f$ has infinite asymptotic dimension.
        \item For any function $f: {\mathbb N} \rightarrow {\mathbb N}$ for which there exists $r_0 \in {\mathbb N}$ with $f(r_0) \leq 3r_0$,  the class of graphs of growth at most $f$ has asymptotic dimension at most 1.
    \end{enumerate}
\end{thm}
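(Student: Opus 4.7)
The plan is to prove the three parts separately: part (1) establishes bounded asymptotic dimension for polynomial growth, part (2) constructs unbounded-asdim examples under only superpolynomial growth, and part (3) refines the bound to $1$ in the sparse regime $f(r_0) \leq 3r_0$.

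For part (1), with $f(r) \leq Cr^k$, I would establish a uniform $(d+1)$-fold cover via a recursive BFS decomposition. Starting from any root $v_0 \in V(G)$, partition $V(G)$ into radial shells $S_i = \{v : d_G(v_0, v) \in [iR, (i+1)R)\}$ at an appropriate scale $R$. The polynomial bound $|B_G(v, r)| \leq Cr^k$ controls the number of vertices in each shell (and within each intersection with an $R$-neighborhood), so recursively applying the finite union theorem for asymptotic dimension yields $\ad(G) \leq d$ with $d$ depending only on $k$. Alternatively, uniform polynomial growth implies a form of uniform doubling, and one invokes Lang--Schlichenmaier-style results to deduce finite Assouad-Nagata (hence asymptotic) dimension.

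For part (2), for each $d \geq 1$ I would construct an infinite graph $G_d$ with $\ad(G_d) = d$ and growth bounded by $f$. Take $G_d$ to be $\mathbb{Z}^d$ with each edge subdivided into $k_d$ copies and each grid vertex (originally of degree $2d$) replaced by a length-$L_d$ path gadget carrying $2d$ ports spaced at intervals $L_d/(2d)$. This construction has maximum degree $3$, giving $|B_{G_d}(v, 1)| \leq 4 \leq f(1)$. With $L_d$ chosen large enough, at scales $r$ below the port spacing only a single port lies in any ball, giving $|B_{G_d}(v, r)| \leq 3r + 1 \leq f(r)$. At larger scales the ball grows polynomially as $C(r/(k_d L_d))^d$; by superpolynomiality of $f$ (that is, $r^d/f(r) \to 0$), we can tune $k_d, L_d$ so this stays at most $f(r)$. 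Since $G_d$ is quasi-isometric to $\mathbb{Z}^d$, $\ad(G_d) = d$, and the compactness theorem of the excerpt lifts this to infinite asymptotic dimension of the class of finite induced subgraphs of $\bigsqcup_d G_d$.

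For part (3), given $r_0$ with $f(r_0) \leq 3r_0$, every $G$ in the class satisfies $|B_G(v, r_0)| \leq 3r_0$ for every $v$. The crucial structural observation is \emph{Y-freeness at scale $r_0$}: no vertex $v$ of $G$ can support three pairwise internally disjoint paths of length $r_0$ starting at $v$, since this would contribute $3r_0 + 1$ distinct vertices to $B_G(v, r_0)$. Consequently, at each branching vertex $v$ of $G$, at most two connected components of $G - v$ have depth $\geq r_0$; the rest have depth strictly less than $r_0$ and are bounded (each contained in $B_G(v, r_0 - 1)$ and hence of size at most $3r_0$). Iteratively pruning these short branches produces a spanning subgraph $G' \subseteq G$ of maximum degree at most $2$---a disjoint union of paths and cycles---with every vertex of $G$ within distance $r_0$ of $G'$. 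Hence $G$ is quasi-isometric to $G'$ with constants depending only on $r_0$, and since $G'$ has asymptotic dimension at most $1$ by Theorem~\ref{tw_ad_intro_2} (with $w = 1$), so does $G$, uniformly across the class.

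The main obstacles lie in parts (2) and (3). In (2), the parameter tuning of $k_d$ and $L_d$ must enforce the growth bound at every scale simultaneously; in the transition regime where multiple ports enter a ball the growth exceeds $3r+1$, and superpolynomiality of $f$ is needed precisely to provide the required slack there. In (3), the iterative pruning of short branches can alter which vertices of $G$ remain branching, so a careful well-ordered (or fixed-point) argument is required to make the extraction of $G'$ well-defined and to verify that the additive error in the induced quasi-isometry stays uniformly bounded by a constant depending only on $r_0$.
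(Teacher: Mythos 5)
Your part (2) is essentially the paper's argument (Lemmas~\ref{Gdpk_bdd_ad}, \ref{Gdpk_growth} and Theorem~\ref{th:superpoly:nope}): stretch $d$-dimensional grids so that the growth is at most $3r+1$ up to the scale of the vertex gadget, linear-in-$r$ (with slope depending on $d$) up to the edge-subdivision scale, and polynomial beyond, then use superpolynomiality of $f$ to tune the two scales for each fixed $d$. The paper uses $p$-subdivided $3$-regular trees as vertex gadgets and a projection argument to lower-bound the dimension of the class of stretched \emph{finite} grids, whereas you use a ported path gadget, quasi-isometry to $\mathbb{Z}^d$, and compactness; both routes are legitimate, but note that your gadget has an intermediate regime (port spacing $<r\le k_d$) where balls grow like $3r+1+4dr^2/L_d$, not like $(r/(k_dL_d))^d$, so the tuning must also place the port spacing beyond the threshold where $f(r)\ge r^2+3r+1$, say; this is fixable but is exactly the kind of detail your sketch leaves out.

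Parts (1) and (3) have genuine gaps. For (1), the radial-shell recursion has no decreasing parameter: a shell of width $R$ in a graph of growth at most $Cr^k$ is again just a graph of growth at most $Cr^k$, and any union theorem requires the pieces to already have uniformly bounded dimension, which is what is to be proved; bounded ball cardinality alone does not bound the dimension of the shells (bounded-degree expanders have uniformly bounded balls at each fixed radius). Your fallback is also incorrect: a uniform polynomial upper bound on ball sizes does \emph{not} imply the doubling property (a comb, i.e.\ a ray with a pendant path of length $i$ attached at its $i$-th vertex, has quadratic growth but unbounded doubling constant), so Lang--Schlichenmaier-type results do not apply. The paper's proof of (1) rests on a real ingredient you are missing: the Krauthgamer--Lee theorem that growth rate at most $d$ places a graph in $\mathcal{D}^{O(d\log d)}(2)$, combined with Theorem~\ref{thm:krlee}, yielding Corollary~\ref{cor:polygrowth}.

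For (3), the structural claim fails as stated: pruning shallow components of $G-v$ cannot produce a graph of maximum degree at most $2$. Take a long path with sparse short chords (a chord joining vertices at path-distance $5$ every $10r_0$ steps): every ball of radius $r_0$ has at most $3r_0$ vertices when $r_0\ge 5$, yet the chord endpoints have degree $3$ and at every vertex $G-v$ has at most two components, both deep, so nothing is pruned and the result is not a disjoint union of paths and cycles (also, a \emph{spanning} subgraph would trivially be at distance $0$ from every vertex, so the statement of your reduction needs repair). The conclusion that such graphs are uniformly quasi-isometric to quasi-lines is plausible but would require a genuinely more careful argument (for instance via Steiner trees on triples of far-apart vertices, whose branch vertex would carry three internally disjoint $r_0$-paths). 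The paper sidesteps all of this: $f(r_0)\le 3r_0$ forbids three internally disjoint paths of length $r_0$ from a common vertex, hence forbids the $(r_0-1)$-subdivision of $K_{1,3}$ as a minor; since that graph is planar, Theorem~\ref{tw_ad_intro_1} immediately gives asymptotic dimension at most $1$ for the whole class.
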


We remark that even though the growth rate is a pure combinatorial property, graphs with polynomial growth are closely related to bounded dimensional grids. See Section~\ref{sec:geom} for more details.

\subsection{Application 1: Asymptotic dimension of groups}\label{sec:cayley}

Given a (finite or infinite) group $\Gamma$ and a generating set $S$ (assumed to be symmetric, in the sense that $s\in S$ if and only if $s^{-1}\in S$), the \emph{Cayley graph} $\text{Cay}(\Gamma,S)$ is the (possible infinite) graph with vertex-set $ \Gamma$, with an edge between two vertices $u,v\in  \Gamma$ if and only if $u=vs$ for some $s\in S$. 
As observed by Gromov~\cite{Gr93},  when $\Gamma$ is finitely generated, the asymptotic dimension of $\text{Cay}(\Gamma,S)$ is independent of the choice
of the finite generating set $S$, and thus the asymptotic
dimension is a group invariant  for finitely generated groups. The {\it asymptotic dimension of a finitely generated group $\Gamma$} is defined to be the asymptotic dimension of $\text{Cay}(\Gamma,S)$ for some symmetric finite generating set $S$.

We say that $\text{Cay}(\Gamma,S)$ is \emph{minor excluded} if it is $H$-minor free for some (finite) graph $H$. 

\begin{qn}[Problem 4.1 in~\cite{OR15}]\label{qn:or}
  Let $\Gamma$ be a finitely generated group and $S$ a finite generating set such that $\text{Cay}(\Gamma,S)$ is minor excluded. 
  Does it follow that $\Gamma$ has asymptotic dimension at most 2?
\end{qn}

An immediate corollary of Statement 1 of Corollary~\ref{minor_infinite_intro} gives a positive answer to Question~\ref{qn:or}, even when the group is not finitely generated.

\begin{corollary} \label{cor:Cay_minor}
Let $\Gamma$ be a group and $S$ a symmetric (not necessarily finite) generating set such that $\text{Cay}(\Gamma,S)$ is minor excluded. 
Then $\text{Cay}(\Gamma,S)$ has asymptotic dimension at most 2.
Furthermore, if $S$ is finite, then $\Gamma$ has asymptotic dimension at most 2. 
\end{corollary}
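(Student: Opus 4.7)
The plan is to derive Corollary~\ref{cor:Cay_minor} as a direct application of Statement~1 of Corollary~\ref{minor_infinite_intro}, together with Gromov's standard observation that the asymptotic dimension of a finitely generated group is well defined via any Cayley graph on a finite symmetric generating set. No new ingredients are needed; the only task is to unpack the hypothesis and match it against the already-established tools.

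First I would unpack the assumption: the statement that $\text{Cay}(\Gamma,S)$ is minor excluded means, by definition, that there exists a \emph{finite} graph $H$ such that $\text{Cay}(\Gamma,S)$ is $H$-minor free. Since $\Gamma$ (and hence $\text{Cay}(\Gamma,S)$) may very well be infinite, the right tool to invoke is Corollary~\ref{minor_infinite_intro} rather than Theorem~\ref{thm:minorch}: the former is precisely the compactness-extended version that applies to classes containing infinite graphs. I would then take $\F = \{\text{Cay}(\Gamma,S)\}$ as a one-element class of (possibly infinite) graphs and apply Statement~1 of Corollary~\ref{minor_infinite_intro} to conclude $\ad(\text{Cay}(\Gamma,S)) \leq 2$.

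For the "furthermore" clause, I would invoke the observation recalled in the paragraph preceding Question~\ref{qn:or}: when $S$ is a finite symmetric generating set of $\Gamma$, the asymptotic dimension of $\text{Cay}(\Gamma,S)$ does not depend on the choice of $S$, and this common value \emph{is} the definition of the asymptotic dimension of the finitely generated group $\Gamma$. Combining this with the bound established in the previous paragraph gives $\ad(\Gamma) = \ad(\text{Cay}(\Gamma,S)) \leq 2$, completing the statement.

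The deduction is essentially bookkeeping. The only points one needs to verify carefully are that Corollary~\ref{minor_infinite_intro} is indeed stated for classes of finite \emph{or} infinite graphs (it is), and that the group-theoretic and graph-theoretic notions of asymptotic dimension coincide for Cayley graphs over finite symmetric generating sets (also explicitly recorded in the paper). Thus there is no genuine obstacle: the nontrivial content has been bought and paid for by Theorem~\ref{thm:minorch} and its compactness consequence.
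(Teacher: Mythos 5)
Your proposal is correct and follows exactly the route the paper intends: the paper states Corollary~\ref{cor:Cay_minor} as an immediate consequence of Statement~1 of Corollary~\ref{minor_infinite_intro} (applied to the possibly infinite graph $\text{Cay}(\Gamma,S)$, which is $H$-minor free for some finite $H$ by the definition of minor excluded), combined with Gromov's observation that for a finite symmetric generating set the asymptotic dimension of the Cayley graph is the asymptotic dimension of the group. Nothing is missing.
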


Note that Corollary \ref{cor:Cay_minor} is actually stronger. 
The finitely generated condition for $\Gamma$ is only used for ensuring that the asymptotic dimension of $\Gamma$ is independent of the choice of $S$.
We remark that the choice of the generating set $S$ of $\Gamma$ affects whether $\text{Cay}(\Gamma,S)$ is $H$-minor free or not. 

Similarly, Corollary \ref{gk_planar_ad_intro} leads to the following immediate corollary. 

\begin{corollary} \label{cor:Cay_gk}
For every group $\Gamma$ with a symmetric (not necessarily finite) generating set $S$, if there exist nonnegative integers $g$ and $k$ such that the Cayley graph for $(\Gamma,S)$ is $(g,k)$-planar, then the asymptotic dimension of $\text{Cay}(\Gamma,S)$ is at most 2.
In particular, if $\Gamma$ is finitely generated, then $\Gamma$ has asymptotic dimension at most 2.
\end{corollary}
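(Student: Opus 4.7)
The plan is to deduce Corollary \ref{cor:Cay_gk} from Corollary \ref{gk_planar_ad_intro} together with the compactness principle (Theorem \ref{compact_graphs}) that bounds the asymptotic dimension of an infinite graph by the asymptotic dimension of its family of finite induced subgraphs. Since Corollary \ref{gk_planar_ad_intro} is stated for finite graphs but a Cayley graph may be infinite (even when the generating set $S$ is finite), the main work is to pass from the finite-graph statement to the infinite Cayley graph.

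First I would verify that $(g,k)$-planarity is monotone under taking induced subgraphs. Fix a drawing of $\text{Cay}(\Gamma,S)$ in a surface $\Sigma$ of Euler genus at most $g$ in which every edge is crossed at most $k$ times. For any finite $F\subseteq\Gamma$, the restriction of this drawing to $F$ and the edges of $\text{Cay}(\Gamma,S)$ with both endpoints in $F$ is a drawing of the induced subgraph $\text{Cay}(\Gamma,S)[F]$ in $\Sigma$ in which each remaining edge has at most $k$ crossings with other remaining edges. Hence $\text{Cay}(\Gamma,S)[F]$ is $(g,k)$-planar.

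Next, I would apply Corollary \ref{gk_planar_ad_intro} to the family of all finite induced subgraphs of $\text{Cay}(\Gamma,S)$: by that corollary, this family admits a common $2$-dimensional control function. Theorem \ref{compact_graphs} then transfers this bound to $\text{Cay}(\Gamma,S)$ itself, yielding $\ad(\text{Cay}(\Gamma,S))\leq 2$. This proves the first assertion.

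For the ``in particular'' part, recall that when $\Gamma$ is finitely generated, Gromov's observation guarantees that $\ad(\text{Cay}(\Gamma,S))$ is independent of the choice of symmetric finite generating set $S$, and is by definition the asymptotic dimension of $\Gamma$; thus the bound from the first assertion yields $\ad(\Gamma)\leq 2$. There is no real obstacle here: the nontrivial content has already been absorbed into Corollary \ref{gk_planar_ad_intro} (which rests on Theorem \ref{layered_tw_ad_intro} combined with the layered treewidth bound of Dujmovi\'c, Eppstein and Wood for $(g,k)$-planar graphs), and the remaining step is simply a compactness argument together with the trivial monotonicity of $(g,k)$-planarity under induced subgraphs.
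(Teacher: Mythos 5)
Your proposal is correct and follows essentially the same route as the paper, which derives this corollary immediately from Corollary~\ref{gk_planar_ad_intro}: your explicit steps (restriction of the drawing shows finite induced subgraphs are $(g,k)$-planar, Theorem~\ref{compact_graphs} transfers the bound to the possibly infinite Cayley graph, and Gromov's independence of the finite generating set gives the group statement) are exactly the intended, routine details the paper leaves implicit.
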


\subsection{Application 2: Sparse partitions} \label{sec:sparsecover}

A \emph{ball of radius $r$} (or \emph{$r$-ball}) centered in a point $x$ in a metric space $X$, denoted by $B_r(x)$, is the set of points of $X$ at distance at
most $r$ from $x$.
For a real $r\ge 0$ and an integer $n\ge 0$,  a family $\mathcal{U}$ of subsets of elements of $X$ has \emph{$r$-multiplicity}
at most $n$ if each $r$-ball in $X$ intersects at most $n$ sets of $\mathcal{U}$. 
It is not difficult to see that if $D_X(r)$ is an $n$-dimensional control function for a metric space $X$, then for any $r>0$, $X$ has a $D_X(2r)$-bounded cover of $r$-multiplicity at most $n+1$.
Gromov~\cite{Gr93} proved that a converse of this result also holds, in the sense that the asymptotic dimension of $X$ is exactly the least integer $n$ such that for any real number $r > 0$, there is a real number $D_X'(r)$ such that $X$ has a $D_X'(r)$-bounded cover of $r$-multiplicity at most $n+1$. Moreover, the function $D_X'$ has the same type as the
$n$-dimensional control function $D_X$ of $X$: $D_X'$ is linear if and only if $D_X$ is linear, and $D_X'$ is a dilation if and only if $D_X$ is a dilation.

As a consequence, the notion of Assouad-Nagata dimension is closely related to the well-studied notions of sparse covers and sparse partitions in theoretical computer science. 
A weighted graph
$G$ admits a \emph{$(\sigma,\tau)$-weak sparse partition scheme} if for any $r \geq 0$,  the vertex-set of $G$ has a partition into $(\sigma\cdot r)$-bounded sets of $r$-multiplicity at most $\tau$, and such a partition can be computed in polynomial time. As before, we say
that a family of graphs admits a $(\sigma,\tau)$-weak sparse partition
scheme if all graphs in the family admit a $(\sigma,\tau)$-weak sparse partition scheme. This definition was
introduced in~\cite{JLNRS05}, and is equivalent to
the notion of weak sparse cover scheme of Awerbuch and
Peleg~\cite{AP90} (see~\cite{Fil20}). Note that if a family of graphs
admits a $(\sigma,\tau)$-weak sparse partition scheme then its
Assouad-Nagata dimension is at most $\tau-1$.  
Conversely, if a family
of graphs has Assouad-Nagata dimension at most $d$ and the covers can
be computed efficiently, then the family admits a $(\sigma,d+1)$-weak
sparse partition scheme, for some constant $\sigma$.

All our proofs are constructive and yield polynomial-time algorithms to compute the corresponding covers. 
In particular, whenever we obtain that a fixed class has Assouad-Nagata dimension at most $n$, then we in fact obtain
a $(O(1),n+1)$-weak partition scheme for the class.
So we have the following corollary from Theorem~\ref{thm:main} and Corollary~\ref{cor:genus_intro}. 

\begin{corollary}
For every integer $g \geq 0$,  there exists a real number $N$ such that the following hold.
    \begin{enumerate}
        \item There exists an $( N,3)$-weak partition scheme for the class of weighted $K_{3,g}$-minor graphs.
        \item There exists an $( N,3)$-weak partition scheme for the class of weighted graphs embeddable in a surface of Euler genus $g$. 
    \end{enumerate}
 \end{corollary}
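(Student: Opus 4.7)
The plan is to derive this corollary directly from Theorem~\ref{thm:main} and Corollary~\ref{cor:genus_intro} together with the standard correspondence between Assouad-Nagata dimension and weak sparse partition schemes explained earlier in this section. By those two results, each of the two classes in question has Assouad-Nagata dimension at most $2$, so there is a constant $c > 0$ (which can be taken to be the same for both classes, by taking the maximum of the two) such that for every weighted graph $G$ in either class and every $r > 0$, $V(G)$ admits a cover $\mathcal{U} = \mathcal{U}_1 \cup \mathcal{U}_2 \cup \mathcal{U}_3$ in which each $\mathcal{U}_i$ is $r$-disjoint and every element of $\mathcal{U}$ has diameter at most $cr$.

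I would then apply the equivalence recalled in this section (due to Gromov). Working at scale $2r$ instead of $r$, the families $\mathcal{U}_i$ are $2r$-disjoint, so any $r$-ball can meet at most one set per family, giving a $(2cr)$-bounded cover of $V(G)$ of $r$-multiplicity at most $3$. To pass from this cover to a partition, I would use the classical refinement trick: fix an arbitrary linear ordering of the sets in the cover, and assign each vertex to the first set in this ordering that contains it. Each resulting part is contained in a cover element, so the partition is still $(2cr)$-bounded, and any $r$-ball meeting $k$ parts also meets $k$ cover elements, so the $r$-multiplicity is still at most $3$. Setting $N := 2c$ then yields an $(N,3)$-weak sparse partition scheme for each of the two classes.

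The remaining step is the polynomial-time computability of the partition, which is inherited from the constructive nature of the proofs of Theorem~\ref{thm:main} and Corollary~\ref{cor:genus_intro} (as emphasised at the start of this section): the cover $\mathcal{U}$ is produced by a polynomial-time algorithm, the rescaling at scale $2r$ preserves this, and the refinement trick is clearly polynomial-time once a linear ordering of the parts is fixed. The only subtlety to check is that the cover-to-partition refinement does not inflate the $r$-multiplicity, but this is immediate from the fact that each class of the partition is contained in a set of the cover. Apart from this point, the argument is a direct unfolding of the definitions, so no genuine obstacle is expected.
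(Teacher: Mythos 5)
Your proposal is correct and follows essentially the same route as the paper, which derives the corollary immediately from Theorem~\ref{thm:main} and Corollary~\ref{cor:genus_intro} via the Gromov-type correspondence between dilation control functions and bounded covers of bounded multiplicity, together with the remark that the proofs are constructive; your cover-to-partition refinement and the multiplicity check are exactly the standard details the paper leaves implicit.
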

 
We remark that while the Assouad-Nagata dimension and sparse partition are almost equivalent, the emphasis is on different parameters. 
In the case of the Assouad-Nagata dimension, the goal is to minimise the dimension (or equivalently $\tau$ in the sparse partition scheme), while in the $(\sigma,\tau)$-weak sparse partition scheme, the goal is usually to minimise a function of $\sigma$ and $\tau$ which depends on the application. 
As an example, it was proved in~\cite{JLNRS05} that if an $n$-vertex  graph admits a $(\sigma,\tau)$-weak sparse partition scheme, then the graph has a universal Steiner tree with stretch $O(\sigma^2 \tau \log_\tau n)$, so in this case the goal is to minimise $\sigma^2\cdot \tfrac{\tau}{\log \tau}$.

\subsection{Application 3: Weak diameter colouring and clustered colouring}\label{sec:clustered}

The asymptotic dimension of a graph is closely related to weak diameter colourings of its powers.

For a graph $G$, the \emph{weak diameter in $G$ of a subset} $S$ of $V(G)$ is the maximum distance in $G$ between two vertices of $S$; the \emph{weak diameter in $G$ of a subgraph $H$ of $G$ is the weak diameter in $G$ of $V(H)$} (thus we are taking distances in $G$ rather than $H$).
Given a colouring $c$ of a graph $G$, a \emph{$c$-monochromatic component} (or simply a \emph{monochromatic component} if $c$ is clear from the context) is a connected component of the subgraph of $G$ induced by some colour class of $c$.

A graph $G$ is \emph{$k$-colourable with weak diameter in $G$ at most $d$} if each vertex of $G$ can be assigned a colour from $\{1,\ldots,k\}$ so that all monochromatic components have weak diameter in $G$ at most $d$. 

Weak diameter colouring is also studied under the name of \emph{weak diameter network decomposition} in distributed computing (see~\cite{AGLP}), although in this context $k$ and $d$
usually depend on $|V(G)|$ (they are typically of order $\log|V(G)|$), while here we will only consider the case where $k$ and $d$ are constants.
Observe that the case $d=0$ corresponds to the usual notion of (proper) colouring. 
Note also that  weak diameter colouring should not be confused with the stronger  notion that requires that each monochromatic component has bounded diameter,  where the distance is computed in the monochromatic component instead of in $G$
(see for instance~\cite[Theorem 4.1]{LO18}).   
  
For any integer $\ell\ge 1$, the {\it $\ell$-th power of a graph $G$}, denoted by $G^\ell$, is the graph obtained from $V(G)$ by adding an edge $xy$ for each pair of distinct vertices of $G$ with distance at most $\ell$.
Note that for any graph $G$, $G^1$ coincides with $G$.
The following simple observation allows us to study asymptotic dimension in terms of weak diameter colouring.
(For completeness, we will include a proof in Appendix \ref{sec:appendix_dia_asdim}.)

\begin{proposition}\label{obs:weakdiameter}
Let $\F$ be a class of graphs.
Let $m>0$ be an integer.
Then $\ad(\F) \leq m-1$ if and only if there exists a function $f: {\mathbb N} \rightarrow {\mathbb N}$ such that for every $G \in \F$ and $\ell \in {\mathbb N}$, $G^\ell$ is $m$-colourable with weak diameter\footnote{Note that for any function $c:V(G) \rightarrow [m]$, $c$ is an $m$-colouring of $G$ and an $m$-colouring of $G^\ell$, but the $c$-monochromatic components in $G$ are different from the $c$-monochromatic components in $G^\ell$.} in $G^\ell$ at most $f(\ell)$.
\end{proposition}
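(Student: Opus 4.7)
The plan is to prove both implications by directly translating between two objects: covers witnessing asymptotic dimension of $G$ at scale $r$, and $m$-colourings of $G^r$ with small weak-diameter monochromatic components. The key bookkeeping is the relation $d_{G^\ell}(u,v)=\lceil d_G(u,v)/\ell\rceil$ between distances in $G$ and in its $\ell$-th power, which allows us to move the ``boundedness'' data back and forth while incurring only a factor $\ell$.

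For the forward direction, I would fix $\ell \in \mathbb{N}$ and $G \in \F$, let $D_{\mathcal{F}}$ be an $(m-1)$-dimensional control function for $\F$, and apply it at radius $r:=\ell$. This produces a cover $\U = \bigcup_{i=1}^{m}\U_i$ of $V(G)$ where each $\U_i$ is $\ell$-disjoint in $G$ and each element is $D_{\F}(\ell)$-bounded in $G$. Define a colouring $c: V(G)\to [m]$ by letting $c(v)$ be any index $i$ for which $v$ lies in some $U\in \U_i$ (this is well-defined because $\U$ is a cover). The crucial observation is that if $u,v\in V(G)$ receive colour $i$ but belong to different elements of $\U_i$, then $d_G(u,v)>\ell$, so $uv\notin E(G^\ell)$. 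Consequently every $c$-monochromatic component of $G^\ell$ of colour $i$ is contained in a single element of $\U_i$, hence has weak diameter in $G$ at most $D_{\F}(\ell)$, and therefore weak diameter in $G^\ell$ at most $f(\ell) := \lceil D_{\F}(\ell)/\ell\rceil$.

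For the converse, assume the function $f$ exists and fix $r\in \mathbb{N}$; I would apply the weak-diameter colouring hypothesis to $G^r$, yielding an $m$-colouring $c$ of $G^r$ whose monochromatic components all have weak diameter in $G^r$ at most $f(r)$. For each colour $i\in[m]$, let $\U_i$ be the collection of vertex-sets of the $c$-monochromatic components of colour $i$ in $G^r$. Two vertices in distinct elements of $\U_i$ lie in distinct components of the subgraph of $G^r$ induced by colour $i$; in particular they are non-adjacent in $G^r$, so $d_G(u,v) > r$, proving $r$-disjointness. Moreover, for $u,v$ in a common $U\in\U_i$ we have $d_{G^r}(u,v)\le f(r)$, hence $d_G(u,v)\le r\cdot f(r)$, so every element of $\U$ is $(r\cdot f(r))$-bounded in $G$. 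Thus $D_{\F}(r):=r\cdot f(r)$ is an $(m-1)$-dimensional control function for $\F$, giving $\ad(\F)\le m-1$.

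The argument is essentially bookkeeping, so there is no serious obstacle; the one point that needs care is the conversion between weak diameter measured in $G$ versus in $G^\ell$, which is the only place the power structure enters. Everything else is a direct reinterpretation of the same combinatorial datum (a partition into ``clusters'' that are far apart in $G$) as either a cover in the asymptotic-dimension sense or a colouring in the weak-diameter sense, so it is enough to verify that the quantitative bounds transfer in both directions, as sketched above.
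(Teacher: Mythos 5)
Your proposal is correct and follows essentially the same route as the paper: covers at scale $\ell$ become colourings whose monochromatic components in $G^\ell$ sit inside a single ($\ell$-disjoint, bounded) cluster, and conversely monochromatic components of $G^r$ give an $r$-disjoint cover by $(r\cdot f(r))$-bounded sets, the only bookkeeping being the factor-$\ell$ conversion between distances in $G$ and in $G^\ell$ (your exact identity $d_{G^\ell}(u,v)=\lceil d_G(u,v)/\ell\rceil$ even sharpens the paper's cruder bound). The one small point you gloss over, and which the paper treats explicitly, is that a control function must be defined for all real $r>0$, not just $r\in\N$; this is immediate since graph distances are integers (take $D_{\F}(r):=\lceil r\rceil\, f(\lceil r\rceil)$), but it deserves a sentence.
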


We say that a class $\mathcal{G}$ of graphs has \emph{weak diameter chromatic number} at most $k$ if there is a constant $d$ such that every graph $G$ in $\mathcal{G}$ is $k$-colourable with weak diameter in $G$ at most $d$. 
By Proposition~\ref{obs:weakdiameter}, Theorems~\ref{thm:minorch} and \ref{tw_ad_intro_1} and Corollary~\ref{apex_layered_tw_ad_intro} have the following immediate corollary.

\begin{corollary}\label{cor:wdcol}
For every integer $\ell \geq 1$, the following holds.
    \begin{enumerate}
        \item If $\F$ is a proper minor-closed family (for example, the class of planar graphs and any class of graphs embeddable in a fixed surface), then the class $\{G^\ell: G \in \F\}$ has weak diameter chromatic number at most 3.
        \item If $\F$ is a proper minor-closed family that does not contain some planar graph, then the class $\{G^\ell: G \in \F\}$ has weak diameter chromatic number at most 2.
        \item If $\F$ is a class of graphs such that there exist integers $w,k$ such that for every graph $G \in \F$, there exists $Z \subseteq V(G)$ with $\lvert Z \rvert \leq k$ such that $G-Z$ has layered treewidth at most $w$, then the class $\{G^\ell: G \in \F\}$ has weak diameter chromatic number at most 3.
    \end{enumerate}
\end{corollary}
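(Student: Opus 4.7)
The plan is to deduce the three parts directly from Proposition~\ref{obs:weakdiameter} together with the previously established bounds on asymptotic dimension, namely Theorem~\ref{thm:minorch}, Theorem~\ref{tw_ad_intro_1}, and Corollary~\ref{apex_layered_tw_ad_intro}. The only subtlety is matching the two formulations of ``weak diameter chromatic number'': the definition in the corollary asks, for a fixed host graph $H$, that $H$ admits a bounded-weak-diameter colouring with respect to distances in $H$ itself, while Proposition~\ref{obs:weakdiameter} delivers, for each $\ell$, a colouring of $G^\ell$ whose monochromatic components have bounded weak diameter in $G^\ell$. Since the host graph in the statement of the corollary is $H=G^\ell$, these two notions agree once we fix $\ell$.

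More concretely, I would proceed as follows. Fix an integer $\ell\geq 1$ and let $\F$ be one of the three classes considered in the corollary. By Theorem~\ref{thm:minorch} (resp.\ Theorem~\ref{tw_ad_intro_1}, resp.\ Corollary~\ref{apex_layered_tw_ad_intro}), we have $\ad(\F)\leq 2$, $\ad(\F)\leq 1$, or $\ad(\F)\leq 2$. Applying the ``only if'' direction of Proposition~\ref{obs:weakdiameter} with $m=3$ (resp.\ $m=2$, resp.\ $m=3$), we obtain a function $f:\mathbb{N}\to\mathbb{N}$ such that for every $G\in\F$, the graph $G^\ell$ admits an $m$-colouring whose monochromatic components have weak diameter in $G^\ell$ at most $f(\ell)$. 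Setting $d:=f(\ell)$ yields a fixed constant (depending only on $\ell$ and $\F$), and by definition this means that the class $\{G^\ell : G\in\F\}$ has weak diameter chromatic number at most $m$, which is exactly what the corresponding statement asserts.

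There is no real obstacle: the work has already been done in the asymptotic dimension theorems and in Proposition~\ref{obs:weakdiameter}, and the corollary amounts to unpacking the quantifiers. The one thing to be careful about is the distinction between weak diameter measured in $G$ versus in $G^\ell$: these differ in general, but because the statement of the corollary takes $G^\ell$ itself to be the ambient graph, this is precisely the notion provided by Proposition~\ref{obs:weakdiameter}. Thus the proof reduces to a one-line invocation for each of the three parts.
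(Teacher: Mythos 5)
Your proposal is correct and follows exactly the paper's route: the paper also presents the corollary as an immediate consequence of Proposition~\ref{obs:weakdiameter} combined with Theorems~\ref{thm:minorch} and \ref{tw_ad_intro_1} and Corollary~\ref{apex_layered_tw_ad_intro}, fixing $\ell$ and taking $d=f(\ell)$ as the constant. Your remark about measuring weak diameter in $G^\ell$ (the ambient graph of the class) rather than in $G$ is precisely the point that makes the quantifier-unpacking legitimate, and it is handled the same way in the paper.
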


Note that the case $\ell\ge 2$ of (1) is not a direct consequence of the case $\ell=1$ of (1), (2), or (3). This is because the $2$nd power of a tree can contain arbitrarily large complete subgraphs, so the classes $\{G^\ell:G \in \F\}$ for $\ell \geq 2$ mentioned in Corollary~\ref{cor:wdcol} are not minor-closed families and have unbounded layered treewidth, even after the deletion of a bounded number of vertices. 

\medskip

Weak diameter colouring is related to clustered colouring which is a variation of the traditional notion of a proper colouring and has received wide attention.
A graph $G$ is \emph{$k$-colourable with clustering $c$} if each vertex of $G$ can be assigned a colour from $\{1,\ldots,k\}$ so that all monochromatic components have at most $c$ vertices. 
The case $c=1$ corresponds to the usual notion of (proper) colouring, and there is a large body of work on the case where $c$ is a fixed constant. 
In this context, we say that a class $\mathcal{G}$ of graphs has \emph{clustered chromatic number} at most $k$ if there is a constant $c$ such that every graph of $\mathcal{G}$ is $k$-colourable with clustering $c$ (see~\cite{Woo18} for a recent survey).

\begin{corollary} \label{obs:cluster}
Let $\Delta \geq  2$ be an integer.
If $\mathcal{G}$ is a class of graphs with asymptotic dimension at most $k$ such that every graph in ${\mathcal G}$ has maximum degree at most $\Delta$, then for every integer $\ell >0$, the clustered chromatic number of the class $\{G^\ell: G \in \mathcal{G}\}$ is at most $k+1$. 
\end{corollary}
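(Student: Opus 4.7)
The plan is to combine Proposition~\ref{obs:weakdiameter} with the bounded-degree hypothesis: weak-diameter colourings of $G^{\ell}$ become clustered colourings once one knows that sets of bounded weak diameter in $G^{\ell}$ are contained in balls of bounded size in $G$.

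Fix $\ell \geq 1$. Since $\ad(\mathcal{G}) \leq k$, Proposition~\ref{obs:weakdiameter} supplies a function $f:\mathbb{N}\to\mathbb{N}$ such that every $G\in \mathcal{G}$ admits a $(k+1)$-colouring of $G^{\ell}$ in which each monochromatic component $C$ (of $G^{\ell}$) has weak diameter at most $f(\ell)$ in $G^{\ell}$. The first step is to convert this bound on weak diameter in $G^{\ell}$ into a bound on diameter in $G$: if $u,v\in V(G)$ satisfy $d_{G^{\ell}}(u,v)\leq f(\ell)$, then any shortest $u$--$v$ walk in $G^{\ell}$ of length at most $f(\ell)$ can be lifted edge by edge to a walk in $G$ of length at most $\ell\cdot f(\ell)$, so $d_{G}(u,v)\leq \ell\cdot f(\ell)$. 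Consequently, every monochromatic component $C$ is contained in the $G$-ball $B^{G}_{\ell f(\ell)}(v)$ for any chosen $v\in C$.

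The second step uses the degree bound. Because $G$ has maximum degree at most $\Delta\geq 2$, a straightforward counting gives a uniform bound $N(\Delta,r)$ on the number of vertices in any $G$-ball of radius $r$ (it is at most $2r+1$ when $\Delta=2$, and at most $1+\Delta\,((\Delta-1)^{r}-1)/(\Delta-2)$ when $\Delta\geq 3$). Setting $c:=N(\Delta,\ell\cdot f(\ell))$, every monochromatic component $C$ satisfies $|C|\leq c$. Thus the colouring inherited from Proposition~\ref{obs:weakdiameter} is a $(k+1)$-colouring of $G^{\ell}$ with clustering $c$, so the clustered chromatic number of $\{G^{\ell}:G\in \mathcal{G}\}$ is at most $k+1$.

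There is no real obstacle here; the only point requiring care is to distinguish cleanly between distances in $G$ and distances in $G^{\ell}$, and to remember that a monochromatic component of $G^{\ell}$ has small weak diameter measured in $G^{\ell}$, so the translation back to $G$ picks up the extra factor of $\ell$ before the bounded-degree ball estimate is applied.
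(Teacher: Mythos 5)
Your proposal is correct and follows essentially the same route as the paper: both invoke Proposition~\ref{obs:weakdiameter} to get a $(k+1)$-colouring of $G^\ell$ with bounded weak diameter and then use the degree bound to cap the size of each monochromatic component. The only cosmetic difference is that the paper bounds balls directly in $G^\ell$ (whose maximum degree is at most $\Delta^{\ell+1}$), while you translate the weak diameter back to $G$ (picking up the factor $\ell$) and bound balls there; either way the clustering constant depends only on $\Delta$, $\ell$ and $f(\ell)$.
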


\begin{proof}
By Proposition~\ref{obs:weakdiameter}, for every integer $\ell >0$, there is a constant $d$ such that for any graph $G\in \mathcal{G}$, $G^\ell$ is $(k+1)$-colourable with weak
diameter in $G^\ell$ at most $d$. 
Consider such a colouring of $G^\ell$.
Since $G$ has maximum degree at most $\Delta$, $G^\ell$ has maximum degree at most $\Delta^{\ell+1}$, so each monochromatic component of $G^\ell$ has size at most $\Delta^{(\ell+1)(d+1)}$. 
This implies that $\{G^\ell: G \in \mathcal{G}\}$ has clustered chromatic number at most $k+1$. 
\end{proof}

A direct consequence of the case $\ell=1$ in Corollaries~\ref{cor:wdcol} and \ref{obs:cluster} is that under the bounded maximum degree condition, $H$-minor free graphs, bounded treewidth graphs, and graphs obtained from bounded layered treewidth graphs by adding a bounded number of apices are (respectively) 3-colourable, 2-colourable, and 3-colourable with bounded clustering (these results were originally proved in \cite{LO18}, \cite{ADOV03}, and \cite{LW19}, respectively).
In addition, as the $\ell$-th power of a graph of maximum degree $\Delta$ has maximum degree at most $\Delta^{\ell+1}$, the above result can be extended to any integer $\ell$.

\subsection{Outline of the paper}

The first goal of this paper is to prove Theorems~\ref{thm:minorch} and \ref{tw_ad_intro_2}, where the latter is equivalent to Theorem \ref{tw_ad_intro_1}  by the Grid Minor Theorem and Theorem~\ref{compact_graphs}.
The key tool (Theorem~\ref{tree_extension_ad}) that we develop in this paper to prove Theorems~\ref{thm:minorch} and \ref{tw_ad_intro_2} might be of independent interest.
It allows us to show that generating a new class of graphs from hereditary classes by using tree-decompositions of bounded adhesion does not increase the asymptotic dimension.

Using Proposition~\ref{obs:weakdiameter}, we will prove Theorem~\ref{tree_extension_ad} in its equivalent form stated in terms of weak diameter colouring.
In order to do so, we will prove a stronger version that shows that we can extend any reasonable precolouring to a desired colouring.
In Section~\ref{sec:center}, we build the machinery for extending such a precolouring and use this machinery to prove Theorem~\ref{apex_extension_ad_intro}.
In Section~\ref{sec:tree}, we will prove Theorem~\ref{tree_extension_ad} and use it to prove Theorem~\ref{tw_ad_intro_2}.

We require another essential tool (Theorem~\ref{thm:bd}) to prove Theorem~\ref{thm:minorch}.
Theorem~\ref{thm:bd} allows us to bound the asymptotic dimension if there exists a layering of graphs such that the asymptotic dimension of any subgraph induced by a union of finitely many consecutive layers is under control.
We develop this machinery in Section~\ref{sec:control}.
In Section~\ref{sec:minor}, we will use this tool to derive our result on layered treewidth (Theorem~\ref{layered_tw_ad_intro}) from the result on treewidth (Theorem~\ref{tw_ad_intro_2}) and prove Theorem~\ref{thm:minorch} by combining Theorems~\ref{layered_tw_ad_intro}, \ref{apex_extension_ad_intro} and \ref{tree_extension_ad}.

Sections~\ref{sec:tw}, \ref{sec:k3p} and \ref{sec:surfaces} address Assouad-Nagata dimension.
We will prove Theorem~\ref{thm:nofptw_intro} in Section~\ref{sec:tw}, showing that no analogous result of Theorem~\ref{layered_tw_ad_intro} for Assouad-Nagata dimension can hold.
In Section~\ref{sec:k3p}, we will prove the result for $K_{3,p}$-minor free graphs (Theorem~\ref{thm:main}) by using the tool developed in Section~\ref{sec:control} and a notion called ``fat minor''.
In Section~\ref{sec:surfaces}, we show that the result for complete Riemannian surfaces (Theorem~\ref{cor:manifold_intro}) follows from Theorem~\ref{thm:main} by a simple argument about quasi-isometry.

Finally, in Section~\ref{sec:geom} we investigate the asymptotic dimension of classes of graphs with a low-dimensional representation and prove Theorem~\ref{thm:polygrowth_intro}.

We conclude this paper in Section~\ref{sec:ccl} with some open problems.

\subsection{Remark on the notation}
In this paper, ${\mathbb R}^+$ denotes the set of all positive real numbers and ${\mathbb N}$ denotes the set of all positive integers.
All graphs in this paper are simple. That is, there exist no parallel edges or loops.

We recall that for a graph $G$ and a subset $S$ of $V(G)$, the subgraph of $G$ induced by $S$ is the graph, denoted by $G[S]$, whose vertex-set is $S$ and whose edge-set consists of the edges of $G$ with both ends in $S$; for a weighted graph $(G,\phi)$ and a subset $S$ of $V(G)$, the (weighted) subgraph of $(G,\phi)$ induced by $S$ is the weighted graph $(G[S],\phi|_{E(G[S])})$.
Note that the (weighted) subgraph induced by $S$ is a (weighted) graph, so it defines a pseudometric space, but this pseudometric space is not necessarily equal to the induced metric which is the space $S$ together with the distance function computed in $G$ or $(G,\phi)$.

\section{Centered sets} \label{sec:center}

Given a set of vertices $S\subseteq V(G)$, and an integer $r\ge 0$, we denote by $N_G^{\le r}(S)$ the set of vertices in $V(G)$ which are at distance at most $r$ from at least one vertex in $S$. Given an integer $k\ge 0$, if a set of vertices $Z$ is contained in $N_G^{\le r}(S)$ for some set $S$ of size at most $k$, we say that $Z$ is a \emph{$(k,r)$-centered set}  in $G$.

In this section we prove that given some graph $G$ and a $(k,r)$-centered set $Z$, the union of an arbitrary colouring of $Z$  and a colouring of $G-Z$ with bounded weak diameter in $G-Z$ gives a colouring of $G$ of bounded weak diameter in $G$. We will make extensive use of this result in the proof of Theorem~\ref{tree_extension_ad}, from which we derive all our results about the asymptotic dimension of minor-closed families.

For $i \in \{1,2\}$, let $f_i$ be a function with domain $S_i$.
If $f_1(x)=f_2(x)$ for every $x \in S_1 \cap S_2$, we define $f_1 \cup f_2$  to be the function with domain $S_1 \cup S_2$ such that for every
$i\in \{1,2\}$ and $x \in S_i$, $(f_1 \cup f_2)(x) = f_i(x)$.

\begin{lemma} \label{deleting_centered_set}
For any integers $k \geq 0, r\ge 0, N\ge 1$ there exists an integer $N^*$ such that the following holds.
For every graph $G$, any integers $m\ge 1, \ell\ge 1$, and every $(k,r)$-centered set $Z$ in $G$, if $c$ is an $m$-colouring of $G-Z$ with weak diameter in $(G-Z)^\ell$ at most $N$, and $c_Z:Z\to[m]$ is an arbitrary $m$-colouring  of $Z$, then $c\cup c_Z$ is an $m$-colouring of $G$ with weak diameter in $G^\ell$ at most $N^*$.
\end{lemma}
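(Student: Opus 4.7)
The plan is to analyze an arbitrary monochromatic walk in $G^\ell$ under $c \cup c_Z$ and bound the $G^\ell$-distance between its endpoints by a function of $k$, $r$, $N$ alone, independent of $\ell$. Fix a set $S = \{s_1, \ldots, s_k\}$ of size at most $k$ with $Z \subseteq N_G^{\leq r}(S)$, and consider a walk $u = w_0, w_1, \ldots, w_t = v$ in $G^\ell$ whose vertices all receive the same colour under $c \cup c_Z$. I classify each edge $w_iw_{i+1}$ as \emph{good} if $w_i, w_{i+1} \in V(G) \setminus Z$ and $d_{G-Z}(w_i, w_{i+1}) \leq \ell$ (so $w_iw_{i+1}$ is also an edge of $(G-Z)^\ell$), and as \emph{pivot} otherwise.

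Two elementary observations drive the argument. First, every pivot edge $w_iw_{i+1}$ admits some $s_j \in S$ with $d_{G^\ell}(w_i, s_j), d_{G^\ell}(w_{i+1}, s_j) \leq r+1$: either $w_i$ or $w_{i+1}$ lies in $Z$ (hence at $G$-distance at most $r$ from some $s_j$), or any $G$-path of length at most $\ell$ joining $w_i$ and $w_{i+1}$ must pass through some $z \in Z$. In either case the triangle inequality gives $d_G(w_i, s_j), d_G(w_{i+1}, s_j) \leq r+\ell$; combined with the elementary bound $d_{G^\ell}(x,y) \leq \lceil d_G(x,y)/\ell \rceil$, this yields $d_{G^\ell}(w_i, s_j), d_{G^\ell}(w_{i+1}, s_j) \leq \lceil (r+\ell)/\ell \rceil \leq r+1$ for every $\ell \geq 1$. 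Second, any maximal sub-walk consisting entirely of good edges lies in a single $c$-monochromatic component of $(G-Z)^\ell$, so by hypothesis its endpoints are at $(G-Z)^\ell$-distance at most $N$, and hence at $G^\ell$-distance at most $N$ as well.

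I then build a short $G^\ell$-walk from $u$ to $v$ in at most $k$ iterations. Starting from the current position (initially $u$), advance along good edges until the first pivot edge is met; this stretch has $G^\ell$-length at most $N$. Let $s_j \in S$ be its pivot, and jump through $s_j$ to the vertex immediately after the final occurrence of pivot $s_j$ in the walk, at $G^\ell$-cost at most $2(r+1)$. By construction the remaining walk contains no further pivot edge with pivot $s_j$, so each iteration strictly shrinks the set of distinct pivots still present in the remainder. After at most $|S| \leq k$ iterations the remaining walk is entirely good, and a final stretch of $G^\ell$-length at most $N$ reaches $v$. Summing contributions gives
\[
d_{G^\ell}(u, v) \leq (k+1)N + 2k(r+1),
\]
so taking $N^* := (k+1)N + 2k(r+1)$ proves the lemma.

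The main conceptual obstacle is that an edge of $G^\ell$ between two vertices of $V(G) \setminus Z$ need not correspond to an edge of $(G-Z)^\ell$, because a witnessing $G$-path of length at most $\ell$ may be forced to traverse $Z$. The good/pivot decomposition isolates precisely these ``hidden detours'', and pigeonholing over the bounded pivot set $S$ converts them into a uniformly bounded cost. The key quantitative point that $\lceil r/\ell \rceil \leq r$ for every $\ell \geq 1$ is what ensures $N^*$ is independent of $\ell$, as required by the statement.
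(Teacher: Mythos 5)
Your proof is correct, but it follows a genuinely different route from the paper's. The paper argues by induction on $k$: it peels off a single centre $s_1$, applies the induction hypothesis to $G-Z_1$ (where $Z_1=Z\setminus N^{\le r}_G(S\setminus\{s_1\})$), and then observes that any $(c\cup c_Z)$-monochromatic component of $G^\ell$ meeting $N_G^{\le \ell}(Z_1)$ lies entirely within distance $r+1+f(k-1,r,N)$ of $s_1$ in $G^\ell$; the resulting bound $N^*=2^k(N+2r+2)-2r-2$ is exponential in $k$. You instead analyse a monochromatic walk directly, splitting its edges into good edges (which stay inside a single $c$-monochromatic component of $(G-Z)^\ell$, hence cost at most $N$) and pivot edges (each of which places both endpoints within $G^\ell$-distance $r+1$ of some centre, via $d_{G^\ell}(x,y)\le\lceil d_G(x,y)/\ell\rceil$), and then shortcut through each centre from the first to the last pivot edge it serves, so that each of the at most $k$ centres is used once. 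All the steps check out, including the slightly delicate point that an edge of $G^\ell$ between two vertices of $V(G)\setminus Z$ need not be an edge of $(G-Z)^\ell$ (your case analysis shows any witnessing path through $Z$ produces a valid pivot), and the fact that the chosen pivot of the last such edge is the same vertex $s_j$, so the jump costs at most $2(r+1)$. What your approach buys is a quantitatively better, non-recursive bound $N^*=(k+1)N+2k(r+1)$, linear rather than exponential in $k$; what the paper's inductive formulation buys is a statement and proof shape that mesh with how the lemma is reused later (it is essentially the same peeling argument that reappears in the tree-decomposition machinery), but for the lemma itself either argument suffices, since only the existence of some $N^*$ depending on $k,r,N$ is needed.
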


\begin{proof}
We define $f(\alpha,x,y)=2^\alpha(y+2x+2) - 2x-2$ for any integers $\alpha \geq 0$, $x \geq 0$ and $y >0$.
 Note that
 			\begin{itemize}
				\item $f(0,x,y)= y$, and 				
                \item for every $\alpha \in \N$, $f(\alpha,x,y) = 2x+2+2f(\alpha-1,x,y)$. 
 			\end{itemize}

It suffices to show that the $m$-colouring $c \cup c_Z$ of $G^\ell$ has weak diameter in $G^\ell$ at most $f(k,r,N)$, since we can take $N^*=f(k,r,N)$. 

 We shall prove it by induction on $k$. 
Since $Z$ is $(k,r)$-centered, there exists a subset $S$ of $V(G)$ with size at most $k$ such that $Z \subseteq N_G^{\le r}(S)$.
 When $k=0$, $S=Z=\emptyset$, and hence we are done since $f(0,r,N) \geq N$.
 So we may assume that $k \geq 1$ and this lemma holds when $k$ is smaller.

Let $s_1 \in S$.
Let $S'=S-\{s_1\}$ and $Z'=N^{\le r}_G(S') \cap Z$.
We denote $Z_1=Z - Z' \subseteq N^{\le r}_G(\{s_1\})$.
Since $Z_1$ contains no vertex at distance at most $r$ in $G$ from $S'$, we infer that $Z' \subseteq N_{G-Z_1}^{\leq r}(S')$.
We can apply the induction hypothesis on $G-Z_1$ and $Z'$, since $c$ is indeed an $m$-colouring of $(G-Z_1)-Z'=G-Z$ with weak diameter in $((G-Z_1)-Z')^\ell$ at most $N$, and $Z'$ is a $(k-1,r)$-centered set. We obtain that the $m$-colouring $c' \coloneqq c\cup c_{Z'}$ has weak diameter in $(G-Z_1)^\ell$ at most $f(k-1,r,N)$, where $c_{Z'}$ is the restriction of $c_Z$ to $Z'$.

Writing $c_{Z_1}$ for the restriction of $c_Z$ to $Z_1$, we have $c \cup c_Z = c' \cup c_{Z_1}$. So it remains to show that $c' \cup c_{Z_1}$ has weak diameter in $G^\ell$ at most $f(k,r,N)$.

Let $M$ be a $(c' \cup c_{Z_1})$-monochromatic component in $G^\ell$. If $M$ is disjoint from $N_G^{\leq \ell}(Z_1)$, then all its vertices are at distance more than $\ell$ in $G$ from any vertex of $Z_1$ with the same colour, so $M$ is a $c'$-monochromatic component and has weak diameter in $(G-Z_1)^\ell \subseteq G^\ell$ at most $f(k-1,r,N) \leq f(k,r,N)$.
We can now assume that $M$ intersects  $N_G^{\leq \ell}(Z_1)$. 
Then $M-Z_1$ consists of a (possibly empty) union of  $c'$-monochromatic components in $(G-Z_1)^{\ell}$, where each of them contains a vertex within distance in $G$ at most $\ell$ from $Z_1$, hence at distance  in $G^\ell$ at most $1$ from $Z_1$, and has weak diameter in $(G-Z_1)^{\ell}  \subseteq G^\ell$ at most $f(k-1,r,N)$. 
Therefore $s_1$ is at distance  in $G^{\ell}$ at most $r+1+f(k-1,r,N)$ from any vertex of $M$, and so the weak diameter in $G^{\ell}$ of $M$  is at most twice that value, which is $f(k,r,N)$, as desired.
\end{proof}

Let $\ell>0,m>0,N>0$ be integers.
We say a class $\F$ of graphs is {\it $(m,\ell,N)$-nice} if for every $G \in \F$, $G^\ell$ is $m$-colourable with weak diameter in $G^\ell$ at most $N$.

Recall that for every integer $n \geq 0$, $\F^{+n}$ is the class of graphs such that for every $G \in \F^{+n}$, there exists $Z \subseteq V(G)$ with $\lvert Z \rvert \leq n$ such that $G-Z \in \F$. 
Notice that such $Z$ is an $(n,0)$-centered set in $G$.
Through a direct application of Lemma~\ref{deleting_centered_set} with $(k,r)=(n,0)$, we  obtain the following observation.

\begin{obs} \label{apex_extension}
For all integers $n \geq 0,N\ge 1$, there exists $N^*\ge 1$ such that  for all integers $\ell \ge 1, m\ge 1$, if $\F$ is an $(m,\ell,N)$-nice class of graphs, then $\F^{+n}$ is an $(m,\ell,N^*)$-nice class. 
\end{obs}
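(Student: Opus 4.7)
The plan is to deduce Observation \ref{apex_extension} as an immediate consequence of Lemma \ref{deleting_centered_set} applied with $(k,r) = (n,0)$. The main point is to notice that an apex set of size at most $n$ is automatically a $(n,0)$-centered set, so that Lemma \ref{deleting_centered_set} directly converts a good colouring of the apex-deleted subgraph into a good colouring of the whole graph, with an additive penalty depending only on $n$ and $N$ (and in fact independent of $\ell$ and $m$).

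Concretely, I would first fix integers $n \geq 0$ and $N \geq 1$, and define $N^*$ to be the constant $N^*$ produced by Lemma \ref{deleting_centered_set} applied to $k=n$, $r=0$, and the given $N$. I then let $\ell \geq 1$ and $m \geq 1$ be arbitrary, and let $\mathcal{F}$ be any $(m,\ell,N)$-nice class. Given a graph $G \in \mathcal{F}^{+n}$, the definition provides a set $Z \subseteq V(G)$ with $|Z| \leq n$ such that $G - Z \in \mathcal{F}$. Since $N_G^{\leq 0}(Z) = Z$, the set $Z$ is $(n,0)$-centered in $G$ by taking $S = Z$.

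Because $G - Z \in \mathcal{F}$ and $\mathcal{F}$ is $(m,\ell,N)$-nice, there is an $m$-colouring $c$ of $G - Z$ with weak diameter in $(G-Z)^\ell$ at most $N$. I then define $c_Z : Z \to [m]$ to be any $m$-colouring of $Z$ (for instance the constant map). Lemma \ref{deleting_centered_set}, applied to $G$, $Z$, $c$ and $c_Z$ with parameters $(k,r) = (n,0)$, produces the $m$-colouring $c \cup c_Z$ of $G$ with weak diameter in $G^\ell$ at most $N^*$. As $G \in \mathcal{F}^{+n}$ was arbitrary, this proves that $\mathcal{F}^{+n}$ is $(m,\ell,N^*)$-nice, as required.

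There is essentially no obstacle here: the only thing that needs to be verified is that $N^*$ really depends only on $n$ and $N$, not on $\ell$ or $m$, and this is exactly the form of the quantifier in the conclusion of Lemma \ref{deleting_centered_set}. The only subtlety worth flagging is the distinction between the weak diameter of $c$ measured in $(G-Z)^\ell$ versus in $G^\ell$; since $(G-Z)^\ell$ is a subgraph of $G^\ell$ on the same vertex set but with possibly fewer edges (i.e., larger distances), the hypothesis $(m,\ell,N)$-nice, phrased in terms of $(G-Z)^\ell$, is exactly the input format that Lemma \ref{deleting_centered_set} requires, so the argument goes through without modification.
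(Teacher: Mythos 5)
Your proof is correct and follows exactly the paper's argument: the apex set $Z$ is an $(n,0)$-centered set, and Lemma~\ref{deleting_centered_set} with $(k,r)=(n,0)$ yields the constant $N^*$ depending only on $n$ and $N$, as the lemma's quantifiers guarantee. Nothing further is needed.
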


Now we are ready to prove Theorem \ref{apex_extension_ad_intro}.
The following is a restatement.

\begin{thm} \label{apex_extension_ad}
Let $\F$ be a class of graphs and $n \geq 0$ an integer.
Then $\ad(\F^{+n})=\ad(\F)$.
\end{thm}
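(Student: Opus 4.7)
The plan is to bootstrap the equality from the two tools that are already in place: Proposition~\ref{obs:weakdiameter}, which recasts asymptotic dimension in terms of weak-diameter colourings of powers, and Observation~\ref{apex_extension}, which shows that adding at most $n$ apex vertices preserves the property of being $(m,\ell,N)$-nice up to replacing $N$ by a larger constant $N^\ast$ depending only on $n$ and $N$ (crucially not on $m$ or $\ell$).

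The direction $\ad(\F) \le \ad(\F^{+n})$ is immediate: taking $Z = \emptyset$ in the definition of $\F^{+n}$ shows $\F \subseteq \F^{+n}$, and asymptotic dimension is monotone with respect to inclusion of classes since any control function that works for the larger class also works for the smaller. So I only need to treat $\ad(\F^{+n}) \le \ad(\F)$, and I may assume $\ad(\F) = d < \infty$ (otherwise there is nothing to prove).

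The core argument will proceed in three short steps. First, setting $m = d+1$ and applying the forward direction of Proposition~\ref{obs:weakdiameter} to $\F$, I obtain a function $f \colon \N \to \N$ such that for every $G \in \F$ and every $\ell \in \N$, the power $G^\ell$ admits an $m$-colouring with weak diameter in $G^\ell$ at most $f(\ell)$; equivalently, $\F$ is $(m,\ell,f(\ell))$-nice for every $\ell \ge 1$. Second, I invoke Observation~\ref{apex_extension} once for each $\ell$: for each $\ell$ there exists $N^\ast(\ell)$, depending only on $n$ and $f(\ell)$, such that $\F^{+n}$ is $(m,\ell,N^\ast(\ell))$-nice. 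Third, the function $\ell \mapsto N^\ast(\ell)$ is exactly the input required by the reverse direction of Proposition~\ref{obs:weakdiameter}, which then yields $\ad(\F^{+n}) \le m - 1 = d = \ad(\F)$, completing the proof.

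There is essentially no obstacle here: the substantive content is already packaged inside Lemma~\ref{deleting_centered_set} (and hence Observation~\ref{apex_extension}), whose proof exploits the fact that a set $Z$ of size at most $n$ is $(n,0)$-centered, so that any colouring of $G-Z$ with bounded weak diameter in $(G-Z)^\ell$ extends to a colouring of $G$ with bounded weak diameter in $G^\ell$ regardless of how $Z$ itself is coloured. The only care needed is to choose the quantifiers in the right order: the number of colours $m$ must be fixed first (from the asymptotic dimension of $\F$), and then for each scale $\ell$ the constant $N^\ast(\ell)$ is produced uniformly over the class $\F^{+n}$, which is exactly what Proposition~\ref{obs:weakdiameter} asks for.
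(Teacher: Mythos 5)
Your proposal is correct and follows essentially the same route as the paper's proof: both directions are handled identically, with $\F \subseteq \F^{+n}$ giving one inequality and the combination of Proposition~\ref{obs:weakdiameter} (in both directions) with Observation~\ref{apex_extension} giving $\ad(\F^{+n}) \le \ad(\F)$. The quantifier ordering you emphasize (fix $m$ from $\ad(\F)$ first, then obtain the bound $N^\ast(\ell)$ uniformly for each $\ell$) is exactly how the paper composes $f$ with the function from Observation~\ref{apex_extension}.
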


\begin{proof}
Since $\F^{+n} \supseteq \F$, we have $\ad(\F^{+n})\ge\ad(\F)$, so we
only have to prove that $\ad(\F^{+n})\le\ad(\F)$.

Let $m=\ad(\F)$. 
By Proposition~\ref{obs:weakdiameter}, there is a function $f: {\mathbb N} \rightarrow \N$ such that for every $\ell \in {\mathbb N}$, $\F$ is $(m+1,\ell,f(\ell))$-nice.
By Observation~\ref{apex_extension}, there exists a function $g: {\mathbb N} \rightarrow \N$ such that for every $\ell \in {\mathbb N}$, $\F^{+n}$ is $(m+1,\ell,g(f(\ell)))$-nice.
By Proposition~\ref{obs:weakdiameter}, $\ad(\F^{+n}) \leq m$.
\end{proof}

A {\it vertex-cover} of a graph $G$ is a subset $S$ of $V(G)$ such that $G-S$ has no edge. 
As the class of edgeless graphs has asymptotic dimension 0, the following is a direct consequence of Theorem~\ref{apex_extension_ad}.

\begin{obs} \label{vc_ad}
For any integer $k$, the class of all graphs with a vertex-cover of
size at most $k$ has asymptotic dimension 0.
\end{obs}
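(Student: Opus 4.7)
The plan is to recognize this as an immediate application of Theorem~\ref{apex_extension_ad}. Let $\F_0$ denote the class of edgeless graphs, i.e.\ graphs with no edges. A graph $G$ has a vertex-cover of size at most $k$ if and only if there exists a set $Z \subseteq V(G)$ with $|Z| \le k$ such that $G-Z$ is edgeless, which is precisely the definition of $\F_0^{+k}$. So the class in question is exactly $\F_0^{+k}$.

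Next I would verify that $\ad(\F_0) = 0$. In any edgeless graph, distinct vertices lie in distinct connected components, so the pairwise distance between distinct vertices is $\infty$. Consequently, for any $r > 0$, the partition of $V(G)$ into singletons is a $0$-bounded, $r$-disjoint cover consisting of a single family. The constant function $D(r) = 0$ is therefore a $0$-dimensional control function that works uniformly for every $G \in \F_0$, and hence $\ad(\F_0) = 0$.

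Finally, I would invoke Theorem~\ref{apex_extension_ad} with $\F = \F_0$ and $n = k$ to conclude that $\ad(\F_0^{+k}) = \ad(\F_0) = 0$, which is exactly the desired statement. There is no real obstacle here: both ingredients (the trivial dimension of edgeless graphs and the apex-extension theorem) are already in hand, so the proof is effectively a one-line citation.
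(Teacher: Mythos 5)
Your proposal is correct and follows exactly the paper's route: the paper also identifies the class as $\F_0^{+k}$ for $\F_0$ the (asymptotic-dimension-$0$) class of edgeless graphs and cites Theorem~\ref{apex_extension_ad} (Theorem~\ref{apex_extension_ad_intro}) to conclude. The only cosmetic remark is that the control function should take values in $\mathbb{R}^+$, so one would take a small positive constant rather than $D(r)=0$, which changes nothing.
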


By applying Lemma~\ref{deleting_centered_set} with $V(G)=Z$ and using the fact that a colouring of the empty graph has weak diameter $0$,  we obtain the following.

\begin{obs} \label{all_centered}
For any integers $k \geq 0,r \geq 0$, there exists an integer $N^*\ge 1$ such that for every graph $G$, and for all integers $m\ge 1,\ell\ge 1$, if $V(G)$ is $(k,r)$-centered, then any $m$-colouring of $G^\ell$ has weak diameter in $G^\ell$ at most $N^*$. 
\end{obs}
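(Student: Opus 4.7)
The plan is to derive Observation~\ref{all_centered} as a direct application of Lemma~\ref{deleting_centered_set}, taking $Z = V(G)$ so that the ``interior'' subgraph $G - Z$ is empty. Given integers $k \geq 0$ and $r \geq 0$, I would first feed Lemma~\ref{deleting_centered_set} the triple $(k, r, N)$ with $N = 1$ (the smallest value its statement permits) to extract an integer $N^* \geq 1$, and then claim that this $N^*$ witnesses the observation.

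To verify the claim, fix any graph $G$ such that $V(G)$ is $(k,r)$-centered, any integers $m \geq 1$ and $\ell \geq 1$, and any $m$-colouring $\gamma$ of $G^\ell$. Setting $Z \coloneqq V(G)$, the hypothesis gives that $Z$ is $(k,r)$-centered in $G$, and $G - Z$ is the empty graph. The empty function $c$ is an $m$-colouring of $G - Z$; trivially, every monochromatic component (there are none) has weak diameter in $(G-Z)^\ell$ at most $N = 1$. Now let $c_Z \coloneqq \gamma$, viewed as an $m$-colouring of $Z = V(G)$. Lemma~\ref{deleting_centered_set} applies and yields that $c \cup c_Z$ is an $m$-colouring of $G$ with weak diameter in $G^\ell$ at most $N^*$. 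Since $c \cup c_Z = \gamma$, this is exactly the desired bound on the given colouring.

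There is essentially no obstacle here: once Lemma~\ref{deleting_centered_set} is available, Observation~\ref{all_centered} is its degenerate case in which the precoloured part fills all of $V(G)$. The only points to be careful about are (i) checking that the hypothesis $N \geq 1$ in the lemma is compatible with the vacuous weak diameter bound on the empty graph (it is, since $0 \leq 1$), and (ii) making sure that the arbitrary $m$-colouring of $G^\ell$ may legitimately play the role of $c_Z$ in the lemma (it may, because $c_Z$ is allowed to be arbitrary and $G^\ell$ shares its vertex set with $G$).
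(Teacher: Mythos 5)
Your proposal is correct and matches the paper's own derivation: the paper obtains Observation~\ref{all_centered} precisely by applying Lemma~\ref{deleting_centered_set} with $Z=V(G)$, noting that the (empty) colouring of the empty graph trivially satisfies the weak diameter hypothesis, so the arbitrary colouring plays the role of $c_Z$. Your extra care about $N\ge 1$ and the role of $c_Z$ is fine but not an issue in the paper's argument either.
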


\section{Gluing along a tree} \label{sec:tree}

 In this section, we prove one of the main technical results of this paper, from which we derive our results about the asymptotic dimension of classes of graphs of bounded treewidth, and of minor-closed families of graphs. This result roughly states that if every graph in a class $\G$ has a tree-decomposition with bounded adhesion where each bag belongs to some family $\F$, then the asymptotic dimension of $\G$ is essentially that of $\F$ -- 
 or more precisely that of a class of graphs that can be obtained from the graphs of $\F$ by adding vertices and edges in a specific way.

A class $\F$ of graphs is {\it hereditary} if for every $G \in \F$, every induced subgraph of $G$ belongs to $\F$.

\begin{thm} \label{tree_extension_ad}
Let $\cC$ be a hereditary class of graphs, and let $\theta \geq 1$ be an integer.
Let $\G$ be a class of graphs such that for every $G \in \G$, there exists a tree-decomposition $(T,\X)$ of $G$ of adhesion at most $\theta$, where $\X=(X_t: t \in V(T))$, such that $\cC$ contains all graphs which can be obtained from any $G[X_t]$ by adding, for each neighbour 
$t'$ of $t$ in $T$, a set of pairwise non-adjacent new vertices whose neighbourhoods are contained in $X_t\cap X_{t'}$. 
Then $\ad(\G) \leq \max\{\ad(\cC),1\}$.
\end{thm}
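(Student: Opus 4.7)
By Proposition~\ref{obs:weakdiameter}, setting $m := \max\{\ad(\cC), 1\}+1$, it suffices to show that for every $\ell \geq 1$ there exists $N(\ell,\theta)$ such that every $G \in \G$ admits an $m$-colouring of $G^\ell$ with weak diameter at most $N(\ell,\theta)$ in $G^\ell$. The hypothesis $\ad(\cC) \leq m-1$, combined again with Proposition~\ref{obs:weakdiameter}, yields a function $f$ such that every $H \in \cC$ admits an $m$-colouring of $H^\ell$ with weak diameter at most $f(\ell)$ in $H^\ell$.

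Fix $G \in \G$ with a tree-decomposition $(T,\X)$ of adhesion at most $\theta$, rooted at some $r_0 \in V(T)$. For each $t \in V(T)$, let $B_t = X_t \cap X_{\mathrm{parent}(t)}$ (empty if $t = r_0$), so $|B_t| \leq \theta$ and $B_t$ is a $(\theta,0)$-centered set in $G$. I plan to prove a stronger statement by induction on $|V(T)|$: for every such $G$ and every pre-colouring of $B_{r_0}$ with $m$ colours (relevant when $r_0$ is viewed as the root of a subtree of a larger tree-decomposition), the pre-colouring extends to an $m$-colouring of $G$ with weak diameter at most $N(\ell,\theta)$ in $G^\ell$. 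The inductive step would proceed by (i) constructing an extended graph $H_{r_0} \in \cC$ from $G[X_{r_0}]$ by adding, for each child $t'$ of $r_0$, a judiciously chosen set of pairwise non-adjacent new vertices with neighbourhoods in $X_{r_0} \cap X_{t'}$; (ii) applying the hypothesis on $\cC$ to obtain an $m$-colouring of $H_{r_0}^\ell$ with weak diameter at most $f(\ell)$ in $H_{r_0}^\ell$; (iii) invoking Lemma~\ref{deleting_centered_set} on the $(\theta,0)$-centered set $B_{r_0}$ to modify the colouring on $B_{r_0}$ so that it agrees with the prescribed pre-colouring, at the cost of a bounded blow-up of the weak diameter; and (iv) recursively colouring, for each child $t'$ of $r_0$, the subgraph $G[\bigcup_{s\in T_{t'}} X_s]$ with pre-colouring inherited from the restriction of the current colouring to $X_{r_0} \cap X_{t'}$, and gluing everything into a single $m$-colouring of $G$.

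The main obstacle is to control the weak diameter of monochromatic components in $G^\ell$ (not merely in the per-bag $H_t^\ell$) while preventing the weak diameter bound from growing with the (possibly unbounded) depth of $T$: a careless iteration of Lemma~\ref{deleting_centered_set} across the recursion levels would produce a bound that blows up exponentially with the depth. I expect this difficulty to be circumvented via the careful choice of the pairwise non-adjacent new vertices added to each extended bag $H_t$: these should serve as proxies encoding the structural information coming from the child subtrees, so that the $m$-colouring provided by the hypothesis on $\cC$ is already essentially compatible with what the recursive calls will require on each adhesion $X_t \cap X_{t'}$. This should ensure that Lemma~\ref{deleting_centered_set} is applied at most $O(1)$ times per vertex of $G$ with a uniformly bounded overhead, and that the "tree-shape" of any monochromatic component in $G^\ell$ is effectively controlled using the fact that trees have asymptotic dimension $1$. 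Making this proxy construction precise, and verifying that the resulting global colouring has weak diameter bounded in terms of $\ell$ and $\theta$ only, constitutes the main technical content of the proof.
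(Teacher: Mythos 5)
There is a genuine gap. Your reduction via Proposition~\ref{obs:weakdiameter}, the idea of extending a precolouring on the root adhesion set, the use of the added non-adjacent vertices as ``proxies'' for the child subtrees, and the appeal to Lemma~\ref{deleting_centered_set} all point in the right direction, but the heart of the argument is exactly the part you defer: you yourself note that a naive bag-by-bag recursion on $\lvert V(T)\rvert$, re-applying Lemma~\ref{deleting_centered_set} at every level, gives a weak-diameter bound that grows with the depth of $T$, and your proposed remedy (``proxies chosen so that the lemma is applied $O(1)$ times per vertex'') is not made precise and, as stated, does not address the real difficulty. The problem is not the number of applications per vertex but that a single monochromatic component of $G^\ell$ may snake through unboundedly many bags down the tree, so one needs a mechanism that (a) prevents most components from crossing far beyond the region already coloured, and (b) for the components that do cross an adhesion set, guarantees that what happens on the far side is already encoded in the colouring of the proxy vertices. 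Your sketch asserts that such a mechanism exists but does not construct it, so the statement ``the weak diameter is bounded in terms of $\ell$ and $\theta$ only'' is not proved.

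For comparison, the paper's proof (Lemma~\ref{tree_extension}) resolves this with two concrete devices that are absent from your plan. First, the induction is not on the tree: the outer induction is on an adhesion parameter $\eta\le\theta$. One processes in a single step the whole subtree $T_0$ of bags meeting $Z=N_G^{\le 3\ell}(X_{t^*})$; since every bag of $T_0$ meets $Z$, deleting $Z$ strictly decreases the relevant adhesion parameter, so the recursion depth of the ``extend through Lemma~\ref{deleting_centered_set}'' step is at most $\theta$, independent of the depth of $T$ (this is the role of the $(\eta,\theta,\F,\F')$-constructions). Second, the proxies are chosen canonically: for each edge $e$ leaving $T_0$, the adhesion set $X_e$ is partitioned into the components $Y$ of $X_e$ in $(G_e)^{7\ell}$, one new vertex $v_Y$ per part, and after colouring the augmented graph $H^\ell$ one forces two buffer annuli $N_{G_e}^{\le 2\ell}\setminus N_{G_e}^{\le \ell}$ and $N_{G_e}^{\le 3\ell}\setminus N_{G_e}^{\le 2\ell}$ to receive colours $1$ and $2$ respectively. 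The buffer forces every monochromatic component meeting $G_0$ to stay inside $V(G_0)\cup N^{\le 2\ell}$, and the $7\ell$-component choice of the parts $Y$ is what makes the colour of $v_Y$ consistent along any short path that dips into $G_e$ and returns, so such components project to monochromatic subsets of $H^\ell$ of bounded weak diameter. Without these (or equivalent) explicit constructions and the change of induction parameter, your argument does not yield a bound independent of the depth of $T$, so the proposal as written does not establish the theorem.
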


\subsection{Sketch of the proof of Theorem~\ref{tree_extension_ad}}

The proof of Theorem~\ref{tree_extension_ad} follows a rather technical induction, whose precise statement is that of Lemma~\ref{tree_extension}.
For convenience of the reader, and in order to ease the understanding of the purpose of the incoming set-up, we begin by sketching the main steps of the proof.

Let $G \in \G$ and $(T,\X)$ be a tree-decomposition of $G$ as stated in Theorem \ref{tree_extension_ad}.
As $(T,\X)$ has bounded adhesion, we can treat $T$ as a rooted tree so that the bag of the root, denoted by $X_{t^*}$, has size at most the adhesion, up to creating a redundant bag if necessary.
We shall prove a stronger statement: for every $Z \subseteq N_G^{\leq 3\ell}(X_{t^*})$, every precolouring $c|_Z$ on $Z$ with at most $m$ colours extends to an $m$-colouring of $G^\ell$ with bounded weak diameter in $G^\ell$, by induction on the adhesion of $(T,\X)$, and subject to this, induction on $\lvert V(G) \rvert$. (See Lemma \ref{tree_extension} for a precise statement.)

By first extending $c|_Z$ to $N_G^{\leq 3\ell}(X_{t^*})$, we may assume $Z=N_G^{\leq 3\ell}(X_{t^*})$.
Hence the subgraph $T_0$ of $T$ induced by the nodes whose bags intersect $Z$ is a subtree of $T$ containing $t^*$.
Let $G_0$ be the subgraph of $G$ induced by the bags of the nodes in $T_0$.
Let $U_E$ be the set of edges of $T$ with exactly one end in $T_0$.
For every $e \in U_E$, let $G_e$ be the subgraph of $G$ induced by the bags of the nodes in the component of $T-e$ disjoint from $T_0$.
Then for each $e \in U_E$,
there is a partition $\P_e$ of $V(G_0) \cap V(G_e)$ into sets such that two vertices in $V(G_0) \cap V(G_e)$ are not far from each other in $G_e$ if and only if they are contained in the same part of $\P_e$.
This can be done as $\lvert V(G_0) \cap V(G_e) \rvert$ is bounded by the adhesion of $(T,\X)$.

Note that $Z$ is an $(\lvert X_{t^*} \rvert,3\ell)$-centered set, and $G_0-Z$ has a tree-decomposition of smaller adhesion by the definition of $G_0$ and $T_0$.
So $(G_0-Z)^\ell$ has a colouring by induction.
Hence the precolouring $c_Z$ on $Z$ can be extended to $G_0^\ell$ by Lemma \ref{deleting_centered_set}.
However, it is troublesome to further extend the colouring to $G^\ell$, as no information about $G_e$ for $e \in U_E$ can be seen from $G_0$ and edges of $G^\ell$ with ends in $V(G_0)$ cannot be completely told from $G_0^\ell$.
To overcome this difficulty, we add ``gadgets'' to $G_0$ to obtain a graph $H$ such that extending $c_Z$ from $Z$ to $H^\ell$ gives sufficient information about how to further extend it to $G^\ell$.
The gadgets we add to form $H$ are a vertex $v_Y$ for each $e \in U_E$ and each part $Y \in \P_e$, and the edges between $v_Y$ and $Y$.

However, $H-Z$ possibly does not have a tree-decomposition of smaller adhesion, so the induction hypothesis cannot be applied to $H-Z$.
Instead, we setup a more technical induction hypothesis to overcome this difficulty.
This is the motivation of $(\eta,\theta,\F,\F')$-constructions mentioned in Section \ref{sec:key_lemma_gluing_tree}.
So we can extend $c_Z$ to an $m$-colouring of $H^\ell$ by using this technical setting.

Note that no vertex in $Z$ is in $\bigcup_{e \in U_E}G_e$.
Then for each $e \in U_E$, we colour vertices in $G_e-V(G_0)$ that have distance in $G_e$ at most $\ell$ from $V(G_e) \cap V(G_0)$ according to the colours on $v_Y$ for $Y \in \P_e$.
Call the set of vertices coloured in this step $Z_1$.
Then we colour every uncoloured vertex in $G_e$ that has distance in $G_e$ at most $\ell$ from $Z_1$ colour 1.
Call the set of vertices coloured in this step $Z_2-Z_1$, and let $Z_2$ contain $Z_1$.
Then we colour every uncoloured vertex in $G_e$ that has distance in $G_e$ at most $\ell$ from $Z_2$ colour 2.
This ensures that no matter how we further colour the other vertices, every monochromatic component intersecting $G_0$ must be contained in $V(G_0) \cup Z_1 \cup Z_2$; and we can show that such monochromatic components have small weak diameter due to our definition of $\P_e$ and $v_Y$.

At this point, for every $e \in U_E$, the vertices coloured in $G_e$ are contained in $N_G^{\leq 3\ell}(V(G_0) \cap V(G_e))$.
Hence for each $e \in U_E$, we can extend this precolouring to an $m$-colouring of $G_e^\ell$ with bounded weak diameter in $G_e^\ell$ by induction, since $G_e$ has fewer vertices than $G$.
Every monochromatic component not intersecting $G_0$ must be contained in $G_e$ for some $e \in U_E$ and hence has small weak diameter.
This completes the sketch of the proof of Lemma \ref{tree_extension} (and Theorem \ref{tree_extension_ad}, which follows as a simple consequence).

\subsection{Key lemma for proving Theorem~\ref{tree_extension_ad}} \label{sec:key_lemma_gluing_tree}

Let $\ell,N,m\ge 1$ be integers. 
Recall that a class $\F$ of graphs is {\it $(m,\ell,N)$-nice} if for every $G \in \F$, $G^\ell$ is $m$-colourable with weak diameter in $G^\ell$ at most $N$. 

Let $G$ be a graph, and let $(T,\X)$ be a tree-decomposition of $G$, where $\X=(X_t: t \in V(T))$.
For every $S \subseteq V(T)$, we define $X_S = \bigcup_{t \in S}X_t$; 
when $T'$ is a subgraph of $T$, we write $X_{T'}$ instead of $X_{V(T')}$.

\smallskip

A {\it rooted tree} is a directed graph whose underlying graph is a tree
where all vertices have in-degree $1$, except one which has in-degree $0$ and that we call the \emph{root}.
A {\it rooted tree-decomposition} of a graph $G$ is a tree-decomposition $(T,\X)$ of $G$ such that $T$ is a rooted tree.

We denote $\F^{+1}$ by $\F^+$.

Let $\F$ and $\F'$ be classes of graphs.
Let $\eta,\theta$ be nonnegative integers with $\eta \leq \theta$.
A rooted tree-decomposition $(T,\X)$ of a graph $G$ is called an {\it $(\eta,\theta,\F,\F')$-construction} of $G$ if it has adhesion at most $\theta$ and satisfies the following additional properties:
	\begin{itemize}
		\item for every edge $tt' \in E(T)$, if $\lvert X_t \cap X_{t'} \rvert > \eta$, then one end of $tt'$ has no child, say $t'$, and the set $X_{t'}-X_t$ contains at most 1 vertex,
		\item for the root $t^*$ of $T$, 
		\begin{itemize}
		    \item $\lvert X_{t^*} \rvert \leq \theta$, 
			\item if $\eta>0$, then $X_{t^*} \neq \emptyset$, 
		\end{itemize}
		\item for every $t \in V(T)$,
			\begin{itemize}
				\item if $t$ has a child in $T$, then $G[X_t] \in \F$,
				\item if $t$ has no child in $T$, then $G[X_t] \in \F^+$, and 
				\item $\F'$ contains all graphs which can be obtained from $G[X_t]$ by adding, for each child $t'$ of $t$ in $T$, vertices whose neighbourhoods are contained in $X_t\cap X_{t'}$. (Note that this final property only applies to nodes $t$ that have children, since otherwise the precondition ``for each child $t'$ of $t$" is void.) 
			\end{itemize}
	\end{itemize}
We say that a graph $G$ is {\it $(\eta,\theta,\F,\F')$-constructible} if there exists an $(\eta,\theta,\F,\F')$-construction of $G$. 

For every rooted tree $T$, define $I(T)$ to be the set of nodes of $T$ that have at least one child.

Let $G$ be a graph and $m>0$ an integer.
Let $S \subseteq V(G)$.
Let $c: S \rightarrow [m]$ be a function.
Let $c'$ be an $m$-colouring of $G$ such that $c'(v)=c(v)$ for every $v \in S$.
Then we say that $c$ can be {\it extended} to $c'$.

Recall that a class $\F$ of graphs is {\it hereditary} if for every $G \in \F$, every induced subgraph of $G$ belongs to $\F$.
Note that if $\F$ is hereditary, then so is $\F^+$.

\begin{lemma} \label{tree_extension}
For any integers $\ell \geq 1,N \geq 1,m \geq 2,\theta \geq 0$, there exists a function $f^*: {\mathbb N} \cup \{0\} \rightarrow {\mathbb N}$ such that the following holds.
Let $\F$ and $\F'$ be $(m,\ell,N)$-nice hereditary classes. 
Let $\eta$ be a nonnegative integer with $\eta \leq \theta$.
Let $G$ be an $(\eta,\theta,\F,\F')$-constructible graph with an $(\eta,\theta,\F,\F')$-construction $(T,\X)$. 
Denote $\X$ by $(X_t: t \in V(T))$.
Let $t^*$ be the root of $T$.
For every $Z \subseteq N_G^{\leq 3\ell}(X_{t^*})$, every function $c_Z: Z \rightarrow [m]$ can be extended to an $m$-colouring of $G^\ell$ with weak diameter in $G^\ell$ at most $f^*(\eta)$.
\end{lemma}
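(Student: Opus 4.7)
The plan is to prove the lemma by double induction, outer on $\eta$ and inner on $|V(G)|$. For the base case $\eta = 0$, every non-leaf edge of $(T,\X)$ has empty adhesion, so $G$ decomposes into pieces consisting of the root bag $G[X_{t^*}]\in\F^+$ together with disjoint leaf-attachments contributing at most one vertex each; the $(m,\ell,N)$-niceness of $\F^+$ combined with Lemma~\ref{deleting_centered_set} (applied with $Z$ as a $(|X_{t^*}|,3\ell)$-centered set) produces the required colouring. For the inductive step, I first extend $c_Z$ arbitrarily to a precolouring on $Z_0 := N_G^{\leq 3\ell}(X_{t^*})$ and work with $Z_0$, which remains $(|X_{t^*}|,3\ell)$-centered.

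Let $T_0$ be the subtree of $T$ on nodes $t$ with $X_t\cap Z_0\neq\emptyset$, let $G_0 := G[X_{T_0}]$, and let $U_E$ be the edges of $T$ with exactly one end in $V(T_0)$. For $e = tt'\in U_E$ with $t\in V(T_0)$, let $T_e$ be the component of $T-e$ disjoint from $T_0$, set $G_e := G[X_{T_e}]$, and $S_e := X_t\cap X_{t'}$ so that $|S_e|\leq\theta$. Partition each $S_e$ into classes $\P_e$ via $u\sim v\iff d_{G_e}(u,v)\leq L$ for a large constant $L$ chosen below. Build the auxiliary graph $H$ from $G_0$ by adding, for each $e\in U_E$ and each $Y\in\P_e$, one apex $v_Y$ joined to exactly $Y$, and equip $H$ with the tree-decomposition obtained from $(T_0,\X|_{T_0})$ by attaching to each such $t$ one leaf with bag $Y\cup\{v_Y\}$ per class $Y\in\P_e$. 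This is an $(\eta,\theta,\F,\F')$-construction of $H$: each gadget leaf has adhesion $|Y|\leq\theta$ with exactly one extra vertex $v_Y$ (satisfying the $\eta$-clause); its bag lies in $\F^+$ as one apex over $G[Y]\in\F$ by heredity; internal bags $H[X_t] = G[X_t]\in\F$ are unchanged; and the $\F'$-closure is inherited because $Y\subseteq X_t\cap X_{t'}$ was already an adhesion of $(T,\X)$. Unless $T_0 = T$ — in which case $V(G)\subseteq Z_0$ and Lemma~\ref{all_centered} concludes — one has $|V(H)| < |V(G)|$, so the inner induction applied to $H$ extends $c_{Z_0}$ to an $m$-colouring $c^H$ of $H^\ell$ with bounded weak diameter in $H^\ell$.

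Define the output colouring $c$ on $V(G_0)$ by $c := c^H|_{V(G_0)}$, and for each $e\in U_E$ pre-colour three concentric shells inside $G_e\setminus V(G_0)$: $A_1^e := \{u : d_{G_e}(u,S_e)\leq\ell\}\setminus V(G_0)$ receives $c^H(v_{Y(u)})$, where $Y(u)\in\P_e$ is the class containing an $S_e$-nearest vertex to $u$; $A_2^e$ of uncoloured vertices within $G_e$-distance $\ell$ of $A_1^e$ receives colour $1$; $A_3^e$ of uncoloured vertices within $G_e$-distance $\ell$ of $A_2^e$ receives colour $2$. The precoloured set in $G_e$ then lies in $N_{G_e}^{\leq 3\ell}(X_{t'})$, and $(T_e,\X|_{T_e})$ is an $(\eta,\theta,\F,\F')$-construction of $G_e$ rooted at $t'$ with $|X_{t'}|\leq\theta$ and $|V(G_e)| < |V(G)|$, so the inner induction extends the precolouring to an $m$-colouring of $G_e^\ell$ with bounded weak diameter in $G_e^\ell$. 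The colour-$1$ and colour-$2$ buffer shells (using $m\geq 2$) guarantee that every monochromatic $G^\ell$-component is either \emph{deep} — contained in $\{v\in V(G_e) : d_{G_e}(v,S_e)>\ell\}$ for a single $e$ and bounded via the $G_e$-induction — or \emph{shallow}, contained in $V(G_0)\cup\bigcup_e(A_1^e\cup A_2^e\cup A_3^e)$, in which case the map $v\in A_1^e\mapsto v_{Y(v)}$ (identity on $V(G_0)$) sends it into a $c^H$-monochromatic subset of $H$ of bounded $H^\ell$-weak diameter; finally, two vertices attached to a single class $Y\in\P_e$ are at $G$-distance at most $L$ by definition of $\P_e$.

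The main obstacle will be the distance bookkeeping between $H$ and $G$: each length-$2$ gadget detour $y\to v_Y\to y'$ in $H$ corresponds in $G$ to a path of length up to $L$ through $G_e$, and this inflation must be absorbed into the uniform bound $f^*(\eta)$ via a recursion of the form $f^*(\eta)\leq F(\ell, N, m, \theta, L, f^*(\eta-1))$ that closes for a suitable choice of $L$ and buffer widths. Separate care is required for the degenerate case $T_0 = T$, for components whose own colour is $1$ or $2$ (and so can extend into the buffer shells), and for verifying that $Z_0\subseteq N_H^{\leq 3\ell}(X_{t^*})$ so that the inner induction on $H$ may be legitimately invoked; also, the bookkeeping must keep track of which bags are checked against $\F$ versus $\F'$ after the gadget attachments.
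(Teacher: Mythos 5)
Your overall architecture closely mirrors the paper's (the sets $T_0,G_0,U_E,G_e$, the partitions $\P_e$, the apex graph $H$, the three buffer shells, and the deep/shallow dichotomy for monochromatic components), but the step that makes the induction well-founded is missing, and this is exactly the step the $(\eta,\theta,\F,\F')$-machinery exists for. You colour $H$ by invoking the \emph{same} statement at the \emph{same} $\eta$, justified by ``$|V(H)|<|V(G)|$ unless $T_0=T$, in which case $V(G)\subseteq Z_0$.'' Both halves of this claim fail. First, $T_0=T$ only says every bag meets $Z_0$, not that $V(G)\subseteq Z_0$; e.g.\ a two-node decomposition whose second bag contains $X_{t^*}$ but has large diameter gives $T_0=T$, $U_E=\emptyset$, hence $H=G_0=G$, and your recursion into $H$ is literally circular while the proposed fallback (Observation~\ref{all_centered}) does not apply. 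Second, even when $T_0\neq T$, $|V(H)|<|V(G)|$ can fail: $H$ gains one vertex $v_Y$ per class, whereas $G$ loses only $\bigcup_{e}(X_{T_e}\setminus X_{T_0})$, which can be empty (all vertices of the cut-off subtrees may already lie in bags of $T_0$). The paper avoids this entirely: it never recurses into $H$ at the same $\eta$. Instead it observes that, because every adhesion inside $T_0$ meets $Z$ (and the large-adhesion edges are leaf edges contributing at most one vertex), $H-Z$ admits an $(\eta-1,\theta,\F,\F')$-construction; the induction on $\eta$ then colours $(H-Z)^\ell$, and $c_Z$ is reinstated by Lemma~\ref{deleting_centered_set} since $Z$ is $(\theta,3\ell)$-centered. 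Without that $\eta$-decreasing step (or some genuinely decreasing substitute, which you do not supply), your induction does not close; you flag distance bookkeeping and the check $Z_0\subseteq N_H^{\leq 3\ell}(X_{t^*})$ as the remaining obstacles, but those are the benign parts — the well-foundedness is the real one.

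Two secondary points. When recursing into $G_e$ you root at $t'$ and assert $|X_{t'}|\leq\theta$; bags are not bounded by $\theta$ (only adhesions and the root bag are), so this is false. The paper instead attaches a new root $r_e$ with bag $X_e$ and, crucially, splits into the cases $|X_e|>\eta$ (then $|V(G_e)|\leq\theta+1$ by the leaf clause, handled trivially) and $|X_e|\leq\eta$ (so the new root edge respects the $\eta$-condition); without this split the rooted decomposition of $G_e$ need not be an $(\eta,\theta,\F,\F')$-construction. Also, in your base case $\eta=0$ the relevant piece is a bag together with \emph{many} pendant vertices (one per leaf child), which lies in $\F'$ but not in general in $\F^+$; this is precisely why $\F'$ appears in the definition, so the base case should be run through the niceness of $\F'$ rather than of $\F^+$.
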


\begin{proof}
A visual summary of some of the notation introduced throughout the proof is depicted in Figure~\ref{fig:tg}.
Let $\ell \geq 1,N \geq 1,m \geq 2,\theta \geq 0$ be integers.
By Observation \ref{apex_extension}, there exists an integer $N_{\F^+}$ (that depends only on $N$) such that $\F^+$ is $(m,\ell,N_{\F^+})$-nice.
Note that $\F \subseteq \F^+$, so we may assume that $N_{\F^+} \geq N$.

We define the following.
	\begin{itemize}
		\item Let $f_1: {\mathbb N} \rightarrow {\mathbb N}$ be the function such that for every $x \in {\mathbb N}$, $f_1(x)$ is the integer $N^*$ mentioned in Lemma \ref{deleting_centered_set} by taking $(k,r,N)=(\theta,3\ell,x)$. 
		\item Let $N_\theta$ be the integer $N^*$ mentioned in Lemma \ref{deleting_centered_set} by taking $(k,r,N)=(\theta,0,1)$.
			Note that we may assume that $N_\theta \geq \theta+1$  by possibly replacing $N_\theta$ by $\max\{N_\theta,\theta+1\}$. 
		\item Let $N'_\theta$ be the number $N^*$ mentioned in Observation \ref{all_centered} by taking $(k,r)=(\theta,3\ell)$. 
		\item Define $f^*: ({\mathbb N} \cup \{0\}) \rightarrow {\mathbb N}$ to be the function such that
		    \begin{itemize}
		        \item $f^*(0)=N_{\F^+} +N'_\theta+N_\theta+f_1(N)$, and 
		        \item for every $x \in {\mathbb N}$, $f^*(x)= \max\{(14\theta+4)\ell + 7\theta\ell^2f_1(f^*(x-1)),f^*(0)\}$.  
		    \end{itemize}
	\end{itemize}

Let $\eta,G,(T,\X),t^*,Z,c_Z$ be as defined in the lemma. 
We shall prove this lemma by a  triple induction, first on $\eta$, then on $\lvert I(T) \rvert+\lvert V(G)-Z \rvert+\lvert V(G) \rvert$,  and then on $\lvert V(T) \rvert$.
Since $(T,\X)$ is an $(\eta,\theta,\F,\F')$-construction, $\lvert X_{t^*} \rvert \leq \theta$.
So $Z$ is $(\theta,3\ell)$-centered.
If $Z=V(G)$, then $c_Z$ is itself an $m$-colouring of $G^\ell$ with weak diameter in $G^\ell$ at most $N'_\theta \leq f^*(\eta)$ by Observation \ref{all_centered}. So we may assume $Z \neq V(G)$.

\begin{clm}[Base case]\label{cl:1}
The lemma holds for $\eta=0$.
\end{clm}

\begin{claimproof}
Assume that $\eta=0$. 
Note that for every component $Q$ of $G^\ell$, $Q=C^\ell$ for some component $C$ of $G$.
So it suffices to show that for each component $C$ of $G$, $c_Z|_{Z\cap V(C)}$ extends to an $m$-colouring of $C^\ell$ with weak diameter in $C^\ell$ at most $f^*(\eta)$. 

Let $C$ be a component of $G$.
Let $W=\{tt' \in E(T): X_t \cap X_{t'}=\emptyset\}$. Then $V(C) \subseteq X_{T_C}$ for some component $T_C$ of $T-W$. Let $t_C$ be the root of $T_C$, and let us write $G_C$ as a shorthand for $G[X_{T_C}]$.
For every edge $tt'\in E(T_C)$ it holds that $|X_t\cap X_{t'}|>0 = \eta$, and it follows from the fact that $(T,\X)$ is an $(\eta,\theta,\F,\F')$-construction that one end of $tt'$ has no child, say $t'$, and $|X_{t'}-X_t|\leq 1$. In particular $T_C$ is star and $G_C$ is obtained from $G[X_{t_C}]$ by adding, for each child $t'$ of $t_C$ in $T_C$, a single vertex whose neighbourhood is contained in $X_{t_C} \cap X_{t'}$. So $G_C \in \F'$.

Since $\F'$ is an $(m,\ell,N)$-nice hereditary class, if $t_C\neq t^*$, then  $Z\cap V(C)=\emptyset$, so $(c_Z)|_{Z\cap V(C)}=(c_Z)|_{\emptyset}$ can be extended to an $m$-colouring of $C^\ell$ with weak diameter in $C^\ell$ at most $N\leq f^*(\eta)$. So we may assume $t_C=t^*$. 
Then $Z \subseteq N_{G}^{\leq 3\ell}(X_{t^*}) \cap V(G_C)= N_{G_C}^{\leq 3\ell}(X_{t^*})$, so $Z$ is $(\theta,3\ell)$-centered in $G_C$. We find $G_C-Z \in \mathcal{F'}$ since $\F'$ is hereditary, so using that $\mathcal{F'}$ is $(m,\ell,N)$-nice and applying Lemma \ref{deleting_centered_set} we find that $c_Z|_{Z\cap X_{T_C}}$ can be extended to an $m$-colouring of $G_C^\ell$ with weak diameter in $G_C^\ell$ at most $f_1(N) \leq f^*(\eta)=f(0)$.
This proves the claim since  $C$ is a component of $G_C$.
\end{claimproof}

Henceforth we assume that we have proven the lemma for all instances for which $(\eta,\lvert I(T) \rvert+\lvert V(G)-Z \rvert+\lvert V(G) \rvert)$ is lexicographically smaller, and assume $\eta \geq 1$.

\begin{clm}
We may assume that $G$ is connected.
\end{clm}

\begin{claimproof}
Assume that $G$ is disconnected.
It suffices to show that for each component $C$ of $G$, $c_Z|_{Z\cap V(C)}$ extends to an $m$-colouring of $C^\ell$ with weak diameter in $C^\ell$ at most $f^*(\eta)$. 

Let $C$ be a component of $G$. 
Since $G$ is disconnected, $\lvert V(C) \rvert < \lvert V(G) \rvert$.
Let $T_C$ be the subtree of $T$ induced by $\{t \in V(T): X_t \cap V(C) \neq \emptyset\}$. Let $\X_C = (X_t \cap V(C): t \in V(T_C))$.
If $t^* \in V(T_C)$, then $t^*$ is the root of $T_C$ and $X_{t^*} \cap V(C) \neq \emptyset$, so in this case $(T_C,\X_C)$ is an $(\eta,\theta,\F,\F')$-construction of $C$ with 
\[
\lvert I(T_C) \rvert+\lvert V(C)-(Z \cap V(C)) \rvert+\lvert V(C)\rvert< \lvert I(T) \rvert+\lvert V(G)-Z \rvert+\lvert V(G) \rvert,
\]
and hence the induction hypothesis applies to $C$.
So we may assume $t^* \not \in V(T_C)$. Then $Z \cap V(C)=\emptyset$.
Let $T_C'$ be the rooted tree obtained from $T_C$ by adding a node $t^*_C$ adjacent to the root of $T_C$, where $t^*_C$ is the root of $T_C'$.
Let the bag at $t^*_C$ be the set consisting of a single vertex in the intersection of $V(C)$ and the bag of the root of $T_C$.
Then since $\eta \geq 1$, we obtain an $(\eta,\theta,\F,\F')$-construction of $C$ with underlying tree $T_C'$.
(Note that $\eta \geq 1$ implies that $t^*_C$ satisfies the last condition of being an $(\eta,\theta,\F,\F')$-construction, since  stars are the only graphs that can be obtained by adding new vertices to the bag of $t^*_C$, and $\eta \geq 1$ implies that $\F'$ contains stars.)
Since $t^* \not \in V(T_C)$, we find $\lvert I(T_C') \rvert\le \lvert I(T) \rvert$. 
Since $\lvert V(C) \rvert < \lvert V(G) \rvert$, we obtain
\[
\lvert I(T_C') \rvert+\lvert V(C)-(Z \cap V(C)) \rvert+\lvert V(C) \rvert < \lvert I(T) \rvert+\lvert V(G)-Z \rvert+\lvert V(G) \rvert,
\]
so the induction hypothesis applies to $C$.
\end{claimproof}

So henceforth we may assume that $G$ is connected.

\begin{clm} \label{claim_Zbasic}
We may assume that $Z=N_G^{\leq 3\ell}(X_{t^*})$ and $Z-X_{t^*} \neq \emptyset$.
\end{clm}

\begin{claimproof}
If there exists $v \in N_G^{\leq 3\ell}(X_{t^*})-Z$, then let $Z' = Z \cup \{v\}$ and let $c': Z' \rightarrow [m]$ be the function obtained from $c_Z$ by further defining $c'(v)=m$. Note that $|V(G)-Z'|<|V(G)-Z|$, so by the induction hypothesis, $c'$ (and hence $c_Z$) can be extended to an $m$-colouring of $G^\ell$ with weak diameter in $G^\ell$ at most $f^*(\eta)$.
Hence we may assume that $Z=N_G^{\leq 3\ell}(X_{t^*})$. 
In particular, since $Z\neq V(G)$ and $G$ is connected, we find $Z-X_{t^*}\neq \emptyset$. 
\end{claimproof}

\begin{figure}[htb]
 \centering
 \includegraphics[scale=1.1]{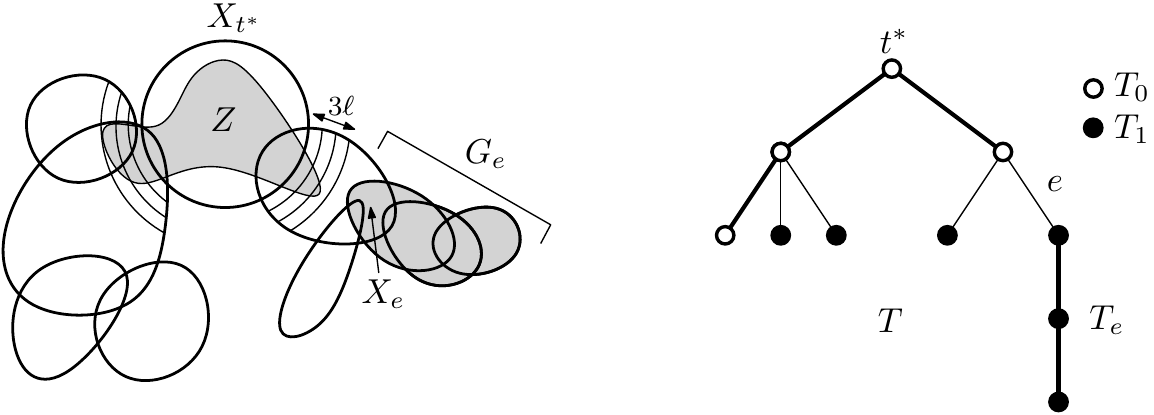}
 \caption{A snapshot of the setting of Lemma~\ref{tree_extension} and the notations used in the proof. Note that by Claim~\ref{claim_Zbasic} we know that we can assume that $Z=N_G^{\leq 3\ell}(X_{t^*})$, but we only include the original setting of this lemma in the picture.}
 \label{fig:tg}
\end{figure}

For each $v \in X_{t^*}$, let $T_v$ be the subgraph of $T$ induced by $\{t \in V(T): N_G^{\leq 3\ell}(\{v\}) \cap X_t \neq \emptyset\}$.
Since $(T,\X)$ is a tree-decomposition, $T_v$ is a subtree of $T$ containing $t^*$.
So $\bigcup_{v \in X_{t^*}}T_v$ is a subtree of $T$ containing $t^*$.

Let $T_0 = \bigcup_{v \in X_{t^*}}T_v$.
Since $Z = N_G^{\leq 3\ell}(X_{t^*})$, $Z \subseteq X_{T_0}$.
Let $U_E = \{e \in E(T):$ exactly one end of $e$ is in $V(T_0)\}$ (see Figure~\ref{fig:tg}, right, where the edges of $U_E$ are depicted with thin lines).
Note that for every vertex $t\in V(T_0)$ incident with an edge $e \in U_E$, the component of $T-e$ disjoint from $t$ is disjoint from $T_0$.

For each $e \in E(T)$, define $X_e$ to be the intersection of the bags of the ends of $e$. 
Since the tree-decomposition $(T,\X)$ has adhesion at most $\theta$, $|X_e|\le \theta$ for every $e\in E(T)$.
For each $e \in E(T)$, let $T_e$ be the component of $T-e$ disjoint from $t^*$. 
Since $G$ is connected and $X_{t^*} \neq \emptyset$, we may assume that $X_e \neq \emptyset$ for every $e\in E(T)$, for otherwise $X_{T_e}=\emptyset$ so that we can delete $T_e$ from $T$ to decrease  $\lvert V(T) \rvert$ and apply induction.

Let $T_1=\bigcup_{e\in U_E}T_e$.
Note that $V(T_1)=V(T)-V(T_0)$.
Let us define $G_0=G[X_{T_0}]$, $G_1=G[X_{T_1}]$, and $G_e=G[X_{T_e}]$ for every edge $e\in U_E$ (see Figure~\ref{fig:tg} for a visual summary of some of the notation introduced in this paragraph). 
By Claim~\ref{claim_Zbasic}, $Z \cap X_t \neq \emptyset$ for every $t \in V(T_0)$, and $Z \cap X_t = \emptyset$ for every $t \in V(T_1)$.

A naive plan is to first extend $c_Z$ to $G_0^\ell$ by using the induction hypothesis, and then further extend this to $G_1$ using the induction hypothesis on the graphs $G_e$ for $e\in U_E$. 
An issue of this naive plan is that some edges of $G^\ell$ with both ends in $V(G_0)$ cannot be told from $G_0$, so colouring $G_0$ without knowing those edges in $G^\ell$ might make some monochromatic component in $G^\ell$ contain arbitrarily many monochromatic components in $G_0^\ell$. 
To overcome this difficulty as well as other potential issues, we add ``gadgets'' to $G_0$ to obtain a graph $H$ such that extending $c_Z$ to $H^\ell$ gives sufficient information about how to extend it to $G^\ell$.

For each $e \in U_E$, we define a partition $\P_e$ of $X_e$ such that each part of $\P_e$ is a connected component of the subgraph of $(G_e)^{7\ell}$ induced by $X_e$. 
In other words, two vertices $x,y\in X_e$ are in the same part of $\P_e$ if and only if there exists a sequence $a_1,\dots,a_\theta$ of (not necessarily distinct) elements of $X_e$ such that $a_1=x$, $a_\theta=y$, and for every $i \in [\theta-1]$, there exists a path in $G_e$ from $a_i$ to $a_{i+1}$ of length at most $7\ell$.

Define $H$ to be the graph obtained from $G_0$ by adding, for each $e \in U_E$ and $Y \in \P_e$, a new vertex $v_Y$ whose neighbourhood in $H$ is $Y$.

\begin{clm}
\label{claim4}
There is an $m$-colouring $c_H$ of $H^\ell$ with weak diameter in $H^\ell$ at most $f_1(f^*(\eta-1))$ such that $c_H(v)=c_Z(v)$ for every $v \in Z$. 
\end{clm}
\begin{claimproof}
We first show that it is enough to give an
$(\eta-1,\theta,\F,\F')$-construction of $H-Z$. By the induction hypothesis, this would imply that there exists an $m$-colouring $c_H'$ of $(H-Z)^\ell$ with weak diameter in $(H-Z)^\ell$ at most $f^*(\eta-1)$.
By Lemma \ref{deleting_centered_set}, $c_H=c_Z \cup c_H'$ is then an $m$-colouring of $H^\ell$ with weak diameter in $H^\ell$ at most $f_1(f^*(\eta-1))$, and by definition $c_H(v)=c_Z(v)$ for every $v \in Z$. 

The remainder of the proof of this claim is devoted to showing the existence of an $(\eta-1,\theta,\F,\F')$-construction of $H-Z$.
We start by showing the existence of an $(\eta,\theta,\mathcal{F},\mathcal{F}')$-construction for $H$. 

Define $T'$ to be the rooted tree obtained from $T_0$ by adding, for each $e \in U_E$ and $Y \in \P_e$, a node $t_Y$ adjacent to the end of $e$ in $V(T_0)$.
Note that for every $e \in U_E$, since $X_e \neq \emptyset$, the end of $e$ in $V(T_0)$ has a child in both $T$ and $T'$.
For each $t' \in V(T_0)$, define $X'_{t'}=X_{t'}$; for each $t' \in V(T')-V(T_0)$, $t'=t_Y$ for some $e \in U_E$ and $Y \in \P_e$, and we define $X'_{t'}=X_e \cup \{v_Y\}$. 
Let $\X'=(X'_t: t \in V(T'))$.

Clearly, $(T',\X')$ is a tree-decomposition of $H$ of adhesion at most $\max_{e \in U_E}\{\theta,\lvert X_e \rvert\} = \theta$.
For each $tt' \in E(T')$, say $t'$ is a child of $t$, if $tt' \in E(T_0)$, then $X'_t=X_t$, $X'_{t'}=X_{t'}$, $t$ has a child in both $T$ and $T'$, and $t'$ has a child in $T'$ if and only if $t'$ has a child in $T$ (since $X_{e} \neq \emptyset$ for every $e \in U_E$); if $tt' \not \in E(T_0)$, then $t \in V(T_0)$ and $t' \not \in V(T_0)$, and $\lvert X'_{t'}-X'_t \rvert = 1$.
Hence for every $tt' \in E(T')$, if $\lvert X'_t \cap X'_{t'} \rvert > \eta$, then one end of $tt'$, say $t'$, has no child, and $\lvert X'_{t'}-X'_t \rvert \leq 1$.

Furthermore, $t^* \in V(T_0) \subseteq V(T')$ and $X'_{t^*}=X_{t^*}$, so $\lvert X'_{t^*} \rvert = \lvert X_{t^*} \rvert \leq \theta$.
Since $\eta \geq 1$, $X'_{t^*} = X_{t^*} \neq \emptyset$.
In addition, for every $t \in V(T')$, if $t$ has a child in $T'$, then $t \in V(T_0) \subseteq V(T)$ has a child in $T$, so $G[X'_t]=G[X_t] \in \F$; if $t$ has no child in $T'$, then either $t \in V(T)$ has no child in $T$ (so $G[X'_t] = G[X_t] \in \F^+$), or $t \in V(T')-V(T)$ and $G[X'_t]$ can be obtained by adding a vertex to $G[X_e] \in \F$ for some $e \in U_E$, so $G[X'_t] \in \F^+$.
If $t$ has a child in $T'$, then $t \in V(T_0) \subseteq V(T)$, so $\F'$ contains every graph that can be obtained from $G[X'_t]=G[X_t]$ by adding, for each child $t'$ of $t$ in $T'$, new vertices whose neighbourhoods are contained in $X'_t \cap X'_{t'}$.
Therefore, $(T',\X')$ is an $(\eta,\theta,\F,\F')$-construction of $H$.

For every $t \in V(T')$, let $X''_t = X'_t-Z$.
Let $\X''=(X''_t: t\in V(T'))$.
So $(T',\X'')$ is a tree-decomposition of $H-Z$ of adhesion at most $\theta$.
Since for every $t \in V(T_0)$, $X_t \cap Z \neq \emptyset$ and $Z=N_G^{\leq 3\ell}(X_{t^*})$, we know that for every $e \in E(T_0)$, $X_e \cap Z \neq \emptyset$.
Note that $X''_{t^*} = X_{t^*}-Z=\emptyset$.
If $\eta-1=0$, then let $T'''=T'$ and $\X'''=\X''$; otherwise,  let $t_0$ be a node of $T'$ with $X''_{t_0} \neq \emptyset$ closest to $t^*$, let $v_0$ be a vertex in $X''_{t_0}$, let $T'''$ be the rooted tree obtained from $T'$ by adding a new node $t_0^*$ adjacent to $t^*$, where $t_0^*$ is the root of $T'''$, and let $\X'''=(X'''_t: t \in V(T'''))$, where $X'''_{t_0^*}=\{v_0\}$, $X'''_t=X''_t \cup \{v_0\}$ if $t \neq t_0^*$ and $t$ is in the path in $T'$ between $t^*$ and $t_0$, and $X'''_{t}=X''_t$ otherwise.

Then $(T''',\X''')$ is a tree-decomposition of $H-Z$.
Since $(T',\X')$ is an $(\eta,\theta,\F,\F')$-construction of $H$, and $G$ is connected, and $\F,\F'$ and $\F^+$ are hereditary, $(T''',\X''')$ is an
$(\eta-1,\theta,\F,\F')$-construction of $H-Z$ as desired.
\end{claimproof}

Let $S = \bigcup_{e \in U_E} X_e$.
Recall that $G_1=\bigcup_{e\in U_E} G_e$.
So $S=V(G_0)\cap V(G_1)$.
For $i \in [3]$, let $Z_i = N_{G_1}^{\leq i\ell}(S)$.  

\begin{clm}
\label{claim5}
For every $e \in U_E$, $i \in [3]$ and $v \in Z_i \cap X_{T_e}-X_e$, there exists $Y \in \P_e$ such that $v \in N_{G_e}^{\leq i\ell}(Y)$, and for every $Y' \in \P_e-\{Y\}$, $v \not \in N_{G_e}^{\leq i\ell}(Y')$.
\end{clm}
\begin{claimproof}
Since $v \in Z_{i} \cap X_{T_e}-X_e$, there exists a path $P$ in $G_1$ from $v$ to $X_e$ internally disjoint from $X_e$ of length at most $i\ell$.
Since $P$ is internally disjoint from $X_e$, $P$ is a path in $G_e$.
Let $y$ be the vertex in $V(P) \cap X_e$.
Let $Y$ be the member of $\P_e$ containing $y$.
So $v \in N_{G_e}^{\leq i\ell}(Y)$.
Let $Y'$ be any member of $\P_e-\{Y\}$.
If $v \in N_{G_e}^{\leq i\ell}(Y')$, then there exists a walk in $G_e$ from $Y$ to $Y'$ of length at most $2i\ell \leq 6\ell$, so $Y=Y'$ by the definition of $\P_e$, a contradiction.
Hence $v \not \in N_{G_e}^{\leq i\ell}(Y')$.
\end{claimproof}
For every $v \in Z_1-V(H) = Z_1-S \subseteq Z_{3}-S$, there is a unique $e_v\in U_E$ with $v\in Z_3\cap X_{T_{e_v}}-X_{e_v}$, hence
by Claim \ref{claim5}, there exists a unique  pair $(e_v,Y_v)$ with $e_v \in U_E$ and $Y_v \in \P_e$ such that $v \in N_{G_{e_v}}^{\leq 3\ell}(Y_v)$.

Let $c_H$ be the function from Claim \ref{claim4}. This assigns a colour to all vertices from $V(H)$ and in particular all vertices from $V(G_0)$. 
We now use the structure of $H$ to extend this colouring of $G_0$ to $G_1$.
Let $c_3: Z_3 \rightarrow [m]$ be the function such that 
	\begin{itemize}
		\item $c_3(u)=c_H(u)$ for every $u \in Z_1 \cap V(H) = S$, 
		\item $c_3(u) = c_H(v_{Y_u})$ for every $u \in Z_1-V(H) = Z_1-S$, 
		\item $c_3(u)=1$ for every $u \in Z_2- Z_1$, and 
		\item $c_3(u)=2$ for every $u \in Z_3-Z_{2}$.
	\end{itemize}
	
For every $e \in U_E$, let 
$Z_e = Z_3 \cap V(G_e)$, and let $c_e: Z_e \rightarrow [m]$ such that $c_e(v)=c_3(v)$ for every $v \in Z_e$.
By Claim \ref{claim5}, for every $e \in U_E$, $Z_e \subseteq N_{G_e}^{\leq 3\ell}(X_e)$.\\

\begin{clm}
\label{claim 6}
For every $e \in U_E$, $c_e$ can be extended to an $m$-colouring $c_e'$ of $G_e^\ell$ with weak diameter in $G_e^\ell$ at most $f^*(\eta)$.
\end{clm}
\begin{claimproof}
We aim to apply the induction hypothesis to $G_e$.
Let $e \in U_E$.
If $\lvert X_e \rvert > \eta$, then since $(T,\X)$ is an $(\eta, \theta,\F,\F')$-construction of $G$, $\lvert V(G_e) \rvert \leq \lvert X_e \rvert+1 \leq \theta+1$, so $c_e$ can be extended to an $m$-colouring of $G_e^\ell$ with weak diameter in $G_e^\ell$ at most $\lvert V(G_e) \rvert \leq \theta+1 \leq f^*(\eta)$.

So we may assume that $\lvert X_e \rvert \leq \eta$.
We may also assume that $X_e \neq \emptyset$, since if $X_e = \emptyset$, then $V(G_e)=\emptyset$ (since $G$ is connected and $X_{t^*} \neq \emptyset$).
Define $Q_e$ to be the rooted tree obtained from $T_e$ by adding a node $r_e$ adjacent to the end of $e$ in $V(T_e)$, where $r_e$ is the root of $Q_e$.
Let $W_{r_e} = X_e$; for every $t \in V(T_e)$, let $W_t=X_t$.
Let $\W=(W_t: t \in V(Q_e))$.
Then $(Q_e,\W)$ is a rooted tree-decomposition of $G_e$ of adhesion at most $\theta$ such that $\lvert W_{r_e} \rvert = \lvert X_e \rvert \leq \eta$.
So if $tt' \in E(Q_e)$ with $\lvert W_t \cap W_{t'} \rvert > \eta$, then $tt' \in E(T_e)$, so $W_t=X_t$ and $W_{t'}=X_{t'}$.
Since $G[W_{r_e}]=G[X_e]$ and $\F$ and $\F'$ are hereditary, $(Q_e,\W)$ is an $(\eta,\theta,\F,\F')$-construction of $G_e$.

Note that $I(Q_e) = \{r_e\} \cup (I(T) \cap V(T_e))$.
Note that every vertex that belongs to the shortest directed path in $T$ containing $t^*$ and an end of $e$ belongs to $I(T)-V(T_e)$.
So $\lvert I(Q_e) \rvert \leq \lvert I(T) \rvert$, and equality holds only when $t^*$ is an end of $e$. 
If $t^*$ is an end of $e$, then since $X_e \neq \emptyset$, and $Z\supseteq X_{t^*}$ (by Claim~\ref{claim_Zbasic}), $X_t \cap Z \supseteq X_t \cap X_{t^*} \neq \emptyset$, where $t$ is the end of $e$ other than $t^*$, so $t \in V(T_0)$, a contradiction.
Hence $\lvert I(Q_e) \rvert < \lvert I(T) \rvert$.

Recall that $Z_e \subseteq N_{G_e}^{\leq 3\ell}(X_e) = N_{G_e}^{\leq 3\ell}(W_{r_e})$.
Hence by the induction hypothesis, $c_e$ can be extended to an $m$-colouring of $G_e^\ell$ with weak diameter in $G_e^\ell$ at most $f^*(\eta)$.
\end{claimproof}

For every $e \in U_E$, let $c_e'$ be the $m$-colouring from Claim \ref{claim 6}.
Define 
\[
c = c_H|_{V(G_0)} \cup \bigcup_{e \in U_E}c'_e.
\]
If $v\in V(G_0)\cap V(G_e)$ for some $e\in U_E$, then $c_e'(v)=c_3(v)=c_H(v)$ and hence  $c$ is well-defined. Moreover, $c|_Z=c_H|_Z=c_Z$. 

To prove this lemma, if suffices to show that $c$ has weak diameter in $G^\ell$ at most $f^*(\eta)$.
Let $M$ be a $c$-monochromatic component of $G^\ell$.  
The rest of the proof is devoted to showing that $M$ has weak diameter  in $G^\ell$ at most $f^*(\eta)$.

\begin{clm} \label{claim_oneZ}
$V(M)$ cannot intersect both $Z_1$ and $(Z_3- Z_2)$.
\end{clm} 

\begin{claimproof}
Suppose that $V(M)$ intersects both $Z_1$ and $Z_3-Z_2$.
Since $M$ is a connected subgraph of $G^\ell$, $M$ would then also intersect $Z_2- Z_1$. 
But then $M$ would contain a vertex $v_3$ in $Z_3- Z_2$ and a vertex $v_2$ in $Z_2- Z_1$.
Since $c(v_3)=2$ and $c(v_2)=1$, $M$ could not be $c$-monochromatic, a contradiction. 
\end{claimproof}

\begin{clm} \label{claim_intersect}
We may assume that for every $e\in U_E$, $V(M) \not \subseteq V(G_e)-Z_1$.
\end{clm} 

\begin{claimproof}
Assume that $V(M)\subseteq V(G_e)-Z_1$ for some $e\in U_E$.
Then every edge in $M \subseteq G^\ell$ is an edge in $G_e^\ell$.
So $M$ is a $c_e'$-monochromatic component in $G_e^\ell$.
Hence the weak diameter  in $G_e^\ell$ of $M$ is at most $f^*(\eta)$.
That is, for any vertices $x,y$ in $M$, there exists a path $P_{x,y}$ in $G_e^\ell$ between $x$ and $y$ of length at most $f^*(\eta)$.
Since $G_e \subseteq G$, $G_e^\ell \subseteq G^\ell$.
So for any vertices $x,y$ in $M$, $P_{x,y}$ is a path in $G^\ell$ between $x$ and $y$ of length at most $f^*(\eta)$.
Hence the weak diameter in $G^\ell$ of $M$  is at most $f^*(\eta)$. 
\end{claimproof}

In particular,  Claim \ref{claim_intersect} shows that for all $e\in U_E$, if $M$ contains a vertex from $V(G_e)-X_e$, then we can assume that it also contains a vertex from $Z_1 \cap (V(G_e)-X_e)$, since the distance between $V(G_e)-Z_1$ and $X_e$ is greater than $\ell$. 
Let 
\[
B = \{e \in U_E: V(M) \cap (V(G_e)-X_e) \neq \emptyset\}.
\]

\begin{clm} \label{claim_G0_and_others}
$V(M) \subseteq V(G_0) \cup \bigcup_{e \in B}(X_{T_e} \cap Z_2)$.
\end{clm}

\begin{claimproof}
Let $e \in B$.  Since $V(M) \cap (V(G_e)-X_e) \neq \emptyset$, we just saw that this implies $V(M) \cap (Z_1-X_e) \cap V(G_e) \neq \emptyset$. 
In particular, $M$ contains a vertex from $Z_1$. 
By Claim~\ref{claim_oneZ}, $V(M) \cap Z_3-Z_2 = \emptyset$.
Since $M \subseteq G^\ell$, $V(M) \cap X_{T_e} \subseteq Z_2 \cap X_{T_e}$.
This proves the claim.
\end{claimproof}

\begin{clm} \label{claim8}
For every $x \in V(M)-V(G_0)$, there exists $e \in B$ with $x \in X_{T_e} \cap Z_2-X_e$, and there exists a unique $Y_x \in \P_e$ such that $x \in N_{G_e}^{\leq 2\ell}(Y_x)$. Moreover, we have $c(M)=c_H(v_{Y_x})$.
\end{clm}

\begin{claimproof}
Let $x \in V(M)-V(G_0)$.
By Claim \ref{claim_G0_and_others}, there exists $e \in B$ such that $x \in X_{T_e} \cap Z_2-X_e$.
So by Claim \ref{claim5}, there exists a unique $Y_x \in \P_e$ such that $x \in N_{G_e}^{\leq 2\ell}(Y_{x})$.
Furthermore, $V(M) \cap (Z_1-X_e) \cap V(G_e) \neq \emptyset$ since $V(M)\cap (V(G_e)-X_e)\neq \emptyset$.
So there exists $x' \in V(M) \cap (Z_1-X_e) \cap V(G_e)$ such that $c(x)=c(x')=c(M)$.
We further choose such a vertex $x'$ such that the distance in $M$ between $x$ and $x'$ is as small as possible.
(Note that it is possible that $x=x'$.)
Let $P$ be a shortest path in $M \subseteq G^\ell$ between $x$ and $x'$.
By the choice of $x'$, $V(P)-\{x'\} \subseteq V(M) \cap X_{T_e} \cap Z_2-Z_1$.
Denote $P$ by  $x_1x_2\dots x_{\lvert V(P) \rvert}$. 
So for every $i \in [\lvert V(P) \rvert-1]$, there exists a path in $G_e$ between $x_i$ and $x_{i+1}$ with length in $G_e$ at most $\ell$. 

By Claim \ref{claim5}, there exists a unique $Y_{x'} \in \P_e$ such that $x' \in N_{G_e}^{\leq \ell}(Y_{x'})$, and for each $v \in V(P)-\{x'\}$, there exists a unique $Y_{v} \in \P_e$ such that $v \in N_{G_e}^{\leq 2\ell}(Y_{v})$.
For any $i \in [\lvert V(P) \rvert-1]$, if $Y_{x_i} \neq Y_{x_{i+1}}$, then there exists a walk in $G_e$ from $Y_{x_i}$ to $Y_{x_{i+1}}$ with length at most $2\ell+\ell+2\ell \leq 5\ell$, so $Y_{x_i}=Y_{x_{i+1}}$ by the definition of $\P_e$, a contradiction.
So $Y_{x_i}=Y_{x_{i+1}}$ for every $i \in [\lvert V(P) \rvert-1]$.
In particular, $v_{Y_x}=v_{Y_{x'}}$.
Hence by the definition of $c_3$, $c(M)=c(x')=c_3(x')=c_H(v_{Y_{x'}})=c_H(v_{Y_{x}})$.
\end{claimproof}
For every $x \in V(M)-V(G_0)$, define $Y_x$ to be the set mentioned in Claim \ref{claim8}.
Let $M'$ be the graph obtained from $M$ by identifying, for each $e \in B$ and $Y \in \P_e$, all vertices $x \in V(M)-V(G_0)$ with $Y_x=Y$ into a single vertex $v_Y$. Observe that since $M$ is connected, $M'$ is also connected.
Note that there is a natural injection from $V(M')$ to $V(H)$ obtained by the identification mentioned in the definition of $M'$.
So we may assume $V(M') \subseteq V(H)$.

\begin{clm} \label{claim10}
$M'$ is contained in a $c_H$-monochromatic component in $H^\ell$.
\end{clm} 

\begin{claimproof}
By Claim \ref{claim8}, for every $x \in V(M)-V(G_0)$, $c(M)=c_H(v_{Y_x})$.
So all vertices in $M'$ have the same colour in $c_H$.
Hence to prove that $M'$ is contained in a $c_H$-monochromatic component in $H^\ell$, it suffices to prove that $M'$ is a connected subgraph of $H^\ell$.
Since $M'$ is connected and $V(M') \subseteq V(H)=V(H^\ell)$, it suffices to prove that $E(M') \subseteq E(H^\ell)$.

Note that for any $e \in U_E$, distinct vertices $x,y \in X_e$ and path $P$ in $G_e$ between $x$ and $y$ internally disjoint from $X_e$ of length at most $\ell$ having at least one internal vertex, the ends of $P$ are contained in the same part (say $Y$) of $\P_e$, so there exists a path $\overline{P}=xv_Yy$ in $H$ of length two between $x$ and $y$; since $P$ has at least one internal vertex, the length of $\overline{P}$ is at most the length of $P$.
Hence, for every path $P$ in $G$ of length at most $\ell$ between two distinct vertices in $V(G_0)$, we can replace each maximal subpath $P'$ of $P$ of length at least two whose all internal vertices are not in $V(G_0)$ by $\overline{P'}$ to obtain a walk $\overline{P}$ in $H$ of length at most the length of $P$ having the same ends as $P$.

Hence if $xy$ is an edge of $M'$ with $x,y \in V(G_0)$, then $x,y \in V(M)$, and since $M \subseteq G^\ell$, there exists a path $P_{xy}$ in $G$ of length at most $\ell$ between $x$ and $y$, so $\overline{P_{xy}}$ is a walk in $H$ of length at most $\ell$ between $x$ and $y$, so $xy \in E(H^\ell)$.

Now assume that there exists an edge $xy$ of $M'$ with $x \in V(G_0)$ and $y \not \in V(G_0)$.
Since $y \not \in V(G_0)$, there exists $y_0 \in V(M)$ with $v_{Y_{y_0}}=y$ such that $xy_0 \in E(M)$, and there exists $e_y \in U_E$ such that $y_0 \in X_{T_{e_y}}-X_{e_y}$.
Since $M \subseteq G^\ell$, there exists a path $P_{xy}$ in $G$ of length at most $\ell$ between $x$ and $y_0$.
Let $y'$ be the vertex in $V(P_{xy}) \cap X_{e_y}$ such that the subpath of $P_{xy}$ between $y_0$ and $y'$ is contained in $G_{e_y}$.
Then $y' \in Y_{y_0}$ by Claim \ref{claim5}.
So $x\overline{P'_{xy}}y'y$ is a walk in $H$ of length at most $\ell$, where $P'_{xy}$ is the subpath of $P_{xy}$ between $x$ and $y'$.
So $xy \in E(H^\ell)$.

Hence every edge of $M'$ incident with a vertex of $V(G_0)$ is an edge of $H^\ell$.

Now assume that there exist $e \in B$ and distinct $Y,Y' \in \P_e$ such that $v_Yv_{Y'} \in E(M')$.
So there exists $ab \in E(M)$ such that $a,b \in V(M) \cap X_{T_e}-X_e$, $Y_a=Y$ and $Y_b=Y'$.
By Claim \ref{claim8}, there exist a path $P_a$ in $G_e$ from $a$ to $Y_a$ with length at most $2\ell$ and a path $P_b$ in $G_e$ from $b$ to $Y_b$ of length at most $2\ell$.
Since $ab \in E(M) \subseteq E(G^\ell)$, there exists a path $P_{ab}$ in $G$ of length at most $\ell$ from $a$ to $b$.
If $V(P_{ab}) \subseteq X_{T_e}$, then $P_a \cup P_{ab} \cup P_b$ is a walk in $G_e$ from $Y_a$ to $Y_b$ of length at most $2\ell+\ell+2\ell<7\ell$, contradicting that $Y_a=Y$ and $Y_b=Y'$ are distinct parts of $\P_e$.
So $V(P_{ab}) \not \subseteq X_{T_e}$.
In particular, there exist distinct $a',b' \in V(P_{ab}) \cap X_e$ such that the subpath $P_{a'}$ of $P_{ab}$ between $a$ and $a'$ and the subpath $P_{b'}$ of $P_{ab}$ between $b$ and $b'$ are contained in $G_e$.
Since the length of $P_{a'}$ and $P_{b'}$ are at most $\ell$, $a' \in Y_a$ and $b' \in Y_b$ by Claim \ref{claim5}.
So $v_Ya'\overline{P'}b'v_{Y'}$ is path in $H$ of length at most the length of $P_{ab}$, where $P'$ is the subpath of $P_{ab}$ between $a'$ and $b'$.
Hence $v_Yv_{Y'} \in H^\ell$.

Finally, assume that there exist distinct $e_1,e_2 \in B$, $Y_1 \in \P_{e_1}$ and $Y_2 \in \P_{e_2}$ such that $v_{Y_1}v_{Y_2} \in E(M')$.
So for each $i \in [2]$, there exists $x_i \in V(M) \cap Z_2 \cap X_{T_{e_i}}-X_{e_i}$ by Claim \ref{claim8} such that $Y_{x_i}=Y_i$, and there exists a path $P_{x_1x_2}$ in $G$ of length at most $\ell$ between $x_1$ and $x_2$.
For each $i \in [2]$, let $y_i$ be the vertex in $V(P_{x_1x_2}) \cap X_{e_i}$ such that the subpath of $P_{x_1x_2}$ between $x_i$ and $y_i$ is contained in $G_{e_i}$.
Then $v_{Y_1}y_1\overline{P'_{x_1x_2}}y_2v_{Y_2}$ is a walk in $H$ of length at most $\ell$, where $P'_{x_1x_2}$ is the subpath of $P_{x_1x_2}$ between $y_1$ and $y_2$.
Therefore, $v_{Y_1}v_{Y_2} \in E(H^\ell)$.

This proves $E(M') \subseteq E(H^\ell)$, and hence $M'$ is contained in a $c_H$-monochromatic component in $H^\ell$.
\end{claimproof}

In particular, the weak diameter of $M'$ in $H^\ell$ is at most $f_1(f^*(\eta-1))$ by Claim~\ref{claim4}.
We shall use this fact to bound the weak diameter in $G^\ell$ of $M$.
We now give a relation between the weak diameter in $G^\ell$ and  in $H^\ell$.

\begin{clm}
\label{claim11}
For every pair of vertices $u,v \in V(G_0)$, if $P$ is a path in $H$ between $u$ and $v$, then there exists a walk $\hat{P}$ in $G$ between $u$ and $v$ of length at most $7\theta\ell\lvert E(P) \rvert$.
\end{clm}
\begin{claimproof}
Let $Q$ be a path in $H$ such that there exists $e \in U_E$ such that $Q$ is from $X_e$ to $X_e$, internally disjoint from $X_e$, and contains at least one internal vertex corresponding to a vertex in $X_{T_e}-X_e$.
Then $Q$ has length two, and there exists $Y_Q \in \P_e$ containing both ends of $Q$.
So there exists a path $\hat{Q}$ in $G_e$ between the ends of $Q$ of length at most $7\theta\ell$ by the definition of $\P_e$.

Let $u,v \in V(G_0) \subseteq V(G) \cap V(H)$.
Let $P$ be a path in $H$ between $u$ and $v$.
Consider each subpath $P'$ of $P$ in which there exists $e_{P'} \in U_E$ such that $P'$ is from $X_{e_{P'}}$ to $X_{e_{P'}}$, internally disjoint from $X_{e_{P'}}$, and contains at least one internal vertex corresponding to a vertex in  $X_{T_{e_{P'}}}-X_{e_{P'}}$, and replace it with $\hat{P'}$ as obtained in the above paragraph. We obtain a walk in $G$ between $u$ and $v$ of length at most $7\theta\ell \lvert E(P) \rvert$. 
\end{claimproof}

For a vertex $u \in V(M)-V(G_0)$ and a path $P$ in $M'$ on at least 2 vertices having $v_{Y_u}$ as an end, we let $h_P(u)$ be the neighbour of  $v_{Y_u}$ in $P$. 
Note that since all the vertices $v_Y$ are pairwise non-adjacent in $H$, it follows that $h_P(u) \in Y_u$.

\begin{clm} \label{claim_hu}
For a vertex $u \in V(M)-V(G_0)$ and a path $P$ in $M'$ on at least 2 vertices having $v_{Y_u}$ as an end, there exists a path in $G$ from $u$ to $h_P(u)$ of length in $G$ at most $(7\theta+2)\ell$.
\end{clm}

\begin{claimproof}
By Claim~\ref{claim8}, there exists $e \in U_E$ such that $u \in (X_{T_e}-X_e) \cap N_{G_e}^{\leq 2\ell}(Y_u)$, so there exists a path $P_u$ in $G_e$ from $u$ to $Y_u$ with length at most $2\ell$.
Let $u'$ be the end of $P_u$ in $Y_u$.
Since $Y_u$ contains both $u'$ and $h_P(u)$, there exists a path in $G$ from $u'$ to $h_P(u)$ of length at most $7\theta\ell$ by the definition of $\P_e$.
So there exists a walk in $G$ from $u$ to $h_P(u)$ of length in $G$ at most $(7\theta+2)\ell$.
\end{claimproof}

\begin{clm} \label{claim_wdM}
For every pair of  vertices $x,y \in V(M)$, there exists a path in $G$ between $x$ and $y$ with length in $G$ at most $f^*(\eta)$. 
\end{clm}

\begin{claimproof}
For every vertex $u \in V(M)$, if $u \in V(G_0)$, then define $h_0(u)=u$; otherwise, define $h_0(u)=v_{Y_u}$.
Let $x,y \in V(M)$.
We may assume that $x \neq y$, for otherwise we are done.
By Claims~\ref{claim4} and \ref{claim10}, there exists a path $P_0$ in $H^\ell$ between $h_0(x)$ and $h_0(y)$ of length in $H^\ell$ at most $f_1(f^*(\eta-1))$.
So there exists a path $P$ in $H$ of length in $H$ at most $\ell \cdot f_1(f^*(\eta-1))$ between $h_0(x)$ and $h_0(y)$.

For every $u \in \{x,y\}$, if $u \in V(G_0)$, then let $h(u)=u$; if $u \not \in V(G_0)$, then let $h(u)=h_{P}(u)$.
For every $u \in \{x,y\}$, let $Q_u$ be a shortest path in $G$ from $u$ to $h(u)$.
Note that if $u \in V(G_0)$, then $Q_u$ has length 0; if $u \not \in V(G_0)$, then by Claim~\ref{claim_hu}, $Q_u$ has length in $G$ at most $(7\theta+2)\ell$.

Since $h(x)$ and $h(y)$ are in $V(G_0)$, and the subpath of $P$ between $h(x)$ and $h(y)$ is a path in $H$ of length in $G$ at most $\ell \cdot f_1(f^*(\eta-1))$, by Claim~\ref{claim11}, there exists a walk $W$ in $G$ between $h(x)$ and $h(y)$ of length in $G$ at most $7\theta\ell \cdot \ell \cdot f_1(f^*(\eta-1))$.
Therefore, $Q_x \cup W \cup Q_u$ is a walk in $G$ between $x$ and $y$ of length in $G$ at most $2(7\theta+2)\ell + 7\theta\ell^2f_1(f^*(\eta-1)) \leq f^*(\eta)$.
\end{claimproof}

By Claim \ref{claim_wdM}, the weak diameter in $G$ of $M$ is at most $f^*(\eta)$.
Therefore, the weak diameter in $G^\ell$ of $M$ is at most $f^*(\eta)$.
This proves the lemma.
\end{proof}

\subsection{Proof of Theorems~\ref{tree_extension_ad} and \ref{tw_ad_intro_2}}

We are now ready to prove Theorem~\ref{tree_extension_ad}, which we restate here for the convenience of the reader.

\begin{repthm}{tree_extension_ad}
Let $\cC$ be a hereditary class of graphs, and let $\theta \geq 1$ be an integer.
Let $\G$ be a class of graphs such that for every $G \in \G$, there exists a tree-decomposition $(T,\X)$ of $G$ of adhesion at most $\theta$, where $\X=(X_t: t \in V(T))$, such that $\cC$ contains all graphs which can be obtained from any $G[X_t]$ by adding, for each neighbour $t'$ of $t$ in $T$, a set of new vertices whose neighbourhoods are contained in $X_t\cap X_{t'}$.
Then $\ad(\G) \leq \max\{\ad(\cC),1\}$.
\end{repthm}

\begin{proof} 
By Proposition~\ref{obs:weakdiameter}, there exists a function $f: {\mathbb N} \rightarrow {\mathbb N}$ such that $\cC$ is $(\ad(\cC)+1,\ell,f(\ell))$-nice for every $\ell \in {\mathbb N}$.
Define $g: {\mathbb N} \rightarrow {\mathbb N}$ to be the function such that for every $x \in {\mathbb N}$, $g(x)=f^*_x(\theta)$, where $f^*_x$ is the function $f^*$ mentioned in Lemma \ref{tree_extension} by taking  $(\ell,N,m,\theta)=(x,f(x),\max\{\ad(\cC ),1\}+1,\theta)$. 

Let $G \in \G$.
So there exists a tree-decomposition $(T,\X)$ of $G$ of adhesion at most $\theta$, where $\X=(X_t: t \in V(T))$, such that $\cC$ contains all graphs which can be obtained from any $G[X_t]$ by adding, for each neighbour $t'$ of $t$ in $T$, a set of new vertices whose neighbourhoods are contained in $X_t\cap X_{t'}$ (in particular for every $t \in V(T)$, $G[X_t] \in \cC$).
Let $t_0$ be a node of $T$ with $X_{t_0} \neq \emptyset$, and let $v_0$ be a vertex in $X_{t_0}$.
Let $T'$ be the rooted tree obtained from $T$ by adding a new node $t_0'$ adjacent to $t_0$, where $t_0'$ is the root of $T'$.
Let $X'_{t_0'}=\{v_0\}$; for every $t \in V(T)$, let $X'_t=X_t$.
Let $\X'=(X'_t: t \in V(T'))$.
Then $(T',\X')$ is a $(\theta,\theta,\cC,\cC)$-construction of $G$.
For every $\ell \in {\mathbb N}$, applying Lemma \ref{tree_extension} by taking $(\ell,N,m,\theta,\F,\F',\eta,Z)=(\ell,f(\ell),\max\{\ad(\cC),1\}+1,\theta,\cC,\cC,\theta,\emptyset)$, $G^\ell$ is $(\max\{\ad(\cC),1\}+1)$-colourable with weak diameter in $G^\ell$ at most $g(\ell)$.

Hence $\G$ is $(\max\{\ad(\cC),1\}+1,\ell,g(\ell))$-nice for every $\ell \in {\mathbb N}$.
By Proposition~\ref{obs:weakdiameter}, $\ad(\G) \leq \max\{\ad(\cC),1\}$.
\end{proof}

We can now prove Theorem \ref{tw_ad_intro_2}, which we also restate here for convenience.

\begin{thm} \label{tw_ad}
For any integer $w$, the class of graphs of treewidth at most $w$ has asymptotic dimension at most 1.
\end{thm}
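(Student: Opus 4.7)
The plan is to deduce Theorem~\ref{tw_ad} directly from Theorem~\ref{tree_extension_ad} together with Observation~\ref{vc_ad}. Given a graph $G$ of treewidth at most $w$, the definition of treewidth provides a tree-decomposition $(T,\X)$ with $\X = (X_t : t \in V(T))$ and $|X_t| \leq w+1$ for every $t \in V(T)$. In particular, the adhesion of $(T,\X)$ is at most $w+1$, so we set $\theta = w+1$.

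To apply Theorem~\ref{tree_extension_ad}, I need to exhibit a hereditary class $\cC$ that contains every graph obtainable from some $G[X_t]$ by adding, for each neighbour $t'$ of $t$ in $T$, a set of pairwise non-adjacent new vertices with neighbourhoods contained in $X_t \cap X_{t'}$. The key observation is that every such graph has $X_t$ as a vertex-cover: indeed, the new vertices added at each neighbour are pairwise non-adjacent to each other, have no edges to new vertices introduced by other neighbours (their neighbourhoods lie in $X_t$), and each new vertex sends all its edges into $X_t$. Since $|X_t| \leq w+1$, every such graph admits a vertex-cover of size at most $w+1$. Hence taking $\cC$ to be the class of all graphs with a vertex-cover of size at most $w+1$ does the job; this class is manifestly hereditary (deleting vertices only shrinks the vertex-cover).

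By Observation~\ref{vc_ad}, $\ad(\cC) = 0$. Applying Theorem~\ref{tree_extension_ad} to the class $\G$ of graphs of treewidth at most $w$ with this choice of $\cC$ and $\theta$ yields $\ad(\G) \leq \max\{\ad(\cC), 1\} = 1$, which is exactly the statement of Theorem~\ref{tw_ad}.

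There is essentially no obstacle here: all of the technical work has been absorbed into Theorem~\ref{tree_extension_ad} and Observation~\ref{vc_ad}. The only subtlety is to verify that the ``gadget'' operation described in the hypothesis of Theorem~\ref{tree_extension_ad} preserves the vertex-cover property, which follows immediately from the fact that all added edges have one endpoint in $X_t$.
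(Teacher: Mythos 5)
Your proposal is correct and is essentially the same argument as the paper's: both take $\cC$ to be the (hereditary) class of graphs with a vertex-cover of size at most $w+1$, invoke Observation~\ref{vc_ad} to get $\ad(\cC)=0$, and apply Theorem~\ref{tree_extension_ad}, the key point being that the added vertices send all their edges into $X_t$, so $X_t$ remains a vertex-cover. The only cosmetic difference is that the paper takes a tree-decomposition of adhesion at most $w$ while you use the trivial bound $\theta=w+1$, which makes no difference to the conclusion.
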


\begin{proof}
Let $\F$ be the class of graphs of treewidth at most $w$, and let $\cC$ be the class  of graphs that have a vertex-cover of size at most $w+1$.
Note that $\cC$ is a hereditary class and $\ad(\cC) = 0$ by Observation \ref{vc_ad}.

Note that for every graph $G$ of treewidth at most $w$, there exists a tree-decomposition $(T,\X)$ of $G$ of adhesion at most  $w$, where $\X=(X_t: t \in V(T))$, such that  $\cC$ contains all graphs which can be obtained from any $G[X_t]$ by adding, for each neighbour $t'$ of $t$ in $T$, a set of new vertices whose neighbourhoods are contained in $X_t\cap X_{t'}$  (since $X_t$ is a vertex-cover of size at most $w+1$ of such graphs).
Hence by Theorem \ref{tree_extension_ad}, $\ad(\F) \leq \max\{\ad(\cC),1\}=1$.
\end{proof}

\section{Control functions and layerings}\label{sec:control}

In this section we introduce some notation and state a result of Brodskiy, Dydak, Levin and Mitra~\cite{BDLM} which is an extension of a result of Bell and Dranishnikov~\cite{BD06}, and we will use it to derive a result about graph layering.

\subsection{Real projections  and layerings}\label{sec:layer}

Given a metric space $(X,d)$, and a real $c>0$, a function $f: X\to \mathbb{R}$ is \emph{$c$-Lipschitz} if for any
$x,y\in X$, $|f(x)-f(y)|\le c\cdot d(x,y)$ (such functions can be defined between any two metric spaces, but here we will only consider $\mathbb{R}$ as the codomain). 
When $f$ is 1-Lipschitz, we call it a \emph{real projection} of $(X,d)$.

In the context of graphs, an interesting example of real projections comes from layerings. 
Recall that a layering $L=(L_i)_{i \in \N}$ or $L=(L_i)_{i \in \Z}$ 
of a graph $G$ is  an ordered partition of $V(G)$ into (possibly empty) sets (called the layers), such that for any edge $uv$ of $G$, $u$ and $v$ lie in the same layer or in a union of two consecutive layers (i.e.\ in $L_i \cup L_{i+1}$ for some $i$). 
Note that a layering  can also be seen as a function $L: V(G)\to \mathbb{N}$ or $L: V(G)\to \mathbb{Z}$ such that for any edge $uv$, $|L(u)-L(v)|\le 1$.
In particular, a layering can be seen as a real projection by the triangle inequality.

\subsection{\texorpdfstring{$r$}\ -components and \texorpdfstring{$(r,s)$}\ -components}

Let $(X,d)$ be a metric space. Recall that a subset $S\subseteq X$ is
$r$-bounded if for any $x,x'\in S$, $d(x,x')\le r$.  
Given a subset $A\subseteq X$, two points $x,x' \in A$
are \emph{$r$-connected}\footnote{This definition should not be confused with the usual definition of $k$-connected graphs in graph theory (graphs on at least $k+1$ vertices that remain connected after the deletion of any set of at most $k-1$ vertices). Since we talk here about $r$-connected points, or vertices, instead of graphs, we hope there is no danger of that.} in $A$ if there are points $x_1=x,x_2,\dots,x_\ell=x'$ in $A$, for some $\ell \in {\mathbb N}$, such
that for any $1\le i \le \ell-1$, $d(x_i,x_{i+1}) \leq r$. 
A maximal set of
$r$-connected points in $A$ is called an \emph{$r$-component} of $A$.
Note that these $r$-components form a partition of $A$. 
Observe that in an unweighted graph $G$, the 1-components of a subset $U\subseteq V(G)$ of vertices are exactly the vertex-sets of the connected components of $G[U]$, the subgraph of $G$ induced by $U$.
(See Figure~\ref{fig:rscomponent}(a), for an example of 2-components of a subset $A$ of vertices.)

\begin{figure}[htb]
 \centering
 \includegraphics[scale=0.8]{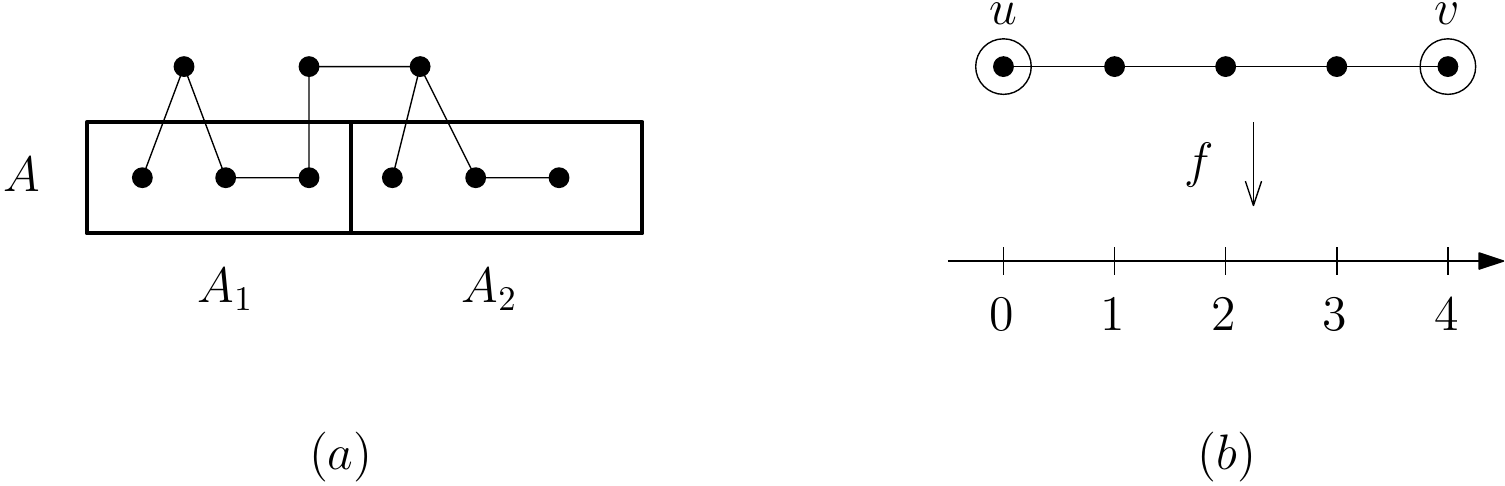}
 \caption{(a) The two 2-components $A_1$ and $A_2$ of a subset $A$, and (b) an example of a set $B=\{u,v\}$ which has one 4-component but two $(4,3)$-components.}
 \label{fig:rscomponent}
\end{figure}

Recall that $D_X:\mathbb{R}^+\to
\mathbb{R}^+$ is an
$n$-dimensional control function for $X$ if for any $r>0$, $X$ has a cover
$\mathcal{U}=\bigcup_{i=1}^{n+1}\mathcal{U}_i$, such that each
$\mathcal{U}_i$ is $r$-disjoint and each element of $\mathcal{U}$ is
$D_X(r)$-bounded. 
Observe that $D_X:\mathbb{R}^+\to \mathbb{R}^+$ is an $n$-dimensional control function for $X$ if and only if for any $r>0$, $X$ is a union of
$n+1$ sets whose $r$-components are $D_X(r)$-bounded. 
In this
section it will be convenient to work with this definition of control
functions.

\medskip

Let $(X,d)$ be a metric space and let $f: X\to \mathbb{R}$ be a real projection of $X$.
A subset $A\subseteq  X$ is said to be \emph{$(r,s)$-bounded with respect to
$f$ (and $d$)} if for all $x,x'\in A$
we have $d(x,x')\leq r$ and $|f(x)-f(x')|\leq s$
(when $f$ is clear from the context we often omit ``with respect to $f$'', and similarly for $d$).
Two vertices $x,x'$ of $A$
are {\em $(r,s)$-connected} in $A$ if there are vertices  $x_1=x,x_2,\dots,x_\ell=x'$ in $A$, for some $\ell \in {\mathbb N}$, such that for any $1\le i \le \ell-1$, $\{x_i,x_{i+1}\}$ is $(r,s)$-bounded (i.e.\  $d(x_i,x_{i+1}) \leq r$ and $|f(x)-f(x')|\leq s$). 
A maximal set of
$(r,s)$-connected vertices in $A$ is called an {\em $(r,s)$-component} of $A$.
Note that these $(r,s)$-components form a partition of $A$.

Note that by the definition of a real projection, any
$r$-bounded set is also $(r,r)$-bounded, and similarly being
$r$-connected is equivalent to being $(r,r)$-connected, and
an $r$-component is the same as an $(r,r)$-component. Observe that in Figure~\ref{fig:rscomponent}(b), the vertices $u$ and $v$ are 4-connected in $\{u,v\}$ but they are not $(4,3)$-connected in $\{u,v\}$ with respect to $f$, where $f$ is defined to be the function $f(x)=d(u,x)$ for any $x$. This shows that the notions of $r$-components and $(r,s)$-components differ when $s<r$.

\subsection{Control functions for real projections}\label{sec:controlfun}

We have seen above that $D_X:\mathbb{R}^+\to
\mathbb{R}^+$ is an $n$-dimensional control function for a metric space $(X,d)$ if for any $r>0$, $X$  is a union of $n+1$ sets whose $r$-components are $D_X(r)$-bounded. 
It will be convenient to extend this definition to real projections, as follows. 
For a metric space  $(X,d)$ and a real projection $f:X\to \mathbb{R}$, we say that $D_f:\mathbb{R}^+\times \mathbb{R}^+\to
\mathbb{R}^+$ is an \emph{$n$-dimensional control function for $f$} if for any real numbers $r,S>0$, any $(\infty,S)$-bounded subset $A\subseteq X$  is a union of $n+1$ sets whose $r$-components are $D_f(r,S)$-bounded. 
We say that the control function $D_f$ is \emph{linear} if there are constants $a,b,c>0$ such that $D_f(r,S)\le ar+bS+c$ for any  $r,S>0$. 
We say that the control function $D_f$ is a \emph{dilation}  if there are constants
$a,b>0$ such that $D_f(r,S)\le ar+bS$ for any  $r,S>0$.

The following is a special case of a combination of Proposition 4.7 in~\cite{BDLM} and Theorem 4.9 in~\cite{BDLM}.
\footnote{Note that every real projection defined in this paper is a large-scale uniform function, as defined in \cite[Definition 3.4]{BDLM}, having the identity function as  coarseness control function (in particular the control function is a dilation).}

\begin{thm}[\cite{BDLM}] \label{thm:mthm49}
Let $(X,d)$ be a metric space and $f:X\to \mathbb{R}$ be a real  projection of $(X,d)$. 
If $f$ admits an $n$-dimensional control function $D_f$, then $X$ admits an $(n+1)$-dimensional control function $D_X$ such that $D_X(r)$ only depends on $r$, $D_f$ and $n$. 
Moreover, if $D_f$ is linear then $D_X$ is also linear; if $D_f$ is
a dilation then $D_X$ is also a dilation.
\end{thm}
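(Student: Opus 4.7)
The approach will be a Hurewicz-type decomposition: cover $\mathbb{R}$ by intervals (using that $\ad(\mathbb{R})=1$ with linear control), pull back via $f$ to obtain strips in $X$, and apply the fiber control function $D_f$ within each strip.

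Fix $r > 0$, and choose a scale $S$ depending on $r$ and $D_f$; this $S$ can be taken as a linear function of $r$. First I would construct two interleaving families of intervals $\mathcal{J}_0, \mathcal{J}_1$ on $\mathbb{R}$, each consisting of intervals of length $O(S)$ placed with gaps $> r$ between members of the same family. Pulling back via $f$, for each $j \in \{0,1\}$ and each $J \in \mathcal{J}_j$, set $T_J = f^{-1}(J)$. Since $f$ is $1$-Lipschitz, $T_J$ is $(\infty,S)$-bounded, and for fixed $j$ the family $\{T_J : J \in \mathcal{J}_j\}$ is pairwise at distance $> r$ in $X$.

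Next, I would apply $D_f$ to each strip, writing $T_J = V_J^0 \cup \cdots \cup V_J^n$ where each $V_J^i$ has $r$-components of diameter at most $D_f(r,S)$. For fixed $j$, the $r$-separation between the strips lifts to an $r$-separation of the pieces $V_J^i$ across different $J \in \mathcal{J}_j$, so for each $(j,i)$ the union $\bigcup_{J \in \mathcal{J}_j} V_J^i$ still has $r$-components of diameter $\leq D_f(r,S)$. This already yields a cover of $X$ by $2(n+1)$ families, i.e.\ $\ad(X) \leq 2n+1$.

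To sharpen the bound to $\ad(X) \leq n+1$, i.e.\ to cover $X$ with only $n+2$ families, one applies a swallowing refinement: choose the overlaps between intervals of $\mathcal{J}_0$ and $\mathcal{J}_1$ to be wider than $D_f(r,S)$, so that any $r$-component of the $\mathcal{J}_0$-decomposition protruding into a $\mathcal{J}_1$-strip can be reassigned to a matching color of the $\mathcal{J}_1$-decomposition without creating $r$-close pairs of the same color. A single additional color then absorbs the residual pieces, giving $n+2$ color classes in total. The main obstacle will be the rigorous bookkeeping of how the $r$-components of the two decompositions interact in the overlap, which is precisely the content of Proposition 4.7 and Theorem 4.9 of \cite{BDLM}. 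Finally, the linear/dilation conclusion follows because the construction uses $S = O(r)$ and plugs $O(r)$ arguments into $D_f$: if $D_f(r,S) \leq ar + bS + c$ then $D_X(r) \leq O(r) + c$ is linear, and if moreover $c=0$ then $D_X$ is a dilation.
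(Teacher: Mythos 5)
Your proposal should first be measured against what the paper actually does with this statement: the paper does not prove it, but imports it as a special case of Proposition 4.7 and Theorem 4.9 of \cite{BDLM}, adding only a footnote checking that a real projection is a large-scale uniform map with the identity as coarseness control function. Your sketch reproduces correctly the easy half of the Hurewicz-type argument: pulling back two $r$-separated families of $O(S)$-long intervals and applying $D_f$ inside each strip does give a cover by $2(n+1)$ families with $r$-components $D_f(r,S)$-bounded, i.e.\ asymptotic dimension at most $2n+1$ with the right kind of control. That part is fine.

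The genuine gap is exactly where the theorem's content lies, namely passing from $2(n+1)$ to $n+2$ colour classes. Your ``swallowing refinement'' asks for the overlap between a $\mathcal{J}_0$-interval and a $\mathcal{J}_1$-interval to be wider than $D_f(r,S)$, but the intervals themselves have length $O(S)$ and $D_f(r,S)$ is an arbitrary control function in the asymptotic-dimension case: it may grow much faster than $S$, so the requirement $D_f(r,S)\lesssim S$ is circular and in general unsatisfiable at a single scale (this is why the actual proofs work with a hierarchy of scales, decomposing preimages at scale $r$ and then choosing the next family of intervals at a scale dominating $D_f(r,\cdot)$). Even when the overlap can be made wide enough, ``reassigning'' an $r$-component protruding into a $\mathcal{J}_1$-strip to a matching colour of the $\mathcal{J}_1$-decomposition can place it within distance $r$ of a same-coloured $\mathcal{J}_1$-component, merging them; such merges can cascade along a strip, and nothing in your sketch bounds the diameter of the merged components. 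Preventing this cascade, while simultaneously tracking the control function so that the linear and dilation cases survive, is precisely the bookkeeping you defer to Proposition 4.7 and Theorem 4.9 of \cite{BDLM} --- so as written your argument is a plausibility sketch plus the same citation the paper uses, not an independent proof; if you intend to cite \cite{BDLM}, you may do so outright (as the paper does), but then the intermediate ``swallowing'' step should not be presented as if it closed the gap.
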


\subsection{Intrinsic control of real projections}\label{sec:intrinsic}

Let $(G,\phi)$ be a weighted graph, and let $A$ be a subset of $V(G)$.
The \emph{weighted subgraph of $(G,\phi)$ induced by $A$} is the weighted graph $(G[A],\phi|_{E(G[A])})$.

Let $(G,\phi)$ be a weighted graph.
Let $L:V(G)\to \mathbb{R}$ be a real projection of $(V(G),d_{(G,\phi)})$.
A function $D_L:\mathbb{R}^+\times \mathbb{R}^+\to \mathbb{R}^+$ is an \emph{$n$-dimensional intrinsic control function} for $L$ if for all $r,S>0$, for any maximal $(\infty,S)$-bounded set $A$ of $(V(G),d_{(G,\phi)})$ with respect to $L$ and $d_{(G,\phi)}$, $A$ is a union of $n+1$ sets whose $r$-components are $D_L(r,S)$-bounded, where the definitions of $r$-components and $D_L(r,S)$-bounded are with respect to the metric $d_{(G[A],\phi|_{E(G[A])})}$.

Since every (unweighted) graph can be viewed as a weighted graph whose weight on each edge is 1, the definition for intrinsic control function is also defined for graphs $G$ and corresponding function $L$.

\smallskip

As before, we say that an intrinsic control function $D_L$ for a real projection is \emph{linear} if there are constants $a,b,c>0$ such that $D_L(r,S)  \leq ar+bS+c$, for any real numbers $r,S>0$.
We also say that $D_L$ is \emph{a dilation} if there are constants $a,b>0$ such that $D_L(r,S) \leq  ar+bS$, for any real numbers  $r,S>0$.

\smallskip

We now prove that intrinsic control functions can be transformed into (classical)
control functions.

\begin{lemma}\label{lem:intrinsic}
Let $(G,\phi)$ be a weighted graph.
Let $L:V(G)\to \mathbb{R}$ be a real projection of $(V(G),d_{ (G,\phi)})$ such that $L$ admits an $n$-dimensional intrinsic control function $D$. 
Then $D'(r,S):=D(r,S+2r)$ for any $r,S>0$ is an $n$-dimensional control function for $L$. 
In particular, if $D$ is linear, then $D'$ is also linear, and if $D$ is a dilation, then $D'$ is a dilation.
\end{lemma}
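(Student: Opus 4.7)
The plan is to show that any $(\infty,S)$-bounded subset of $V(G)$ can be enlarged to a \emph{maximal} $(\infty,S+2r)$-bounded set without losing control over the $r$-components, so that the intrinsic control function $D$ applies and delivers the bound $D(r,S+2r)$ in the induced metric, which dominates the ambient metric.

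First I would fix an $(\infty,S)$-bounded set $A\subseteq V(G)$ and set $a=\inf L(A)$, $b=\sup L(A)$, so $b-a\le S$. I would then consider the ``thickening'' $B=L^{-1}([a-r,a+S+r])\supseteq A$, which is $(\infty,S+2r)$-bounded because its $L$-diameter is at most $S+2r$. Extend $B$ to a maximal $(\infty,S+2r)$-bounded set $A^*$ with respect to $L$ and $d_{(G,\phi)}$ (by Zorn's lemma). By the hypothesis on $D$, there exists a partition $A^*=A^*_1\cup\cdots\cup A^*_{n+1}$ whose $r$-components, measured in the metric $d_{(G[A^*],\phi|_{E(G[A^*])})}$, are $D(r,S+2r)$-bounded. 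Set $A_i:=A\cap A^*_i$; this gives a cover of $A$ into $n+1$ pieces, and it remains to bound their $r$-components in the ambient metric $d_{(G,\phi)}$.

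The key step, and the main point of the argument, is the following comparison: if $x,y\in A_i$ are $r$-connected in $A_i$ with respect to $d_{(G,\phi)}$ via some sequence $x=z_0,z_1,\dots,z_\ell=y$ in $A_i$, then they are also $r$-connected in $A^*_i$ with respect to the induced metric $d_{(G[A^*],\phi|_{E(G[A^*])})}$. For each $j$, any geodesic path $P_j$ of length $\le r$ between $z_j$ and $z_{j+1}$ in $(G,\phi)$ satisfies $|L(w)-L(z_j)|\le r$ for every vertex $w$ on $P_j$, because $L$ is $1$-Lipschitz; since $L(z_j)\in[a,a+S]$, every such $w$ has $L(w)\in[a-r,a+S+r]$, hence $w\in B\subseteq A^*$. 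Therefore $P_j$ lies inside $G[A^*]$ and witnesses $d_{(G[A^*],\phi|_{E(G[A^*])})}(z_j,z_{j+1})\le r$. Since $z_j\in A_i\subseteq A^*_i$ for all $j$, $x$ and $y$ lie in a common $r$-component of $A^*_i$ in the induced metric, so $d_{(G[A^*],\phi|_{E(G[A^*])})}(x,y)\le D(r,S+2r)$. Because $d_{(G,\phi)}\le d_{(G[A^*],\phi|_{E(G[A^*])})}$ on $A^*$, we conclude $d_{(G,\phi)}(x,y)\le D(r,S+2r)=D'(r,S)$, which yields the desired $n$-dimensional control function.

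Finally, the preservation of linearity and dilation is routine: if $D(r,S)\le aR+bS+c$ then $D'(r,S)=D(r,S+2r)\le (a+2b)r+bS+c$ is again linear, and likewise $D(r,S)\le ar+bS$ gives $D'(r,S)\le(a+2b)r+bS$, still a dilation. The only nontrivial step is the comparison between ambient and induced distances via the $L$-Lipschitz property, and this is precisely the reason for the extra ``$+2r$'' budget in the second coordinate.
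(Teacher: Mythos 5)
Your proof is correct and follows essentially the same route as the paper: thicken the $(\infty,S)$-bounded set by taking the $L$-preimage of the interval enlarged by $r$ on each side, apply the intrinsic control function with second parameter $S+2r$, restrict the resulting cover back to $A$, and use the $1$-Lipschitz property of $L$ to see that the short connecting paths stay inside the thickened set, so induced distances control ambient ones. The only (harmless) difference is that you pass to a genuinely maximal $(\infty,S+2r)$-bounded superset via Zorn's lemma, while the paper works directly with the preimage (maximal at some level $r'\le S+2r$) and assumes without loss of generality that $D$ is non-decreasing.
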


\begin{proof}
Note that we may assume that $D$ is a non-decreasing function.
  Let $S>0$ be a real number, and let $X$ be an $(\infty,S)$-bounded subset of vertices of $G$ with respect to $L$ and  $d_{(G,\phi)}$. 
  It follows that there is some $a>0$ such that for any $x\in X$, $L(x)\in [a,a+S]$. 
  Fix some $r>0$, and denote by $X^+$ the preimage of the interval $[a-r,a+S+r]$ under $L$. 
  Note that $X^+\supseteq X$ and $X^+$ is a maximal $(\infty,r')$-bounded subset of $V(G)$ with respect to $L$ and  $d_{(G,\phi)}$ for some real number $r' \leq S+2r$.
  Let $H^+$ be  $(G[X^+],\phi|_{E(G[X^+])})$. 
  Then by definition of $D$, $(H^+,d_{H^+})$ has a cover by $n+1$ sets $U_1^+, U_2^+, \dots, U_{n+1}^+$, whose $r$-components are $D(r,S+2r)$-bounded  with respect to $d_{H^+}$. 
  For each $1\le i \le n+1$, let $U_i = {U}_i^+ \cap X$. 
  It follows that $\bigcup_{i=1}^{n+1}{U}_i = X$. 
  
  Consider now an $r$-component $C$ of ${U}_i$, for  some $1\le i \le n+1$ (where
  the distance in the definition of $r$-components  is with respect to
  the metric  $d_{(G,\phi)}$). 
  For any $u,v\in C$, there are $u_0=u,u_1,\ldots,u_t=v$ in $U_i$, for some $t \in {\mathbb N}$, such that for any $0\le j \le t-1$,  $d_{(G,\phi)}(u_j,u_{j+1})\le r$. 
It follows that $u_j$ and $u_{j+1}$ are two vertices in $X$ connected by a path $P_j$ (of length at most $r$) in $G$,  so $V(P_j) \subseteq X^+$, and thus  $u_j$ and $u_{j+1}$ are also connected by $P_j$ in $H^+= (G[X^+],\phi|_{E(G[X^+])})$. 
Hence, $u_j$ and $u_{j+1}$ lie in the same $r$-component of $U_i^+$ (where the distance in the definition of $r$-component is with respect to $d_{H^+}$). 
Since all $r$-components of $U_i^+$ (with respect to $d_{H^+}$) are $D(r,S+2r)$-bounded with respect to $d_{H^+}$, they are also $D(r,S+2r)$-bounded with respect to $d_G$.
This shows that $D'(r,S):=D(r,S+2r)$ is an $n$-dimensional control function for $L$.  
\end{proof}

\subsection{Layerability}\label{sec:layerable}

In this section, we will consider graph layerings (or more generally real projections) such that any constant number of consecutive layers induce a graph from a ``simpler'' class of graphs. For instance, any $d$-dimensional grid has a layering in which each layer induces a $(d-1)$-dimensional grid, and moreover any constant number of consecutive layers induce a graph that is a $d$-dimensional grid where the height in one dimension is bounded and hence is significantly simpler than a general $d$-dimensional grid.

Given a class $\mathcal{C}$ of weighted graphs and a sequence of classes
$\mathcal{L}=(\mathcal{L}_i)_{i\in \mathbb{N}}$ of weighted graphs with $\mathcal{L}_1\subseteq\mathcal{L}_2\subseteq\cdots$, we say
that $\mathcal{C}$ is \emph{$\mathcal{L}$-layerable} if there is a function $f:\mathbb{R}^+\to \mathbb{N}$ such that any weighted graph $(G,\phi)$ from ${\mathcal C}$ has a real projection $L: V(G)\to \mathbb{R}$ such that for any $S>0$, any maximal $(\infty,S)$-bounded set in $(G,\phi)$ with respect to $L$ and $d_{ (G,\phi)}$ induces a weighted graph from $\mathcal{L}_{f(S)}$. 
If there is a constant $c>0$ such that $f(S)\le \max\{cS,1\}$ for any $S>0$, we say that
$\mathcal{C}$ is \emph{$c$-linearly $\mathcal{L}$-layerable}.

\begin{thm}\label{thm:bd}
Let $\mathcal{L}=(\mathcal{L}_1, \mathcal{L} _2,\ldots)$ be a sequence of classes of weighted graphs of asymptotic dimension at most $n$.
Let $\mathcal{C}$ be an $\mathcal{L}$-layerable class of weighted graphs. 
Then the asymptotic dimension of $\mathcal{C}$ is at most $n+1$. 
  
In addition, assume moreover that the class $\mathcal{C}$ is $c$-linearly $\mathcal{L}$-layerable for some $c > 0$. 
  If there exist $a,b,d \geq 0$ such that each class $\mathcal{L}_i$ has an $n$-dimensional control function $D_i$ with $D_i(r)\le a r+b(i-1)+d$ for every $r>0$, then $\mathcal{C}$ has asymptotic dimension at most $n+1$ of linear type. 
  If moreover $d=0$, then $\mathcal{C}$ has Assouad-Nagata dimension at most $n+1$.
\end{thm}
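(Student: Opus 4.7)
The plan is to construct an $n$-dimensional intrinsic control function for the real projection $L$ guaranteed by $\mathcal{L}$-layerability, then use Lemma~\ref{lem:intrinsic} to convert it into a classical $n$-dimensional control function for $L$, and finally invoke Theorem~\ref{thm:mthm49} to lift this to an $(n+1)$-dimensional control function for the ambient metric space. Since by hypothesis $\mathrm{asdim}(\mathcal{L}_i) \le n$ for every $i$, I first fix a uniform $n$-dimensional control function $D_i$ for each $\mathcal{L}_i$. Given the function $f:\mathbb{R}^+\to\mathbb{N}$ from $\mathcal{L}$-layerability, for any $(G,\phi) \in \mathcal{C}$ with associated real projection $L$, I would define
\[
D_L(r,S) := D_{f(S)}(r)
\]
and verify that $D_L$ is an $n$-dimensional intrinsic control function for $L$, uniformly over $\mathcal{C}$. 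Indeed, any maximal $(\infty,S)$-bounded set $A$ induces a weighted graph lying in $\mathcal{L}_{f(S)}$, which by definition of $D_{f(S)}$ admits (with respect to its own induced metric) a cover by $n+1$ families of $r$-disjoint, $D_{f(S)}(r)$-bounded sets.

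Next, Lemma~\ref{lem:intrinsic} converts $D_L$ into the classical $n$-dimensional control function $D'_L(r,S) := D_L(r,S+2r) = D_{f(S+2r)}(r)$ for $L$, still uniform over $\mathcal{C}$. Theorem~\ref{thm:mthm49} then produces an $(n+1)$-dimensional control function $D_\mathcal{C}$ for each $(V(G), d_{(G,\phi)})$ that depends only on $D'_L$ and $n$, hence is again uniform over $\mathcal{C}$. This yields the first conclusion $\mathrm{asdim}(\mathcal{C}) \le n+1$.

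For the finer statements, I assume $\mathcal{C}$ is $c$-linearly $\mathcal{L}$-layerable (so $f(S) \le \max\{cS,1\}$) and each $D_i$ satisfies $D_i(r) \le ar + b(i-1) + d$. The plan is to deduce
\[
D_L(r,S) \le ar + bcS + d,
\]
which is linear in $(r,S)$, and is a dilation when $d=0$. This linear (respectively dilation) form will be preserved under the substitution $S \mapsto S+2r$ from Lemma~\ref{lem:intrinsic}, giving $D'_L(r,S) \le (a+2bc)r + bcS + d$; and then preserved once more by the ``moreover'' clause of Theorem~\ref{thm:mthm49}. The result is asymptotic dimension of linear type at most $n+1$, and Assouad-Nagata dimension at most $n+1$ when $d=0$.

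The main obstacle will be bookkeeping: uniformly tracking how the constants depend on $(G,\phi)$ through the three reductions, and verifying that the notion of ``maximal $(\infty,S)$-bounded set'' appearing in the definition of $\mathcal{L}$-layerability matches exactly the one appearing in the definition of intrinsic control functions, so that the control data $D_{f(S)}$ transplant verbatim. The shift $S \mapsto S+2r$ introduced by Lemma~\ref{lem:intrinsic} would destroy the Assouad-Nagata conclusion if $f$ grew faster than linearly in $S$, which is precisely why the $c$-linear layerability hypothesis is assumed in the second part of the theorem.
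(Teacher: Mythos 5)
Your proposal is correct and follows essentially the same route as the paper: define the intrinsic control function $D(r,S)=D_{f(S)}(r)$, convert it via Lemma~\ref{lem:intrinsic}, lift via Theorem~\ref{thm:mthm49}, and track linearity/dilation through the substitution $S\mapsto S+2r$ using $c$-linear layerability. The only cosmetic difference is that you bound $D_{f(S)}(r)$ directly by $ar+b(f(S)-1)+d$ rather than first passing to $D_{\max\{c(S+2r),1\}}$ as the paper does (which uses the monotonicity $D_1\le D_2\le\cdots$), but both computations yield the same linear bound.
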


\begin{proof}
For any $i\ge 1$, let $D_i$ be an $n$-dimensional control function for the graphs  in $\mathcal{L}_i$.
Since $\L_1 \subseteq \L_2 \subseteq \cdots$, we may assume that for every $x$, $D_1(x) \leq D_2(x) \leq \cdots$. 
Since ${\mathcal C}$ is $\L$-layerable, there exists a function $f: {\mathbb R}^+ \rightarrow {\mathbb N}$ such that for any weighted graph $(G,\phi) \in \mathcal{C}$, there exists a real projection $L_{(G,\phi)}: V(G)\to \mathbb{R}$ such that for any $S>0$, any maximal $(\infty,S)$-bounded set in $(G,\phi)$ with respect to $L_{(G,\phi)}$ and $d_{(G,\phi)}$ induces a weighted graph in $\mathcal{L}_{f(S)}$.

Note that $D(r,S):=D_{f(S)}(r)$ is an $n$-dimensional intrinsic control function for  $L_{(G,\phi)}$ for any $(G,\phi) \in {\mathcal C}$. 
By Lemma~\ref{lem:intrinsic}, $D'(r,S):=D(r,S+2r)$ is an
$n$-dimensional control function for $L_{(G,\phi)}$ for any $(G,\phi) \in {\mathcal C}$. 
By Theorem~\ref{thm:mthm49},  for every $(G,\phi) \in {\mathcal C}$, $(G,\phi)$ admits an $(n+1)$-dimensional control function $D_{(G,\phi)}$ such that $D_{(G,\phi)}(r)$ only depends on $r$, $D$ and $n$, and hence only depends on $r,f, (D_i)_{i \in {\mathbb N}}$ and $n$. 
Let $(G_0,\phi_0) \in {\mathcal C}$.
Then $D_{(G_0,\phi_0)}$ is an $n$-dimensional control function for all members of ${\mathcal C}$.
This shows that $\mathcal{C}$ has asymptotic dimension at most $n+1$. 

If the class $\mathcal{C}$ is $c$-linearly $\mathcal{L}$-layerable for some $c>0$, then it holds that $D'(r,S)=D(r,S+2r) \leq D_{ \max\{c(S+2r),1\}}(r)$. 
If moreover  there exist $a,b,d \geq 0$ such that $D_i(r)\le a r+b(i-1)+d$ for every $i \in {\mathbb N}$ and $r>0$, then 
\[
D'(r,S)\le a r+b \cdot (\max\{c(S+2r),1\}-1)+d \leq ar+b \cdot c(S+2r)+d= (a+2bc) r+bcS+d,
\]
which means that $D'$ is linear. In this case it follows from
Theorem~\ref{thm:mthm49} that $D_{(G,\phi)}$ is linear, and thus $\mathcal{C}$ has asymptotic dimension at
most $n+1$ of linear type.

If moreover $d=0$,  we have $D'(r,S)\le
  (a+2bc)r+bcS$. By Theorem~\ref{thm:mthm49},
$\mathcal{C}$ has Assouad-Nagata
dimension at
most $n+1$.
\end{proof}

\section{Layered treewidth and minor-closed families}\label{sec:minor}

We now apply the tool developed in the previous section to prove that graphs of bounded layered treewidth have asymptotic dimension at most 2 (Theorem~\ref{layered_tw_ad_intro}).

\begin{lemma} \label{layerable}
For any integer $w>0$, the class of graphs of layered treewidth at most $w$ is $(\L_i)_{i\in \N}$-layerable, where for every $i \in {\mathbb N}$, $\L_i$ is the class of graphs of treewidth at most $i$.
\end{lemma}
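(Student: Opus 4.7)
The plan is to use the tree-decomposition witnessing layered treewidth at most $w$ together with its associated layering: the layering will serve as the real projection, and the restriction of the tree-decomposition to any constant number of consecutive layers will give a tree-decomposition with bounded width.

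First I would fix $G$ with layered treewidth at most $w$, and view it as a weighted graph with all edge weights equal to $1$. By definition there exist a tree-decomposition $(T,\X)$ with $\X=(X_t: t\in V(T))$ and a layering $L: V(G)\to \mathbb{Z}$ such that $\lvert X_t \cap L^{-1}(i)\rvert \leq w$ for every $t\in V(T)$ and $i\in \mathbb{Z}$. I would then check that $L$ is a real projection of $(V(G), d_G)$: since $|L(u)-L(v)|\leq 1$ for every edge $uv$, the triangle inequality along any shortest path between $x$ and $y$ yields $|L(x)-L(y)|\leq d_G(x,y)$, so $L$ is $1$-Lipschitz.

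Next, for any $S>0$ and any maximal $(\infty,S)$-bounded set $A\subseteq V(G)$ with respect to $L$, the set $A$ is of the form $L^{-1}([a,a+S])$ for some $a\in\mathbb{R}$. As $L$ takes integer values, $A$ is contained in the union of at most $\lfloor S\rfloor+1$ consecutive layers. Restricting the tree-decomposition to $A$ by replacing each bag $X_t$ with $X_t\cap A$ yields a valid tree-decomposition of $G[A]$, and each such bag has size at most $(\lfloor S\rfloor+1)w$, because it meets each of the at most $\lfloor S\rfloor+1$ relevant layers in at most $w$ vertices. Hence $G[A]$ has treewidth at most $(\lfloor S\rfloor+1)w-1$, so $G[A]\in \L_{(\lfloor S\rfloor+1)w}$.

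Taking $f(S):=(\lfloor S\rfloor+1)w$ (which is a function $\mathbb{R}^+\to\mathbb{N}$ as required), the above shows that the class of graphs of layered treewidth at most $w$ is $(\L_i)_{i\in\N}$-layerable. The proof is essentially a direct unpacking of the definitions; the only potential subtlety is verifying that the restriction of a tree-decomposition to an induced subgraph is again a tree-decomposition with the expected width bound, which is a standard and routine fact. No serious obstacle is expected.
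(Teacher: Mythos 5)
Your proof is correct and follows essentially the same route as the paper: use the layering as the real projection and restrict the given tree-decomposition to the at most $\lfloor S\rfloor+1$ consecutive layers meeting a maximal $(\infty,S)$-bounded set, giving bags of size at most $(\lfloor S\rfloor+1)w$ and hence membership in $\L_{(\lfloor S\rfloor+1)w}$. The only difference is cosmetic: the paper takes $f(S)=\lceil wS\rceil$ and speaks of ``at most $S$ consecutive layers,'' whereas your $f(S)=(\lfloor S\rfloor+1)w$ handles the layer count slightly more carefully, which is immaterial for the statement.
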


\begin{proof}
Let $w>0$.
Let $f:{\mathbb R}^+ \rightarrow {\mathbb N}$ be the function such that $f(x)= \lceil wx \rceil$ for every real $x$.
Let $G$ be a graph with layered treewidth at most $w$.
So it has a tree-decomposition $(T,\X)$ and a layering ${\mathcal V}=(V_1,V_2,\dots)$ such that each bag of $(T,\X)$ intersects each layer of ${\mathcal V}$ in at most $w$ vertices. 
Note that for any $\ell>0$ and any set $U$ which is a union of at most
$\ell$ consecutive layers, the intersection of $U$ and any bag of
$(T,\X)$ has size at most $w\ell$, so the treewidth of $G[U]$ is at most $w\ell\le f(\ell)$.
That is, $G[U] \in \L_{f(\ell)}$.

Let $L: V(G) \rightarrow {\mathbb R}$ such that for every $v \in V(G)$, $L(v)$ is the index such that $v \in V_{L(v)}$. 
So $L$ is a real projection of $(V(G),d_G)$.
For any $\ell>0$, any $(\infty,\ell)$-bounded set with respect to $L$ and $d_G$ is a subset of a union of at most $\ell$ consecutive layers, so it induces a graph in $\L_{f(\ell)}$.
Therefore, $\F$ is $(\L_i)_{i \in {\mathbb N}}$-layerable.
\end{proof}

Now we are ready to prove Theorem~\ref{layered_tw_ad_intro}, which we restate here.

\begin{thm} \label{layered_tw_ad}
For any integer $w>0$, the class of graphs of layered treewidth at most $w$ has asymptotic dimension at most 2.
\end{thm}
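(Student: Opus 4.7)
The plan is to assemble this statement directly from the three ingredients already prepared in Sections~\ref{sec:tree} and~\ref{sec:control} together with Lemma~\ref{layerable}. No new combinatorics is needed: the content of the theorem is precisely what the machinery was designed to deliver.

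First, I would invoke Theorem~\ref{tw_ad}: for each integer $i\ge 1$, the class $\L_i$ of graphs of treewidth at most $i$ has asymptotic dimension at most $1$. In particular, each $\L_i$ admits a $1$-dimensional control function. Trivially, $\L_1\subseteq \L_2\subseteq\cdots$ because having treewidth at most $i$ implies having treewidth at most $i+1$, so the sequence $\L=(\L_i)_{i\in\N}$ is a non-decreasing sequence of classes of asymptotic dimension at most $1$, exactly as required by the hypothesis of Theorem~\ref{thm:bd} with $n=1$.

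Next, I would apply Lemma~\ref{layerable}: the class $\F$ of graphs of layered treewidth at most $w$ is $\L$-layerable. Concretely, a graph $G\in\F$ with a tree-decomposition $(T,\X)$ and layering ${\mathcal V}=(V_1,V_2,\dots)$ witnessing layered treewidth at most $w$ provides the real projection $L$ sending each vertex to the index of its layer; any $(\infty,S)$-bounded set with respect to $L$ and $d_G$ lies in a union of at most $\lceil S\rceil$ consecutive layers and therefore induces a subgraph of treewidth at most $w\lceil S\rceil$, which belongs to $\L_{\lceil wS\rceil}$.

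Finally, applying the first (qualitative) conclusion of Theorem~\ref{thm:bd} with $n=1$ to the $\L$-layerable class $\F$ yields $\ad(\F)\le n+1=2$, which is exactly the statement of Theorem~\ref{layered_tw_ad}. There is no real obstacle here; the substantive work has already been done in Theorem~\ref{tw_ad} (the treewidth case), Theorem~\ref{thm:bd} (the general layering machinery derived from~\cite{BDLM}), and Lemma~\ref{layerable} (the interpretation of layered treewidth as a layering whose layers have controlled treewidth). The only point worth stating explicitly in the write-up is the monotonicity $\L_1\subseteq\L_2\subseteq\cdots$, so that Theorem~\ref{thm:bd} applies without modification.
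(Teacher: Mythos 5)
Your proposal is correct and follows exactly the paper's own proof: combine Theorem~\ref{tw_ad} (treewidth at most $i$ has asymptotic dimension at most $1$), Lemma~\ref{layerable} ($\F$ is $(\L_i)_{i\in\N}$-layerable), and the qualitative part of Theorem~\ref{thm:bd} with $n=1$ to get $\ad(\F)\le 2$. The remark about the monotonicity $\L_1\subseteq\L_2\subseteq\cdots$ is a fine (and harmless) explicit check that the paper leaves implicit.
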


\begin{proof}
Let $\F$ be the class of graphs of layered treewidth at most $w$.
For every integer $i>0$, let $\L_i$ be the class of graphs of treewidth at most $i$.
By Theorem~\ref{tw_ad}, $\ad(\L_i) \leq 1$ for each $i \in {\mathbb N}$.
By Lemma~\ref{layerable}, $\F$ is $(\L_i)_{i \in {\mathbb N}}$-layerable.
So by Theorem~\ref{thm:bd}, $\ad(\F) \leq 2$.
\end{proof}

To prove Theorem~\ref{thm:minorch}, we need the following lemma.

\begin{lemma} \label{small_exten_minor}
Let $p>0$ be an integer.
For every integer $x>0$, let $\F_x$ be the class of graphs of layered treewidth at most $x$.
Let $\W$ be the class of graphs such that for every $G \in \W$, $G$ can be obtained from a graph $G' \in \F_p^{+p}$ by adding new vertices and edges incident with these vertices, where the neighbourhood of each new vertex is contained in a clique in $G'$. 
Then $\W \subseteq \F_{p+1}^{+p}$.
\end{lemma}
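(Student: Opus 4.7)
Let $G \in \W$. Unpacking the definitions, there exist $G' \in \F_p^{+p}$ and a set $Z' \subseteq V(G')$ with $\lvert Z' \rvert \le p$ such that $G' - Z'$ has layered treewidth at most $p$, and $G$ is obtained from $G'$ by adding new vertices $v$ whose neighbourhoods in $G$ are contained in cliques $K_v$ of $G'$. My plan is to take the apex set of $G$ to be $Z := Z'$ and show directly that $G - Z'$ has layered treewidth at most $p+1$, which will yield $G \in \F_{p+1}^{+p}$.

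Fix a tree-decomposition $(T,\X)$ of $G' - Z'$ with $\X = (X_t : t \in V(T))$ and a layering $\V = (V_j)$ witnessing layered treewidth at most $p$, so $\lvert X_t \cap V_j \rvert \le p$ for all $t,j$. For each new vertex $v$, its neighbourhood in $G - Z'$ is contained in the clique $K_v - Z'$ of $G' - Z'$. By the Helly property for subtrees of $T$, $K_v - Z'$ lies inside a single bag $X_{t_v}$; and because any edge of $G' - Z'$ joins vertices in the same or consecutive layers, the clique $K_v - Z'$ lies in some $V_{i_v} \cup V_{i_v+1}$ (and in a single layer, or is empty, in the degenerate cases). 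I then build $(T',\X')$ from $(T,\X)$ by attaching, for each new vertex $v$, a fresh leaf $t'_v$ to $t_v$ (to an arbitrary node if $K_v - Z' = \emptyset$) with bag $X'_{t'_v} := (K_v - Z') \cup \{v\}$; and I extend the layering by placing $v$ into the layer $V_{i_v}$ (any layer in the degenerate case).

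It then remains to check three routine items: (i) $(T',\X')$ is a valid tree-decomposition of $G - Z'$, the only subtlety being that the bag-subtree of each original vertex $u$ stays connected because the new leaves containing $u$ are precisely those $t'_v$ with $u \in K_v - Z' \subseteq X_{t_v}$, each adjacent to $t_v$; (ii) the extended layering is valid, since every edge of $G - Z'$ incident to a new vertex $v$ joins $v$ to a vertex of $K_v - Z' \subseteq V_{i_v} \cup V_{i_v+1}$, and $v$ itself lies in $V_{i_v}$; and (iii) the layered treewidth bound, which is the main quantitative point: for an original bag $X'_t = X_t$ we have $\lvert X'_t \cap V'_j \rvert = \lvert X_t \cap V_j \rvert \le p$, while for a new bag we bound
\[
\lvert X'_{t'_v} \cap V'_j \rvert \le \lvert (K_v - Z') \cap V_j \rvert + \lvert \{v\} \cap V'_j \rvert \le \lvert X_{t_v} \cap V_j \rvert + 1 \le p + 1.
\]
The hardest step is really just identifying the correct construction: the new leaves exist only to accommodate the new vertices, and the $+1$ in ``$p+1$'' comes exclusively from the single new vertex $v$ itself, since its clique neighbourhood was already packed into an existing bag of size at most $p$ per layer. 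The degenerate cases where $\lvert K_v - Z' \rvert \in \{0,1\}$ are handled transparently by the freedom in choosing $t_v$ and $i_v$.
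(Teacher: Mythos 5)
Your proposal is correct and follows essentially the same route as the paper's proof: keep the apex set $Z$ unchanged, place each clique $K_v-Z$ in a single existing bag and in (at most) two consecutive layers, attach a fresh leaf with bag $(K_v-Z)\cup\{v\}$, insert $v$ into the lower of those layers, and observe that each new bag meets each layer in at most $p+1$ vertices. The only cosmetic difference is that you make explicit the Helly-property justification and the degenerate cases $\lvert K_v-Z\rvert\le 1$, which the paper leaves implicit.
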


\begin{proof}
Let $G \in \W$.
So there exists $H \in \F_p^{+p}$ such that $G$ can be obtained from $H$ by adding new vertices whose neighbourhoods are contained in cliques of  $H$.
Since $H \in \F_p^{+p}$, there exists $Z \subseteq V(H)$ with $\lvert Z \rvert \leq p$ such that $H-Z \in \F_p$.
Hence $G-Z$ can be obtained from $H-Z$ by adding new vertices such that for each vertex $v \in V(G)-(V(H) \cup Z)=V(G)-V(H)$, the neighbourhood of $v$ is contained in a clique $C_v$ in $H-Z$.

Since $H-Z \in \F_p$, there exist a layering $(V_1,V_2,\dots)$ of $H-Z$ and a tree-decomposition $(T,\X=(X_t: t \in V(T))$ of $H-Z$ such that the intersection of any $V_i$ and any $X_t$ has size at most $p$.
For every $v \in V(G)-V(H)$, since $C_v$ is a clique in $H-Z$, there exist $t_{v} \in V(T)$ with $C_v \subseteq X_{t_{v}}$ and an integer $i_{v}>0$ such that $C_v \subseteq V_{i_{v}} \cup V_{i_{v}+1}$.

For each integer $i>0$, define $V_i' = V_i \cup \{v \in V(G)-V(H): i_{v}=i\}$.
Then $(V_1',V_2',\dots)$ is a layering of $G-Z$.
Let $T'$ be the tree obtained from $T$ by adding, for every $v \in V(G)-V(H)$, a new node $t_v'$ adjacent to $t_{v}$.
For every $t \in V(T)$, define $X'_t = X_t$; for every $t \in V(T')-V(T)$, $t=t_v'$ for some (unique) $v \in V(G)-V(H)$, and we define $X'_t = C_v \cup \{v\}$.
Let $\X'=(X'_t: t \in V(T'))$.
Then $(T',\X')$ is a tree-decomposition of $G-Z$.

Let $i>0$ be an integer, and let $t \in V(T')$.
If $t \in V(T)$, then $X_t' \subseteq V(H)-Z$, so $\lvert X'_t \cap V'_i \rvert = \lvert X_t \cap V_i \rvert \leq p$.
If $t \in V(T')-V(T)$, then there exists $v \in V(G)-V(H)$ such that $t=t_v'$, so $X'_t \cap V_i' = (C_v \cap V_i) \cup (\{v\} \cap V_i') \subseteq (X_{t_{v}} \cap V_i) \cup \{v\}$, and hence $\lvert X'_t \cap V_i' \rvert \leq p+1$.

Therefore, the layered treewidth of $G-Z$ is at most $p+1$.
So $G-Z \in \F_{p+1}$.
Hence $G \in \F_{p+1}^{+p}$.
This shows $\W \subseteq \F_{p+1}^{+p}$.
\end{proof}

We can now prove Theorem~\ref{thm:minorch}. 
The following is a restatement.

\begin{thm}\label{thm:minorch2}
For any graph $H$, the class of $H$-minor free graphs has asymptotic dimension at most 2.
\end{thm}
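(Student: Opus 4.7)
The plan is to combine the Robertson--Seymour Graph Minor Structure Theorem with the technical results developed earlier in the paper, namely Theorem~\ref{layered_tw_ad}, Theorem~\ref{apex_extension_ad}, and Theorem~\ref{tree_extension_ad}. Write $\G$ for the class of $H$-minor free graphs. Given $H$, the Graph Minor Structure Theorem supplies integers $\theta=\theta(H)$ and $p=p(H)$ such that every $G\in\G$ admits a tree-decomposition $(T,\X)$, $\X=(X_t:t\in V(T))$, of adhesion at most $\theta$ with the property that each adhesion $X_t\cap X_{t'}$ is a clique in the torso $T_t$ at $t$, and each torso $T_t$ is ``almost embeddable'' in a surface of bounded Euler genus with a bounded number of bounded-depth vortices, after deletion of at most $p$ apex vertices. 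Extending the layered treewidth bound of Dujmovi\'{c}, Morin and Wood to accommodate bounded-depth vortices (a standard strengthening), the torso minus its apex vertices has layered treewidth at most $p$, enlarging $p$ if necessary, so each torso lies in $\F_p^{+p}$, where $\F_p$ is the class of graphs of layered treewidth at most $p$.

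Next I would set $\cC:=\F_{p+1}^{+p}$ and verify the hypothesis of Theorem~\ref{tree_extension_ad}. Observe that $\cC$ is closed under arbitrary subgraphs: layered treewidth is monotone under arbitrary subgraphs (the same tree-decomposition and layering still witness the bound), so $\F_{p+1}$ is, and restricting an apex set $Z$ of some $G\in\F_{p+1}^{+p}$ to a subgraph $H\subseteq G$ produces an apex set of size at most $|Z|\le p$ for $H$; in particular $\cC$ is hereditary. Now fix $t\in V(T)$ and consider any graph $\hat G_t$ obtained from $G[X_t]$ by adding, for each neighbor $t'$ of $t$ in $T$, a set of pairwise non-adjacent new vertices whose neighborhoods are contained in $X_t\cap X_{t'}$. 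Form $\hat T_t$ by adding the very same new vertices, with the same adjacencies, to the torso $T_t$ instead. Since $X_t\cap X_{t'}$ is a clique in $T_t\in\F_p^{+p}$, Lemma~\ref{small_exten_minor} gives $\hat T_t\in\F_{p+1}^{+p}=\cC$. Because $\hat G_t$ is a subgraph of $\hat T_t$ on the same vertex set, $\hat G_t\in\cC$ as well, as required by Theorem~\ref{tree_extension_ad}.

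Finally, $\ad(\F_{p+1})\le 2$ by Theorem~\ref{layered_tw_ad}, and therefore $\ad(\cC)=\ad(\F_{p+1}^{+p})\le 2$ by Theorem~\ref{apex_extension_ad}. Applying Theorem~\ref{tree_extension_ad} to $\G$ with this choice of $\cC$ and adhesion bound $\theta$ yields $\ad(\G)\le\max\{\ad(\cC),1\}=2$, which is Theorem~\ref{thm:minorch2}. The main delicate point is the passage from the induced subgraph $G[X_t]$, which is what Theorem~\ref{tree_extension_ad} directly controls, to the torso $T_t$, which is what the Graph Minor Structure Theorem describes; Lemma~\ref{small_exten_minor} together with the subgraph-closure of $\F_{p+1}^{+p}$ is precisely what bridges this gap.
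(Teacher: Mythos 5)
Your proposal is correct and follows essentially the same route as the paper: the structure theorem giving a tree-decomposition of bounded adhesion whose torsos lie in $\F_p^{+p}$, Lemma~\ref{small_exten_minor} plus subgraph-closure of $\F_{p+1}^{+p}$ to pass from torsos to the graphs required by Theorem~\ref{tree_extension_ad}, and then Theorems~\ref{layered_tw_ad} and \ref{apex_extension_ad} to bound $\ad(\F_{p+1}^{+p})$ by $2$. The only cosmetic difference is that the paper cites \cite[Theorem 1.3]{RS03} together with \cite[Theorem 20]{DMW17} directly for the layered-treewidth bound on torsos (vortices included), rather than invoking a ``standard strengthening'' of the Dujmovi\'c--Morin--Wood bound as you do.
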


\begin{proof}
Let $\F$ be the class of $H$-minor free graphs.
For every integer $x>0$, let $\F_x$ be the class of graphs of layered treewidth at most $x$.
By \cite[Theorem 1.3]{RS03} and \cite[Theorem 20]{DMW17}, there exists an integer $p>0$ such that for every graph $G \in \F$, there exists a tree-decomposition $(T,\X)$ of $G$ of adhesion at most $p$ such that for every $t \in V(T)$, the torso\footnote{Given a tree-decomposition $(T,\X)$ of $G$, where $\X=(X_t: t \in V(T))$, the {\it torso} at $t$ is the graph obtained from $G[X_t]$ by adding edges such that $X_t \cap X_{t'}$ is a clique for each neighbour $t'$ of $t$ in $T$.} at $t$ belongs to $\F_p^{+p}$.

Let $\W$ be the class of graphs such that for every $G \in \W$, $G$ can be obtained from a graph $G' \in \F_p^{+p}$ by adding new vertices and edges incident with these vertices, where the neighbourhood of each new vertex is contained in a clique in $G'$. 
By Lemma \ref{small_exten_minor}, $\W \subseteq \F_{p+1}^{+p}$.

Note that $\F_{p+1}^{+p}$ is closed under taking subgraphs.
Hence for every $G \in \F$, there exists a tree-decomposition $(T,\X)$ of $G$ of adhesion at most $p$, where $\X=(X_t: t \in V(T))$, such that for every $t \in V(T)$, $\F_{p+1}^{+p}$ contains all graphs which can be obtained from any $G[X_t]$ by adding, for each neighbour $t'$ of $t$ in $T$, a set of new vertices whose neighbourhoods are contained in $X_t\cap X_{t'}$.

Therefore, by Theorem \ref{tree_extension_ad}, $\ad(\F) \leq \max\{\ad(\F_{p+1}^{+p}),1\} \leq 2$, where the last inequality follows from Theorems \ref{apex_extension_ad} and \ref{layered_tw_ad}. 
\end{proof}

\section{Assouad-Nagata dimension of bounded layered treewidth graphs}\label{sec:tw}

A natural question is whether the asymptotic dimension can be
replaced by the Assouad-Nagata dimension in Theorems~\ref{thm:minorch}, \ref{tw_ad_intro_2}, or \ref{layered_tw_ad_intro}. 
We now show that the answer to this stronger question is negative in the case of layered treewidth. 
That is, Theorem~\ref{layered_tw_ad_intro} cannot be extended by replacing asymptotic dimension by Assouad-Nagata dimension.

Given a graph $G$ and an integer $k \geq 0$, the \emph{$k$-subdivision} of $G$, denoted by $G^{(k)}$, is the graph obtained from $G$ by replacing each edge of $G$ by a path on $k+1$ edges.
We start with the following simple observation.

\begin{lemma}\label{obs:subdv}
Let $k \geq 0$ and $n \geq 0$ be integers.
Let $c>0$ be a real number.
Let $D: {\mathbb R}^+ \rightarrow {\mathbb R}^+$ with $D(r) \leq cr$ for every $r>0$.
Let $G$ be a graph.
If $D$ is an $n$-dimensional control function of $G^{(k)}$, then the function $f(x):=cx$ for every $x>0$ is an $n$-dimensional control function of $G$.
\end{lemma}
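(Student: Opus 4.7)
The plan is to pull back a good cover of $G^{(k)}$ at scale $(k+1)r$ to a good cover of $G$ at scale $r$, using the key fact that subdividing each edge into a path of length $k+1$ multiplies distances between original vertices by exactly $k+1$.

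More precisely, first I observe that if we regard $V(G)$ as a subset of $V(G^{(k)})$, then $d_{G^{(k)}}(u,v) = (k+1)\,d_G(u,v)$ for all $u,v \in V(G)$: a shortest path in $G$ of length $\ell$ lifts to a path of length $(k+1)\ell$ in $G^{(k)}$, and conversely any path in $G^{(k)}$ between two vertices of $V(G)$ must cross each subdivided edge it uses in full (it can only enter or leave a subdivision path at one of its two endpoints in $V(G)$), so its length is $k+1$ times the length of the corresponding walk in $G$.

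Now let $r > 0$. Since $D$ is an $n$-dimensional control function of $G^{(k)}$, there is a cover $\mathcal{U}^* = \bigcup_{i=1}^{n+1} \mathcal{U}^*_i$ of $V(G^{(k)})$ such that each $\mathcal{U}^*_i$ is $(k+1)r$-disjoint in $G^{(k)}$ and each element of $\mathcal{U}^*$ is $D((k+1)r)$-bounded in $G^{(k)}$, hence $c(k+1)r$-bounded. For each $i \in \{1,\ldots,n+1\}$, define
\[
\mathcal{U}_i := \{\, U^* \cap V(G) : U^* \in \mathcal{U}^*_i,\ U^* \cap V(G) \neq \emptyset \,\}.
\]
Then $\mathcal{U} := \bigcup_{i=1}^{n+1} \mathcal{U}_i$ covers $V(G)$. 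For any $U = U^* \cap V(G) \in \mathcal{U}_i$ and any $u,v \in U$, we have $d_{G^{(k)}}(u,v) \leq c(k+1)r$, so by the distance identity $d_G(u,v) \leq cr$; hence each element of $\mathcal{U}$ is $cr$-bounded in $G$. For the $r$-disjointness of $\mathcal{U}_i$, suppose $u \in U_1^* \cap V(G)$ and $v \in U_2^* \cap V(G)$ with $U_1^*, U_2^* \in \mathcal{U}^*_i$ distinct; then $d_{G^{(k)}}(u,v) > (k+1)r$, so $d_G(u,v) > r$.

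Thus $f(x) = cx$ is an $n$-dimensional control function of $G$, as claimed. The proof is essentially a one-line scaling argument; there is no real obstacle, only the need to verify the distance identity $d_{G^{(k)}}(u,v) = (k+1)\,d_G(u,v)$ for $u,v \in V(G)$, which is immediate from the structure of the subdivision.
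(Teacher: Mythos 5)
Your proof is correct and follows essentially the same route as the paper: restrict a cover of $G^{(k)}$ at scale $(k+1)r$ to $V(G)$ and use the exact scaling identity $d_{G^{(k)}}(u,v)=(k+1)\,d_G(u,v)$ for $u,v\in V(G)$ to get $cr$-boundedness and $r$-disjointness at scale $r$ in $G$. The only cosmetic difference is that the paper phrases the argument via the equivalent ``$r$-components'' formulation of control functions while you work directly with $r$-disjoint families, which changes nothing of substance.
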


\begin{proof}
Fix some real $r>0$. 
Let $U_1,\ldots,U_{n+1}$ be subsets of $V(G^{(k)})$ whose $(k+1)r$-components are $D((k+1)r)$-bounded such that $\bigcup_{i=1}^{n+1}U_i=V(G^{(k)})$.
For any $1 \leq i \leq n+1$, let $U_i'=U_i \cap V(G)$.
Each $r$-component of $U_i'$ (for some $1\le i \le n+1$) in $(V(G),d_G)$ is contained in a $(k+1)r$-component of $U_i$ in $(V(G^{(k)}),d_{G^{(k)}})$.
Thus each $r$-component of $U_i'$ is $D((k+1)r)$-bounded in $(V(G^{(k)}),d_{G^{(k)}})$.
Note that $D((k+1)r)  \leq c\cdot (k+1)r$ and for any two vertices $u$ and $v$ in $G$, we have $d_{G^{(k)}}(u,v)=(k+1)\cdot d_G(u,v)$. 
It follows that for any $1\le i \le n+1$, each $r$-component of $U_i'$ is $cr$-bounded. 
So the function $f(x):=cx$ for any $x>0$ is an $n$-dimensional control function of $G$.
\end{proof}

Note that Lemma~\ref{obs:subdv} does not assume any independence between $k$ and $G$, and in particular we will later apply it with $k=|E(G)|$.

Recall that a graph is \emph{1-planar} if it has a drawing in the plane so that each edge contains at most one edge-crossing (see~\cite{PT97} for more details on $k$-planar graphs). 
Note that Corollary~\ref{gk_planar_ad_intro} directly implies that the class of 1-planar graphs has asymptotic dimension 2. We now prove that the analogous result does not hold for the Assouad-Nagata dimension.

\begin{lemma} \label{01_unbounded_AN}
There is no integer $d$ such that the class of 1-planar graphs has Assouad-Nagata dimension at most $d$.
\end{lemma}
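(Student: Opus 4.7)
The plan is to argue by contradiction: assume the class of 1-planar graphs has Assouad-Nagata dimension at most $d$ for some integer $d$. By definition of Assouad-Nagata dimension applied to a class, this yields a uniform constant $c>0$ such that \emph{every} 1-planar graph $H$ admits a $d$-dimensional control function $D_H$ satisfying $D_H(r)\le cr$ for all $r>0$. The idea is then to leverage Lemma~\ref{obs:subdv} to transfer this property from 1-planar graphs to arbitrary graphs via subdivisions, and derive a contradiction with the known fact that the class of all finite graphs has unbounded asymptotic dimension.

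The key combinatorial/geometric step is the following claim: for every finite graph $G$ there exists an integer $k=k(G)\ge 0$ such that the $k$-subdivision $G^{(k)}$ is 1-planar. To see this, fix any drawing of $G$ in the plane (for instance a straight-line drawing), let $M$ be the maximum number of crossings along any edge of $G$ in this drawing, and take $k=M$. Each edge of $G$ can then be subdivided into $M+1$ arcs by placing the subdivision vertices between consecutive crossing points; the resulting drawing of $G^{(k)}$ has at most one crossing per edge, hence $G^{(k)}$ is 1-planar.

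Once the claim is established, the argument finishes quickly. For an arbitrary graph $G$, pick $k=k(G)$ as above; by our standing assumption there is a $d$-dimensional control function $D$ for $G^{(k)}$ with $D(r)\le cr$. Applying Lemma~\ref{obs:subdv} with this $D$ and this $k$ shows that $G$ itself has a $d$-dimensional control function $f(x)=cx$, with the same constant $c$ independent of $G$. Hence the entire class of finite graphs would have asymptotic dimension at most $d$, with a linear (in fact dilation) control function.

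This last conclusion is absurd: as recalled in the introduction, any family of bounded-degree expanders (which is a subclass of all finite graphs) already has unbounded asymptotic dimension. The only step requiring real thought is the 1-planar subdivision claim, but it is essentially immediate from a direct drawing argument; everything else is bookkeeping with Lemma~\ref{obs:subdv} and the uniformity of the constant $c$ coming from the definition of Assouad-Nagata dimension of a class.
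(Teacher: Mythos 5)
Your proof is correct and follows essentially the same route as the paper: subdivide each graph enough times so that a straight-line (general position) drawing becomes a drawing with at most one crossing per edge, giving a 1-planar subdivision, and then use Lemma~\ref{obs:subdv} together with the uniform dilation constant to pull the control function back to the original graph. The only cosmetic differences are that the paper subdivides $|E(G)|$ times and derives the contradiction from a family of unbounded Assouad-Nagata dimension (grids of increasing dimension), whereas you subdivide $M$ times ($M$ the maximum number of crossings on an edge) and contradict the unbounded asymptotic dimension of the class of all finite graphs via expanders; both are valid.
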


\begin{proof}
Take any family $\mathcal{F}$ of graphs of unbounded
Assouad-Nagata dimension (for instance, grids of increasing size and dimension).
For each graph $G \in \mathcal{F}$, let $G^*:=G^{(|E(G)|)}$.

Note that
$G^*$ is $1$-planar (this can be seen by placing the vertices of $G$ in general position in the plane, joining adjacent vertices in $G$ by straight-line segments, and then subdividing each edge at least once between any two consecutive
crossings involving this edge). Define $\mathcal{F}^*:=\{G^*\,|\,G
\in \mathcal{F}\}$, and note that all the graphs of  $\mathcal{F}^*$
are $1$-planar. 

Suppose to the contrary that there exists an integer $d$ such that the class of $1$-planar graphs has Assouad-Nagata dimension at most $d$.
Then $\F^*$ has Assouad-Nagata dimension at most $d$.
So there exists a $d$-dimensional control function $D$ of all graphs in $\F^*$ such that $D$ is a dilation.
Hence there exists a constant $c>0$ such that $D(r) \leq cr$ for every $r>0$, and for every $G \in \F$, $D$ is a $d$-dimensional control function of $G^*=G^{(\lvert E(G) \rvert)}$.
Let $f: {\mathbb R}^+ \rightarrow {\mathbb R}^+$ be the function with $f(x)=cx$ for every $x \in {\mathbb R}^+$.
By Lemma~\ref{obs:subdv}, for every $G \in \F$, $f$ is a $d$-dimensional control function of $G$.
Hence $f$ is a $d$-dimensional control function of $\F$.
But $f$ is a dilation, so the Assouad-Nagata dimension of $\F$ is at most $d$, a contradiction.
\end{proof}

Note that \cite[Theorem 3.1]{DEW17} implies that every $1$-planar graph has layered treewidth at most $12$.
So Lemma \ref{01_unbounded_AN} implies a weaker version of Theorem~\ref{thm:nofptw_intro} that replaces the number 1 by $12$.
Inspired by a comment of an anonymous referee of an earlier version of this paper, the number 12 can be dropped to 1 by using a standard argument about quasi-isometry and \cite[Lemma 3]{BDJMW}.

Two metric spaces $(X,d_X)$ and $(Y,d_Y)$ are \emph{quasi-isometric} if there is a map $f: X \rightarrow Y$ and constants $\epsilon\ge 0$, $\lambda\ge 1$, and $C\ge 0$ such that for any $y\in Y$ there is $x\in X$ such that $d_Y(y,f(x))\le C$, and for every $x_1,x_2\in X$, $$\frac1{\lambda}d_X(x_1,x_2)-\epsilon\le d_Y(f(x_1),f(x_2))\le \lambda d_X(x_1,x_2)+\epsilon.$$
It is not difficult to check that the definition is symmetric.  
Moreover, if for every $r> 0$, $X$ has a cover by $n$ sets whose $r$-components are $D_X(r)$-bounded and there exists a map $f: X \rightarrow Y$ as above, then for every $r  > 0$, $Y$ has a cover by $n$-sets whose $r$-components are $D_Y(r)$-bounded, where $D_Y$ only depends on $D_X$ and the constants $\lambda$, $\epsilon$, and $C$ in the definition of $f$. 
Moreover, $D_X$ is linear if and only if $D_Y$ is linear.
This implies that asymptotic dimension (of linear type) is invariant under quasi-isometry. 
Moreover,  if each member of a family  $\mathcal{X}$ of metric spaces is quasi-isometric to some metric space  in a family ${\mathcal Y}$ of metric spaces, with uniformly bounded constants $\lambda$, $\epsilon$, and $C$ in the definition of the quasi-isometry map, then $\mathrm{asdim}\, \mathcal{X}\le\mathrm{asdim}\,{\mathcal Y}$, and the same holds for the asymptotic dimension of linear type.

\medskip

We can now derive the following, which  is a restatement of Theorem~\ref{thm:nofptw_intro}. 

\begin{corollary}
There is no integer $d$ such that the class of graphs of layered treewidth at most $1$ has asymptotic dimension of linear type at most $d$. 
\end{corollary}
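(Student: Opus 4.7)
The plan is to combine the construction from Lemma~\ref{01_unbounded_AN} with a quasi-isometric reduction of the layered treewidth bound from $12$ down to $1$, using \cite[Lemma~3]{BDJMW} as suggested in the discussion preceding the corollary.

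First, I would observe that the proof of Lemma~\ref{01_unbounded_AN} goes through essentially unchanged when ``Assouad-Nagata dimension'' is replaced by ``asymptotic dimension of linear type'' throughout. The only place where the precise form of the control function matters is the appeal to Lemma~\ref{obs:subdv}, whose proof shows that if $D$ is an $n$-dimensional control function for $G^{(k)}$, then $r\mapsto D((k+1)r)/(k+1)$ is an $n$-dimensional control function for $G$. If $D$ is linear, say $D(r)\le ar+b$, then $D((k+1)r)/(k+1)\le ar + b/(k+1)\le ar+b$, so the transformed control function is linear with the same constants. Starting from a family of grids of increasing dimension (whose asymptotic dimension of linear type equals their dimension, and is therefore unbounded), the resulting $1$-planar family $\mathcal{F}^*$ from the proof of Lemma~\ref{01_unbounded_AN} has unbounded asymptotic dimension of linear type.

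Second, I would apply \cite[Lemma~3]{BDJMW} to replace each graph in $\mathcal{F}^*$ by a quasi-isometric graph of layered treewidth at most~$1$. Since every $1$-planar graph has layered treewidth at most~$12$ by \cite[Theorem~3.1]{DEW17}, the quasi-isometry constants $\lambda,\varepsilon,C$ produced by that lemma depend only on $12$, and are therefore uniform across the family. This yields a family $\mathcal{G}$ of graphs of layered treewidth at most $1$, each uniformly quasi-isometric to some member of $\mathcal{F}^*$.

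Finally, I would invoke the quasi-isometry invariance of asymptotic dimension of linear type recalled in the paragraph preceding the corollary: when the quasi-isometry constants are uniform across the family, a linear $n$-dimensional control function for one family produces a linear $n$-dimensional control function for the other, with constants depending only on the originals and on $\lambda,\varepsilon,C$. Hence the unbounded asymptotic dimension of linear type of $\mathcal{F}^*$ transfers to $\mathcal{G}$, which proves the corollary. The main obstacle is the first step, i.e.\ verifying that the subdivision argument of Lemma~\ref{01_unbounded_AN} transfers cleanly to the linear-type setting; as noted above, this reduces to the elementary observation that linearity is preserved under the rescaling $D\mapsto D((k+1)\,\cdot\,)/(k+1)$.
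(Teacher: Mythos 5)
Your proposal is correct and follows essentially the same route as the paper: both arguments rest on the subdivided-grid family of Lemma~\ref{01_unbounded_AN} (1-planar, hence of layered treewidth at most 12 by \cite{DEW17}), the quasi-isometric replacement of layered treewidth 12 by layered treewidth 1 via \cite[Lemma 3]{BDJMW} with uniform constants, and the quasi-isometry invariance of asymptotic dimension of linear type. The only divergence is where the linear-type/dilation gap is bridged: the paper upgrades the linear control function $cr+c$ to the dilation $2cr$ using that all edges have weight 1, and then contradicts Lemma~\ref{01_unbounded_AN} as stated for Assouad--Nagata dimension, whereas you rerun Lemmas~\ref{obs:subdv} and~\ref{01_unbounded_AN} in the linear-type setting, noting that the subdivision argument sends a control function $ar+b$ for $G^{(k)}$ to $ar+b/(k+1)\le ar+b$ for $G$; both bridges are valid and of comparable length.
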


\begin{proof}
Let ${\mathcal Y}$ be the class of graphs of layered treewidth at most $1$.
Suppose to the contrary that there exists an integer $d$ such that ${\mathcal Y}$ has asymptotic dimension of linear type at most $d$.

Let $\X$ be the class of graphs of layered treewidth at most $12$.
By \cite[Lemma 3(a)]{BDJMW} (with $k=12$), for every graph $G$ with layered treewidth at most $12$, $G$ can be made into a graph $G'$ with layered treewidth at most $1$ by replacing each edge by a path on at most 24 vertices.
So the mapping $\iota: V(G) \rightarrow V(G')$ with $\iota(v)=v$ for every $v \in V(G)$ satisfies that for every $y \in V(G')$, there exists $x \in V(G)$ such that $d_{G'}(y,\iota(x)) \leq 11$, and for every $x_1,x_2 \in V(G)$, $\frac{1}{23}d_G(x_1,x_2) \leq d_{G'}(\iota(x_1),\iota(x_2)) \leq 23d_G(x_1,x_2)$.
That is, every member of $\X$ is quasi-isometric to a member of ${\mathcal Y}$ with uniformly bounded constants.
Since ${\mathcal Y}$ has asymptotic dimension of linear type at most $d$, so does $\X$.

Hence ${\mathcal X}$ has a $d$-dimensional control function $D$ defined by $D(x):=cx+c$ for some real number $c>0$.
Since every edge has weight 1, the function $f(x):=2cx$ is a $d$-dimensional control function for  ${\mathcal X}$.
Hence the class of graphs of layered treewidth at most 12 has Assouad-Nagata dimension at most $d$.
By \cite[Theorem 3.1]{DEW17}, every $1$-planar graph has layered treewidth at most $12$, and so the Assouad-Nagata dimension of the class of 1-planar graphs is at most $d$. This contradicts Lemma~\ref{01_unbounded_AN}.
\end{proof}

\section{\texorpdfstring{$K_{3,p}$}\ -minor free graphs}\label{sec:k3p}

\subsection{Terminology}

Let $(G,\phi)$ be a weighted graph.
For two subsets $A,B$ of  $V(G)$, we define
$$d_{(G,\phi)}(A,B):=\min\{d_{(G,\phi)}(u,v)\,|\,(u,v)\in A\times B\}.$$

We say that a subset $S$ of vertices of $G$ is
\emph{connected} if $G[S]$ is connected. 

\subsection{Fat minors}\label{sec:fatminor}

An equivalent way to define minors is the following: a (weighted or unweighted) graph $G$ contains a graph $H$ as a \emph{minor} if $V(G)$ contains $|V(H)|$
vertex-disjoint subsets $\{T_v\,|\,v \in V(H)\}$, each inducing a
connected subgraph in $G$, and such that for every edge $uv$ in $H$,
$T_u$ and $T_v$ are connected by an edge in $G$. 
We will also need the following interesting metric variant of minors: for some integer $q\ge 1$, a weighted graph  $(G,\phi)$ contains a graph $H$ as a \emph{$q$-fat minor} if $V(G)$ contains $|V(H)|$
vertex-disjoint subsets $\{T_v\,|\,v \in V(H)\}$ such that
\begin{itemize}
\item each subset $T_v$ induces a connected subgraph of $G$;
  \item any two sets $T_u$ and $T_v$ are at distance at least $q$ apart in  $(G,\phi)$;
\item for every edge $uv$ in $H$,
  $T_u$ and $T_v$ are connected by a path $P_{uv}$ (of length in $(G,\phi)$ at least $q$) in $G$, such that 
  \begin{itemize}
\item for any pair of distinct edges $uv$ and $xy$ of $H$ (possibly sharing a vertex), the paths $P_{uv}$ and $P_{xy}$ are at distance at least $q$ in $(G,\phi)$, and 
  \item for any edge $uv$ in $H$ and any vertex $w$ distinct from $u$ and $v$, $P_{uv}$ is at distance at least $q$ from $T_w$ in  $(G,\phi)$.  
  \end{itemize}
\end{itemize}
Note that if $(G,\phi)$ contains $H$ as a $q$-fat minor, then $G$ contains $H$
as a minor. An example of $q$-fat $K_3$-minor is depicted in Figure~\ref{fig:fatbanana}, where the last property (stating that sets $T_w$ are far from paths $P_{uv}$) is not mentioned explicitly, for the sake of readability.

\medskip

For a real interval $I$, we say a weighted graph is an \emph{$I$-weighted graph} if the weight of each edge is in $I$. Note that for $(0,1]$-weighted graphs, the assumption that two sets $A$ and $B$ are at distance at least $q$ implies that any path connecting $A$ and $B$ contains at least $q$ vertices.

\medskip

\begin{figure}[htb]
 \centering
 \includegraphics[scale=0.7]{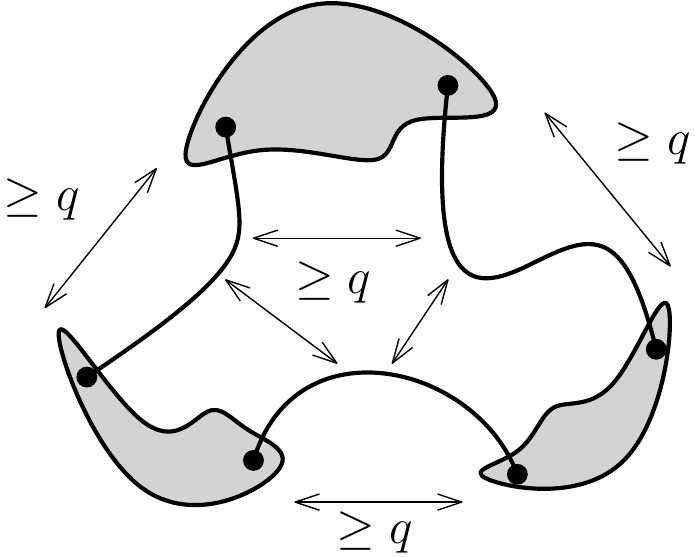}
 \caption{A $q$-fat $K_3$-minor.}
 \label{fig:fatbanana}
\end{figure}

In this section, we will prove a technical lemma on graphs with no $q$-fat $K_{2,p}$-minors, generalising a result of Fujiwara and Papasoglu (Theorem 3.1 in \cite{FP20})  about similar objects called ``thetas" in their paper.

\begin{lemma}\label{lem:thetaminor}
Let $p\ge 2$ and $q\ge 1$ be integers and $r >0,\kappa>0$ be real numbers. 
Let $(G,\phi)$ be a $(0,\kappa]$-weighted graph. If $(G,\phi)$ does not contain the complete bipartite graph $K_{2,p}$ as a $q$-fat minor, then  $(G,\phi)$ has a cover by two sets whose $r$-components are $(5r+9q+9\kappa)p$-bounded.
\end{lemma}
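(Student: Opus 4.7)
The plan is to imitate the Fujiwara--Papasoglu strategy for thetas~\cite{FP20} by constructing the cover from alternating BFS annuli and then bounding $r$-component diameters via the extraction of a $K_{2,p}$ $q$-fat minor. First I would reduce to the case where $G$ is connected. Fix a basepoint $v_0 \in V(G)$ and set $d(v) := d_{(G,\phi)}(v_0, v)$. With $W := (5r+9q+9\kappa)p$, define the annuli $A_j := \{v \in V(G): jW \le d(v) < (j+1)W\}$ for $j \ge 0$, and take $\mathcal{U}_i := \bigcup_{j \equiv i \pmod 2} A_j$ for $i \in \{1, 2\}$. Using that every edge has weight at most $\kappa < W$, the triangle inequality shows that if $|j - j'| \ge 2$ then $A_j$ and $A_{j'}$ are at $(G,\phi)$-distance strictly greater than $r$, so every $r$-component of $\mathcal{U}_i$ is contained in a single annulus.

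The heart of the proof is to show that every $r$-component $C \subseteq A_j$ has $(G,\phi)$-diameter at most $W$. I would proceed by contrapositive. Assuming $C$ has diameter exceeding $W$, let $u, u' \in C$ be witnesses, take an $r$-chain from $u$ to $u'$ inside $C$, and note that the function $f(x) := d_{(G,\phi)}(u, x)$ is $r$-Lipschitz along the chain. A greedy extraction along the chain therefore yields $p$ vertices $u_1, \ldots, u_p \in C$ with $d_{(G,\phi)}(u_k, u_l) > 5r + 9q + 9\kappa$ whenever $k < l$, and with consecutive $u_k$ and $u_{k+1}$ still joined by a subchain lying entirely inside $C \subseteq A_j$. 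For each $u_k$, fix a shortest $v_0$-$u_k$ path $Q_k$ in $(G,\phi)$.

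From these ingredients I would construct a $K_{2,p}$ $q$-fat minor in $(G,\phi)$. The first branch set $T_a$ contains $v_0$ together with every vertex lying on at least two of the $Q_k$, trimmed so that the remaining ``private tails'' $Q_k'$ are pairwise internally vertex-disjoint; the second branch set $T_b$ is a single vertex, chosen along the $r$-chain but separated from each $u_k$ and each tail $Q_k'$ by the spacing we have arranged. The $p$ degree-$2$ branch sets are $T_{c_k} := \{u_k\}$. The edge $a c_k$ is realised by the tail $Q_k'$ (extended inside $C$ if necessary), and the edge $b c_k$ is realised by the subchain of $C$ from $u_k$ to $T_b$, with each $r$-chain step expanded into a shortest $(G,\phi)$-path of length at most $r$.

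The main obstacle will be the piece-by-piece verification of $q$-fatness: tails versus tails, tails versus in-$C$ paths, tails versus branch sets, in-$C$ paths versus one another, and so on. The key observation is that any shortcut of length less than $q$ between two pieces indexed by $k$ and $l$ would, by a triangle-inequality accounting, produce a $(G,\phi)$-path between $u_k$ and $u_l$ of total length at most $5r + 9q + 9\kappa$, contradicting the separation extracted above. The three summands in $W$ have natural meanings: the $5r$ absorbs the slack incurred in traversing a bounded number of $r$-chain segments when building a shortcut, while each of the nine pairwise comparisons between pieces contributes one $q$-buffer (giving $9q$) and one $\kappa$-buffer (giving $9\kappa$) to handle endpoint overlaps of weight-$\kappa$ edges at the junctions. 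Assembling all of this contradicts the $K_{2,p}$ $q$-fat-minor-freeness of $(G,\phi)$ and so establishes the required diameter bound.
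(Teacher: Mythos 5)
Your high-level strategy (alternating BFS annuli, greedy extraction of well-spaced points along an $r$-chain, shortest paths to the root, contradiction with $q$-fat $K_{2,p}$-minor-freeness) is the same as the paper's, but two concrete choices in your construction break the argument. First, your $K_{2,p}$ model is not a valid $q$-fat minor: you realise the $p$ edges $bc_k$ by subchains of the single $r$-chain, all attached to the singleton branch set $T_b$. These subchains are nested/overlapping pieces of one chain and in any case all come within one edge-weight ($\le\kappa$) of the same vertex $T_b$, whereas the definition requires the connecting paths of \emph{any} two distinct edges of $K_{2,p}$ -- including edges sharing the vertex $b$ -- to be at distance at least $q$ from one another. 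The paper avoids exactly this by making the entire chain (together with the outer ends of the geodesics $P_i$) into the second \emph{branch set} $B$, so that the only connecting paths on that side are short ones ($P_i^B$, of length at most $q+\kappa$) attached at points that are provably far apart.

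Second, taking annuli of width $W=(5r+9q+9\kappa)p$ instead of width $r$ destroys the metric control that makes the separation estimates close up. In the paper, because each chain point $x_{\iota(i)}$ lies in an annulus of width $r$ and the near-root branch set $A$ is cut off at depth $(k-1)r-(3q+3\kappa)$, every vertex of the crossing segment $Q_i$ is within $2r+3q+3\kappa$ of $x_{\iota(i)}$; this is what converts the pairwise spacing $>4r+9q+9\kappa$ of the chain points into pairwise distance $>3q+3\kappa$ between the segments (and hence $q$-separation of all pieces). With width-$W$ annuli your ``private tails'' $Q_k'$ can run within distance $0<q$ of each other, of the chain, and of the $u_l$'s arbitrarily close to the top of the annulus, so trimming at shared vertices only yields vertex-disjointness, not $q$-separation, and $T_a$ itself need not be $q$-far from $\{u_k\}$ or from the chain. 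Moreover, even if one redid the paper's estimates inside a width-$W$ annulus, the diameter bound obtained would be of order $Wp$, not $W$, so you could not conclude that the $r$-components are $(5r+9q+9\kappa)p$-bounded; the choice of width-$r$ annuli (plus a central ball of radius roughly $r+3q+3\kappa$) is what makes the numerics consistent. (A smaller point: from diameter $>(5r+9q+9\kappa)p$ and chain steps of length $\le r$, the greedy extraction yields pairwise spacing $>4r+9q+9\kappa$, not $>5r+9q+9\kappa$ as you claim.)
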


\begin{proof}
Let  $(G,\phi)$ be a $(0,\kappa]$-weighted graph with no $q$-fat $K_{2,p}$-minor. 
We  may assume that  $G$ is connected, for otherwise we can consider each weighted connected component of $(G,\phi)$ separately. 
We fix a root vertex $v\in  V(G)$, and recall that an
\emph{annulus} is a set of the form $A(a,b)=\{u  \in V(G)\,|\, a\le d_{(G,\phi)}(u,v)<b\}$ for some $0<a<b$. 
For any integer $k\geq 1$, let $A_k:=A(kr, (k+1)r)$. For any integer $k$, we say that the annuli $A_k$ and $A_{k+1}$ are \emph{consecutive}.

Let $k_0$ be the smallest integer such that $k_0r\ge r+3q+3\kappa$. 
Note that by definition,
$(k_0-1)r\le r+3q+3\kappa$, and thus $k_0r\le 2r+3q+3\kappa$. 
Let $A_0$ be the set of vertices at distance less than $k_0r$ from $v$. 
Note that $A_0$ has weak diameter in $(G,\phi)$ at most $2k_0r\le 4r+6q+6\kappa\le (5r+9q+9\kappa)p$. 

We define $C_0:=\bigcup \{A_k\,|\, k= k_0+i, \,i \geq 0\text{
  is an even integer}\} $ and define $C_1:=A_0\cup \bigcup \{A_k\,|\, k= k_0+i, \,i  \geq 0\text{  is an odd integer}\} $. 
Note that these two sets clearly cover $V(G)$, so we only
need to show that each $r$-component in $C_0$ or $C_1$ is
$(5r+9q+9\kappa)p$-bounded. By the definition of the annuli $A_k$, observe
that each $r$-component of $C_0$ or $C_1$ is contained in $A_0$ or in
some annulus $A_k$ with $k\ge k_0$. 
Since $A_0$ is $(5r+9q+9\kappa)p$-bounded, it thus suffices to show that for any $k\ge k_0$, each $r$-component of $A_k$ is also $(5r+9q+9\kappa)p$-bounded.

Fix some $k\ge k_0$, and let $C$ be an $r$-component of $A_k$. 
Assume for the sake of contradiction that there exist $x$ and $y$ in $C\subseteq A_k$ with $d_{(G,\phi)}(x,y)> (5r+9q+9\kappa)p$. 
Since $C$ is an $r$-component, there exist $x_1,\dots,x_\ell\in C$  for some integer $\ell \geq 1$, where $x_1=x$, $x_\ell=y$ and
$d_{(G,\phi)}(x_i,x_{i+1})\leq r$ for $1\le i\le \ell-1$.

We now define a function $\iota : \{1,\ldots
,p\}\mapsto \N$, as follows: 
$\iota(1)= 1$, $\iota(p)=\ell$, and for any $1\le i\le p- 2$, $\iota(i+1) = 1+\max\{j \in {\mathbb N}: \iota(i) \leq j \leq \ell-1, d_{(G,\phi)}(x_{\iota(i)},x_{j})\leq 4r+9q+9\kappa\}$. 

Since for any $1\le i \le p-2$ we have
\begin{align*}
d_{(G,\phi)}(x_{\iota(i+1)},x_{\iota(i)})& \le  d_{(G,\phi)}(x_{\iota(i+1)},x_{\iota(i+1)-1})+d_{(G,\phi)}(x_{\iota(i+1)-1},x_{\iota(i)})\\
  &\leq r+(4r+9q+9\kappa) \\
  &\le 5r+9q+9\kappa,
\end{align*}
we obtain that for any $1\le i \le p-1$, $d_{(G,\phi)}(x_{1},x_{\iota(i)})\le
(5r+9q+9\kappa)i$.
As a consequence, for any $1 \leq i \leq p-1$,
\begin{align*}
d_{(G,\phi)}(x_{\iota(i)},x_\ell)&\geq d_{(G,\phi)}(x_1,x_\ell)-d_{(G,\phi)}(x_1,x_{\iota(i)})\\
&\geq (5r+9q+9\kappa)p-(5r+9q+9\kappa)i \ge 5r+9q+9\kappa> 4r+9q+9\kappa.
\end{align*}
This shows that $\iota(i) \neq \ell$ for every $1 \leq i \leq p-1$.
In particular, $\iota$ is a strictly increasing function, and  for any $i,i'$ with $1 \leq i \leq p-1$ and $\iota(i+1) \leq i' \leq \ell$, we have $d_{(G,\phi)}(x_{\iota(i)},x_{i'})> 4r+9q+9\kappa$.
Therefore, $d_{(G,\phi)}(x_{\iota(i)},x_{\iota(j)})> 4r+9q+9\kappa$ for any $1\le i<j \le p$.

\medskip

For each $1\le i \le p$,  let $P_i$ be a shortest path  in $(G,\phi)$ from $x_{\iota(i)}$ to the root $v$.
For each $1 \leq i \leq \ell-1$, let $R_i$ be a shortest path in $(G,\phi)$ from $x_i$ to $x_{i+1}$.
Note that each $R_i$ has length in $(G,\phi)$ at most $r$.
We define the following (see also Figure~\ref{fig:lemma71} for an illustration).
\begin{itemize}
\item Let $A$ be the set of vertices in $\bigcup_{1\le i \le p}
  V(P_i)$   with
  distance in $(G,\phi)$ at most $(k-1)r-(3q+3\kappa)$ from $v$ (note that since
  $k\ge k_0$, we have $kr\ge r+3q+3\kappa$, so $v \in A$ and hence $A \neq \emptyset$).
  \item Let $D$ be the set of vertices in $\bigcup_{1\le i \le p}
  V(P_i)$   with
  distance in $(G,\phi)$ at least $(k-1)r$ from $v$.
\item Let $B = D \cup \bigcup_{i=1}^{\ell-1}V(R_i)$. 
\end{itemize}

\begin{figure}[htb]
 \centering
 \includegraphics[scale=1.2]{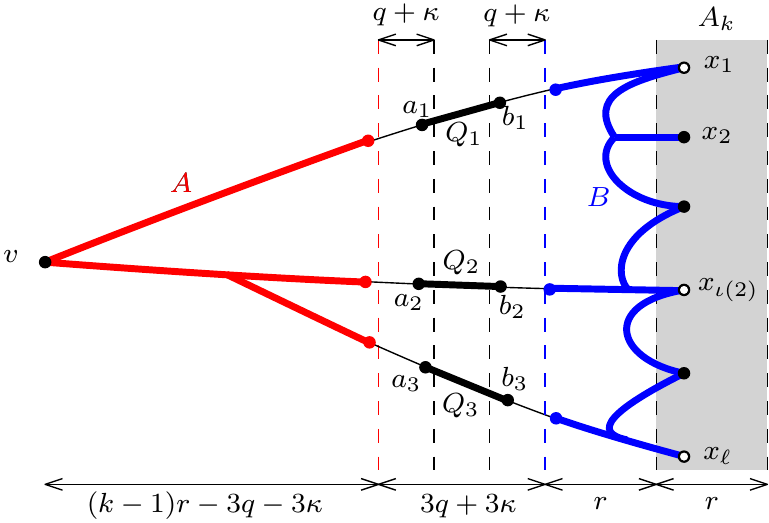}
 \caption{The construction of a $q$-fat $K_{2,p}$-minor with $p=3$ in the proof of Lemma~\ref{lem:thetaminor}. The figure is for illustrating the notation only and should not reflect any level of generality: for instance $r$ can be either much larger or much smaller than $q+\kappa$, and the vertices in $A$ or $B$ closest to $a_i$ or $b_i$ are not necessarily in $P_i$.}
 \label{fig:lemma71}
\end{figure}

Note that $A$ and $B$ induce connected subgraphs of $G$. 
Since the vertices $x_1,x_2,\ldots,x_\ell$ are in $A_k$, they are at distance at least $kr$ from $v$. 
Since each $R_i$ has length at most $r$, $B$ is at distance  in $(G,\phi)$ at least $(k-1)r$ from $v$ and thus at distance  in $(G,\phi)$ at least $(k-1)r-( (k-1)r-(3q+3\kappa) ) =3q+3\kappa$ from $A$. 
Since each $P_i$ is a shortest path between $v$ and $x_{\iota(i)}$, for every $x \in V(P_i)$, we have $d_{(G,\phi)}(v,x) = d_{(P_i,\phi|_{E(P_i)})}(v,x)$ and $d_{(G,\phi)}(x_{\iota(i)},x) = d_{(P_i,\phi|_{E(P_i)})}(x_{\iota(i)},x)$.

For each $1\le i \le p$, 
    \begin{itemize}
        \item let $a_i$ be the vertex in $P_i$ such that the distance in $(G,\phi)$ between $a_i$ and $A$ is at most $q+\kappa$, and subject to this, the subpath of $P_i$ between $a_i$ and $V(P_i) \cap V(B)$ is as short as possible,
        \item let $b_i$ be the vertex in the subpath of $P_i$ between $a_i$ and $V(P_i) \cap B$ such that the distance in $(G,\phi)$ between $b_i$ and $B$ is at most $q+\kappa$, and subject to this, the subpath of $P_i$ between $b_i$ and $V(P_i) \cap V(B)$ is as long as possible,
        \item let $Q_i$ be the subpath of $P_i$ between $a_i$ and $b_i$ (see again Figure~\ref{fig:lemma71} for an illustration),
        \item let $P_i^A$ be a path in $(G,\phi)$ from $A$ to $a_i$ whose length in $(G,\phi)$ equals the distance in $(G,\phi)$ between $A$ and $a_i$, and
        \item let $P_i^B$ be a path in $(G,\phi)$ from $B$ to $b_i$ whose length in $(G,\phi)$ equals the distance in $(G,\phi)$ between $B$ and $b_i$.
    \end{itemize}
Note that each $P_i^A$ and $P_i^B$ have length at most $q+\kappa$.
Since $A$ and $B$ are at distance at least $3q+3\kappa$ apart in $(G,\phi)$, the length of $Q_i$ is at least $(3q+3\kappa)-2(q+\kappa) \geq q+\kappa$, so $Q_i$ contains at least 2 edges and least one internal vertex.

We shall prove that the vertex sets $\{A,B,V(Q_i): 1 \leq i \leq p\}$ together with the connecting paths $\{P_i^A,P_i^B: 1 \leq i \leq p\}$ form a $q$-fat $K_{2,p}$-minor.

Since $(G,\phi)$ is $(0,\kappa]$-weighted, by the minimality of the length of the subpath of $P_i$ between $a_i$ and $V(B) \cap V(P_i)$, the distance in $(G,\phi)$ between $A$ and $V(Q_i)$ is at least $(q+\kappa)-\kappa=q$; similarly, the distance in $(G,\phi)$ between $B$ and $V(Q_i)$ is at least $q$.
In particular, $\{A,B,V(Q_i): 1 \leq i \leq p\}$ is a collection of pairwise disjoint sets inducing connected subgraphs of $G$.

By the definition of $A$, we know that for each $1 \leq i \leq p$ and $x \in V(Q_i)$, since $Q_i$ is disjoint from $A$, $d_{(P_i,\phi|_{E(P_i)})}(v,x) \geq (k-1)r-(3q+3\kappa)$.
For each $1 \leq i \leq p$, since $x_{\iota(i)} \in A_k$, $d_{(P_i,\phi|_{E(P_i)})}(v,x_{\iota(i)}) = d_{(G,\phi)}(v,x_{\iota(i)}) \leq (k+1)r$, so for every $x \in V(Q_i)$, $d_{(G,\phi)}(x_{\iota(i)},x)=d_{(P_i,\phi|_{E(P_i)})}(x_{\iota(i)},x) = d_{(P_i,\phi|_{E(P_i)})}(v,x_{\iota(i)})-d_{(P_i,\phi|_{E(P_i)})}(v,x) \leq (k+1)r-((k-1)r-(3q+3\kappa)) = 2r+3q+3\kappa$.
If there exist $1 \leq i <j \leq p$ such that the distance in $(G,\phi)$ between some vertex $x$ in $Q_i$ and some vertex $y$ in $Q_j$ is less than $3q+3\kappa$, then $d_{(G,\phi)}(x_{\iota(i)},x_{\iota(j)}) \leq d_{(G,\phi)}(x_{\iota(i)},x)+d_{(G,\phi)}(x,y)+d_{(G,\phi)}(y,x_{\iota(j)}) < (2r+3q+3\kappa) + (3q+3\kappa) + (2r+3q+3\kappa) = 4r+9q+9\kappa$, a contradiction.
So any two distinct paths $Q_i,Q_j$ are at distance at least $3q+3\kappa$ in $(G,\phi)$. 
This implies that $\{A,B,V(Q_i): 1 \leq i \leq p\}$ is a collection of sets with pairwise distance in $(G,\phi)$ at least $q$.
Furthermore, this together with the fact that each $P_i^A$ and $P_i^B$ has length at most $q+\kappa$ imply that for distinct $i,j$, the distance between $V(P_i^A) \cup V(Q_i) \cup V(P_i^B)$ and $V(P_j^A) \cup V(Q_j) \cup V(P_j^B)$ is at least $(3q+3\kappa)-2(q+\kappa)>q$.

For any $X \in \{A,B\}$ and $Y \in \{V(P_i^A),V(P_i^B): 1 \leq i \leq p\}$ with $X \cap Y = \emptyset$, the distance between $X$ and $Y$ is at least $2q+2\kappa$, for otherwise, since the length of each of $P_i^A$ and $P_i^B$ is at most $q+\kappa$, the distance between $X$ and $Y$ is less than $3q+3\kappa$, a contradiction.
This together with the fact that each $P_i^A$ and $P_i^B$ has length at most $q+\kappa$ imply that for any (not necessarily distinct) $i,j$, the distance between $P_i^A$ and $P_j^B$ is at least $(2q+2\kappa)-(q+\kappa) \geq q$.
This shows that $\{A,B,V(Q_i): 1 \leq i \leq p\}$ together with $\{P_i^A,P_i^B: 1 \leq i \leq p\}$ form a $q$-fat $K_{2,p}$-minor.
This contradiction completes the proof.
\end{proof}

Note that a graph containing a $q$-fat $K_{2,2}$-minor also contains a chordless cycle of length at least $4q$. 
It follows that graphs without chordless cycles of length at least $4q$ have no $q$-fat $K_{2,2}$-minor, so by Lemma~\ref{lem:thetaminor} (or directly by Theorem 3.1 in \cite{FP20}), for any $\ell\ge 4$ the class of graphs without chordless cycles of length at least $\ell$ has asymptotic dimension 1 (note that the asymptotic dimension is at least 1 because this class contains all trees).

\begin{lemma}\label{lem:layerqfat}
Let $H$ be a graph.
Let $H^*$ be the graph obtained from $H$ by adding a new vertex adjacent to all other vertices.
Let $(G,\phi)$ be a weighted graph such that $G$ has no $H^*$ minor. 
For any vertex $v\in V(G)$, and any real numbers $0< s < t$ and $q > 2(t-s)$, the weighted subgraph of $(G,\phi)$ induced by the vertices $\{u \in V(G)\,|\, s\le d_{(G,\phi)}(u,v)  \leq t\}$ does not contain $H$ as a $q$-fat minor.
\end{lemma}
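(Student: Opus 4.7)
My approach will be to proceed by contradiction: assume that the weighted subgraph $(G[A], \phi|_{E(G[A])})$, where $A := \{u \in V(G) : s \le d_{(G,\phi)}(u,v) \le t\}$, does contain $H$ as a $q$-fat minor, witnessed by branch sets $\{T_w : w \in V(H)\}$ and connecting paths $\{P_e : e \in E(H)\}$ lying inside $G[A]$. I will then derive a contradiction by building $H^*$ as a minor of $G$, in which the $T_w$'s, suitably enlarged, serve as branch sets for the vertices of $H$ and a new branch set $T_{v^*}$ containing $v$ serves for the universal vertex of $H^*$.

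The first step will be to introduce the key auxiliary objects. For each $w \in V(H)$, pick $a_w \in T_w$ minimising $d_{(G,\phi)}(v,\cdot)$, so that $s \le d_{(G,\phi)}(v,a_w) \le t$, and let $Q_w$ be a shortest path in $G$ from $v$ to $a_w$. Because $Q_w$ is a shortest path starting at $v$, the quantity $d_{(G,\phi)}(v,\cdot)$ is monotone non-decreasing along $Q_w$, and hence $Q_w$ splits naturally into an initial segment $Q_w^{\mathrm{out}}$ of vertices $y$ with $d_{(G,\phi)}(v,y) < s$ and a final segment $Q_w^{\mathrm{in}}$ of vertices $y$ with $d_{(G,\phi)}(v,y) \ge s$. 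A crucial observation is that $Q_w^{\mathrm{in}}$ is actually a path of $G[A]$ of length at most $t-s$, because along it the values of $d_{(G,\phi)}(v,\cdot)$ all lie in $[s,t]$. For each edge $e = ww' \in E(H)$, I will also split the path $P_e = p_0 p_1 \cdots p_{k_e}$ (with $p_0 \in T_w$ and $p_{k_e} \in T_{w'}$) into a prefix $\tilde T_w^{(e)} := \{p_0, \ldots, p_{j_e}\}$ and a suffix $\tilde T_{w'}^{(e)} := \{p_{j_e+1}, \ldots, p_{k_e}\}$, where $j_e$ is the largest index such that $d_{(G[A], \phi|_{E(G[A])})}(p_{j_e}, T_w) \le d_{(G[A], \phi|_{E(G[A])})}(p_{j_e}, T_{w'})$.

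The branch sets of the $H^*$-minor will then be $\tilde T_w := T_w \cup V(Q_w^{\mathrm{in}}) \cup \bigcup_{e \ni w} \tilde T_w^{(e)}$ for $w \in V(H)$, together with $T_{v^*} := \{v\} \cup \bigcup_{w \in V(H)} V(Q_w^{\mathrm{out}})$. Each such set is connected by construction, and the required adjacencies are immediate: the edge $p_{j_e} p_{j_e+1}$ on $P_e$ realises an edge $ww' \in E(H)$, and the edge on $Q_w$ joining the last vertex of $Q_w^{\mathrm{out}}$ to the first vertex of $Q_w^{\mathrm{in}}$ realises the edge $wv^*$ of $H^*$. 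Disjointness of $T_{v^*}$ from each $\tilde T_w$ will be free, because $T_{v^*}$ is contained in the open $s$-ball around $v$ while each $\tilde T_w \subseteq A$. The hard part will be to rule out intersections $\tilde T_u \cap \tilde T_w$ for distinct $u, w \in V(H)$, and this is precisely where the hypothesis $q > 2(t-s)$ is used. The decisive case occurs when some $x \in A$ lies on both $Q_u$ and $Q_w$: the final segments of $Q_u$ and $Q_w$ from $x$ to $a_u$ and $a_w$ respectively are both paths of $G[A]$ of length at most $t-s$, so their concatenation is a walk of length at most $2(t-s)$ in $G[A]$ from $a_u$ to $a_w$, contradicting $d_{(G[A], \phi|_{E(G[A])})}(T_u, T_w) \ge q > 2(t-s)$. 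All remaining intersection cases---$V(Q_w^{\mathrm{in}})$ meeting $T_{w'}$ or $\tilde T_{w'}^{(e')}$, and the path halves $\tilde T_w^{(e)}$ meeting each other---are handled by analogous arguments combining the bound $d_{(G[A], \phi|_{E(G[A])})}(x, a_w) \le t-s$ for $x \in V(Q_w^{\mathrm{in}})$ with the fat-minor distance conditions. Altogether this will exhibit $H^*$ as a minor of $G$, contradicting the hypothesis.
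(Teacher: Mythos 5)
Your overall strategy is the same as the paper's (augment each branch set $T_w$ by the tail $Q_w^{\mathrm{in}}$ of a geodesic towards $v$ lying in the annulus, use the ball of radius $s$ around $v$ as the branch set of the apex, and exploit $q>2(t-s)$), but there is a genuine gap in the disjointness verification, precisely in the case you dismiss as ``analogous''. Consider an edge $e=ww'$ of $H$ and the half $\tilde T_w^{(e)}=\{p_0,\dots,p_{j_e}\}$ of $P_e$ that you absorb into $\tilde T_w$. Maximality of $j_e$ only tells you that $d(p_i,T_w)>d(p_i,T_{w'})$ for $i>j_e$; it says nothing about the vertices $p_i$ with $i<j_e$. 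Since $P_e$ may oscillate, such a prefix vertex $p_i$ can be within distance $t-s$ of $T_{w'}$ (and far from $T_w$), and nothing in the fat-minor definition prevents the geodesic tail $Q_{w'}^{\mathrm{in}}$ from passing through exactly such a vertex: the only distance conditions available are $d(T_x,T_y)\ge q$, $d(P_e,P_{e'})\ge q$ for distinct edges, and $d(P_e,T_z)\ge q$ for $z\notin\{w,w'\}$ --- there is no lower bound on the distance between $P_{ww'}$ and $T_w$ or $T_{w'}$, since $P_{ww'}$ ends in both. In that situation $\tilde T_w\cap\tilde T_{w'}\neq\emptyset$ and your model of $H^*$ collapses; the computation that works for the suffix (where $d(p_i,T_{w'})<d(p_i,T_w)\le t-s$ yields $d(T_w,T_{w'})\le 2(t-s)<q$) has no counterpart for the prefix, so no ``analogous argument'' is available. (A secondary, easily fixed point: the fat-minor definition does not explicitly say $P_e$ meets $T_w\cup T_{w'}$ only at its ends, so you should first pass to a minimal connecting subpath.)

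The gap is repairable, and the repair is essentially what the paper does. First set $T_w^+:=T_w\cup V(Q_w^{\mathrm{in}})$ and check, exactly as in your ``decisive case'', that these sets are pairwise at distance at least $q-2(t-s)>0$, hence disjoint. Then, instead of cutting $P_e$ at a single index, take $P_e^+$ to be a \emph{minimal} subpath of $P_e$ between $T_w^+$ and $T_{w'}^+$: minimality makes its interior disjoint from both $T_w^+$ and $T_{w'}^+$, and the condition $d(P_e,T_z)\ge q>t-s$ (for $z\notin\{w,w'\}$) makes it disjoint from every other $T_z^+$, while $d(P_e,P_{e'})\ge q$ keeps distinct connecting paths apart. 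One can then either keep the paths $P_e^+$ as the connecting paths of the minor model, or absorb $P_e^+$ minus its last vertex into $T_w^+$; only after this minimality step is the absorption safe. With that change your construction of the apex branch set and of the edges to it goes through as you describe.
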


\begin{proof}
Let $v$ be a fixed vertex of $G$.
Let $A=\{u \in V(G)\,|\,  d_{(G,\phi)}(u,v)<s\}$ and $B=\{u \in V(G)\,|\, s\le d_{(G,\phi)}(u,v) \leq t\}$. 
Let $(G',\phi')$ be the weighted subgraph of $(G,\phi)$ induced by $B$.
Assume for the sake of contradiction that $(G',\phi')$ contains $H$ as a $q$-fat minor. 
Let $\{T_u\,|\,u\in V(H)\}$ and $\{P_{uw}\,|\,uw \in E(H)\}$ be as in the definition of a $q$-fat minor. 
For each $u\in V(H)$, let $P_u$ be a path in $G$ from $T_u$ to $v$  such that the length in $(G,\phi)$ of $P_u$ equals $d_{(G,\phi)}(T_u,\{v\})$, and let $P_u^+$ be the maximal subpath of $P_u$ containing the vertex in $V(P_u) \cap T_u$ and contained in $G[B]$. 
Note that the length  in $(G',\phi')$ of $P_u^+$ is at most $t-s$. 
For any $u\in V(H)$, set $T_u^+:=T_u \cup V(P_u^+)$.

Each of the sets $T_u^+$ is connected and disjoint from $A$, and two sets $T_u^+$ and $T_w^+$ are at distance  in $(G',\phi')$ at least $q-2(t-s)> 0$ (in
particular the sets are pairwise disjoint). 
For each $u \in V(H)$, since the length in $(G,\phi)$ of $P_u$ is $d_{(G,\phi)}(T_u,\{v\}) \leq t$, $V(P_u) \subseteq A \cup B$, so there exists an edge of $P_u$ between $A$ and $T_u^+$.
For any edge $uw$ in $H$, let $P_{uw}^+$ be a minimum subpath of $P_{uw}$ between $T_u^+$ and $T_w^+$. 
Since each path $P_{uw}$ is at distance  in $(G',\phi')$ at least $q - (t-s) > t-s$ from all the
sets $T_x$ with $x \not\in \{u,w\}$, the subpaths $P_{uw}^+$ are disjoint from all sets $T_x^+$ with $x \not\in \{u,w\}$.
Moreover, since the paths  in $\{P_{uw}: uw \in E(H)\}$ are pairwise vertex-disjoint and disjoint from $A$, the paths in $\{P_{uw}^+: uw \in E(H)\}$ are also pairwise vertex-disjoint and disjoint from $A$.
It follows that $G$ contains
$H^*$ as a minor, where $A$ is the connected subset corresponding to the vertex in $V(H^*)-V(H)$, which is a contradiction.
\end{proof}

\subsection{\texorpdfstring{$K_{3,p}$}\ -minor free graphs}

Recall that Fujiwara and Papasoglu~\cite{FP20} proved that unweighted planar
graphs have Assouad-Nagata dimension at most 3. We now prove that the
dimension can be reduced to 2, and the class of graphs can be extended
to all graphs avoiding $K_{3,p}$ as a minor, for any fixed integer
$p\ge 1$. 
Moreover the result below holds for weighted graphs (this
will be used to derive our result on Riemannian surfaces in Section~\ref{sec:surfaces}). 

The following observation is obvious.

\begin{obs}\label{obs:subv}
Let $(G,\phi)$ be a weighted graph and let $(G',\phi')$ be obtained from $(G,\phi)$ by subdividing  an edge $e$ of $G$ once (i.e.\ replacing $e$ by a path  with two edges) and assigning weights summing to $\phi(e)$ to the two newly created edges, while all the other edges of $G$ retain their weight from $\phi$. 
Then any $n$-dimensional control function for $(G',\phi')$ is also an $n$-dimensional control function for $(G,\phi)$. 
\end{obs}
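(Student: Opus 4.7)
The plan is to observe that subdivision preserves the distances between vertices of the original graph, and then restrict the cover witnessing the control function on $(G',\phi')$ down to $V(G)$.

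More precisely, write $e=uv$ with $\phi(e)=w$, and let $x$ be the new vertex in $V(G')\setminus V(G)$, with $\phi'(ux)+\phi'(xv)=w$. First I would verify the following distance preservation claim: for all $a,b\in V(G)$, $d_{(G,\phi)}(a,b)=d_{(G',\phi')}(a,b)$. The inequality $\ge$ follows because any path in $G$ between $a$ and $b$ can be lifted to a path in $G'$ of the same length by replacing any use of $e$ with the two-edge path $uxv$ of weight $w$. The inequality $\le$ follows because $x$ has degree $2$ in $G'$, so every path in $G'$ between vertices of $G$ either avoids $x$ altogether (and is already a path in $G$) or uses both edges incident to $x$, in which case replacing the segment $uxv$ (of weight $w$) by the single edge $e$ yields a walk in $G$ of the same length.

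Now suppose $D$ is an $n$-dimensional control function for $(G',\phi')$. Fix $r>0$ and let $\mathcal{U}=\bigcup_{i=1}^{n+1}\mathcal{U}_i$ be a cover of $V(G')$ with each $\mathcal{U}_i$ being $r$-disjoint and each element being $D(r)$-bounded in $(G',\phi')$. Define $\mathcal{U}'_i:=\{U\cap V(G):U\in\mathcal{U}_i\}$ and $\mathcal{U}':=\bigcup_{i=1}^{n+1}\mathcal{U}'_i$. Clearly $\mathcal{U}'$ covers $V(G)$. Each element $U\cap V(G)$ has diameter in $(G,\phi)$ at most its diameter in $(G',\phi')$ by the distance preservation claim, hence at most $D(r)$. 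For $r$-disjointness, if $a,b\in V(G)$ belong to distinct elements of $\mathcal{U}'_i$, then they belong to distinct elements of $\mathcal{U}_i$, so $d_{(G',\phi')}(a,b)>r$, and again by distance preservation $d_{(G,\phi)}(a,b)>r$.

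This is essentially a one-step verification; there is no substantive obstacle, since the added vertex has degree $2$ and the weights on the two new edges sum to the original weight, so the metric on $V(G)$ is unchanged.
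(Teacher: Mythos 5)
Your proof is correct: the paper states this observation without proof (calling it obvious), and your argument — that the new degree-$2$ vertex with weights summing to $\phi(e)$ leaves the metric on $V(G)$ unchanged, so $(V(G),d_{(G,\phi)})$ sits isometrically inside $(V(G'),d_{(G',\phi')})$ and the cover witnessing the control function restricts to $V(G)$ — is exactly the intended verification. No gaps; the handling of $r$-disjointness and boundedness under restriction is fine.
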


\begin{lemma}\label{lem:k3p}
For any integer $p\ge 1$,  the class of weighted graphs with no $K_{3,p}$-minor has asymptotic dimension at most 2.
\end{lemma}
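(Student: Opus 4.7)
The plan is to apply Theorem~\ref{thm:bd} to a distance-based layering, after reducing to the case of edge weights in $(0,1]$.

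First I would reduce to the $(0,1]$-weighted case. Given any weighted $K_{3,p}$-minor-free graph $(G,\phi)$, subdivide each edge of weight greater than $1$ into equal pieces of weight at most $1$ to obtain a $(0,1]$-weighted graph $(G',\phi')$; subdivision creates no new minors, so $(G',\phi')$ is again $K_{3,p}$-minor-free, and distances on $V(G)\subseteq V(G')$ are preserved. By Observation~\ref{obs:subv} (or by simply restricting covers to $V(G)$), any $n$-dimensional control function for the class $\mathcal{C}$ of $(0,1]$-weighted $K_{3,p}$-minor-free graphs also bounds $(G,\phi)$, so it suffices to prove $\ad(\mathcal{C})\le 2$.

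The key combinatorial input is that $K_{3,p}$ is a minor of $K_{2,p+1}^*$: writing $K_{2,p+1}^*$ with universal vertex $w$ and $K_{2,p+1}$-sides $\{a_1,a_2\}$, $\{b_1,\dots,b_{p+1}\}$, the branch sets $\{w\},\{a_2\},\{a_1,b_1\}$ (the last connected via the edge $a_1b_1$) together with the singletons $\{b_2\},\dots,\{b_{p+1}\}$ realise every required cross-edge of $K_{3,p}$. Consequently every $G\in\mathcal{C}$ is $K_{2,p+1}^*$-minor-free. For each integer $i\ge 1$, let $\mathcal{L}_i$ denote the class of $(0,1]$-weighted graphs with no $K_{2,p+1}$ as a $(2i+1)$-fat minor; the chain $\mathcal{L}_1\subseteq\mathcal{L}_2\subseteq\cdots$ is immediate, and Lemma~\ref{lem:thetaminor} (applied with $p$ replaced by $p+1$, $q=2i+1$, $\kappa=1$) shows that $\mathcal{L}_i$ admits the linear $1$-dimensional control function $r\mapsto(5r+18i+18)(p+1)$. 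I would then verify that $\mathcal{C}$ is $\mathcal{L}$-layerable with $f(S):=\lceil S\rceil$: treating connected components separately, fix a root $v_0\in V(G)$ and use the $1$-Lipschitz real projection $L(u):=d_{(G,\phi)}(u,v_0)$. A maximal $(\infty,S)$-bounded set has the form $A=L^{-1}([a,a+S])$; when $a>0$, Lemma~\ref{lem:layerqfat} with $H=K_{2,p+1}$ and $q=2\lceil S\rceil+1>2S$ places $G[A]$ in $\mathcal{L}_{\lceil S\rceil}$.

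When $a=0$, Lemma~\ref{lem:layerqfat} does not apply directly (it requires $s>0$), and $A$ is the ball $B_S(v_0)$. The resolution is the observation that any shortest path in $(G,\phi)$ from $u\in A$ to $v_0$ has all intermediate vertices at distance at most $d_{(G,\phi)}(u,v_0)\le S$ from $v_0$, hence stays in $A$; thus $d_{G[A]}(u,v_0)=d_{(G,\phi)}(u,v_0)\le S$, and the intrinsic diameter of $G[A]$ is at most $2S<2\lceil S\rceil+1$, which forbids any $(2\lceil S\rceil+1)$-fat minor involving more than one branch set, and so $G[A]\in\mathcal{L}_{\lceil S\rceil}$. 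Hence $\mathcal{C}$ is $\mathcal{L}$-layerable, and Theorem~\ref{thm:bd} yields $\ad(\mathcal{C})\le 1+1=2$. The main obstacle is precisely this central case, for which Lemma~\ref{lem:layerqfat} genuinely fails; it is only the ``geodesic convexity'' of balls centred at $v_0$ (which fails for general annuli) that lets the argument go through without modifying the real projection.
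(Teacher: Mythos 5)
Your proposal is correct and follows essentially the same route as the paper: reduce to $(0,1]$-weights by subdividing heavy edges (Observation~\ref{obs:subv}), layer by distance to a root, control each annulus via Lemma~\ref{lem:layerqfat} combined with Lemma~\ref{lem:thetaminor} applied to fat $K_{2,\cdot}$-minors, and conclude with Theorem~\ref{thm:bd}; your two deviations (working with $K_{2,p+1}$ and $q=2\lceil S\rceil+1$ instead of $K_{2,p}$ and $q=3S$, and disposing of the central ball by noting that its intrinsic diameter lies below the fatness threshold, rather than adding a separate bounded-radius class $\mathcal{L}_i''$ as the paper does) are cosmetic. One caveat: the blanket justification ``subdivision creates no new minors'' is false in general, and even the specific instance fails for $p=2$ (subdividing one edge of $K_4$, which is $K_{3,2}$-minor-free, produces a graph containing $K_{2,3}=K_{3,2}$ as a subgraph); this is harmless here, since one may first replace $p$ by $\max\{p,3\}$ (a $K_{3,p}$-minor-free graph is $K_{3,p'}$-minor-free for every $p'\ge p$), and for $p\ge 3$ the graph $K_{3,p}$ has minimum degree $3$, so subdivisions of $K_{3,p}$-minor-free graphs really are $K_{3,p}$-minor-free -- a point the paper's own write-up also glosses over.
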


\begin{proof}
Let $p \geq 1$ be an integer.
Since every graph with no $K_{3,1}$-minor has no $K_{3,2}$-minor, we may assume that $p \geq 2$.
Let $\F_1$ be the class of connected weighted graphs with no $K_{3,p}$-minor.
Since a function is an $n$-dimensional control function for a weighted graph $(G,\phi)$ if and only if it is an $n$-dimensional control function for each of the connected components of $(G,\phi)$, to prove this theorem, it suffices to prove that $\F_1$ has a 2-dimensional control function. 

Let $\F_2$ be the class of connected $(0,1]$-weighted graphs with no $K_{3,p}$-minor.
Note that any subdivision of a $K_{3,p}$-minor free graph is $K_{3,p}$-minor free.
So by Observation~\ref{obs:subv}, it suffices to prove that $\F_2$ has a 2-dimensional control function. 

For every integer $i \geq 1$, let $\L_i'$ be the class of $(0,1]$-weighted graphs with no $i$-fat $K_{2,p}$-minor.
By Lemma~\ref{lem:thetaminor},  the function $D_i'(x):=(5x+9i+9)p$ for every $x>0$ is a 1-dimensional control function of $\L_i'$.

For every integer $i \geq 1$, let $\L_i''$ be the class of $(0,1]$-weighted graphs $(G,\phi)$ such that there exists a vertex $v_G \in V(G)$ with $d_{(G,\phi)}(x,v_G) \leq i$ for every $x \in V(G)$.
Hence the function $D_i''(x):=2i$ for every $x>0$ is a 1-dimensional control function of $\L_i''$.

For every integer $i \geq 1$, let $\L_i = \L_i' \cup \L_i''$, and let $D_i(x) = D_i'(x)$ for every $x>0$.
Note that $D_i$ is a  1-dimensional control function of $\L_i$, since $D_i(x) = \max\{D_i'(x),D_i''(x)\}=D_i'(x)$. 
So by Theorem~\ref{thm:bd}, to show that $\F_2$ has a 2-dimensional control function, it suffices to show that $\F_2$ is  $(\L_i)_{i\in \N}$-layerable. 

Define $f: {\mathbb R}^+ \rightarrow {\mathbb N}$ to be the function such that $f(x)=3x$ for every $x>0$.

Let  $(G,\phi)$ be a member of $\F_2$.
Let $v_0$ be a vertex of $G$.
Define $L: V(G) \rightarrow {\mathbb R}$ such that $L(u)=d_{(G,\phi)}(v_0,u)$ for every vertex $u \in V(G)$.
Since $G$ is connected, $L$ is a well-defined real projection.

Let $S>0$ be a real number.
Let $W$ be a maximal $(\infty,S)$-bounded set in $(G,\phi)$ with respect to $L$ and $d_{(G,\phi)}$.
So there exist a real number $k \geq 0$ such that $k \leq d_{(G,\phi)}(v_0,u) \leq k+S$ for every $u \in W$.
By the maximality of $W$, we can indeed assume that $W = \{u \in V(G): k \leq d_{(G,\phi)}(v_0,u) \leq k+S\}$.
Let $H$ be the weighted subgraph of $G$ induced by $W$.

If $v_0 \in W$, then $k=0$, and thus $H \in \L_S'' \subseteq \L_{f(S)}'' \subseteq \L_{f(S)}$.
If $v_0 \not \in W$, then $k>0$; since $K_{3,p}$ is a subgraph of the graph that can be obtained from $K_{2,p}$ by adding a universal vertex, by Lemma~\ref{lem:layerqfat}, $H$ does not contain $K_{2,p}$ as a $3S$-fat minor, so $H \in \L'_{3S} \subseteq \L_{f(S)}$.

Hence $\F_2$ is 3-linearly $(\L_i)_{i=1}^\infty$-layerable.
This proves the lemma.
\end{proof}

We say that a class $\F$ of metric spaces is {\it scaling-closed} if for every $(X,d_X) \in \F$ and every real number $k>0$, the metric space $(X,k\cdot d_X)$ is also in the class $\F$.

\begin{lemma} \label{linear_to_AN}
Let $n \geq 1$ be an integer.
Let $\F$ be a scaling-closed class of (finite or infinite) metric spaces.
If $\F$ has asymptotic dimension at most $n$, then $\F$ has Assouad-Nagata dimension at most $n$.
\end{lemma}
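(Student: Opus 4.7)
The plan is a direct rescaling argument made possible by the scaling-closedness of $\F$. Since $\ad(\F) \leq n$, by definition there exists a single function $D : \mathbb{R}^+ \to \mathbb{R}^+$ that is an $n$-dimensional control function for every $(X,d) \in \F$. I would fix an arbitrary $r_0 > 0$, set $c := D(r_0)/r_0$, and show that the dilation $D'(r) := cr$ is an $n$-dimensional control function for every $(X,d) \in \F$. This would establish Assouad-Nagata dimension at most $n$.

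The key step is the following. Given $(X,d) \in \F$ and $r > 0$, set $\lambda := r_0 / r$. By scaling-closedness, $(X, \lambda d) \in \F$, so $D$ is a control function for $(X, \lambda d)$ as well. Applying it at parameter $r_0$ produces a cover $\U = \bigcup_{i=1}^{n+1} \U_i$ of $X$ such that each $\U_i$ is $r_0$-disjoint and each element of $\U$ is $D(r_0)$-bounded, both with respect to $\lambda d$. Since all distances scale by $1/\lambda = r/r_0$ when passing from $\lambda d$ back to $d$, the same cover witnesses that each $\U_i$ is $(r_0/\lambda)$-disjoint and each element is $(D(r_0)/\lambda)$-bounded with respect to $d$; that is, $r$-disjoint and $cr$-bounded, as required. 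Thus $D'(r) = cr$ is a dilation control function uniform over $\F$.

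This proof has no genuine obstacle: the argument is essentially a one-parameter rescaling, and scaling-closedness is precisely what converts a single value $D(r_0)$ of the asymptotic-dimension control function into a linear control function valid at all scales and uniform over the class. The hypothesis $n \geq 1$ plays no role in the argument itself.
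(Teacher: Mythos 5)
Your proposal is correct and is essentially the paper's argument: the paper also rescales the metric by $1/r$ (i.e.\ takes $r_0=1$) using scaling-closedness, applies the single control function at the fixed scale, and scales back to obtain the dilation $g(r)=f(1)\,r$, only phrasing boundedness via $r$-components rather than $r$-disjoint covers, which the paper notes is an equivalent formulation. No gap; the two proofs differ only in these cosmetic choices.
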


\begin{proof}
Since $\F$ has asymptotic dimension at most $n$, there exists an $n$-dimensional control function $f$ of all metric spaces in $\F$.
Let $g: {\mathbb R}^+ \rightarrow {\mathbb R}^+$ be the function such that $g(x)=f(1)x$ for every $x>0$.
To prove this lemma, it suffices to prove that $g$ is an $n$-dimensional control function of all metric spaces in $\F$.

Let $(X,d_X)$ be a metric space in $\F$.
Let $r \in {\mathbb R}^+$.
Let $k_r = \frac{1}{r}$. 
Since $\F$ is scaling-closed, $(X,k_rd_X) \in \F$.
So $f$ is an $n$-dimensional control function of $(X,k_rd_X)$.
Hence there exist subsets $U_1,U_2,\dots,U_{n+1}$ of $X$ with $\bigcup_{i=1}^{n+1}U_i = X$ such that for each $1 \leq i \leq n+1$, every $k_rr$-component (in the metric $k_rd_X$) of $U_i$ is $f(k_rr)$-bounded (in the metric $k_rd_X$).

Let $S$ be an $r$-component of $U_i$ (in the metric $d_X$) for some $1 \leq i \leq n+1$.
Note that $S$ is contained in a $k_rr$-component of $U_i$ (in the metric $k_rd_X$).
So the weak diameter in $(X,k_rd_X)$ of $S$ is at most  $f(k_rr)=f(1)$.

Let $u,v$ be two points in $S$.
Since the weak diameter in $(X,k_rd_X)$ of $S$ is at most $f(1)$, $k_rd_X(u,v) \leq f(1)$.
Hence $d_X(u,v) \leq \frac{f(1)}{k_r} = f(1)r = g(r)$.

Therefore, the weak diameter in $(X,d_X)$ of $S$ is at most $g(r)$.
That is, $U_1,U_2,\dots,U_{n+1}$ are subsets of $X$ with $\bigcup_{i=1}^{n+1}U_i =X$ such that for each $1 \leq i \leq n+1$, every $r$-component (in the metric $d_X$) of $U_i$ is $g(r)$-bounded (in the metric $d_X$).
Hence $g$ is an $n$-dimensional control function of $\F$.
\end{proof}

We can now prove Theorem~\ref{thm:main}.
The following is a restatement.

\begin{corollary} \label{AN_K3p}
For any integer $p\ge 1$, the class of $K_{3,p}$-minor free weighted graphs has Assouad-Nagata dimension at most 2. 
\end{corollary}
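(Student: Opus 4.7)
The plan is to derive Corollary \ref{AN_K3p} from Lemma \ref{lem:k3p} via Lemma \ref{linear_to_AN}. The only thing to check is that the class of $K_{3,p}$-minor free weighted graphs, viewed as a class of (pseudo)metric spaces $(V(G), d_{(G,\phi)})$, is scaling-closed.

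First I would observe that for any weighted graph $(G,\phi)$ and any real $k>0$, the weighted graph $(G, k\phi)$ satisfies $d_{(G,k\phi)} = k \cdot d_{(G,\phi)}$. Since whether $G$ has $K_{3,p}$ as a minor depends only on the underlying graph $G$ and not on the weight function $\phi$, scaling edge weights by $k$ preserves the property of being $K_{3,p}$-minor free. Therefore the class of (pseudo)metric spaces arising from $K_{3,p}$-minor free weighted graphs is scaling-closed in the sense of Lemma \ref{linear_to_AN}.

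Next, by Lemma \ref{lem:k3p}, this class has asymptotic dimension at most $2$. Applying Lemma \ref{linear_to_AN} with $n=2$ then yields that the class has Assouad-Nagata dimension at most $2$, which is exactly the statement of Corollary \ref{AN_K3p}.

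No real obstacle arises: the technical work has already been done in Lemma \ref{lem:k3p} (using the fat minor machinery of Section \ref{sec:fatminor} together with the layering-to-dimension transfer of Theorem \ref{thm:bd}), and the upgrade from asymptotic dimension to Assouad-Nagata dimension is a formal consequence of scaling invariance encapsulated by Lemma \ref{linear_to_AN}. The proof in the paper should therefore be essentially a one-line appeal to these two lemmas, preceded by the trivial observation that $K_{3,p}$-minor freeness is insensitive to a global rescaling of edge weights.
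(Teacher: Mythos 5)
Your proposal is correct and matches the paper's argument exactly: the paper proves Corollary \ref{AN_K3p} as an immediate consequence of Lemmas \ref{lem:k3p} and \ref{linear_to_AN}, with the scaling-closedness of the class (your observation that $d_{(G,k\phi)}=k\cdot d_{(G,\phi)}$ and minor-freeness is weight-independent) being the implicit verification needed to invoke Lemma \ref{linear_to_AN}.
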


\begin{proof}
It immediately follows from Lemmas~\ref{lem:k3p} and~\ref{linear_to_AN}.
\end{proof}

\section{Surfaces}\label{sec:surfaces}

\subsection{Graphs on surfaces}

Recall that in this paper, a surface is a non-null connected 2-dimensional manifold without boundary.

A compact surface can be orientable
or non-orientable. The \emph{compact orientable   surface of genus~$h$} is obtained by adding $h\ge0$ \emph{handles} to the sphere; while the \emph{compact non-orientable surface of genus~$k$} is formed by adding $k\ge1$ \emph{cross-caps} to the sphere. 
By the Surface Classification
Theorem, any compact surface is one of these two types (up to homeomorphism). 
The {\em Euler genus} of a compact surface $\Sigma$ is defined as twice its genus if $\Sigma$ is orientable, and as
its non-orientable genus otherwise.

A \emph{compact surface with boundary} of Euler genus $g$ is obtained from a compact
surface of Euler genus $g$ by deleting  a finite number of open disks bounded by pairwise disjoint contractible simple closed curves on the surface.

In this section, all the non-compact surfaces we consider are metrisable
(or equivalently triangulable). The Euler genus of a non-compact surface $S$
is the supremum Euler genus of all compact subsurfaces (with boundary) of
$S$ (note that the Euler genus of a non-compact surface might be infinite). There also exists a classification theorem
for non-compact surfaces, but we will not need it in this paper (see Section
3.5 in~\cite{MoTh} for more details about non-compact surfaces).

\begin{corollary}\label{cor:genus2}
For any integer $g\ge 0$,  the class of finite or infinite weighted graphs embeddable  in a surface 
of Euler genus at most $g$ has Assouad-Nagata dimension at most 2.
\end{corollary}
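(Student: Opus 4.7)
The plan is to derive Corollary~\ref{cor:genus2} as a direct consequence of Corollary~\ref{AN_K3p} by exhibiting a value of $p$ for which every graph embeddable in a surface of Euler genus at most $g$ is automatically $K_{3,p}$-minor free. The right choice is $p=2g+3$. The key ingredient is the classical Euler genus computation for $K_{3,p}$: an embedding of $K_{3,p}$ in a surface of Euler genus $g$ would yield $F = 2p-1-g$ faces by Euler's formula, and since $K_{3,p}$ is bipartite every face has length at least $4$, forcing $3p = 2E \ge 4F$, whence $g \ge (p-2)/2$. In particular, $K_{3,2g+3}$ has Euler genus $\lceil (2g+1)/2 \rceil = g+1$ and hence does not embed in any surface of Euler genus at most $g$.

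Next I would observe that the class of graphs embeddable in a fixed (closed) surface is minor-closed: contracting or deleting an edge of an embedded graph can be realised topologically without introducing crossings or raising the Euler genus. Consequently, no finite graph that contains $K_{3,2g+3}$ as a minor embeds in a surface of Euler genus at most $g$, so every finite graph embeddable in such a surface is $K_{3,2g+3}$-minor free.

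For the infinite case, I would use a standard finite-support argument. If an infinite weighted graph $G$ embedded in a (possibly non-compact) surface $\Sigma$ of Euler genus at most $g$ contained $K_{3,2g+3}$ as a minor, then this minor would already be witnessed by finitely many branch sets and connecting paths, whose images lie in a compact subsurface $\Sigma'$ of $\Sigma$. Using triangulability, one can enlarge $\Sigma'$ to a compact subsurface with boundary of Euler genus at most $g$ and then cap off its boundary components with disks; the resulting closed surface has Euler genus at most $g$ and contains an embedding of a finite subgraph of $G$ that has $K_{3,2g+3}$ as a minor, contradicting the previous paragraph. Hence every (finite or infinite) weighted graph embeddable in a surface of Euler genus at most $g$ is $K_{3,2g+3}$-minor free, and Corollary~\ref{AN_K3p} applied with $p=2g+3$ yields the desired bound on the Assouad-Nagata dimension.

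The only mildly delicate step is the compactness argument for the infinite case, i.e., justifying that a finite graph embedded in a non-compact surface of Euler genus at most $g$ embeds in a \emph{closed} surface of Euler genus at most $g$; this relies on the triangulability of the surfaces considered and the classification theorem for compact surfaces with boundary, both of which are already invoked in the surrounding subsection.
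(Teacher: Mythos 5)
Your proof is correct and follows essentially the same route as the paper: choose $p=2g+3$, observe via Euler's formula that $K_{3,2g+3}$ does not embed in any surface of Euler genus at most $g$, use minor-closedness of embeddability, and apply Corollary~\ref{AN_K3p}. The only divergence is the infinite case, which the paper dispatches by invoking Theorem~\ref{compact_graphs} to reduce to finite weighted graphs (hence compact surfaces with boundary) rather than your direct compact-subsurface/capping argument (which also needs the standard refinement that a finite minor of an infinite graph can be witnessed by \emph{finite} branch sets); also note the small slip ``$3p=2E$'' --- since $E=3p$ one has $2E=6p\ge 4F$, which is what actually gives your stated bound $g\ge (p-2)/2$.
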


\begin{proof}
By Theorem~\ref{compact_graphs}, we can restrict ourselves to finite  weighted graphs, and we can thus assume that the surface under consideration is compact (with boundary).
For any integer $g\ge 0$, $K_{3,2g+3}$ cannot be embedded in a compact
surface of Euler genus at most $g$ (with or without boundary) by a simple consequence of Euler's Formula (see Proposition 4.4.4 in~\cite{MoTh}).
Since the class of graphs embeddable in a surface of Euler genus  at most $g$ is closed under taking minors, every graph embeddable in a surface of Euler genus at most $g$ is $K_{3,2g+3}$-minor free. 
Therefore, this corollary follows as an immediate consequence of Corollary~\ref{AN_K3p}.
\end{proof}

\subsection{From graphs on surfaces to Riemannian surfaces}

A \emph{Riemannian manifold} $M$ is a manifold $M$ together with a
metric defined by a scalar product on the tangent space of every point.
Recall that a surface is a non-null connected
2-dimensional manifold without boundary.
So a \emph{Riemannian surface} $S$ is a surface $S$ together with
a metric defined by a scalar product on the tangent space of every point. 
An important property that we will need is that for any point $p\in S$, there is a small open neighbourhood $N$ containing $p$ that is \emph{strongly convex}, i.e.\ any two points in $N$ are joined by a unique shortest path.
A Riemannian surface $S$ with metric $d$ is \emph{complete} if the metric space $(S,d)$ is complete. 
By the Hopf-Rinow theorem, in a complete Riemannian surface $S$, subsets that are bounded and closed are compact, and for any two points $p,q \in S$, there is a length-minimising geodesic connecting $p$ and $q$ (i.e.\ a path of length $d(p,q)$).
Note that compact Riemannian surfaces are also complete. 
For  more background on Riemannian surfaces, interested readers are referred to the standard textbook~\cite{Spi99}.

The following result appears to be well known in the area. For instance it can be deduced from the work of Saucan~\cite{Sau09}. 
Here we include a simple proof (suggested to us by Ga\"el Meignez) in dimension 2 for completeness of the paper.

\begin{lemma}\label{lem:rietogr}
Let $S$ be a complete Riemannian surface with distance function $d$. 
Then there is a finite or countable locally finite\footnote{Here {\it locally finite} means that any bounded region of $S$ contains only
finitely many vertices of $G$, and every vertex is incident with finitely many edges.} infinite $(0,1]$-weighted graph $(G,\phi)$ embedded in $S$ such that any point of $S$ is at distance at most 2 from a vertex of $G$ in $S$ and for any vertices $x,y \in V(G)$, $d(x,y) \le d_{(G,\phi)}(x,y) \le 5d(x,y)+2$. 
\end{lemma}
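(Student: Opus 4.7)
The plan is to build $G$ as a discrete $(1/3)$-net in $S$ connected by edges between points at Riemannian distance at most $1$, and then to realise it as a graph properly embedded in $S$ via a smooth triangulation.

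First I would construct the vertex set. Since $S$ is a complete Riemannian surface, it is second-countable and, by Hopf--Rinow, satisfies the Heine--Borel property. Using Zorn's lemma, let $V \subseteq S$ be a maximal subset with $d(u,v) \geq 1/3$ for all distinct $u,v \in V$. The Heine--Borel property combined with $1/3$-separation forces $V$ to meet every compact subset in finitely many points, so $V$ is locally finite and hence countable, and maximality gives $d(p,V) < 1/3$ for every $p \in S$ --- already establishing the covering condition of the lemma.

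Next I would define $G$ on vertex set $V$ by joining distinct $u, v \in V$ with an edge of weight $\phi(uv) := d(u,v) \in [1/3,1]$ whenever $d(u,v) \leq 1$. Then $G$ is $(0,1]$-weighted and locally finite (each vertex has finitely many neighbours, since the $d$-ball of radius $1$ around it meets the $1/3$-separated set $V$ in finitely many points). The lower bound $d(x,y) \leq d_{(G,\phi)}(x,y)$ for $x,y \in V$ is immediate from the triangle inequality in $(S,d)$, since each edge weight equals a Riemannian distance. For the upper bound I would approximate a length-minimising geodesic $\gamma:[0,L] \to S$ from $x$ to $y$ (of length $L=d(x,y)$) by setting $N := \lceil 3L \rceil$, sampling $p_i := \gamma(iL/N)$ for $0 \leq i \leq N$, and choosing $v_i \in V$ with $d(p_i,v_i) < 1/3$ for each internal $i$, with $v_0 := x$ and $v_N := y$. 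The triangle inequality then gives $d(v_{i-1},v_i) < 1/3 + 1/3 + 1/3 = 1$, so each $v_{i-1}v_i$ is an edge of $G$, and
\[
d_{(G,\phi)}(x,y) \leq \sum_{i=1}^{N} d(v_{i-1},v_i) < N \leq 3L + 1 \leq 5L + 2.
\]

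The main obstacle will be realising $G$ as a graph properly embedded in $S$, since a naive drawing of the edges as length-minimising geodesics may produce intersections. I would handle this by refining the construction: start from a sufficiently fine smooth triangulation of $S$ (guaranteed by classical results such as Cairns--Whitehead and refined so that every $2$-simplex has $d$-diameter at most $1/3$), take $V$ to be its $0$-skeleton, and take $G$ to be the $1$-skeleton of the triangulation (which is automatically locally finite and properly embedded in $S$). The sampling argument still applies with the minor modification that consecutive $v_i$ may be connected not by a single edge but by a short $G$-path through a bounded number of triangulation edges; this costs only a bounded multiplicative constant, comfortably absorbed by the slack in $5d(x,y) + 2$.
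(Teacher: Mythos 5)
Your net construction and the metric estimates are sound and essentially match the paper's own first step (the paper uses a maximal $1/5$-separated set, joins points whose closed $1/2$-balls meet, and runs the same geodesic-sampling argument; your compactness argument for local finiteness is even a little simpler than the paper's area estimate). The genuine gap is in your last paragraph, where you switch to the $1$-skeleton of a fine smooth triangulation to obtain the embedding. Two things go wrong there. First, the key claim --- that two skeleton vertices at Riemannian distance less than $1$ are joined in the $1$-skeleton by a path of bounded length --- does not follow from the hypothesis that every $2$-simplex has $d$-diameter at most $1/3$. A diameter bound on simplices controls neither the arc length of an edge (a $1$-simplex of small diameter can be an arbitrarily long embedded arc, so your weights need not lie in $(0,1]$) nor the relation between the skeleton metric and $d$: without some shape-regularity or length control on the triangulation, the assertion that the $1$-skeleton is uniformly quasi-isometric to $(S,d)$ is essentially the content of the lemma you are proving, so it cannot simply be asserted. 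Second, even granting a universal constant $C$ per consecutive pair $v_{i-1},v_i$, your bound becomes $d_{(G,\phi)}(x,y)\le C\,(3d(x,y)+1)$, which yields the stated inequality $d_{(G,\phi)}(x,y)\le 5d(x,y)+2$ only when $C\le 5/3$; an arbitrary ``bounded multiplicative constant'' is not ``comfortably absorbed''.

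The paper resolves the embedding issue differently, and its fix can be grafted onto your construction: keep the net graph, realize each edge as a length-minimising geodesic (of length at most $1$) in $S$, and planarize by declaring every crossing point, and every endpoint of a shared segment, of two such geodesics to be a new vertex. Strong convexity of small neighbourhoods guarantees that two of these shortest paths meet in only finitely many points and segments, and local finiteness of the net guarantees that each path meets only finitely many others, so the subdivided graph is a locally finite graph embedded in $S$. Subdividing keeps all weights in $(0,1]$, can only decrease graph distances between net points, and graph distances still dominate $d$ because every edge is a subarc of a distance-realizing geodesic. With that replacement, your first two paragraphs complete the proof.
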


\begin{proof}
We consider an inclusion-wise maximal set $P$ of points of $S$ that
are pairwise at distance at least $\tfrac15$ apart in $(S,d)$. A
simple area computation shows that any bounded region $R$ of
$S$ contains a finite number of points of $P$ (by compactness, the region has bounded area and it follows from the Bertrand–Diguet–Puiseux Theorem~\cite{Ber48,Dig48,Pui48} that there
is a uniform lower bound on the areas of the balls of radius
$\tfrac1{10}$ centered in $P\cap N_{1/10}(R)$, while these balls are pairwise disjoint), 
and thus it follows that $P$ is countable.
By maximality of $P$, the open balls of radius $\tfrac25$
centered in $P$ cover $S$. Let $G'$ be the graph with vertex-set $P$,
in which two points $p,q\in P$ are adjacent if their closed balls of radius
$\tfrac12$ intersect. For each such pair $p,q$, we join $p$ to $q$ by a
shortest path (of length $d(p,q)\le 1$) on the surface $S$. Note that any two such shortest paths intersect in a finite number of points and
segments (since otherwise we could find an arbitrarily small
neighbourhood containing two points joined by two distinct shortest paths, contradicting the property that sufficiently small neighbourhoods are strongly convex), and each such shortest path can intersect only finitely many other such shortest paths. 
For each intersection point between two paths and each end of
an intersecting segment between the paths, we add a new vertex to
$G'$. Let $G$ be the resulting graph (where two vertices of $G$ are adjacent
if they are consecutive on some shortest path between vertices of $G'$). By definition, $G$ is countable and
locally finite, and properly embedded in $S$. Note that each edge $e$ of
$G$ corresponds to a shortest path between the two endpoints of $e$ in $S$ (we denote the length of this shortest path by $\ell_e$). 
So $(G,\phi)$ is a weighted graph, where $\phi$ maps each edge $e$ of $G$ to $\ell_e$.
Note that by definition, all the weights are in the interval $(0,1]$. For any two vertices
$p,q$ in $G$, we clearly have $d(p,q)\le d_{(G,\phi)}(p,q)$. 
Consider now a length-minimising geodesic $\gamma$ between $p$ and $q$ in $S$, and take $k\le 5 d(p,q)+2$ points $p_1,\ldots,p_k$ (in this
order) on $\gamma$, with $p_1=p$, $p_k=q$, and such that  $d(p_i,p_{i+1}) \leq \tfrac15$ for any $1 \leq i \leq k-1$. 
Recall that each point of $S$ is at distance at most $\tfrac25$ from a point of $P=V(G') \subseteq V(G)$. 
For each $i$, we let $r_i$ be a point in $P$ such that $d(p_i,r_i) \leq \tfrac25$.
So for any $1 \leq i \leq k-1$, $d(r_i,r_{i+1}) \leq d(r_i,p_i)+d(p_i,p_{i+1})+d(p_{i+1},r_{i+1}) \leq  \tfrac25+\tfrac15+\tfrac25=1$.
By definition of $G'$, for any $1\le i\le k-1$, $r_i$ and
$r_{i+1}$ coincide or are adjacent in $G'$, so $d_{(G,\phi)}(r_i,r_{i+1})=d(r_i,r_{i+1}) \leq 1$. 
Therefore, $d_{(G,\phi)}(p,q)=d_{(G,\phi)}(p_1,p_k) \leq d_{(G,\phi)}(p_1,r_1)+\sum_{i=1}^{k-1}d_{(G,\phi)}(r_i,r_{i+1})+d_G(r_k,p_k) \leq \tfrac25+(k-1) \cdot 1+\tfrac{2}{5} \leq k \leq 5d(p,q)+2$.
\end{proof}

%Combining the previous lemmas we obtain the following result. 

\begin{thm}\label{thm:riegenus2}
For any integer $g\ge 0$, the class of  complete Riemannian surfaces of Euler genus at most $g$  has Assouad-Nagata dimension at most 2. 
\end{thm}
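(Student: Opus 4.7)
The plan is to reduce Theorem~\ref{thm:riegenus2} to Corollary~\ref{cor:genus2} via a quasi-isometry argument, then upgrade from asymptotic dimension to Assouad-Nagata dimension using the scaling-closedness of the class via Lemma~\ref{linear_to_AN}. The two ingredients are already available: Lemma~\ref{lem:rietogr} approximates any complete Riemannian surface by a $(0,1]$-weighted graph embedded in it with explicit, surface-independent constants, and the discussion in Section~\ref{sec:tw} guarantees that asymptotic dimension is invariant under quasi-isometry, even for families, provided the quasi-isometry constants are uniform.

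Concretely, given a complete Riemannian surface $(S,d)$ of Euler genus at most $g$, I would apply Lemma~\ref{lem:rietogr} to obtain a locally finite $(0,1]$-weighted graph $(G_S,\phi_S)$ embedded in $S$ such that every point of $S$ lies within distance $2$ of $V(G_S)$, and such that $d(x,y) \le d_{(G_S,\phi_S)}(x,y) \le 5d(x,y)+2$ for every $x,y \in V(G_S)$. Let $f_S \colon S \to V(G_S)$ be any choice map sending each $p \in S$ to a vertex of $G_S$ with $d(p,f_S(p)) \le 2$. A direct triangle-inequality calculation using the two bounds of Lemma~\ref{lem:rietogr} shows that $f_S$ is a quasi-isometry between $(S,d)$ and $(V(G_S),d_{(G_S,\phi_S)})$ with constants $\lambda, \varepsilon, C$ that depend only on the absolute numbers $2$ and $5$, and in particular not on $S$. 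For the reverse coarseness condition, note that each vertex $y \in V(G_S) \subseteq S$ satisfies $f_S(y) = y$ (up to a harmless redefinition), so $C=2$ suffices.

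Since $G_S$ is embedded in $S$, it is embeddable in a surface of Euler genus at most $g$, so by Corollary~\ref{cor:genus2} the family $\{(V(G_S),d_{(G_S,\phi_S)}) : S \text{ as above}\}$ has Assouad-Nagata dimension at most $2$, and in particular asymptotic dimension at most $2$. By quasi-isometry invariance with uniform constants, as recalled in Section~\ref{sec:tw}, the class of all complete Riemannian surfaces of Euler genus at most $g$ also has asymptotic dimension at most $2$.

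Finally, this class is scaling-closed: for any $k>0$, multiplying the Riemannian metric on $S$ by $k$ produces another complete Riemannian surface of the same Euler genus, since rescaling is a smooth operation preserving completeness and topology. Lemma~\ref{linear_to_AN}, applied with $n=2$, then upgrades asymptotic dimension to Assouad-Nagata dimension, yielding the desired bound. The only substantive point in the whole argument is verifying that the quasi-isometry constants furnished by Lemma~\ref{lem:rietogr} are genuinely uniform across the class — everything else is a direct assembly of previously established results.
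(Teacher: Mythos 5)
Your proposal is correct and follows essentially the same route as the paper's proof: discretise the surface via Lemma~\ref{lem:rietogr}, transfer asymptotic dimension at most $2$ from the embedded weighted graphs (Corollary~\ref{cor:genus2}) by quasi-isometry with uniform constants, and then upgrade to Assouad-Nagata dimension via scaling-closedness and Lemma~\ref{linear_to_AN}. The only cosmetic difference is that the paper first restricts to surfaces of Euler genus exactly $g$, which your argument does not need since the class of genus at most $g$ is itself scaling-closed.
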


\begin{proof}
Since $g$ is finite, it suffices to prove that the class $\F$ of complete Riemannian surfaces of Euler genus exactly $g$ has Assouad-Nagata dimension at most 2.
By Lemma~\ref{lem:rietogr}, every complete Riemannian surface $S$ of genus $g$
is quasi-isometric to some weighted finite or infinite weighted graph
embeddable  in $S$, with constants in the quasi-isometry that are
uniform (in fact the constants are even independent of $g$).
Since the asymptotic dimension of the class of weighted finite or infinite graphs embeddable in $S$ is at most 2 by Corollary~\ref{cor:genus2}, $\ad(\F) \leq 2$. 
Since $\F$ is scaling-closed, by Lemma~\ref{linear_to_AN}, the Assouad-Nagata dimension of $\F$ is at most 2.
\end{proof}

\section{Geometric graphs and graphs of polynomial growth}\label{sec:geom}

We now explore the asymptotic dimension of geometric graph classes and graph classes of polynomial growth.

\medskip

For an integer $d\ge 1$, and some real $C\ge 1$, let $\mathcal{D}^d(C)$ be the class
of graphs $G$ whose vertices can be mapped to points of $\mathbb{R}^d$
such that
\begin{itemize}
\item any two vertices of $G$ are mapped to points at (Euclidean) distance at
  least 1 apart, and
\item any two adjacent vertices of $G$ are mapped to points at distance at
  most $C$ apart.
\end{itemize}

The following is a simple consequence of results on coarse equivalence in $\mathbb{R}^d$ (see Section 3.2 in \cite{BD08}), but we include a self-contained proof for the sake of completeness.

\begin{thm}\label{thm:krlee}
For any integer $d\ge 1$ and real $C\ge 1$, the class $\mathcal{D}^d(C)$ has asymptotic dimension at most $d$.
\end{thm}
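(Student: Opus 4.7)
My plan is to reduce the asymptotic dimension of graphs in $\mathcal{D}^d(C)$ to Gromov's classical result $\ad(\mathbb{R}^d) = d$. Fix $G \in \mathcal{D}^d(C)$ with embedding $\iota \colon V(G) \to \mathbb{R}^d$. The first observation is that $\iota$ is $C$-Lipschitz from $(V(G), d_G)$ to $(\mathbb{R}^d, d_E)$: if $uv$ is an edge then $d_E(\iota(u), \iota(v)) \le C$, and this propagates along shortest paths to give $d_E(\iota(u), \iota(v)) \le C \cdot d_G(u,v)$ in general.

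Given $r > 0$, I would set $r' = rC$ and apply $\ad(\mathbb{R}^d) = d$ to obtain a cover $\bigcup_{i=1}^{d+1} \mathcal{V}_i$ of $\mathbb{R}^d$ that is $r'$-disjoint and in which each $V \in \mathcal{V}_i$ has Euclidean diameter at most $D'(r')$ for some control function $D'$ depending only on $d$. A naive pullback by $\iota^{-1}$ would guarantee $r$-disjointness in $d_G$ between pieces coming from distinct elements of the same $\mathcal{V}_i$ (by the Lipschitz property), but it would \emph{not} give bounded weak diameter, since $\iota$ is only a one-sided quasi-isometry: two vertices at small Euclidean distance can be joined in $G$ only by a long graph path that wanders outside their Euclidean neighbourhood.

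To fix this, I would thicken each $V \in \mathcal{V}_i$ by an additive $rC$ to obtain $V^+ = \{x \in \mathbb{R}^d : d_E(x, V) \le rC\}$. Inside the induced subgraph $G[\iota^{-1}(V^+)]$, take its connected components $W_1, W_2, \ldots$, and define the pieces of the cover to be the nonempty intersections $U_j = W_j \cap \iota^{-1}(V)$. This recovers both required properties. For $r$-disjointness: if $u, v$ lie in different pieces originating from the same $V$, any hypothetical $G$-path of length at most $r$ would, by the Lipschitz estimate, stay inside $\iota^{-1}(V^+)$, contradicting the fact that $u, v$ lie in different components of $G[\iota^{-1}(V^+)]$; and for pieces from different $V, V' \in \mathcal{V}_i$, the original $r'$-disjointness directly forbids $d_G(u,v) \le r$. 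For boundedness: $\iota(U_j) \subseteq V^+$ has Euclidean diameter at most $D'(r') + 2rC$, and because distinct points of $\iota(V(G))$ are at Euclidean distance at least $1$, a standard volume-packing estimate bounds $|U_j|$ by some $N = N(d, r)$; since $U_j$ is contained in the connected subgraph $W_j$ of $G[\iota^{-1}(V^+)]$, its weak diameter in $G$ is at most $N - 1$.

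The main obstacle is precisely that $\iota$ is not a full quasi-isometry: Euclidean proximity need not imply graph proximity, so the naive pullback of a Euclidean cover is useless for the diameter bound. The thickening-plus-components device above is the technical heart of the argument, and once this is in place, producing an explicit $d$-dimensional control function for the class $\mathcal{D}^d(C)$---depending only on $d$, $C$, and $r$ through $D'$ and the packing constant---is routine and witnesses $\ad(\mathcal{D}^d(C)) \le d$.
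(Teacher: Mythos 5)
Your proposal is correct and rests on the same reduction as the paper's proof: push the vertices into $\mathbb{R}^d$ via the $C$-Lipschitz embedding, pull back a cover of $\mathbb{R}^d$ coming from $\ad(\mathbb{R}^d)=d$ whose same-class sets are separated by more than $rC$ (resp.\ $\ell C$), and then use the $1$-separation of the images together with a volume-packing bound to turn Euclidean boundedness into a bound on graph distance. The only real difference is how the bounded weak diameter is extracted. The paper works in the weak-diameter-colouring formulation (Proposition~\ref{obs:weakdiameter}): it colours each vertex by an index $i$ with $\pi(v)\in\mathcal{C}_i$ and observes that a monochromatic component of $G^\ell$ cannot jump between two open sets of the same class $\mathcal{C}_i$, since each $G^\ell$-edge moves the image by at most $\ell C$ while distinct sets of $\mathcal{C}_i$ are more than $\ell C$ apart; hence the whole component maps into a single set of diameter $f(\ell)$, the packing bound caps its number of vertices, and connectivity in $G^\ell$ gives the diameter bound with no thickening needed. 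You instead stay with the cover definition and obtain the diameter bound through your thickening-plus-components device, which is a valid and slightly more hands-on substitute for that observation. One small repair is needed in your boundedness step: you bound $\lvert U_j\rvert$ by packing, but you then join vertices of $U_j$ by paths inside $W_j$, so the weak-diameter bound should read $\lvert W_j\rvert-1$; the packing estimate must therefore be applied to $W_j$. This is immediate from what you already wrote, since $\iota(W_j)\subseteq V^+$ and $V^+$ has Euclidean diameter at most $D'(r')+2rC$ (also note the resulting constant depends on $C$ as well as on $d$ and $r$).
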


\begin{proof}
Fix an integer $\ell$ and a graph $G\in \mathcal{D}^d(C)$, together with a mapping $\pi$ of the vertices of $G$ in $\mathbb{R}^d$ as in the definition of $\mathcal{D}^d(C)$. Since $\mathbb{R}^d$ has asymptotic dimension $d$, there is a function $f$ (independent of $G$) and a covering of $\mathbb{R}^d$ by $d+1$ sets $\mathcal{C}_1,\cdots,\mathcal{C}_{d+1}$, such that for any $1\le i \le d+1$, $\mathcal{C}_i$  consists of a union of open sets of diameter at most $f(\ell)$, such that any two open sets in the union are at distance more than $\ell C$ apart. 

For any vertex $v$ of $G$, color $v$ with an arbitrary integer $1\le i \le d+1$ such that $\pi(v)\in \mathcal{C}_i$. Let $F$ be a monochromatic component in $G^\ell$, say of color $i$. Note that for any edge $uv$ in $F \subseteq G^\ell$, $\pi(u)$ and $\pi(v)$ are at Euclidean distance at most $\ell C$ in $\mathbb{R}^d$, and thus $\pi(u)$ and $\pi(v)$ lie in the same open set of $\mathcal{C}_i$. It follows that the images of all the vertices of $F$ lie in the same open set of $\mathcal{C}_i$, of diameter at most $f(\ell)$. Since any two elements of $F$ must be mapped to points at distance at least 1 apart, a simple volume computation shows that $F$ must contain at most $g(d,\ell,C)$ vertices, for some function $g$. Therefore $F$ has diameter at most $g(d,\ell,C)$ in $G^\ell$, which shows that $\mathcal{D}^d(C)$ has asymptotic dimension at most $d$.
\end{proof}

Recall that a graph  $G$ has growth at most $f$ if for every $r>0$, every $r$-ball  in the metric space generated by $G$ contains at most $f(r)$ vertices; a graph class has growth at most $f$ is all graphs in the class have growth at most $f$.
A graph has \emph{growth rate} at most $d$ if it has
growth at most  $f$, where $f(r)=r^d$ (for $r>1$). 
Hence classes of polynomial growth are exactly the classes of graphs whose growth rate is uniformly bounded. 
Krauthgamer and Lee~\cite{KL03} proved that
any graph $G$ with growth rate at most $d$ is in the class
$\mathcal{D}^{O(d\log d)}(2)$.
We thus obtain the following result as a direct corollary of
Theorem~\ref{thm:krlee}. This proves the first part of Theorem~\ref{thm:polygrowth_intro}.

\begin{corollary}\label{cor:polygrowth}
For any real $d> 1$, the class of
graphs of growth rate at most $d$ has asymptotic dimension
$O(d \log d)$. In particular, classes of graphs of polynomial growth
have bounded asymptotic dimension.
\end{corollary}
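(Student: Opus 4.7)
The proof will be a direct combination of Theorem~\ref{thm:krlee} with the embedding result of Krauthgamer and Lee~\cite{KL03} cited just before the statement. The plan is as follows.

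First, I would fix a real $d>1$ and let $\F_d$ denote the class of graphs of growth rate at most $d$, i.e.\ graphs $G$ such that for every $r>1$ and every $v\in V(G)$, the ball $B_r(v)$ in the graph metric contains at most $r^d$ vertices. By the Krauthgamer--Lee result, there exists an integer $d' = O(d\log d)$ (depending only on $d$) such that every $G\in \F_d$ admits a map $\pi: V(G)\to \mathbb{R}^{d'}$ in which distinct vertices are sent to points at Euclidean distance at least $1$ and adjacent vertices are sent to points at distance at most $2$. In other words, $\F_d \subseteq \mathcal{D}^{d'}(2)$.

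Next, I would apply Theorem~\ref{thm:krlee} with parameters $(d',C)=(d',2)$. This yields $\mathrm{asdim}(\mathcal{D}^{d'}(2)) \le d' = O(d\log d)$, and since asymptotic dimension is monotone under subclasses, we conclude $\mathrm{asdim}(\F_d) \le d' = O(d\log d)$, proving the first part of the statement. The ``in particular'' part follows immediately: if $\mathcal{G}$ is a class of polynomial growth, then there is some fixed $d$ such that $\mathcal{G}\subseteq \F_d$, whence $\mathrm{asdim}(\mathcal{G}) = O(d\log d)$ is bounded.

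There is no real obstacle here, since both ingredients are already available. The only minor care needed is to confirm that the Krauthgamer--Lee embedding can be obtained in the precise form required by the definition of $\mathcal{D}^{d'}(C)$ (distance at least $1$ between distinct vertices and at most $C$ along edges), and to note that the constant $C=2$ produced by their theorem fits into Theorem~\ref{thm:krlee} as stated. Apart from quoting these two results correctly, the proof is essentially a one-line composition.
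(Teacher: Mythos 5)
Your proposal is correct and is exactly the argument in the paper: the corollary is stated there as a direct consequence of the Krauthgamer--Lee embedding (growth rate at most $d$ implies membership in $\mathcal{D}^{O(d\log d)}(2)$) combined with Theorem~\ref{thm:krlee}. Nothing further is needed.
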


Again, Corollary~\ref{cor:polygrowth} is a fairly natural extension of the fact that
$d$-dimensional grids have bounded asymptotic dimension
($d$-dimensional grids form the basic example of graphs of polynomial
growth).

Let us now argue why the assumption that the growth is polynomial in
Corollary~\ref{cor:polygrowth} cannot be weakened, showing the second part of Theorem~\ref{thm:polygrowth_intro}.
Recall that a function $f$ is said to be superpolynomial if it can be written as $f(r)=r^{g(r)}$ with $g(r)\to \infty$ when $r\to \infty$.

Given $p\ge 0$, a \emph{$p$-subdivided 3-regular tree} is obtained from a tree in which all internal vertices have degree 3 by subdividing each edge $p$ times.
Given two integers $k \geq 1$ and $p  \geq 1$ and a graph $G$, we say that a graph $G'$ is a \emph{$(k,p)$-stretch} of $G$ if it is obtained from $G$ as follows. 
For each vertex $v\in V(G)$ we  define $T_v$ to be a $p$-subdivided 3-regular tree with $d_{G}(v)$ leaves,  and subject to this, such that the radius of $T_v$ is as small as possible; and each leaf of $T_v$ is indexed by a different neighbour of $v$ in $G$. 
Define $G'$ to be the graph obtained from the disjoint union of these trees $T_v$ (over all $v\in V(G)$) by adding, for each edge $uv$ of $G$, a path with $k$ internal vertices between the leaf of $T_v$ indexed $u$ and the leaf of $T_u$ indexed $v$ (see Figure~\ref{fig:grid} for an illustration).
For any set $S$ of vertices of $G'$, the \emph{projection} of $S$ in $G$ is the set of vertices $v$ of $G$ with $V(T_v)\cap S\ne \emptyset$.   

For every $d \geq 2$, let $\mathcal{G}_d$ be the class of
$d$-dimensional grids. 
Recall that $\mathcal{G}_d$ has asymptotic dimension $d$. 
For $k,p\ge 1$, we now define $\mathcal{G}^{k,p}_d$ to be the class of all $(k,p)$-stretches of graphs from $\mathcal{G}_d$. Note that all vertices $v$ of a graph of $\mathcal{G}_d$ have degree at most $2d$, and thus each tree $T_v$ as above has radius at most $(p+1)\lceil\log_2(2d)\rceil \le 5p\log_2 d$ and diameter at most $2 (p+1)\lceil\log_2(2d)\rceil\le 10 p \log_2 d$ (where the two inequalities follow from $d\ge 2, p\ge 1)$.

\begin{figure}[htb]
 \centering
 \includegraphics[scale=1]{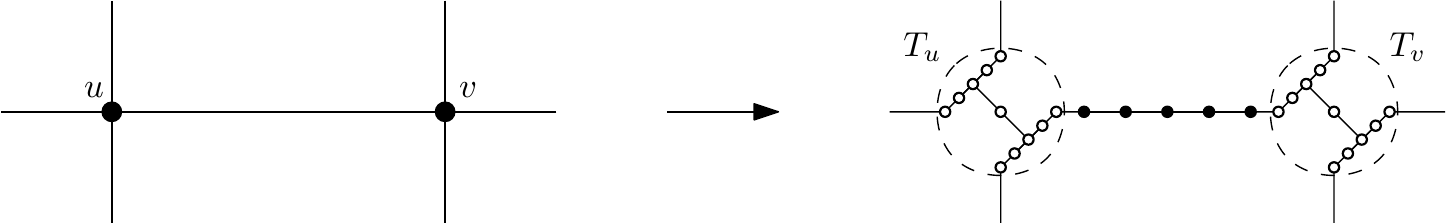}
 \caption{A local view of the $(k,p)$-stretch of a graph, for $k=5$ and $p=1$.}
 \label{fig:grid}
\end{figure}

\begin{lemma} \label{Gdpk_bdd_ad}
For any integers $d  \geq 2,k  \geq 1,p  \geq 1$, the class $\mathcal{G}^{k,p}_d$ has asymptotic dimension at least $d$.
\end{lemma}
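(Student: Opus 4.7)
The plan is to show that every $d$-dimensional grid $G$ is quasi-isometric to its $(k,p)$-stretch $G'$, with distortion and cobounded constants depending only on $d$, $k$, $p$ (and not on the particular $G$), and then to invoke the quasi-isometry invariance of asymptotic dimension under uniform constants recalled in Section~\ref{sec:tw}. Since $\ad(\mathcal{G}_d)=d$ by Gromov's computation for Euclidean spaces, this immediately gives $\ad(\mathcal{G}^{k,p}_d)\ge d$.

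Concretely, for $G\in\mathcal{G}_d$ with $(k,p)$-stretch $G'$, define $\phi\colon V(G)\to V(G')$ by sending $v$ to the root of the tree $T_v$. Every vertex of $G'$ belongs to some $T_v$ and is therefore within distance at most $5p\log_2 d$ of $\phi(v)$, using the uniform radius bound on $T_v$ noted just before the lemma (valid because every vertex of $G$ has degree at most $2d$). For the distance distortion, I would argue two bounds. First, if $u$ and $v$ are adjacent in $G$, then $\phi(u)$ and $\phi(v)$ are joined in $G'$ by a path of length at most $2\cdot 5p\log_2 d+(k+1)$, obtained by concatenating a shortest path in $T_u$ from $\phi(u)$ to the leaf indexed by $v$, the length-$(k+1)$ connecting path, and a shortest path in $T_v$ from the leaf indexed by $u$ to $\phi(v)$. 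Iterating along a shortest $u$-$v$ path in $G$ yields
\[
d_{G'}(\phi(u),\phi(v))\le (10p\log_2 d+k+1)\,d_G(u,v).
\]
Second, any walk in $G'$ from $\phi(u)$ to $\phi(v)$ with $u\neq v$ decomposes into tree-internal segments and inter-tree connecting paths, and the successive trees visited trace an edge-walk between $u$ and $v$ in $G$; since at least $d_G(u,v)$ connecting paths, each of length $k+1$, must be used, we get $d_{G'}(\phi(u),\phi(v))\ge (k+1)\,d_G(u,v)$.

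Together these two inequalities show that $\phi$ is a quasi-isometry with constants $\lambda = 10p\log_2 d+k+1$, $\epsilon=0$, and $C=5p\log_2 d$, all independent of the particular $G\in\mathcal{G}_d$. Applying the uniform quasi-isometry invariance of asymptotic dimension stated in Section~\ref{sec:tw} to the family $\mathcal{G}_d$, each member of which is quasi-isometric (via $\phi$) to a member of $\mathcal{G}^{k,p}_d$, one obtains $\ad(\mathcal{G}_d)\le \ad(\mathcal{G}^{k,p}_d)$, and hence $\ad(\mathcal{G}^{k,p}_d)\ge d$.

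The only real work is verifying the two distance bounds for $\phi$; I do not expect a genuine obstacle, since the stretching operation is designed to preserve the coarse geometry of the grid up to a multiplicative and additive constant controlled by $d$, $k$, and $p$.
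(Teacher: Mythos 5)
Your argument is correct, but it is packaged differently from the paper's proof. The paper argues by contradiction and transfers covers by hand: given a cover of the stretch $G'$ by $d$ sets whose $f(k,p,r)$-components are bounded, it projects each set to $G$ (sending a vertex of $T_v$ to $v$) and checks that the projections form a cover of $G$ whose $r$-components are bounded, yielding a $(d-1)$-dimensional control function for $\mathcal{G}_d$ and hence a contradiction with $\ad(\mathcal{G}_d)=d$. You instead build an explicit map $\phi\colon G\to G'$ and verify it is a quasi-isometry with constants depending only on $d,k,p$, then invoke the uniform quasi-isometry invariance stated in Section~\ref{sec:tw} to get $\ad(\mathcal{G}_d)\le\ad(\mathcal{G}^{k,p}_d)$. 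The two distance comparisons you verify (distances in $G'$ between basepoints are at most $(10p\log_2 d+k+1)\,d_G(u,v)$ and at least $(k+1)\,d_G(u,v)$) are exactly the estimates the paper uses implicitly when transferring the cover, so the underlying geometry is the same; what your route buys is brevity, by outsourcing the cover-transfer to the quasi-isometry invariance statement (which the paper asserts without proof), at the cost of having to check coboundedness, which the paper's projection argument never needs. Two small points to fix but not genuine gaps: the claim that every vertex of $G'$ belongs to some $T_v$ is false, since the $k$ internal vertices of each connecting path lie in no tree; they are, however, within distance about $\lceil(k+1)/2\rceil+5p\log_2 d$ of some $\phi(v)$, so the cobounded constant $C$ is still uniform in $d,k,p$. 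Also, $T_v$ has no distinguished root in the paper's definition, so you should take $\phi(v)$ to be a center vertex of $T_v$ realizing the radius bound $5p\log_2 d$.
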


\begin{proof}
Assume for a contradiction that there is a function $D'$ such that for every $r \in \mathbb{N}$, every graph in $\mathcal{G}^{k,p}_d$ can be covered by $d$ sets whose $r$-components are $D'(r)$-bounded. 
Consider a graph $G\in \mathcal{G}_d$, and a $(k,p)$-stretch $G' \in \mathcal{G}^{k,p}_d$ of $G$. 
Let $f(k,p,r)=(10p\log_2 d)(r+1)+(k+1)r$.
By assumption, there are $d$ sets $U_1',\ldots, U_d'$ that cover $G'$ so that for each $1\le i \le d$, each $f(k,p,r)$-component of $U_i'$ is $D'(f(k,p,r))$-bounded. For $1\le i \le d$, let $U_i$ be the projection of $U_i'$ in $G$. Note that $U_1, \ldots, U_d$ forms a cover of $G$, and every $r$-component of some $U_i$ is a subset of the projection of a $f(k,p,r)$-component of $U_i'$ in $G$. 
Therefore, for every $i$, every $r$-component of $U_i$ is $D'(f(k,p,r))$-bounded. 
By setting $D(r)=D'(f(k,p,r))$, this shows that $D$ is a $(d-1)$-dimensional control function for $\mathcal{G}_d$, a contradiction.
\end{proof}

We now show that the growth of $\mathcal{G}^{k,p}_d$ can be controlled by tuning the parameters $k$ and $p$.

\begin{lemma} \label{Gdpk_growth}
For every $d,k,p \in \mathbb{N}$ with $p \geq d  \geq 2$ and $k \geq 5p \log_2 d$, the class $\mathcal{G}^{k,p}_d$ has growth at most $r\mapsto 3r+1$ when $r \leq p/2$, $r \mapsto 4dr+1$ when $p/2 < r \leq k$, and $r \mapsto 8dk  \cdot(1+2\lceil r/k\rceil)^d$ when $r > k$.
\end{lemma}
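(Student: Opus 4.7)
The plan is to proceed by case analysis on $r$ via a breadth-first search from an arbitrary vertex $x \in V(G')$. I will first record structural facts about $G'$: it has maximum degree $3$, with branching (degree-$3$) vertices being exactly the internal vertices of the unsubdivided $3$-regular trees inside each $T_v$. Two branching vertices in the same $T_v$ are at distance at least $p+1$ in $G'$ (consecutive internal vertices are separated by a path of $p$ subdivisions), while two branching vertices in distinct trees $T_v, T_{v'}$ are at distance at least $d_G(v,v')(k+1) + 2(p+1) \geq k + 2p + 3$: one must exit $T_v$ via a leaf (distance $\geq p+1$), traverse at least one edge-path of length $k+1$, then re-enter $T_{v'}$ similarly. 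Moreover, every cycle in $G'$ uses at least four edge-paths (from a cycle of $G$) together with at least four intra-tree sub-paths of length $\geq 2(p+1)$, so the girth of $G'$ is at least $4(k+1) + 8(p+1) > 4k$.

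For $r \leq p/2$, the distance bounds preclude $B_r(x)$ from containing two branching vertices (they would lie at distance $\leq 2r \leq p < p+1$). If there is none, $B_r(x)$ is a subpath and has at most $2r+1 \leq 3r+1$ vertices. If there is exactly one branching vertex $b$ at distance $s \leq r$ from $x$, a direct count yields $|B_r(x)| \leq 1 + r + s + 2(r-s) = 3r - s + 1 \leq 3r + 1$, by following the two directions from $x$, one reaching $b$ and splitting into two further directions.

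For $p/2 < r \leq k$, the girth bound forces $G'[B_r(x)]$ to be a tree, since any cycle within would have length $\leq 2r \leq 2k$ while the girth exceeds $4k$. Letting $S(i)$ and $b(i)$ count vertices and branching vertices at distance exactly $i$ from $x$, the tree structure yields $S(i+1) = S(i) + b(i)$ for $i \geq 1$ and $S(1) = \deg_{G'}(x) \leq 3$; summing gives $|B_r(x)| \leq 1 + 3r + rM$, where $M$ is the total number of branching vertices in $B_r(x)$. To bound $M$, I will argue that if both $T_v$ and $T_{v'}$ (with $v \neq v'$) contribute branching vertices to $B_r(x)$, then two such are at distance $\leq 2r \leq 2k$, forcing $d_G(v, v')(k+1) \leq 2r - 2(p+1) < 2(k+1)$ and hence $d_G(v, v') = 1$. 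Since the grid $G$ is triangle-free, at most two trees can contribute; as each $T_v$ has at most $2d - 2$ branching vertices (a $3$-regular tree with $\leq 2d$ leaves), $M \leq 4d - 4$, and therefore $|B_r(x)| \leq 1 + 3r + r(4d - 4) \leq 4dr + 1$.

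For $r > k$, set $V(x, r) = \{v \in V(G) : T_v \cap B_r(x) \neq \emptyset\}$ and pick $v_0 \in V(x, r)$ with $d_{G'}(x, T_{v_0}) \leq (k+1)/2$ (such a $v_0$ exists since $x$ lies in some $T_v$ or on an edge-path between two trees). Since every step in $G$ must traverse an edge-path of length $k+1$ in $G'$, for every $v \in V(x, r)$ I will have $d_G(v_0, v)(k+1) \leq d_{G'}(T_{v_0}, T_v) \leq (k+1)/2 + r$, so $d_G(v_0, v) \leq \lceil r/k \rceil$ as $d_G$ is integer; hence $|V(x, r)| \leq (1 + 2\lceil r/k \rceil)^d$ by the standard grid ball bound. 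Each $T_v$ has at most $(2L - 2) + (2L - 3)p \leq 8pd$ vertices and at most $2d$ incident edge-paths of $k$ internal vertices each, so $|B_r(x)| \leq |V(x, r)|(8pd + 2dk) \leq 8dk (1 + 2\lceil r/k \rceil)^d$, using $k \geq 5p \log_2 d \geq 5p$ (from $d \geq 2$) to obtain $8pd \leq 2dk$ and hence $8pd + 2dk \leq 4dk \leq 8dk$. The main obstacle will be Case 2: a naive count of branching vertices per ball gives $\Omega(d^2)$, which would produce only a bound quadratic in $d$; the triangle-freeness of the grid $G$ is the critical ingredient to reduce this to a bound linear in $d$, as required for $4dr + 1$.
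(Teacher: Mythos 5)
Your proof is correct and follows the same three-regime strategy as the paper's (small $r$ confined near one branching vertex, medium $r$ confined to at most two adjacent trees $T_v$, large $r$ handled by passing to the grid and using its $(2s+1)^d$ ball bound), so the differences are in execution. The paper derives the first two cases from the single observation that a ball inducing a tree with $t$ leaves has at most $tr+1$ vertices, taking $t\le 3$ and $t\le 4d$ (the leaves of the two adjacent trees at the ends of the connecting path containing $x$), and handles $r>k$ by contracting each $T_v$ together with half of each incident subdivided edge; your Case 3 is the same argument phrased as a projection, with compatible constants. In Case 2 you instead count BFS levels via branching vertices and cap the number of contributing trees using triangle-freeness of the grid; this works, but two remarks. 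First, your justification that $G'[B_r(x)]$ is a tree (``any cycle within would have length $\le 2r$'') is not literally valid, since a cycle contained in a ball of radius $r$ need not be short in general; the correct statement is the standard fact that girth $>2r+1$ forces the ball to induce a tree (a non-tree BFS edge inside the ball closes a cycle of length at most $2r+1$), and in fact your recursion $S(i+1)\le S(i)+b(i)$ holds for BFS levels without any tree claim, because every vertex at level $i\ge 1$ has a neighbour at level $i-1$. Second, triangle-freeness is an unnecessary detour: for $r\le k$ no vertex of any tree other than the one containing $x$, or the two trees at the ends of the edge-path containing $x$, can lie in $B_r(x)$ at all, since reaching a third tree requires crossing a further full connecting path after traversing part of the first tree entered, which costs more than $k\ge r$; this is how the paper caps the relevant leaves at $4d$ without any hypothesis on the grid beyond its maximum degree. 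With these cosmetic repairs your argument gives exactly the stated bounds.
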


\begin{proof}
Given a graph $G \in \mathcal{G}^{k,p}_d$, a vertex $u \in  V(G)$  and $r>0$, we bound the number $|B_r(u)|$ of vertices at distance at most $r$ of $u$ in $G$. 
Observe that if $B_r(u)$ induces a tree with at most $t$ leaves, then $|B_r(u)|\le tr+1$.

If $r \leq p/2$, then $B_r(u)$ is a tree with at most 3 leaves, and thus $|B_r(u)|\le3 r+1$.
If $p /2 < r \le k$, then $B_r(u)$ is a tree with at most $4d$ leaves (corresponding to the leaves of two adjacent trees $T_v$ and $T_w$, each having at most $2d$ leaves), and thus $|B_r(u)|\le 4dr+1$. 
Assume now that $r > k$. A graph in $\mathcal{G}_d$ has growth rate at most $r \mapsto  (2r+1)^d$. 
Moreover, a graph of $\mathcal{G}_d$ can be obtained from $G$ by contracting trees of radius at most $\tfrac{k+1}2+5p\log_2 d$ with at most $2d$ leaves (corresponding to each tree $T_v$ together with half of each of the $2d$ incident subdivided edges) into single vertices. By the observation above, each such tree contains at most $2d(\tfrac{k+1}2+5p\log_2 d)+1\le 4dk$ vertices. Therefore, the vertex $u$ is at distance at most $r$ from at most $2 \cdot 4dk  \cdot(1+2\lceil r/k\rceil)^d$ vertices in $G$.
\end{proof}

\begin{thm}\label{th:superpoly:nope}
For any superpolynomial function $f$ that satisfies $f(r) \geq 3r+1$ for every $r \in \mathbb{N}$, the class of graphs with growth at most $f$ has unbounded asymptotic dimension.
\end{thm}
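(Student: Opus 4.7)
My plan is to prove this contrapositively by showing that for every integer $d \geq 2$, the class of graphs with growth at most $f$ contains $\mathcal{G}^{k,p}_d$ for some appropriately chosen parameters $k, p$, and then invoke Lemma \ref{Gdpk_bdd_ad} to conclude that the asymptotic dimension of the class is at least $d$. Since $d$ is arbitrary, this gives unbounded asymptotic dimension. The whole argument thus reduces to choosing $k$ and $p$ carefully depending on $d$ and $f$, so that the three piecewise growth bounds provided by Lemma \ref{Gdpk_growth} all lie below $f$.

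The three regimes in Lemma \ref{Gdpk_growth} are handled as follows. For $r \leq p/2$ the bound is $3r+1$, which is at most $f(r)$ by the assumption $f(r) \geq 3r+1$, so this regime is automatic. For $p/2 < r \leq k$ the bound is $4dr+1$; since $f$ is superpolynomial, $f(r)/r \to \infty$, so there exists $R_1$ with $f(r) \geq 4dr+1$ for every $r \geq R_1$, and we then take $p \geq \max\{d, 2R_1\}$. For $r > k$ the bound is $8dk(1+2\lceil r/k\rceil)^d$; using $\lceil r/k\rceil \leq 2r/k$ when $r\geq k$, this is at most $8d\cdot 5^d\cdot r^d/k^{d-1}$. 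Because $f$ is superpolynomial we have $f(r)/r^d \to \infty$, so there exists $R_2$ with $f(r) \geq 8d\cdot 5^d\cdot r^d$ for all $r \geq R_2$; we then choose $k \geq \max\{R_2, 5p\log_2 d\}$, which in particular makes $k \geq 1$, so $8d\cdot 5^d\cdot r^d/k^{d-1} \leq 8d\cdot 5^d\cdot r^d \leq f(r)$ for all $r > k \geq R_2$.

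With these choices, Lemma \ref{Gdpk_growth} shows that every graph of $\mathcal{G}^{k,p}_d$ has growth at most $f$, so $\mathcal{G}^{k,p}_d$ is contained in the class of graphs with growth at most $f$. Lemma \ref{Gdpk_bdd_ad} then gives that this class has asymptotic dimension at least $d$. Letting $d\to\infty$ concludes the proof.

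I expect the main subtlety to be the interplay between $p$ and $k$ in the middle regime $p/2 < r \leq k$, where the additive term $4dr+1$ must be dominated by $f$; this is precisely where the assumption that $f$ is \emph{superpolynomial} (as opposed to merely faster than $3r+1$) enters, because any polynomial $f$ of degree $\geq 1$ would in fact also satisfy this specific bound, and the real obstruction comes from the $r > k$ regime, where controlling $r^d/k^{d-1}$ requires $f$ to eventually dominate every polynomial. Beyond this, the argument is essentially a careful bookkeeping of the quantifier order (choose $p$ after $R_1$, then $k$ after $p$ and $R_2$), with no further difficulties anticipated.
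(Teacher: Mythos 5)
Your proposal is correct and follows essentially the same route as the paper: for each $d\geq 2$ you tune $p$ and then $k$ (with $p\geq d$, $k\geq 5p\log_2 d$) so that the three growth regimes of Lemma~\ref{Gdpk_growth} are dominated by $f$, then invoke Lemma~\ref{Gdpk_bdd_ad} and let $d\to\infty$, exactly as in the paper's proof of Theorem~\ref{th:superpoly:nope}. The only difference is that you make explicit the estimate $8dk\bigl(1+2\lceil r/k\rceil\bigr)^d\leq 8d\cdot 5^d\cdot r^d/k^{d-1}\leq 8d\cdot 5^d\cdot r^d$ for $r>k$, a detail the paper leaves implicit when asserting the existence of $k_d$.
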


\begin{proof}
Consider now a superpolynomial function $f$ with $f(r) \geq 3 r+1$ for every $r$, and fix an integer $d  \geq 2$. Since $f$ is superpolynomial, there exists an integer $p_d \geq d$ such that $f(r) \geq 4dr+1$ for every $r > p_d/2$. Similarly, there exists an integer $k_d \geq 5p_d \log_2 d$ such that $f(r) \geq 8dk_d \cdot (1+2\lceil r/k_d\rceil)^d$ for every $r > k_d$. 
Therefore it follows from  Lemmas~\ref{Gdpk_bdd_ad} and \ref{Gdpk_growth} that the class $\mathcal{G}^{k_d,p_d}_{d}$ has growth at most $f$ and asymptotic dimension at least $d$. 

Hence the class $\bigcup_{d \geq 2}{\mathcal G}^{k_d,p_d}$ has growth at most $f$ but has asymptotic dimension at least $d$ for every $d \geq 2$.
\end{proof}

We note that the intriguing assumption of Theorem~\ref{th:superpoly:nope}, requiring that $f(r)\ge 3r+1$ for every $r  \in {\mathbb N}$, turns out to be necessary. If $f(r_0)\leq 3r_0$ for some $r_0 \in {\mathbb N}$, then graphs with growth at most $f$ do not contain the $(r_0-1)$-subdivision of $K_{1,3}$ as a subgraph and hence as a minor.
So the class of graphs with growth at most $f$ has asymptotic dimension at most 1 by Theorem~\ref{tw_ad_intro_1}. 
This proves the third part of Theorem~\ref{thm:polygrowth_intro}.

\section{Conclusion and open problems}\label{sec:ccl}

For some function $f$, we say that a class of graphs $\mathcal{G}$ has \emph{expansion} at most $f$ if any graph obtained by contracting pairwise disjoint  connected subgraphs of radius at most $r$ in a subgraph of a graph of $\mathcal{G}$ has average degree at most $f(r)$ (see~\cite{NO12} for more details on this notion). 
Note that every proper minor-closed family has constant expansion.
In Section~\ref{sec:geom} we have proved that classes of graphs of polynomial growth have bounded asymptotic dimension. Note that if a class has bounded (polynomial, and superpolynomial, respectively) growth, then it has bounded (polynomial, and superpolynomial) expansion.

\begin{qn}\label{qn:polyexp}
Is it true that every class of graphs of polynomial expansion has bounded asymptotic dimension? 
\end{qn}

Observe that polynomial expansion would again be best possible here, as we have constructed classes of graphs of (barely) superpolynomial growth (and therefore expansion) with unbounded asymptotic dimension.
It should be noted that there are important connections between polynomial expansion and the existence of strongly sublinear separators~\cite{DN16}. 
On the other hand, Hume~\cite{Hum17} proved that classes of graphs of bounded growth and bounded asymptotic dimension have sublinear separators. 

A class of graphs $\cC$ is \emph{monotone} if any subgraph of a graph from $\cC$ lies in $\cC$. 
David Wood asked a weaker form of the converse of Question \ref{qn:polyexp} in private communication after preliminary forms of this paper \cite{BBEGPS,Liu20} appeared in arXiv.
He asked whether every monotone class of graphs of bounded asymptotic dimension has bounded expansion. 
We remark that the monotonicity is required in Wood's question. 
The class consisting of all complete graphs has asymptotic dimension 0 (as all the graphs in this class have diameter at most 2) but does not have bounded expansion. 
This also shows that monotonicity cannot be replaced by the weaker property that the class is hereditary (i.e.\ closed under taking induced subgraphs).

It is known that monotone classes admitting strongly sublinear separators are exactly the monotone classes with polynomial expansions~\cite{DN16}.
Hence one can ask a stronger version of Wood's question as follows.

\begin{qn}\label{qn:convpolyexp}
Is it true that every monotone class of graphs of bounded asymptotic dimension has polynomial expansion?
\end{qn}

Note that positive answers of both Questions~\ref{qn:polyexp} and \ref{qn:convpolyexp} would give a characterization of monotone graph classes with bounded asymptotic dimension in terms of polynomial expansion.
However, during the review process of an earlier version of this paper, some of us (unpublished) were able to find a negative answer to Question \ref{qn:convpolyexp}: for every function $f$, there exists a monotone class of graphs with asymptotic dimension 1 and with expansion greater than $f$.
Note that the control functions in our negative answer for Question \ref{qn:convpolyexp} depend on $f$, so they do not dispute Wood's question for bounded expansion.

\medskip

Lemmas~\ref{lem:thetaminor} shows that for any $p,q$, the class of graphs with no $q$-fat $K_{2,p}$-minors has asymptotic dimension at most 1. On the other hand, Theorem~\ref{thm:minorch} shows that for any $t$, the class of $K_t$-minor free graphs has asymptotic dimension at most 2. A natural question is whether this can be extended to $q$-fat minors, as follows.
\begin{qn}\label{qn:qfat}
Is it true that there is a constant $d$ such that for any integer $q$ and graph $H$, the class of graphs with no $q$-fat $H$-minor has asymptotic dimension at most $d$? 
\end{qn}

\begin{acknowledgement}
This paper is a combination of parts of two manuscripts \cite{BBEGPS,Liu20} in the arXiv repository, where \cite{Liu20} has appeared as an extended abstract in a conference, and a number of results in the unpublished manuscript~\cite{BBEGPS} are strengthened to Assouad-Nagata dimension in this paper.
The authors would like to thank Arnaud de Mesmay, Alfredo Hubard and Emil Saucan for the discussions on the discretization of Riemannian surfaces, and Ga\"el Meigniez for suggesting the simple proof of Lemma~\ref{lem:rietogr} on mathoverflow. The authors would also like to thank David Hume and David Wood for interesting discussions.
The authors thank anonymous referees for their efficient and careful reading and suggestions; in particular, one of them allowed to improve the bound for layered treewidth in Theorem \ref{thm:nofptw_intro} from 12 to 1.
\end{acknowledgement}

\appendix

\section{From finite  metric spaces to infinite metric spaces}

In this section, we use a compactness argument to show that the asymptotic dimension of an infinite weighted graph is bounded from above by the asymptotic dimension of the class of its finite weighted induced subgraphs.
We will use the following result of Gottschalk~\cite{Got51}. A \emph{choice function} for a family of sets $(X_\alpha)_{\alpha\in I}$ is a function $c$ with $c(\alpha)\in X_\alpha$ for any $\alpha\in I$.

\begin{thm}[\cite{Got51}]\label{thm:got}
Let $I$ be a set.
Let  $(X_\alpha)_{\alpha\in I}$ be a family of finite sets,  let $\mathcal{A}$ be the class of all finite subsets of $I$,  and  for  each $A\in \mathcal{A}$, let $c_A$ be a  choice function of $(X_a)_{a\in A}$. 
Then there exists a choice function $c$ of $(X_\alpha)_{\alpha\in I}$ such that for every $A\in \mathcal{A}$ there exists $B\in \mathcal{A}$ with $B\supset A$ such that $ c(\alpha)=c_B(\alpha)$ for any $\alpha \in A$.
\end{thm}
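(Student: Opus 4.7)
The plan is to deduce the statement from Tychonoff's theorem. Equip each $X_\alpha$ with the discrete topology (so each is compact, being finite) and consider the product space $X := \prod_{\alpha \in I} X_\alpha$, which is compact by Tychonoff. Elements of $X$ are exactly choice functions of $(X_\alpha)_{\alpha \in I}$, so I aim to produce the desired $c$ as a point in a suitable nonempty intersection of closed subsets of $X$.

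For each $A \in \mathcal{A}$, define
\[
H_A := \bigl\{\, f \in X \,:\, \exists\, B \in \mathcal{A} \text{ with } B \supseteq A \text{ and } f|_A = c_B|_A \,\bigr\}.
\]
The first step is to check that each $H_A$ is closed (in fact clopen) in $X$. Since $A$ is finite and each $X_\alpha$ is finite, the set $\prod_{\alpha \in A} X_\alpha$ of possible restrictions is finite; hence $\{c_B|_A : B \in \mathcal{A},\, B \supseteq A\}$ is a finite set of functions, and $H_A$ is a finite union of basic cylinder sets of the form $\{f \in X : f|_A = \tau\}$, each of which is clopen in the product topology.

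The second step is to verify the finite intersection property of the family $\{H_A : A \in \mathcal{A}\}$. Given $A_1, \dots, A_n \in \mathcal{A}$, let $B := A_1 \cup \dots \cup A_n \in \mathcal{A}$ and extend $c_B$ arbitrarily (using the hypothesis that each $X_\alpha$ is nonempty, which is implicit in the existence of the $c_A$) to a global choice function $f \in X$. For each $i$, taking the witness $B \supseteq A_i$ shows $f \in H_{A_i}$, so $\bigcap_{i=1}^n H_{A_i} \neq \emptyset$. By compactness, $\bigcap_{A \in \mathcal{A}} H_A \neq \emptyset$, and any $c$ in this intersection is the required choice function: membership in $H_A$ directly provides a finite $B \supseteq A$ with $c(\alpha) = c_B(\alpha)$ for all $\alpha \in A$.

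The main (really the only) obstacle is the appeal to Tychonoff's theorem, i.e.\ to the Axiom of Choice; this is unavoidable since the conclusion is itself a choice principle. Once compactness of $X$ is granted, the key observation making the proof go through is that finiteness of each $X_\alpha$ forces the collection of possible restrictions $c_B|_A$ to be finite, which is precisely what makes $H_A$ closed.
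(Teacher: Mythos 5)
Your proof is correct. The paper does not prove this statement itself but cites Gottschalk \cite{Got51}, whose argument is precisely this one — viewing choice functions as points of the compact product $\prod_{\alpha\in I}X_\alpha$ of finite discrete spaces, noting each constraint set is clopen because only finitely many restrictions to a finite $A$ are possible, and applying the finite intersection property via $B=A_1\cup\dots\cup A_n$ — so your argument coincides with the standard proof behind the citation.
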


We are now ready to prove the following  theorem by a compactness argument.

\begin{thm} \label{compact_graphs}
Let $(G,\phi)$ be a finite or infinite weighted graph.
Let $\F=\{(G[A],\phi|_{ E(G[A])}): A \subseteq V(G), |A|<\infty\}$.
Let $n \in {\mathbb N}$.
Let $f$ be an $n$-dimensional control function of $\F$.
Let $g: {\mathbb R}^+ \rightarrow {\mathbb R}^+$ be the function such that $g(x)=f(x+1)$ for every real $x>0$.
Then $g$ is an $n$-dimensional control function of $(G,\phi)$.
In particular, $\ad((G,\phi)) \leq \ad(\F)$.
\end{thm}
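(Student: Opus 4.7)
The plan is to use a compactness argument based on Gottschalk's theorem (Theorem~\ref{thm:got}) to glue together local covers of finite induced subgraphs into a global cover of $V(G)$. Fix $r > 0$. For each finite $A \subseteq V(G)$, set $(G_A,\phi_A) := (G[A], \phi|_{E(G[A])}) \in \F$, and apply the control function $f$ at parameter $r+1$ to obtain a cover $\mathcal{U}^A = \bigcup_{i=1}^{n+1}\mathcal{U}_i^A$ of $A$ in which each $\mathcal{U}_i^A$ is $(r+1)$-disjoint and each element has diameter at most $f(r+1) = g(r)$ in $d_{(G_A,\phi_A)}$. Define a choice function $c_A : A \to \{1,\ldots,n+1\}$ by selecting, for each $v \in A$, some index $c_A(v)$ such that $v$ lies in an element of $\mathcal{U}_{c_A(v)}^A$. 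Applying Theorem~\ref{thm:got} with the constant family $X_v = \{1,\ldots,n+1\}$ for $v \in V(G)$ yields a global colouring $c : V(G) \to \{1,\ldots,n+1\}$ such that for every finite $A \subseteq V(G)$ there is a finite $B \supseteq A$ with $c|_A = c_B|_A$.

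I would then take, for each $i \in \{1,\ldots,n+1\}$, the family $\mathcal{U}_i$ consisting of the $r$-components of $c^{-1}(i)$ in the metric $d_{(G,\phi)}$. By construction $\mathcal{U} := \bigcup_i \mathcal{U}_i$ covers $V(G)$ and each $\mathcal{U}_i$ is $r$-disjoint. The nontrivial step is to show each element of $\mathcal{U}_i$ has diameter at most $g(r)$. So let $S \in \mathcal{U}_i$ and $u,v \in S$; there is a sequence $u = u_0,\ldots,u_k = v$ in $c^{-1}(i)$ with $d_{(G,\phi)}(u_j,u_{j+1}) \leq r$ for each $j$. Using that each such distance is an infimum of path lengths, pick paths $P_j$ in $G$ from $u_j$ to $u_{j+1}$ of length at most $r + \tfrac{1}{2}$, and set $W := \{u,v\} \cup \bigcup_j V(P_j)$, which is finite.

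By Gottschalk's theorem there is a finite $B \supseteq W$ with $c|_W = c_B|_W$. Since $P_j \subseteq G[B]$, we have $d_{(G_B,\phi_B)}(u_j,u_{j+1}) \leq r + \tfrac{1}{2} < r+1$, and $c_B(u_j) = c(u_j) = i$ for every $j$; as different elements of $\mathcal{U}_i^B$ are $(r+1)$-disjoint in $d_{(G_B,\phi_B)}$, consecutive $u_j$'s must lie in a common element of $\mathcal{U}_i^B$, and hence by transitivity so do $u$ and $v$. That element has diameter at most $f(r+1)$ in $d_{(G_B,\phi_B)}$, so $d_{(G,\phi)}(u,v) \leq d_{(G_B,\phi_B)}(u,v) \leq f(r+1) = g(r)$, as required. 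The final statement $\ad((G,\phi)) \leq \ad(\F)$ is then immediate by taking $n = \ad(\F)$.

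The main obstacle (and the reason for the shift $g(r) = f(r+1)$) is that the infimum defining $d_{(G,\phi)}$ need not be attained on any finite subgraph, so an $r$-walk in $(G,\phi)$ lifts only to a $(r+\varepsilon)$-walk inside a finite induced subgraph; working at local parameter $r+1$ absorbs this approximation error and forces consecutive $u_j$'s into a common local piece. The rest is bookkeeping: choosing the finite set $W$ large enough to contain witnessing paths so that the Gottschalk-produced $B$ already certifies that $u$ and $v$ lie in a single low-diameter element of $\mathcal{U}_i^B$.
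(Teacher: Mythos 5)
Your proof is correct and follows essentially the same route as the paper's: apply $f$ at parameter $r+1$ to finite induced subgraphs, extract a global colouring via Gottschalk's theorem, and bound each $r$-component of a colour class by choosing a finite witness set containing approximate shortest paths (the slack absorbing the fact that $d_{(G,\phi)}$ is only an infimum) on which $c$ agrees with some $c_B$. The only differences are cosmetic: the paper argues by contradiction and uses slack $1$ rather than $\tfrac12$ in the chosen paths.
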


\begin{proof}
We denote the metric $d_{(G,\phi)}$ by $d$, and for every $A \subseteq V(G)$, we denote the metric $d_{(G[A],\phi|_{E(G[A])})}$ by $d_A$. 
Let $I=V(G)$.
For every $\alpha\in I$, let $X_\alpha=[n+1]$.
Fix a real $r>0$.

Since $f$ is an $n$-dimensional control function of $\F$, for every finite subset $A$ of $I=V(G)$, there exist $(r+1)$-disjoint $f(r+1)$-bounded (with respect to $d_A$) collections $\U_{A,1},\U_{A,2},\dots,\U_{A,n+1}$ of subsets of $A$ such that $\bigcup_{i=1}^{n+1}\bigcup_{U \in \U_{A,i}}U \supseteq A$.
So for every finite subset $A$ of $I$, there exists an $(n+1)$-colouring $c_A$ of $G[A]$ such that no monochromatic $(r+1)$-component contains two vertices $u,v\in A$ with $d_A(u,v)>f(r+1)$.
Note that $c_A$ is a choice function of $(X_a)_{a \in A}$.

Let $c$ be the choice function for $(X_\alpha)_{\alpha\in I}$ given by applying Theorem~\ref{thm:got}.
Since $X_\alpha=[n+1]$ for each $\alpha\in I = V(G)$, $c$ is an $(n+1)$-colouring of $G$.

Suppose for the sake of contradiction that there exist $u,v \in V(G)$ with $d(u,v)>f(r+1)$ and integer $j$ with $1 \leq j \leq n+1$ such that some  $c$-monochromatic $r$-component of colour $j$ contains both $u$ and $v$ (with respect to $d$).
So there exist $\ell \in {\mathbb N}$ and $p_0=u,p_1,p_2,\dots,p_\ell=v$ in $V(G)$ such that $d(p_i,p_{i+1}) \leq r$ for every $0 \leq i \leq \ell-1$, and $c(p_i)=j$ for every $0 \leq i \leq \ell$.
Note that for every $0 \leq i \leq \ell-1$, since $d(p_i,p_{i+1})$ is the infimum of the length of all paths in $G$ between $p_i$ and $p_{i+1}$, there exists a path $P_i$ in $G$ between $p_i$ and $p_{i+1}$ with length in $(G,\phi)$ at most $d(p_i,p_{i+1})+1 \leq r+1$.
Similarly, there exists a path $P$ in $G$ between $u$ and $v$ with length in $(G,\phi)$ at most $d(u,v)+1$.
Let $A = V(P) \cup \bigcup_{i=0}^{\ell-1}V(P_i)$. 
Note that $P$ and each $P_i$ is finite, so $A$ is finite. 
By Theorem~\ref{thm:got}, there
exists a finite subset $B\supset A$ of $V(G)$ such that $c_B(a)=c(a)$ for every $a \in A$.
So $c_B(p_i)=j$ for every $0 \leq i \leq \ell$.
Note that $d_B(p_i,p_{i+1}) \leq d_A(p_i,p_{i+1}) \leq r+1$ for every $0 \leq i \leq \ell-1$.
So $u=p_0$ and $v=p_\ell$ are contained in the same $c_B$-monochromatic $(r+1)$-component with respect to $d_{B}$.
In addition, $d_{B}(u,v) \geq d(u,v)>f(r+1)$, which contradicts the definition of $c_B$. 

Therefore, $c$ is an $(n+1)$-colouring of $(G,\phi)$ such that every  monochromatic $r$-component is $f(r+1)$-bounded with respect to $d$.
Hence $g(x):=f(x+1)$ is an $n$-dimensional control function of $(G,\phi)$.
In particular, if $n=\ad(\F)$, then $\ad((G,\phi)) \leq n$.
\end{proof}

\section{Equivalence between asymptotic dimension and weak diameter colouring} \label{sec:appendix_dia_asdim}

We shall prove Proposition~\ref{obs:weakdiameter} in this section.
The following is a restatement.

\begin{proposition} 
Let $\F$ be a class of graphs.
Let $m>0$ be an integer.
Then $\ad(\F) \leq m-1$ if and only if there exists a function $f: {\mathbb N} \rightarrow {\mathbb N}$ such that for every $G \in \F$ and $\ell \in {\mathbb N}$, $G^\ell$ is $m$-colourable with weak diameter in $G^\ell$ at most $f(\ell)$.
\end{proposition}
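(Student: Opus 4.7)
The proof will establish both directions by directly translating between covers witnessing the asymptotic dimension bound and colourings of powers. The key correspondence is that the colour classes of a weak-diameter colouring of $G^\ell$ (partitioned into monochromatic components) will serve as the $\ell$-disjoint collections in a control function cover, and conversely the $r$-disjoint families from a control function will serve as colour classes. Throughout, we use the elementary fact that $d_{G^\ell}(u,v) \leq d_G(u,v)$ and, conversely, $d_G(u,v) \leq \ell \cdot d_{G^\ell}(u,v)$.

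For the forward direction, suppose $\ad(\F) \leq m-1$, so there exists an $(m-1)$-dimensional control function $D$ for $\F$. Fix $G \in \F$ and $\ell \in \N$. Applying the control function with $r = \ell$ produces a cover $\U = \bigcup_{i=1}^m \U_i$ of $V(G)$ where each $\U_i$ is $\ell$-disjoint and each member of $\U$ has diameter at most $D(\ell)$ in $G$. Since each $\U_i$ is $\ell$-disjoint, its members are pairwise disjoint, and we can define an $m$-colouring $c$ of $V(G)$ by assigning to each vertex $v$ a colour $i$ such that $v$ belongs to some (necessarily unique) set in $\U_i$. If $u,v$ both receive colour $i$ and are adjacent in $G^\ell$ (i.e.\ $d_G(u,v) \leq \ell$), the $\ell$-disjointness of $\U_i$ forces them into the same member of $\U_i$. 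Consequently each $c$-monochromatic component of $G^\ell$ lies inside a single set of $\U$, which has diameter at most $D(\ell)$ in $G$ and hence weak diameter at most $D(\ell)$ in $G^\ell$. So we may take $f(\ell) := D(\ell)$.

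For the backward direction, assume $f: \N \to \N$ is such that for all $G \in \F$ and $\ell \in \N$ the graph $G^\ell$ admits an $m$-colouring $c$ with weak diameter in $G^\ell$ at most $f(\ell)$. Given a real $r > 0$, set $\ell := \lceil r \rceil$ and, for each colour $i \in [m]$, let $\U_i$ be the collection of vertex sets of the $c$-monochromatic components of $G^\ell$. Then $\bigcup_{i=1}^m \U_i$ is a partition (hence cover) of $V(G)$. If $U \neq U'$ in $\U_i$ and $u \in U, v \in U'$, then $u,v$ have the same colour but lie in different components of $G^\ell$; therefore $uv \notin E(G^\ell)$, so $d_G(u,v) > \ell \geq r$, showing that $\U_i$ is $r$-disjoint. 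Moreover, for any $u,v$ in a common $U \in \U_i$, we have $d_{G^\ell}(u,v) \leq f(\ell)$, and hence $d_G(u,v) \leq \ell \cdot f(\ell) = \lceil r \rceil \cdot f(\lceil r \rceil)$. Setting $D(r) := \lceil r \rceil \cdot f(\lceil r \rceil)$ thus gives an $(m-1)$-dimensional control function for $\F$.

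There is no substantive obstacle: everything reduces to checking the two translations are well-defined and unpacking the definitions of $r$-disjointness and weak diameter in $G^\ell$ versus $G$. The only minor point worth attention is handling non-integer $r$ in the backward direction, which is absorbed by simply taking $\ell = \lceil r \rceil$.
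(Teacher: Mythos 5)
Your proof is correct and follows essentially the same route as the paper's: the control-function cover (made disjoint within each family) becomes the colour classes, and the monochromatic components of $G^\ell$ become the $r$-disjoint bounded families, with rounding (your $\ell=\lceil r\rceil$ versus the paper's handling of non-integer $r$) absorbing real-valued scales. The only cosmetic point is that $f$ must map into ${\mathbb N}$, so in the forward direction take $f(\ell):=\lfloor D(\ell)\rfloor$ (legitimate since distances in $G^\ell$ are integral) rather than $D(\ell)$ itself.
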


\begin{proof}
First observe that for any integer $\ell \geq 1$, $G \subseteq G^\ell$, so a set of vertices with weak diameter in $G$ at most $f(\ell)$ has weak diameter in $G^{\ell}$ at most $f(\ell)$, and conversely a set of vertices with weak diameter in $G^\ell$ at most $f(\ell)$ has weak diameter in $G$ at most $\ell \cdot f(\ell)$.
  
By definition, $\ad(\F) \leq m-1$ implies that there is a function $f: {\mathbb R}^+ \rightarrow {\mathbb R}^+$ such that for any $\ell \in {\mathbb R}^+$ (and hence in particular for any $\ell \in \N$) any graph $G\in\mathcal{F}$ has a cover that is a union of $m$ $\ell$-disjoint families of $f(\ell)$-bounded sets. 
This cover can be reduced to a partition of the vertex-set of $G$ (while maintaining the property that the $m$ families are $\ell$-disjoint and all their elements are $f(\ell)$-bounded). 
We can now consider each of the $m$ families as a distinct colour class in a colouring $c$ of $G$ (and $G^\ell$). 
Since each family is $\ell$-disjoint, each $c$-monochromatic
component in $G^\ell$ is included in a single element of the partition, and is therefore $\lfloor f(\ell) \rfloor$-bounded (since the distances in $G^\ell$ are integral).
This implies that $G^\ell$ is $m$-colourable with weak diameter in $G^\ell$ at most $\lfloor f(\ell) \rfloor$.

Conversely, for every integer $\ell \ge 1$ and $G \in \F$, any $m$-colouring of $G^\ell$ with weak diameter in $G^\ell$ at most $f(\ell)$ clearly defines a cover of $V(G)$ by $m$ $\ell$-disjoint families of $\ell \cdot f(\ell)$-bounded sets in $G$, where the members of the families are the monochromatic components.
Observe that, for any real $\ell > 0$, a set of $\ell$-disjoint families of $G$ is actually $\lfloor \ell \rfloor$-disjoint, and in particular if $\ell < 1$ then the trivial partition of $V(G)$ into singletons is $\ell$-disjoint.
So the domain of $f$ can be extended to ${\mathbb R}^+$, implying that the function $g(x):=x \cdot f(x)$ is an $m$-dimensional control function of $\F$.
\end{proof}

\end{document}